\title[Global Hilbert expansion for some non-relativistic kinetic equations]{Global Hilbert expansion for some non-relativistic kinetic equations }
\author[Y. J. Lei]{Yuanjie Lei}
\address[Y. J. Lei]{{\newline School of Mathematics and Statistics, Huazhong University of Science and Technology, Wuhan 430074, China}}
\email{leiyuanjie@hust.edu.cn}
\author[S. Q. Liu]{Shuangqian Liu}
\address[S. Q. Liu]{{\newline School of Mathematics and Statistics, and Key Laboratory of Nonlinear Analysis $\&$ Applications (Ministry of Education), Central China Normal University, Wuhan 430079, P. R. China}}
\email{sqliu@ccnu.edu.cn}
\author[Q. H. Xiao]{Qinghua Xiao}
\address[Q. H. Xiao]{\newline Innovation Academy for Precision Measurement Science and Technology, Chinese Academy of Sciences, Wuhan 430071, China}
\email{xiaoqh@apm.ac.cn}
\author[H. J. Zhao]{Huijiang Zhao}
\address[H. J. Zhao]{{\newline School of Mathematics and Statistics, and Computational
Science Hubei Key Laboratory, Wuhan University, Wuhan 430072, China}}
\email{hhjjzhao@whu.edu.cn}
\newtheorem{theorem}{Theorem}[section]
\newtheorem{lemma}{Lemma}[section]
\newtheorem{corollary}{Corollary}[section]
\newtheorem{proposition}{Proposition}[section]
\newtheorem{remark}{Remark}[section]
\def\charf {\mbox{{\text 1}\kern-.30em {\text l}}}
\def\nablax{\nabla_x}
\newcommand{\dis}{\displaystyle}
\newcommand{\R}{\mathbb{R}}
\newcommand{\FM}{\mathbf{M}}
\newcommand{\FP}{\mathbf{P}}
\newcommand{\FI}{\mathbf{I}}
\newcommand{\FZ}{\mathbf{Z}}
\newcommand{\CA}{\mathcal{A}}
\newcommand{\CC}{\mathcal{C}}
\newcommand{\CK}{\mathcal{K}}
\newcommand{\CL}{\mathcal{L}}
\newcommand{\CQ}{\mathcal{Q}}
\newcommand{\CU}{\mathcal{U}}
\newcommand{\na}{\nabla}
\newcommand{\al}{\alpha}
\newcommand{\bet}{\beta}
\newcommand{\ga}{\gamma}
\newcommand{\de}{\delta}
\newcommand{\si}{\sigma}
\newcommand{\pa}{\partial}
\newcommand{\eps}{\epsilon}
\newcommand{\ta}{\theta}
\newcommand{\vps}{\varepsilon}
\newcommand{\Ga}{\Gamma}
\newcommand{\lag}{\langle}
\newcommand{\rag}{\rangle}
\numberwithin{equation}{section}
\begin{document}
%%%%%%%%%%%%%%%%

\date{\today}

\subjclass{76X05; 35Q35; 82C40; 82D10; 35Q83} \keywords{Global Hilbert expansion; Vlasov-Maxwell-Landau system; Non-cutoff Vlasov-Maxwell-Boltzmann system; Euler-Maxwell system; Interplay energy estimates}

\thanks{\textbf{Acknowledgment.} The research of Yuanjie Lei was supported by the National Natural Science Foundation of China grants 11971187 and 12171176, the research of Shuangqian Liu was supported by the National Natural Science Foundation of China grant 12325107, the research of Qinghua Xiao was supported by the National Natural Science Foundation of China grants 11871469 and 12271506, the research of Huijiang Zhao was supported by the National Natural Science Foundation of China grants 11731008 and 12221001. This work was also partially supported by the Fundamental Research Funds for the Central Universities.}

% \thispagestyle{style}

%--------------------------------------------------------------------------------------
%abstract------------------------------------------------------------------------------
\begin{abstract}

The Vlasov-Maxwell-Landau (VML) system and the Vlasov-Maxwell-Boltzmann (VMB) system are fundamental models in dilute collisional plasmas. In this paper, we are concerned with the hydrodynamic limits of both the VML and the non-cutoff VMB systems in the entire space. Our primary objective is to rigorously prove that, within the framework of Hilbert expansion, the unique classical solution of the VML or non-cutoff VMB system converges globally over time to the smooth global solution of the Euler-Maxwell system as the Knudsen number approaches zero.

The core of our analysis hinges on deriving novel interplay energy estimates for the solutions of these two systems, concerning both a local Maxwellian and a global Maxwellian, respectively. Our findings address a problem in the hydrodynamic limit for Landau-type equations and non-cutoff Boltzmann-type equations with a magnetic field. Furthermore, the approach developed in this paper can be seamlessly extended to assess the validity of the Hilbert expansion for other types of kinetic equations.

\end{abstract}

\maketitle
\thispagestyle{empty}

\tableofcontents

%%%%%%%%%%%%%%%%%%%%%%%%%%%%%%%%%%%%%%%%%%%%%
\section{Introduction}
%%%%%%%%%%%%%%%%%%%%%%%%%%%%%%%%%%%%%%%%%%%%%

\setcounter{equation}{0}
The smooth transition from kinetic equations describing perfect gases or particles to hydrodynamics is closely linked to Hilbert's sixth problem. Hilbert himself introduced the renowned Hilbert expansion as the inaugural example of this program. In this expansion, the Knudsen number $\varepsilon$ plays a central role, as detailed in references such as \cite{Hilbert, Saint-Raymond-2009}. Rigorously substantiating the legitimacy of this expansion for kinetic equations has remained a  challenging problem since its inception.

In this article, at the kinetic level, we investigate the rescaled Vlasov-Maxwell-Landau (VML) and Vlasov-Maxwell-Boltzmann (VMB) systems. These systems represent the fundamental models governing the dynamics of dilute charged particles,
\begin{align}\label{main1}
\begin{aligned}
& \partial_t F^{\varepsilon} + v\cdot \nabla_x F^{\varepsilon}- e_-\Big(E^{\epsilon}+v\times B^{\varepsilon} \Big)\cdot\nabla_v F^{\varepsilon} = \frac{1}{\varepsilon}\mathcal{C}(F^{\varepsilon},F^{\varepsilon}),\\
 & \partial_tE^{\varepsilon}-  c\nabla_x \times B^{\varepsilon} =4\pi \int_{\mathbb R^3} v F^{\varepsilon} dv, \\
 &\partial_tB^{\varepsilon}+ c\nabla_x \times E^{\varepsilon}=0,\\
& \nabla_x\cdot E^{\varepsilon}=4\pi e_-\Big(1 -\int_{\mathbb R^3}  F^{\varepsilon} dv\Big), \qquad \nabla_x\cdot B^{\varepsilon}=0.
\end{aligned}
\end{align}
Here, $\varepsilon>0$ represents the Knudsen number, which characterizes the ratio of the mean free path to the characteristic length. The operators $\nabla_x=(\partial_{x_1}, \partial_{x_2}, \partial_{x_3})$ and $\nabla_v=(\partial_{v_1}, \partial_{v_2}, \partial_{v_3})$ denote spatial and velocity gradients, respectively. The function $F^{\varepsilon}= F^{\varepsilon}(t, x, v)\geq0$ represents the number density function for electrons at time $t\geq0$, position $x=(x_1, x_2, x_3) \in \mathbb R^3$, and velocity $v=(v_1, v_2, v_3)\in \mathbb R^3$. In this context, $-e_-$ denotes the charge of the electrons, and $c$ stands for the speed of light. Furthermore, we normalize both constants $e_-$ and $c$ to one for simplicity in the rest of this paper.

Regarding the bilinear collision operator $\mathcal{C}(\cdot,\cdot)$ in equation \eqref{main1}, it can be associated with either the Landau collision operator or the Boltzmann collision operator:

\begin{itemize}
\item  For the Landau collision operator, denoted as $\CC(\cdot,\cdot)$, it is defined as follows:
\begin{eqnarray*}%\label{L-op}
\CC(G,H)&=&\nabla_v\cdot\left\{\int_{{\R}^{3}}\phi(v-v')\left[G(v')\nabla_vH(v)-H(v)\nabla_{v'}G(v')\right]dv'\right\}\nonumber\\
&=&\sum\limits_{i,j=1}^{3}\partial_{v_i}\int_{{\R}^{3}}\phi^{ij}(v-v')\left[G(v')\partial_{v_j}H(v)-H(v)\partial_{v'_j}G(v')\right]dv',
\end{eqnarray*}
where $\phi(v)=\left(\phi^{ij}(v)\right)_{3\times 3}$ is a non-negative matrix with components $\phi^{ij}(v)$ given by
\begin{equation}\label{cker}
\phi^{ij}(v)=\left\{\delta_{ij}-\frac{v_iv_j}{|v|^2}\right\}|v|^{\ga+2}, \qquad \gamma\geq-3.
\end{equation}
Here, $\delta_{ij}$ represents the Kronecker delta. We are particularly interested in the case where $\gamma=-3$, which corresponds to the original Landau collision operator with Coulombic interactions.

\item
For the Boltzmann collision operator, also denoted as $\CC(\cdot,\cdot)$, it is defined as follows:
\begin{equation*}
  \mathcal{C}(G_1,G_2)(v)
  =\int_{\mathbb{R}^3\times \mathbb{S}^2}B(v-v_\ast, \sigma)\left\{G_1(v_\ast')G_2(v')-G_1(v_\ast)G_2(v)\right\}
  dv_\ast d\sigma.
\end{equation*}
Here, $(v,v_\ast)$ and $(v',v_\ast')$ represent the pre-post collision velocities of the particles, and they satisfy
 \begin{align}%\label{pp-ve}
v'=\frac{v+v_\ast}{2}+\frac{|v-v_\ast|}{2}\si,\ v_\ast'=\frac{v+v_\ast}{2}-\frac{|v-v_\ast|}{2}\si.\notag
\end{align}
The collision kernel $B(v-v_\ast, \sigma)$ is a non-negative function of the relative velocity $|v-v_\ast|$ and the deviation angle $\theta$:
 \begin{equation*}
B(v-v_\ast, \sigma)=C_v|v-v_\ast|^{\gamma}b(\cos\theta),\qquad \gamma>-3,
\end{equation*}
where $C_v>0$, and $\cos \theta=[(v-v_\ast)\cdot \sigma]/|v-v_\ast|$. It's worth noting that through standard symmetrization techniques, we can assume that $B(v-v_\ast, \sigma)$ is supported on the set $0\leq \theta\leq \frac{\pi}{2}$ with $\cos\theta\geq0$.
%Throughout this article, ${\bf1}_A$ is the usual indicator function of the set $A$.
For the cutoff case, the function $b(\cos\theta)$ satisfies the following inequality:
$$
0\leq b(\cos\theta)\leq C |\cos \theta|, \qquad C>0.
$$
In contrast, for the non-cutoff case, $b(\cos\theta)$ obeys the following bounds:
\begin{equation*}
\frac{1}{C_b \theta^{1+2s}}\leq \sin\theta b(\cos\theta)\leq \frac{C_b}{ \theta^{1+2s}}, \qquad \forall s\in (0,1), \quad\theta\in\left(0,\frac{\pi}{2}\right],\quad C_b>1.
\end{equation*}

In this paper, our focus is specifically on the non-cutoff case, where $\gamma$ lies within the range:
\begin{equation*}
\max\Big\{-3, -\frac{3}{2}-2s\Big\}<\gamma<0.
\end{equation*}
\end{itemize}

The objective of this paper is to provide a rigorous justification for the global validity of the Hilbert expansion in both the VML and the non-cutoff VMB systems, as given in equation \eqref{main1}, within the entire space. This justification is carried out under the initial data conditions:
\begin{align}\label{VL-id}
F^\vps(0,x,v)=F_0^\vps(x,v),\  E^\vps(0,x)=E^\vps_0(x),\ B^\vps(0,x)=B^\vps_0(x).
\end{align}
%for the VMB system \eqref{main1}, respectively.
%%%%%%%%%%%%%%%%%%%%%%%%%%%%%%%%%%%%%%%%%%%%%
\subsection{Hilbert expansions}
%%%%%%%%%%%%%%%%%%%%%%%%%%%%%%%%%%%%%%%%%%%%%

In this subsection, motivated by Caflisch's groundbreaking work \cite{Caflisch-CPAM-1980}, we outline the standard Hilbert expansion procedures for the system \eqref{main1}.

%%%%%%%%%%%%%%%%%%%%%%%%%%%%%%%%%%%%%%%%%%%%%

%%%%%%%%%%%%%%%%%%%%%%%%%%%%%%%%%%%%%%%%%%%%%
\subsubsection{Hilbert expansion of the system \eqref{main1}}%\label{VML}
%%%%%%%%%%%%%%%%%%%%%%%%%%%%%%%%%%%%%%%%%%%%%
Let $k\geq 2$, by plugging the Hilbert expansion
\begin{align}\label{expan}
\begin{aligned}
&F^{\varepsilon}=\sum_{n=0}^{2k-1}\varepsilon^nF_n+\varepsilon^kF^{\varepsilon}_R,\quad
E^{\varepsilon}=E+\sum_{n=1}^{2k-1}\varepsilon^n E_n+\varepsilon^kE^{\varepsilon}_R,\quad
B^{\varepsilon}=B+ \sum_{n=1}^{2k-1}\varepsilon^n B_n+\varepsilon^kB^{\varepsilon}_R
\end{aligned}
\end{align}
into \eqref{main1},
we then deduce that the coefficients $[F_n(t, x, v),$ $E_n(t, x), B_n(t, x)]$ $(0\leq n\leq 2k-1)$ and  the remainder $[F_R^{\varepsilon}(t,x,v), E_R^{\varepsilon}(t,x),$ $B_R^{\varepsilon}(t,x)]$
satisfy
\begin{align}
\frac{1}{\varepsilon}:&\quad \mathcal{C}(F_0,F_0)=0,\nonumber
\end{align}
\begin{align}\label{expan2}
\varepsilon^0:\quad
\left\{\begin{array}{rll}&\partial_tF_0+v\cdot\nabla_x F_0-\Big(E+v \times B \Big)\cdot\nabla_vF_0=\mathcal{C}(F_1,F_0)+\mathcal{C}(F_0,F_1),\\
 & \partial_tE-  \nabla_x \times B =4\pi \dis{\int_{\mathbb R^3}} vF_0 dv,\\
 &\partial_tB+ \nabla_x \times E=0,\\
&\nabla_x\cdot E=4\pi \Big(1 -\dis{\int_{\mathbb R^3}}  F_0 dv\Big), \qquad \nabla_x\cdot B=0,
\end{array}\right.
\end{align}
$$\cdots\cdots\cdots\cdots\cdots$$
\begin{align}
\varepsilon^n:&\quad \partial_tF_n+v\cdot \nabla_xF_n-\Big(E_n+v \times B_n \Big)\cdot\nabla_pF_0-\Big(E+v \times B \Big)\cdot\nabla_vF_n\nonumber\\
&\qquad=\sum_{\substack{i+j=n+1\\i,j\geq0}}\mathcal{C}(F_i,F_j)+\sum_{\substack{i+j=n\\i,j\geq1}}\Big(E_i+v \times B_i \Big)\cdot\nabla_vF_j, \label{Fn}\\
&\quad\partial_tE_n-\nabla_x \times B_n=4\pi \int_{\mathbb R^3} v F_n dv, \nonumber\\
 &\quad \partial_t B_n+ \nabla_x \times E_n=0,\nonumber\\
&\quad \nabla_x\cdot E_n=-4\pi \int_{\mathbb R^3}  F_n dv, \qquad \nabla_x\cdot  B_n=0,\nonumber\\
&\hspace{4.6cm}\cdots\cdots\cdots\cdots\cdots\nonumber\\
\varepsilon^{2k-1}:& \quad\partial_tF_{2k-1}+v\cdot\nabla_x F_{2k-1}-\Big(E_{2k-1}+v \times B_{2k-1} \Big)\cdot\nabla_vF_0-\Big(E+v \times B \Big)\cdot\nabla_vF_{2k-1}\nonumber\\
& \quad=\sum_{\substack{i+j=2k\\i,j\geq1}}\mathcal{C}(F_i,F_j)+\sum_{\substack{i+j=2k-1\\i,j\geq1}}\Big(E_i+v \times B_i \Big)\cdot\nabla_vF_j, \nonumber\\
&\quad\partial_tE_{2k-1}-\nabla_x \times B_{2k-1}=4\pi \int_{\mathbb R^3} v F_{2k-1} dv, \nonumber\\
 &\quad\partial_t B_{2k-1}+ \nabla_x \times E_{2k-1}=0,\nonumber\\
&\quad \nabla_x\cdot E_{2k-1}=-4\pi \int_{\mathbb R^3}  F_{2k-1} dv, \qquad \nabla_x\cdot  B_{2k-1}=0,\nonumber
\end{align}
and
\begin{align}
&\partial_tF_R^{\varepsilon}+v\cdot\nabla_x F_R^{\varepsilon}-\Big(E_R^{\varepsilon}+v \times B_R^{\varepsilon} \Big)\cdot\nabla_vF_0\nonumber\\
 &\qquad-\Big(E+v \times B \Big)\cdot\nabla_vF_R^{\varepsilon}-\frac{1}{\varepsilon}[\mathcal{C}(F_R^{\varepsilon},F_0)+\mathcal{C}(F_0,F_R^{\varepsilon})]\nonumber\\
&\quad=\varepsilon^{k-1}\mathcal{C}(F_R^{\varepsilon},F_R^{\varepsilon})+\sum_{n=1}^{2k-1}\varepsilon^{n-1}[\mathcal{C}(F_i, F_R^{\varepsilon})+\mathcal{C}(F_R^{\varepsilon}, F_i)]+\varepsilon^k\Big(E_R^{\varepsilon}+v \times B_R^{\varepsilon}\Big)\cdot\nabla_vF_R^{\varepsilon}\nonumber\\
 &\qquad+\sum_{n=1}^{2k-1}\varepsilon^n\Big[\Big(E_n+v \times B_n \Big)\cdot\nabla_vF_R^{\varepsilon}+\Big(E_R^{\varepsilon}+v \times B_R^{\varepsilon} \Big)\cdot\nabla_vF_i\Big]+\varepsilon^{k}\CQ,
\nonumber\\
&\partial_tE_R^{\varepsilon}-\nabla_x \times B_R^{\varepsilon}=4\pi \int_{\mathbb R^3} v F_R^{\varepsilon} dv, \label{remain VML}\\
 &\partial_t B_R^{\varepsilon}+ \nabla_x \times E_R^{\varepsilon}=0,\nonumber\\
&\nabla_x\cdot E_R^{\varepsilon}=-4\pi \int_{\mathbb R^3}  F_R^{\varepsilon} dv, \qquad \nabla_x\cdot  B_R^{\varepsilon}=0.\nonumber
\end{align}
Here
\begin{align*}
\begin{aligned}
\CQ=\sum_{\substack{i+j\geq 2k+1\\2\leq i,j\leq2k-1}}\varepsilon^{i+j-2k-1}Q(F_i,F_j)
+\sum_{\substack{i+j\geq 2k\\1\leq i,j\leq2k-1}}\varepsilon^{i+j-2k}\Big(E_i+v \times B_i \Big)\cdot\nabla_vF_j.
\end{aligned}
\end{align*}

%%%%%%%%%%%%%%%%%%%%%%%%%%%%%%%%%%%%%%%%%%%%%
\subsubsection{Determination of the coefficients}%\label{VML}
%%%%%%%%%%%%%%%%%%%%%%%%%%%%%%%%%%%%%%%%%%%%%
Let's briefly outline how to determine the coefficients $[F_0(t, x, v), E(t, x), B(t, x)]$ and $[F_n(t, x, v), E_n(t, x), B_n(t, x)]$ for $1 \leq n \leq 2k-1$.

Starting with \eqref{expan2}$_1$, we can deduce that the zeroth-order coefficient is given by

\begin{equation*}%\label{maxwell}
	F_0(t,x,v)=\FM=\FM_{[\rho,u,T]}= \frac{\rho(t,x) }{(2\pi RT(t,x))^{3/2}} \exp\left(-\frac{|v-u(t,x)|^2}{2RT(t,x)}\right),
\end{equation*}
where $\rho(t,x), u(t,x)$, and $T(t,x)$ represent the fluid density, bulk velocity, and temperature, respectively. Furthermore, $[\rho, u, T, E, B]$ must satisfy the following compressible Euler-Maxwell system:
\begin{eqnarray}\label{EM}
\partial_t\rho + \nabla_x\cdot(\rho u) &=&0,\nonumber\\
\partial_t[\rho u] + \nabla_x\cdot\left(\rho u\otimes u\right)+\nablax (R\rho T)+\rho \big(E+u\times B\big)&=&0,\nonumber\\
 \partial_t\Big[\rho \Big(e+\frac12|u|^2\Big)\Big] +  \nabla_x\cdot\Big[\rho u\Big(e+\frac12|u|^2\Big)\Big]+\nabla_x\big(R\rho Tu\big)+\rho u\cdot E&=&0,\nonumber\\
 \partial_tE-  \nabla_x \times B &=&4\pi \rho u, \\
 \partial_tB+ \nabla_x \times E&=&0,\nonumber\\
\nabla_x\cdot E&=&4\pi \big(1 -\rho\big), \nonumber\\
\nabla_x\cdot B&=&0.\nonumber
\end{eqnarray}
Here the pressure $p$ and the internal energy $e$ satisfy the following constitutive relations take the form:
 $$p=R\rho T, \qquad e=\frac{3}{2}RT$$
 with $R>0$ being the gas constant. Without loss of generality, we can assume $R=\frac23$.

Given the initial datum
\begin{align}\label{em-id}
[\rho,u,T,E,B](0,x)=[\rho_{0},u_{0},T_{0},E_{0},B_{0}](x), \ T_0=\rho_0^{2/3},
\end{align}
it is shown in \cite[Theorem 1.3, p. 2359]{Ionescu-Pausader-JEMS-2014} that the Cauchy problem \eqref{EM} and \eqref{em-id} does admit a global smooth solution $[\rho(t,x), u(t,x), T(t,x), E(t,x), B(t,x)]$ in the whole space.
\begin{proposition}[Global solvability of the Euler-Maxwell system]\label{em-ex-lem}
Let $N_0>10^4$ be a sufficiently large integer and denote
$$
\left\|\left[\rho_0-1, u_0, T_0-1, E_0, B_0\right]\right\|_{H^{N_0+1}}+\left\|(\FI-\Delta)^{1/2}u_0\right\|_Z+\left\|(\FI-\Delta)^{1/2}E_0\right\|_Z=\epsilon_1,
$$
where the norm $\|\cdot\|_Z$ is defined by \cite[(2.18), p. 2371]{Ionescu-Pausader-JEMS-2014}.

Assume that
$$
\rho_0=-\na_x\cdot E_0,\ B_0=\na_x\times u_0,
$$
and $\epsilon_1>0$ is chosen suitably small, there exists a unique global smooth solution $[\rho(t,x), u(t,x), $ $T(t,x), E(t,x), B(t,x)]$ with $T(t,x)=\rho(t,x)^{2/3}$ to the Cauchy problem \eqref{EM} and \eqref{em-id} which satisfies
\begin{align}\label{em-decay}
&\sup_{t\in[0,\infty]}\left\|\left[\rho(t,x)-1, T(t,x)-1, u(t,x), E(t,x), B(t,x)\right]\right\|_{H^{N_0+1}}\\
&+\sup_{t\in[0,\infty]}\left[(1+t)^{p_0}\sup_{i\leq N_1}\left(\left\|\nabla_x^i[\rho(t,x)-1]\right\|_{\infty}
+\left\|\nabla_x^i[T(t,x)-1]\right\|_{\infty}\right)\right]\nonumber\\
&+\sup_{t\in[0,\infty]}\Big[(1+t)^{p_0}\sup_{i\leq N_1+1}
\left\|\left[\nabla_x^i u(t,x), \nabla_x^i E(t,x), \nabla_x^i B(t,x)\right]\right\|_{\infty}\Big]\leq C \epsilon_1,\nonumber
\end{align}
where $3\leq N_1\ll N_0$, $p_0>1$, $C$ is some positive constant independent of $\epsilon_1$.
\end{proposition}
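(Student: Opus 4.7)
The proposition is quoted verbatim from Ionescu--Pausader \cite{Ionescu-Pausader-JEMS-2014}, so the plan is to outline the dispersive strategy that underlies their proof. The result is a small-data global existence statement for a quasilinear system whose purely hyperbolic skeleton (the Euler sector) would develop shocks in isolation, but is stabilised by the dispersive effect of the Maxwell coupling.

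First I would linearise \eqref{EM} at the equilibrium $[\rho,u,T,E,B]=[1,0,1,0,0]$. Using the constraints $\nabla_x\cdot E=4\pi(1-\rho)$, $\nabla_x\cdot B=0$ and the isentropic relation $T=\rho^{2/3}$, the linearised system decouples into a longitudinal Langmuir mode with dispersion relation $\Lambda_{L}(\xi)=\sqrt{1+c_s^2|\xi|^2}$, exhibiting a mass gap at the plasma frequency, together with two transverse electromagnetic modes with $\Lambda_{T}(\xi)=\sqrt{1+|\xi|^2}$. Both are of Klein--Gordon type, so free solutions enjoy the pointwise decay $(1+t)^{-3/2}$ in three space dimensions, which is precisely the rate needed to close a small-data argument.

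Next I would set up a bootstrap on a quantity schematically of the form
\begin{equation*}
\mathcal{X}(t)=\|U(t)\|_{H^{N_0+1}}+(1+t)^{p_0}\sum_{|\alpha|\leq N_1+1}\|\nabla_x^{\alpha}U(t)\|_{L^{\infty}}+\|U(t)\|_{Z},
\end{equation*}
with $U=[\rho-1,u,T-1,E,B]$ and $\|\cdot\|_Z$ the Fourier-based norm of \cite[(2.18)]{Ionescu-Pausader-JEMS-2014}, designed to capture oscillatory behaviour at low and high frequencies while remaining compatible with bilinear estimates. The Sobolev component is propagated by a symmetric-hyperbolic-plus-Maxwell energy estimate, with commutators controlled by the decay norm. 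The pointwise decay is recovered by applying linear Klein--Gordon dispersive estimates to Duhamel's formula, using the $Z$-norm control on the profile $V(t):=e^{-it\Lambda}U(t)$.

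The hard part, and the real heart of the argument, is the analysis of quadratic resonances between the three modes. For each sign pattern $\iota=(\iota_1,\iota_2,\iota_3)\in\{\pm\}^3$ and each mode assignment one has to analyse the phase $\Phi_{\iota}(\xi,\eta)=\Lambda_{\iota_1}(\xi)-\Lambda_{\iota_2}(\xi-\eta)-\Lambda_{\iota_3}(\eta)$ together with its space-time resonant set. A normal form transformation eliminates the non-time-resonant quadratic contributions, while the space-resonant ones are treated by integration by parts along the gradient of $\Phi$ in a direction transverse to the resonant variety, exploiting a null structure of the projected quadratic symbols that ultimately comes from the divergence/curl form of \eqref{EM}. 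The compatibility conditions $\rho_0=-\nabla_x\cdot E_0$ and $B_0=\nabla_x\times u_0$ propagate under the flow so that the constraint equations remain valid for all time, and the ansatz $T_0=\rho_0^{2/3}$ is preserved because under it the temperature equation reduces to a consequence of the continuity equation, so the three-mode reduced system actually governs the full evolution and the bootstrap closes for $\epsilon_1$ sufficiently small.
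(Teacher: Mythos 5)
Your proposal is correct and is essentially the same as what the paper does: the paper does not prove this proposition itself but imports it directly from \cite[Theorem 1.3]{Ionescu-Pausader-JEMS-2014}, and your sketch (Klein--Gordon-type dispersion of the Langmuir and transverse modes, energy plus $Z$-norm bootstrap, normal forms and space-time resonance analysis, propagation of the constraints and of $T=\rho^{2/3}$) is an accurate summary of the strategy of that cited work.
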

Once  $[\rho(t,x), u(t,x), $ $T(t,x), E(t,x), B(t,x)]$ is constructed as in Proposition \ref{em-ex-lem}, we can then determine $[F_1(t,x,v),$ $E_1(t, x), B_1(t, x)]$ as follows. Firstly, by setting $F_1=\FM^{\frac{1}{2}}f_1$ and using the definition \eqref{fn-mac-def}, we write
	\begin{align*}
	{\bf P}_{\mathbf{M}}[f_1]=&\frac{\rho_1}{\sqrt{\rho}} \chi_0+\sum\limits_{i=1}^3\frac{1}{\sqrt{R\rho T}}u_1^i\chi_i+\frac{T_1}{\sqrt{6\rho}}\chi_4
\end{align*}
and
\begin{align}
F_1=&\Big[\frac{\rho_1}{\rho} \FM+\sum\limits_{i=1}^3\frac{v^i-u^i}{R\rho T}u_1^i\FM+\frac{T_1}{6\rho}\Big(\frac{|v-u|^2}{RT}-3\Big)\FM\Big]+\FM^{\frac{1}{2}}({\bf I-P}_{\mathbf{M}})[f_1].\label{F1}
	\end{align}
Then it is straightforward to see that the microscopic part of $f_{1}=\FM^{-\frac{1}{2}}F_{1}$ can be determined by
\begin{align*}
({\bf I-P}_{\mathbf{M}})[f_{1}]=&\mathcal{L}_{\FM}^{-1}\Big[-\FM^{-\frac{1}{2}}\Big(\partial_tF_{0}+v\cdot \nabla_xF_{0}-\Big(E+v \times B \Big)\cdot\nabla_vF_0\Big], \nonumber
\end{align*}
according to $\eqref{expan2}_1$, where the linearized collision operator $\mathcal{L}_{\FM}$ is defined in \eqref{lm-def}.

To further derive the macroscopic part of $F_{1}$, which is given in \eqref{F1}, by taking velocity moments $1, v, |v|^2$ of \eqref{Fn} with $n=1$, we get
\begin{align}\label{examp1}
		&\partial_t\rho_1+\na_x\cdot(\rho_1 u+u_1)=0,\notag\\
&\partial_tu_1+(u\cdot\nabla_x)u_1+(u_1\cdot\nabla_x)u+(\nabla_x\cdot u)u_1+\frac{1}{3}\nabla_x(3RT\rho_1+RTT_1)-\frac{R\rho_1}{\rho}\nabla_x(\rho T)\notag\\
&+\rho\big[E_1+\big(u\times B_1\big)\big]+u_1\times B=-\nabla_x\cdot\int_{\mathbb R^3}\Big[(v-u)\otimes(v-u)-\frac{|v-u|^2}{3}\FI\Big] F_1 dv,\notag\\
&RT\big[\partial_tT_1+u\cdot \nabla_x T_1+(\nabla_x\cdot u) T_1\big]+2RT\nabla_x\cdot u_1+3R\nabla_xT\cdot u_1\notag\\
&-\frac{2RT}{\rho}\nabla_x\rho\cdot u_n-2\big(u\times B\big)\cdot u_n\\
=&2u^i\partial_{x_j}\Big[\int_{\mathbb R^3}\Big[(v^i-u^i)(v^j-u^j)-\delta_{ij}\frac{|v-u|^2}{3}\Big] F_1 dv\Big]\notag\\
&-\nabla_x\cdot\int_{\mathbb R^3}(v-u)\big[|v-u|^2-5RT\big] F_1 dv\notag\\
&-2\partial_{x_j}\Big[u^i\int_{\mathbb R^3}\Big[(v^i-u^i)(v^j-u^j)-\delta_{ij}\frac{|v-u|^2}{3}\Big] F_1 dv\Big],\notag\\
&\pa_t E_1-\na_x\times B_1=4\pi\big(u_1+\rho_1 u\big),\notag\\
			&\pa_t B_1+\na_x\times E_1=0,\notag\\
			&\na_x\cdot E_1=-\rho_1,\ \na_x\cdot B_1=0,\notag
			\end{align}
where $\FI$ stands for the unit $3\times 3$ matrix. Fortunately, \eqref{examp1} is a linear symmetrizable hyperbolic system with respect to the variables
$[\rho_1,u_1,T_1,E_1,B_1]$ and its Cauchy problem  can be solved appropriately, cf. \cite{majda-book}.

We can then proceed with the above procedure step by step and ultimately determine all the coefficients $[F_n(t, x, v), E_n(t, x), B_n(t, x)]$ under suitable initial data $[\rho_n, u_n, T_n, E_n, B_n](0, x)$ for $1 \leq n \leq 2k-1$. Therefore, our main objective, in order to verify \eqref{expan}, is to solve the remainders $[F_R^{\varepsilon}, E_R^{\varepsilon}, B_R^{\varepsilon}]$.

%%%%%%%%%%%%%%%%%%%%%%%%%%%%%%%%%%%%%%%%%%%%%
\subsection{The remainders}
%%%%%%%%%%%%%%%%%%%%%%%%%%%%%%%%%%%%%%%%%%%%%
In this subsection, our aim is to explore the procedure for solving the remainder equations \eqref{remain VML}.

In order to solve $F_R^{\varepsilon}$, it is standard to introduce $f^{\varepsilon}$ as
\begin{equation*}%\label{f-def-local}
  F_R^{\varepsilon}=\FM^{\frac{1}{2}} f^{\varepsilon}.
\end{equation*}
Then we can rewrite \eqref{remain VML} as
\begin{align}
\partial_tf^{\varepsilon}&+v\cdot\nabla_xf^{\varepsilon}+\frac{\big(E_R^{\varepsilon}+v \times B_R^{\varepsilon} \big) }{ RT}\cdot \big(v-u\big)\mathbf{M}^{\frac{1}{2}}\nonumber\\
&+\Big(E+v \times B \Big)\cdot\frac{v-u }{ 2RT}f^{\varepsilon}-\Big(E+v \times B \Big)\cdot\nabla_vf^{\varepsilon}+\frac{\mathcal{L}_{\mathbf{M}}[f^{\varepsilon}]}{\varepsilon}\nonumber\\
=&-\mathbf{M}^{-\frac{1}{2}}f^{\varepsilon}\Big[\partial_t+v\cdot\nabla_x-\Big(E+v \times B \Big)\cdot\nabla_v\Big]\mathbf{M}^{\frac{1}{2}}+\varepsilon^{k-1}\Gamma_{\mathbf{M}}(f^{\varepsilon},f^{\varepsilon})\nonumber\\
 &+\sum_{n=1}^{2k-1}\varepsilon^{n-1} \Big[\Gamma_{\mathbf{M}}(\mathbf{M}^{-\frac{1}{2}}F_n, f^{\varepsilon})+\Gamma_{\mathbf{M}}(f^{\varepsilon}, \mathbf{M}^{-\frac{1}{2}}F_n)\Big]+\varepsilon^k \Big(E_R^{\varepsilon}+v \times B_R^{\varepsilon}\Big)\cdot\nabla_vf^{\varepsilon}\nonumber\\
 &-\varepsilon^k \Big(E_R^{\varepsilon}+v \times B_R^{\varepsilon}\Big) \cdot\frac{v-u }{ 2RT}f^{\varepsilon}\label{VMLf}\\
 &+\sum_{n=1}^{2k-1}\varepsilon^n\Big[\Big(E_n+v \times B_n \Big)\cdot\nabla_vf^{\varepsilon}+\Big(E_R^{\varepsilon}+v \times B_R^{\varepsilon} \Big)\cdot\mathbf{M}^{-\frac{1}{2}}\nabla_v F_n\Big]\nonumber\\
 &-\sum_{n=1}^{2k-1}\varepsilon^n\Big[\Big(E_n+v \times B_n \Big)\cdot\frac{\big(v-u\big)}{ 2RT}f^{\varepsilon}\Big]+\varepsilon^{k}\CQ_0,
\nonumber
\end{align}
and
\begin{align}\label{fM-2}
\begin{aligned}
&\partial_tE_R^{\varepsilon}-\nabla_x \times B_R^{\varepsilon}=4\pi\int_{\mathbb R^3} v\mathbf{M}^{\frac{1}{2}}f^{\varepsilon} dv, \\
 &\partial_t B_R^{\varepsilon}+ \nabla_x \times E_R^{\varepsilon}=0,\\
& \nabla_x\cdot E_R^{\varepsilon}=-\int_{\mathbb R^3}  \mathbf{M}^{\frac{1}{2}} f^{\varepsilon} dv, \qquad \nabla_x\cdot  B_R^{\varepsilon}=0,
\end{aligned}
\end{align}
where $\CQ_0=\mathbf{M}^{-\frac{1}{2}}\CQ$, $\mathcal{L}_{\mathbf{M}}[f]$ is the linearized Landau/Boltzmann collision operator defined by
\begin{align}\label{lm-def}
-\mathcal{L}_{\mathbf{M}}[f]=&\mathbf{M}^{-\frac{1}{2}} \big[\mathcal{C}( \mathbf{M} , \mathbf{M}^{\frac{1}{2}} f)+\mathcal{C}( \mathbf{M}^{\frac{1}{2}} f, \mathbf{M}\big],
\end{align}
and
$\Gamma_{\mathbf{M}}(f,g)$ is the nonlinear Landau/Boltzmann collision operator given by
\begin{align}\label{gm-def}
\Gamma_{\mathbf{M}}(f,g)=&\mathbf{M}^{-\frac{1}{2}} \mathcal{C}( \mathbf{M}^{\frac{1}{2}} f, \mathbf{M}^{\frac{1}{2}} g).
\end{align}
%Moreover, collision frequency $\nu_{\mathbf{M}}(v)$  and the operator $\mathcal{K}_{\mathbf{M}}=\mathcal{K}_{2,\mathbf{M}}
%-\mathcal{K}_{1,\mathbf{M}}$ are defined as
%\begin{align}
%\nu_{\mathbf{M}}(v)=&\int_{\mathbb{R}^3\times \mathbb{S}^2}|u-v|^{\gamma}q_0(\theta)\mathbf{M}(u) du d\omega ),\label{freqM}\\
% \mathcal{K}_{1,\mathbf{M}}[f]=&\int_{\mathbb{R}^3\times \mathbb{S}^2}|u-v|^{\gamma}q_0(\theta)\sqrt{\mathbf{M}}(u)
% \sqrt{\mathbf{M}}(v)f(v) du d\omega ,\label{K1M}\\
%  \mathcal{K}_{2,\mathbf{M}}[f]=&\int_{\mathbb{R}^3\times \mathbb{S}^2}|u-v|^{\gamma}q_0(\theta)\sqrt{\mathbf{M}}(u)
% \sqrt{\mathbf{M}}(u')f(v') du d\omega\label{K2M} \\
% &+\int_{\mathbb{R}^3\times \mathbb{S}^2}|u-v|^{\gamma}q_0(\theta)\sqrt{\mathbf{M}}(u)
% \sqrt{\mathbf{M}}(v')f(u') du d\omega.\nonumber
%\end{align}

On the other hand, if we introduce the global Maxwellian  $\mu$
 \begin{equation}\label{Global-Maxwellian}
\mu=  \frac{1 }{(2\pi RT_c)^{3/2}
} \exp\left(-\frac{|v|^2}{ 2RT_c}\right)
\end{equation}
with the constant temperature $T_c$ satisfying $\frac{1}{2}\sup_{t,x}T(t,x)<T_c<\inf_{t,x}T(t,x)$ and set
\begin{equation*}%\label{h-def-local}
F_R^{\varepsilon}=\mu^{\frac{1}{2}} h^{\varepsilon},
\end{equation*}
then it is straightforward to see that $h^{\varepsilon}$ satisfies the following equation
\begin{align}\label{h-equation}
\partial_th^{\varepsilon}&+v\cdot\nabla_xh^{\varepsilon}+\frac{\big(E_R^{\varepsilon}+v \times B_R^{\varepsilon} \big) }{ RT}\cdot \big(v-u\big)\mu^{-\frac{1}{2}}\mathbf{M}\nonumber\\
&+\frac{E\cdot v }{ 2RT_c}h^{\varepsilon}-\Big(E+v \times B \Big)\cdot\nabla_vh^{\varepsilon}+\frac{\mathcal{L}[h^{\varepsilon}]}{\varepsilon}\nonumber\\
 =&-\frac{\mathcal{L}_d[h^{\varepsilon}]}{\varepsilon}+\varepsilon^{k-1}\Gamma(h^{\varepsilon},h^{\varepsilon})+\sum_{n=1}^{2k-1}\varepsilon^{n-1}[\Gamma(\mu^{-\frac{1}{2}}F_n, h^{\varepsilon})+\Gamma(h^{\varepsilon}, \mu^{-\frac{1}{2}}F_n)]\nonumber\\
 &+\varepsilon^k \Big(E_R^{\varepsilon}+v \times B_R^{\varepsilon}\Big)\cdot\nabla_vh^{\varepsilon}
 -\varepsilon^k \frac{E_R^{\varepsilon}\cdot v}{ 2RT_c}h^{\varepsilon}\notag\nonumber\\
 &+\sum_{n=1}^{2k-1}\varepsilon^n\Big[\Big(E_n+v \times B_n \Big)\cdot\nabla_vh^{\varepsilon}+\Big(E_R^{\varepsilon}+v \times B_R^{\varepsilon} \Big)\cdot\mu^{-\frac{1}{2}}\nabla_v F_n\Big]\nonumber\\
& -\sum_{n=1}^{2k-1}\varepsilon^n\frac{E_n\cdot v}{ 2RT_c}h^{\varepsilon}+\varepsilon^{k}\CQ_1.
\end{align}
Here $\CQ_1=\mu^{-\frac{1}{2}}\CQ$, $\mathcal{L}[f]$,
and $\Gamma(f,g)$ are defined as \eqref{lm-def} and \eqref{gm-def} respectively, with $\mathbf{M}$ replaced by $\mu$.
In addition, $\mathcal{L}_d [f]$ is a linear collision operator defined as
\begin{equation}\label{dL}
-\mathcal{L}_d [f]=\mu^{-\frac{1}{2}}\left[\mathcal{C}( \mathbf{M}- \mu,\mu^{\frac{1}{2}}f  )+\mathcal{C}( \mu^{\frac{1}{2}} f, \mathbf{M}- \mu )\right].
\end{equation}

%%%%%%%%%%%%%%%%%%%%%%%%%%%%%%%%%%%%%%%%%%%%%%
%\subsubsection{Equations of $f^\vps$ and $h^\vps$}%\label{VML}
%%%%%%%%%%%%%%%%%%%%%%%%%%%%%%%%%%%%%%%%%%%%%%

The primary objective of this paper is to establish specific estimates for both $f^\vps$ and $h^\vps$, ensuring the global applicability of the Hilbert expansion for both the VML and the non-cutoff VMB systems \eqref{main1} in the whole space.

\subsection{Notations}
%%%%%%%%%%%%%%%%%%%%%%%%%%%%%%%%%%%%%%%%%%%%%%%%%%%%%%%%%%%%%%%%%%%%%%%%%%%%%%%%%%

Throughout this paper, $C$ denotes a generic positive constant which is independent of the Knudsen number $\varepsilon$ but may change line by line. The notation $a \lesssim b$ means that there exists a generic positive constant $C$ independent of the Knudsen number $\varepsilon$ such that $a \leq Cb$ and $a \approx b$ means that both $a \lesssim b$ and $b \lesssim a$ hold.

The multi-indices $ \alpha= [\alpha_1,\alpha_2, \alpha_3]$ and $\beta = [\beta_1, \beta_2, \beta_3]$ will be used to record spatial and velocity derivatives, respectively. And $\partial^{\alpha}_{\beta}=\partial^{\alpha_1}_{x_1}\partial^{\alpha_2}_{x_2}\partial^{\alpha_3}_{x_3} \partial^{\beta_1}_{ v_1}\partial^{\beta_2}_{ v_2}\partial^{\beta_3}_{ v_3}$. Similarly, the notation $\partial^{\alpha}$ will be used when $\beta=0$ and likewise for $\partial_{\beta}$. The length of $\alpha$ is denoted by $|\alpha|=\alpha_1 +\alpha_2 +\alpha_3$. $\alpha'\leq  \alpha$ means that no component of $\alpha'$ is greater than the corresponding component of $\alpha$, and $\alpha'<\alpha$ means that $\alpha'\leq  \alpha$ and $|\alpha'|<|\alpha|$. And it is convenient to write $\nabla_x^k=\partial^{\alpha}$ with $|\alpha|=k$.

$(\cdot, \cdot)$  is used to denote the $L^2$ inner product in ${\mathbb{ R}}^3_{ v}$, with the norm $|\cdot|_{L^2}$. For notational simplicity, $\langle\cdot,\cdot\rangle$  denotes the ${L^2}$ inner product either in ${\mathbb{ R}}^3_{x}\times{\mathbb{ R}}^3_{ v }$ or in ${\mathbb{ R}}^3_{x}$ with the norm $\|\cdot\|$. For each non-negative integer $k$ and $1\leq p\leq +\infty$, we also use $W^{k,p}$ to denote the standard Sobolev spaces for $(x, v)\in {\mathbb R}^3 \times {\mathbb R}^3$ or $x \in {\mathbb R}^3$, and denote $H^s=W^{s,2}$ with
$\|f\|^2_{H^s}:=\sum_{|\alpha|+|\beta|=0}^s\|\partial^{\alpha}_{\beta}f\|^2.$ Recall that $\nabla_x=(\partial_{x_1}, \partial_{x_2}, \partial_{x_3})$, $\nabla_v=(\partial_{v_1}, \partial_{v_2}, \partial_{v_3})$.

We now introduce some norms for the VML system and the non-cutoff VMB system case by case.
%%%%%%%%%%%%%%%%%%%%%%%%%%%%%%%%%%%%%%%%%%%%%%%%%%%%%%%%%%%%%%%%%%%%%%%%%%%%%%%%%%
\subsubsection{Norms for the VML system}
%%%%%%%%%%%%%%%%%%%%%%%%%%%%%%%%%%%%%%%%%%%%%%%%%%%%%%%%%%%%%%%%%%%%%%%%%%%%%%%%%%

Let
$$
\sigma^{ij}(v)=\int_{{\R}^{3}}\phi^{ij}(v-v')\mu(v')dv'
$$
with $\phi^{ij}(v)$ defined by \eqref{cker}, we now introduce the dissipation norm in $v\in {\mathbb R}^3$ by
\begin{eqnarray}\label{D-norm}
    |f(t,x)|_{\bf D}^2&:=&\int_{{\mathbb R}^3}\sigma^{ij}(v)\partial_{v_i} f(t,x,v)\partial_{v_j}f(t,x,v)\, dv\nonumber\\
    &&+\frac{1}{4R^2T^{2}_c}\int_{{\mathbb R}^3} \sigma^{ij}(v)v_iv_j|f(t,x,v)|^2\, dv,
\end{eqnarray}
the corresponding dissipation norm in $(x, v)\in {\mathbb R}^3\times {\mathbb R}^3$ by
\begin{eqnarray*}
    \|f(t)\|_{\bf D}^2&:=&\iint_{{\mathbb R}^3\times{\mathbb R}^3}\sigma^{ij}(v)\partial_{v_i} f(t,x,v)\partial_{v_j}f(t,x,v)\, dv dx\\
    &&+\frac{1}{4R^2T^{2}_c}\iint_{{\mathbb R}^3\times{\mathbb R}^3}\sigma^{ij}(v)v_iv_j|f(t,x,v)|^2\, dv dx,
\end{eqnarray*}
and the Sobolev dissipation norm
\begin{align*}
     \|f(t)\|^2_{H^m_{\bf D}}:=\sum_{|\alpha|=0}^m\left\|\partial^{\alpha}f(t)\right\|^2_{\bf D},\qquad m\leq2.
\end{align*}
Here and in the sequel, Einstein's summation convention is used.
%where $\partial^{\alpha}=\partial^{\alpha_1}_{x_1}\partial^{\alpha_2}_{x_2}\partial^{\alpha_3}_{x_3}$ with $\alpha=(\alpha_1,\alpha_2,\alpha_3)$ and $|\alpha|=\alpha_1+\alpha_2+\alpha_3$.

For constant $\ell\geq m$, we set $\lag v\rag=\sqrt{1+|v|^2}$ and introduce the following time-velocity dependent weight functions
\begin{align}\label{tt 01}
    w_{i}(t,v):=\lag v\rag^{\ell-i}\exp\left(\frac{\lag v\rag^2}{8RT_c\ln(\mathrm{e}+t)}\right),\quad  0\leq i\leq m
\end{align}
and then define the weighted norms
\begin{align*}%\label{wM}
    \|wf\|^2_{H^m}:=\sum_{i=0}^m\left\|w_{i}\nabla_x^i f\right\|^2 ,\qquad
    \|wf\|^2_{H^m_{\bf D}}:=\sum_{i=0}^m\left\|w_{i}\nabla_x^i f\right\|^2_{\bf D}.
\end{align*}

%%%%%%%%%%%%%%%%%%%%%%%%%%%%%%%%%%%%%%%%%%%%%%%%%%%%%%%%%%%%%%%%%%%%%%%%%%%%%%%%%%
\subsubsection{Norms for the non-cutoff VMB system}
%%%%%%%%%%%%%%%%%%%%%%%%%%%%%%%%%%%%%%%%%%%%%%%%%%%%%%%%%%%%%%%%%%%%%%%%%%%%%%%%%%

For improved clarity, we will introduce separate notations for the non-cutoff VMB system.

As described in \cite{AMUXY-JFA-2012, Duan-Liu-Yang-Zhao-2013-KRM}, we introduce the following norms for the non-cutoff VMB system:
 \begin{equation}%\label{NBnorm}
\begin{split}
\left |f \right |^2_{\bf D} :=&\iint_{\R^6\times \mathbb{S}^2} B(v  -v _{\ast},\sigma){\bf M}_*
(f '-f)^2 d v _{\ast}dv d\sigma \\
&+ \iint_{\R^6 \times \mathbb{S}^2} B(v  -v _{\ast},\sigma)f_*^2  \Big(\sqrt{{\bf M}_*^{\prime}} - \sqrt{{\bf
M}_*} \Big)^2 dv dv _{\ast}d\sigma, \\
\| f\|_{\bf D}^2 :=&\left\||f|_{\bf D}\right\|^2_{L^2_{x}},
\end{split}\notag
\end{equation}
 For $\ell\in{\mathbb R}$, $L^2_{\ell}$ represents the weighted space with the norm
$$|f|^2_{L^2_{\ell}}=\displaystyle\int_{{\mathbb R}^3} \langle v \rangle^{\ell}
|f(v )|^2dv, \quad \|f\|_{L^2_{\ell}}=\left\||f|_{L^2_{\ell}}\right\|_{L^2_x}.$$
The weighted fractional Sobolev norm is given by
$$
\left|\langle v \rangle^{\ell}f\right|^2_{H^s_{\gamma}}=\left|\langle v \rangle^{\ell}u\right|^2_{L^2_{\gamma}}
+\displaystyle\int_{{\mathbb R}^3} \int_{{\mathbb R}^3}
\frac{\left|\langle v \rangle^{\ell}f(v )-
\langle v '\rangle^{\ell}f(v ')\right|^2}{|v -v '|^{3+2s}}
\chi_{| v- v '|\leq1}d v 'd v,$$
which is equivalent to
$$
\left|\langle v \rangle^{\ell}f\right|^2_{H^s_{\gamma}}=\displaystyle\int_{{\mathbb R}^3} \langle v \rangle^{\gamma}\left|\left(1-\triangle_{v }\right)^{\frac{s}{2}}\left(\langle v \rangle^{\ell}(v )f(v )\right)\right|^2dv.
$$
We introduce the following time-velocity dependent weight function:
\begin{align*}
    \overline{w}(t,v):=\exp\left(\frac{\lag v\rag}{8RT_c\ln(\mathrm{e}+t)}\right),
\end{align*}
and define the corresponding weighted norms:
\begin{align*}%\label{wM}
    \|\overline{w}h\|^2_{H^m}:=\sum_{|\alpha|+|\beta|=0}^m\left\|\overline{w}\pa^\alpha _\beta  h\right\|^2 ,\qquad
    \|\overline{w}h\|^2_{H^m_{\bf D}}:=\sum_{|\alpha|+|\beta|=0}^m\left\|\overline{w}\pa^\alpha _\beta h\right\|^2_{\bf D}.
\end{align*}
%\\
%{\it{ (ii) Cutoff VMB system:}}
%We define
%\begin{equation}\label{B-norm}
%\begin{split}
%	\left |f \right |^2_{\bf D} :=&\left\|\sqrt{\nu}f\right\|^2_{L^2(\mathbb{R}^3_v)},\quad \nu(v)\sim(1+|v|)^{\gamma}, \\
%\| f\|_{\bf D}^2 :=&\left\||f|_{\bf D}\right\|^2_{L^2_{x}},
%\end{split}
%\end{equation}
%and introduce the following velocity weight function
%\begin{align*}
%    w(t,v):=
%    \exp\Big(\frac{\langle v\rangle^2}{8RT_c\ln(\mathrm{e}+t)}\Big),
%   \end{align*}
%then define the  weighted norms
%\begin{align*}%\label{wM}
%    \|wh\|^2_{H^m}:=\sum_{|\alpha|+|\beta|=0}^m\left\|w\pa^\alpha _\beta  h\right\|^2 ,\qquad
%    \|wh\|^2_{H^m_{\bf D}}:=\sum_{|\alpha|+|\beta|=0}^m\left\|w\pa^\alpha _\beta  h\right\|^2_{\bf D}.
%\end{align*}

For simplicity, let's also denote
\begin{align*}
    X(t):=\exp\left(\frac{1}{8RT_c\ln(\mathrm{e}+t)}\right),
\end{align*}
it is straightforward to see that
\begin{align*}
    Y(t):=-\frac{X'(t)}{X(t)}=\frac{1}{8RT_c(\mathrm{e}+t)\big(\ln(\mathrm{e}+t)\big)^2}>0.
\end{align*}

Next, it is straightforward to determine the null space of the linearized Landau/Boltzmann operator $\mathcal{L}_{\mathbf{M}}$,
\[\mathcal {N}_\FM=\mbox{span}\left\{\mathbf{M}^{\frac{1}{2}}, (v_i-u_i)\mathbf{M}^{\frac{1}{2}}(1\leq i\leq3),\left(\frac{|v-u|^2}{RT}-3\right)\mathbf{M}^{\frac{1}{2}}\right\}.
\]
Similarly, for the linearized operator $\mathcal{L}$,
\[\mathcal {N}=\mbox{span}\left\{\mu^{\frac{1}{2}}, v_i\mu^{\frac{1}{2}}(1\leq i\leq3),(|v|^2-3)\mu^{\frac{1}{2}}\right\}.
\]
Now, let's define ${\bf P}_{\mathbf{M}}$ as the orthogonal projection from $L^2_v$ onto $\mathcal {N}_{\bf M}$ and ${\bf P}$ as the orthogonal projection from $L^2_v$ onto $\mathcal {N}$. Additionally, we will use $\chi_j$ $(0\leq j\leq4)$ to denote the orthonormal basis of the null space of $\CL_\FM$, given by
\begin{align*}
\chi_0= \frac{1}{\sqrt{\rho}}\sqrt{\FM},\quad
\chi_{i}=\frac{v_i-u_i}{{\sqrt{R\rho T}}}
\sqrt{\FM},\ (i=1,2,3),\quad
\chi_{4}=\frac{1}{\sqrt{6\rho}}\left(\frac{|v-u|^2}
{RT}-3\right)\sqrt{\FM}.
\end{align*}
Using these orthonormal basis functions, we can represent ${\bf P}_{\mathbf{M}}[f_n] (1\leq n\leq 2k-1)$ as
\begin{align}\label{fn-mac-def}
{\bf P}_{\mathbf{M}}[f_n]=\frac{\rho_n}{\sqrt{\rho}} \chi_0+\sum\limits_{i=1}^3\frac{1}{\sqrt{R\rho T}}u_n^i\chi_i+\frac{T_n}{\sqrt{6\rho}}\chi_4.
\end{align}
Here, $[\rho_n,u_n,T_n]$ are the hydrodynamic fields representing the density, velocity, and temperature fluctuations of $f_n$, respectively.

From Proposition \ref{em-ex-lem}, we can easily deduce that
\begin{corollary}\label{corollary-1}
For any given generic positive constant $C_0\geq 1$, if we assume further that $\eps_1$ is sufficiently small, then one can always find a positive constant $T_c$ such that
 \begin{align}
 %&\sup_{t\in [0, t_1]}\big[\|\rho(t)-1\|_{H^{N_0}}+\|u(t)\|_{H^{N_0}}+\|T(t)-\bar{T}\|_{H^{N_0}}\big]\leq \epsilon_1, \label{tt}\\
\frac{ \epsilon_1}{C_0}\leq  T(t,x)-T_c\leq C_0\epsilon_1 \label{tt1}
 \end{align}
holds for all $(t,x)\in[0,\infty)\times\R^3$.
\end{corollary}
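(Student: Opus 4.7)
\textbf{Proof plan for Corollary \ref{corollary-1}.}
The strategy is to upgrade the Sobolev bound in Proposition \ref{em-ex-lem} to a uniform pointwise bound on $T(t,x)-1$, and then to read off a suitable constant $T_c$ by an elementary algebraic adjustment. From the $H^{N_0+1}$-part of \eqref{em-decay} we have
$$
\sup_{t\ge 0}\bigl\|T(t,\cdot)-1\bigr\|_{H^{N_0+1}(\R^3)}\le C\epsilon_1,
$$
and since $N_0>10^4$ is comfortably larger than $3/2$, the Sobolev embedding $H^{N_0+1}(\R^3)\hookrightarrow L^\infty(\R^3)$ lifts this to
$$
\sup_{t\ge 0,\,x\in\R^3}\bigl|T(t,x)-1\bigr|\le C_\ast\epsilon_1,
$$
with a fixed constant $C_\ast>0$ depending only on $N_0$ (in particular independent of $\epsilon_1$). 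As a consequence, $\inf_{t,x}T(t,x)\ge 1-C_\ast\epsilon_1$ and $\sup_{t,x}T(t,x)\le 1+C_\ast\epsilon_1$.

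With this uniform control in hand, I would try the ansatz $T_c=1-M\epsilon_1$ with $M>0$ to be chosen. This produces
$$
T(t,x)-T_c=M\epsilon_1+\bigl(T(t,x)-1\bigr)\in\bigl[(M-C_\ast)\epsilon_1,\,(M+C_\ast)\epsilon_1\bigr]
$$
for every $(t,x)\in[0,\infty)\times\R^3$. The required sandwich $\epsilon_1/C_0\le T(t,x)-T_c\le C_0\epsilon_1$ then reduces to the two algebraic constraints $M\ge C_\ast+1/C_0$ and $M\le C_0-C_\ast$, which are simultaneously solvable precisely when
$$
C_0-\tfrac{1}{C_0}\ge 2C_\ast.
$$
This condition is satisfied as soon as the generic constant $C_0$ is chosen large enough relative to the fixed embedding constant $C_\ast$ (and this is an $\epsilon_1$-independent requirement). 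For any such $C_0$, any $M$ in the nonempty interval $[C_\ast+1/C_0,\,C_0-C_\ast]$ — e.g.\ $M=C_0/2$ once $C_0\ge 4C_\ast$ — delivers a $T_c$ fulfilling the stated bounds. The smallness of $\epsilon_1$ enters only to guarantee $T_c>0$ and, where needed downstream, the compatibility condition $\tfrac12\sup_{t,x}T(t,x)<T_c<\inf_{t,x}T(t,x)$ built into the definition \eqref{Global-Maxwellian} of the global Maxwellian $\mu$.

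There is essentially no analytic obstacle: the corollary is a direct consequence of the uniform-in-time global existence for the Euler-Maxwell system. The only point that deserves a careful reading is the interplay between $C_0$ and the Sobolev embedding constant $C_\ast$: the phrase ``for any given generic positive constant $C_0\ge 1$'' is to be understood in the generic-constant convention of the paper, i.e.\ allowing $C_0$ to be enlarged once and for all beyond the threshold $C_\ast+\sqrt{C_\ast^2+1}$, since $\epsilon_1$-smallness alone cannot reconcile the two sandwich inequalities when $C_0$ is close to $1$.
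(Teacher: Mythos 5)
Your argument is correct and is essentially the paper's own (the corollary is stated there without proof as an immediate consequence of Proposition \ref{em-ex-lem}): one uses the uniform smallness of $T-1$ and then chooses $T_c$ slightly below $\inf_{t,x}T$, exactly as you do; note only that the $L^\infty$ bound $\sup_{t,x}|T(t,x)-1|\lesssim\epsilon_1$ is already contained in the second line of \eqref{em-decay} (the case $i=0$), so the Sobolev embedding step is not even needed. Your closing caveat — that the sandwich \eqref{tt1} forces $C_0$ to exceed a fixed threshold determined by the constant $C$ in \eqref{em-decay}, with $\epsilon_1$-smallness only ensuring $T_c>0$ and $\tfrac12\sup T<T_c<\inf T$ — is the right reading of the ``generic constant'' phrasing and is consistent with how the corollary is invoked (Theorem \ref{resultVML} only requires the existence of some $C_0>1$).
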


%%%%%%%%%%%%%%%%%%%%%%%%%%%%%%%%%%%%%%%%%%%%%%%%%%%%%%%%%%%%%%%%%%%%%%%%%%%%%%%%%%
\subsection{Main results}
%%%%%%%%%%%%%%%%%%%%%%%%%%%%%%%%%%%%%%%%%%%%%%%%%%%%%%%%%%%%%%%%%%%%%%%%%%%%%%%%%%\subsection{Main results}
In this subsection, we will present our main results concerning the global Hilbert expansion of the VML system and the non-cutoff VMB system. To start, we will first state the result for the VML system.

%%%%%%%%%%%%%%%%%%%%%%%%%%%%%%%%%%%%%%%%%%%%%%%%%%%%%%%%%%%%%%%%%%%%%%%%%%%%%%%%%%
\subsubsection{Main result for the VML system}
%%%%%%%%%%%%%%%%%%%%%%%%%%%%%%%%%%%%%%%%%%%%%%%%%%%%%%%%%%%%%%%%%%%%%%%%%%%%%%%%%%
Setting $\kappa=\frac13$, we define the energy functional
\begin{align}\label{eg-vml}
&\mathcal{E}(t)=\sum_{i=0}^2 \varepsilon^i\left[\left(\left\|\sqrt{4\pi RT}\nabla_x^if^{\varepsilon}(t)\right\|^2+\left\|\nabla_x^iE_R^{\varepsilon}(t)\right\|^2+\left\|\nabla_x^i B_R^{\varepsilon}(t)\right\|^2\right)
+\varepsilon^{1+\kappa}\left\|w_i\nabla_x^ih^{\varepsilon}(t)\right\|^2\right],
\end{align}
and the dissipation rate
\begin{align}\label{dn-vml}
\mathcal{D}(t)\backsimeq&\sum_{i=0}^2 \varepsilon^i\left[\frac{1}{\varepsilon}\left\|\nabla_x^i({\bf I}-{\bf P}_{\mathbf{M}})\left[f^{\varepsilon}\right](t)\right\|^2_{\bf D}+\varepsilon^{\kappa}\left\|w_i\nabla_x^i h^{\varepsilon}(t)\right\|^2_{\bf D}\right.\\
&\left.+\varepsilon^{1+\kappa}Y(t)\left\|(1+|v|)w_i\nabla_x^i h^{\varepsilon}(t)\right\|^2\right].\nonumber
\end{align}
Our main result can now be stated as follows:

\begin{theorem}[Global Hilbert expansion of the VML system]\label{resultVML} Assume that
\begin{itemize}
\item The initial datum $[F^\varepsilon_0(x,v), E^{\varepsilon}_0(x), B^{\varepsilon}_0(x)]$ satisfies
\begin{eqnarray*}
F^\vps_0&=&\sum_{n=0}^{2k-1}\varepsilon^nF_{n,0}+\varepsilon^kF^{\varepsilon}_{R,0}\geq0,
\end{eqnarray*}
and
\begin{align*}
E^{\varepsilon}_0=E_0+\sum_{n=1}^{2k-1}\varepsilon^n E_{n,0}+\varepsilon^kE^{\varepsilon}_{R,0},\
B^{\varepsilon}_0=B_0+\sum_{nB=1}^{2k-1}\varepsilon^n B_{n,0}+\varepsilon^kB^{\varepsilon}_{R,0},
\end{align*}
where $[\rho(t,x), u(t,x), T(t,x), E(t,x), B(t,x)]$ is the smooth solution constructed in Proposition \ref{em-ex-lem} for the Euler-Maxwell system \eqref{EM};
\item $T_c$ is suitably chosen such that there exists a positive constant $C_0 > 1$ and \eqref{tt1} is valid for all $(t,x) \in [0,\infty) \times \mathbb{R}^3$;
\item There exists a constant $C > 0$ such that for $N \geq 2$:
\begin{align}
\sum\limits_{\al_0+|\al|\leq N+4k-2n+2}&\left\|\pa_t^{\al_0}\pa^\al[\rho_{n,0},u_{n,0},T_{n,0},E_{n,0},B_{n,0}](x)\right\|\leq C,\ \ 1\leq n\leq2k-1;\label{Fn-id-vml}
\end{align}
\item Let $k\geq3$ and
\begin{align*}
    \mathcal{E}(0)\lesssim 1.
\end{align*}
\end{itemize}
Then, there exists a constant $\varepsilon_0 > 0$ such that for $0 < \varepsilon \leq \varepsilon_0$, the Cauchy problem of the VML system \eqref{main1} and \eqref{VL-id} admits a unique smooth solution $[F^{\varepsilon}(t,x,v), E^{\varepsilon}(t,x), B^{\varepsilon}(t,x)]$ satisfying
 \begin{align}%\label{expan}
\begin{aligned}
&F^{\varepsilon}=\sum_{n=0}^{2k-1}\varepsilon^nF_n+\varepsilon^kF^{\varepsilon}_R\geq0,\quad
E^{\varepsilon}=E+\sum_{n=1}^{2k-1}\varepsilon^n E_n+\varepsilon^kE^{\varepsilon}_R,\quad
B^{\varepsilon}=B+\sum_{n=1}^{2k-1}\varepsilon^n B_n+\varepsilon^kB^{\varepsilon}_R.
\end{aligned}\notag
\end{align}
Furthermore, for $\kappa=\frac13$, it holds that
\begin{align}\label{TVML1}
&\mathcal{E}(t)+\int_0^{t}\mathcal{D}(s)\, d s\lesssim \mathcal{E}(0)+1,\ 0\leq t\leq\varepsilon^{-\kappa},
\end{align}
and
\begin{align*}%\label{TT0}
   \lim_{\varepsilon\rightarrow 0^+} \sup_{0\leq t\leq \varepsilon^{-\kappa}}\Big(\left\|\FM^{-\frac{1}{2}}(t)\left(F^{\varepsilon}(t)-\mathbf{M}(t)\right)\right\|_{H^2}+\left\|E^{\varepsilon}(t)-E(t)\right\|_{H^2}
   +\left\|B^{\varepsilon}(t)-B(t)\right\|_{H^2}\Big)=0.
\end{align*}

\end{theorem}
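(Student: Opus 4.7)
The plan is to reduce the theorem to a uniform-in-$\varepsilon$ a priori estimate of the form
\begin{equation*}
\frac{d}{dt}\mathcal{E}(t)+\mathcal{D}(t)\lesssim \bigl(\varepsilon^{k-1}+\sqrt{\mathcal{E}(t)}\bigr)\mathcal{D}(t)+\mathcal{R}(t),
\end{equation*}
where $\mathcal{R}(t)$ is integrable on $[0,\infty)$ thanks to the polynomial decay of the Euler--Maxwell background in \eqref{em-decay} and the decay of the Hilbert coefficients $[F_n,E_n,B_n]$ generated hierarchically from \eqref{examp1}. Coupled with standard local-in-time existence for \eqref{main1}, this inequality closes by bootstrap: as long as $\mathcal{E}(t)\le C(\mathcal{E}(0)+1)$ on $[0,T]$ with $T\le\varepsilon^{-\kappa}$, $k\ge 3$ and $\kappa=1/3$, the small factors $\varepsilon^{k-1}+\sqrt{\mathcal{E}}$ can be absorbed and \eqref{TVML1} follows; the $L^\infty$-convergence then comes from \eqref{TVML1} and Sobolev embedding of $H^2$. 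The interplay form of $\mathcal{E}(t)$ in \eqref{eg-vml} is designed so that $f^\varepsilon$ carries the natural local-Maxwellian macro--micro structure while $h^\varepsilon$ supplies the weighted high-velocity estimate unavailable to $f^\varepsilon$; the two are linked pointwise by $\mathbf{M}^{1/2}f^\varepsilon=\mu^{1/2}h^\varepsilon$.

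For the local-Maxwellian block I would take $\nabla_x^\alpha$ of \eqref{VMLf} for $|\alpha|\le 2$, pair with $4\pi RT\,\nabla_x^\alpha f^\varepsilon$, and add the Maxwell energy identity obtained from \eqref{fM-2} tested against $\nabla_x^\alpha E_R^\varepsilon$ and $\nabla_x^\alpha B_R^\varepsilon$. The singular field term $(E_R^\varepsilon+v\times B_R^\varepsilon)\cdot(v-u)\mathbf{M}^{1/2}/(RT)$ pairs exactly with the current $4\pi\int v\mathbf{M}^{1/2}f^\varepsilon\,dv$ in Amp\`ere's law to cancel, and $\CL_\mathbf{M}/\varepsilon$ yields dissipation $(1/\varepsilon)\|(\mathbf{I}-\mathbf{P}_\mathbf{M})\nabla_x^\alpha f^\varepsilon\|_\mathbf{D}^2$. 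The hydrodynamic part $\mathbf{P}_\mathbf{M}[f^\varepsilon]$ receives no direct dissipation, so I would close the $[\rho_R^\varepsilon,u_R^\varepsilon,T_R^\varepsilon]$ estimates through the local conservation laws derived from fluid moments of the remainder equation, replacing $\partial_t$ of these moments by spatial derivatives of the microscopic component plus already-controlled background terms. The inhomogeneity $-\mathbf{M}^{-1/2}f^\varepsilon(\partial_t+v\cdot\nabla_x-(E+v\times B)\cdot\nabla_v)\mathbf{M}^{1/2}$, after expanding $\partial_t\mathbf{M}$ via \eqref{EM}, produces only quadratic forms in $\nabla_x^\alpha f^\varepsilon$ with coefficient $\lesssim\epsilon_1(1+t)^{-p_0}$, integrable in time and thus absorbable into $\mathcal{R}$.

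The harder piece is the global-Maxwellian energy on $h^\varepsilon$, because at $\gamma=-3$ the Landau operator admits no spectral gap without velocity weights and \eqref{h-equation} still carries $\CL/\varepsilon$, the perturbation $\CL_d/\varepsilon$ from $\mathbf{M}-\mu$, and an unbounded drift $(E+v\times B)\cdot\nabla_v h^\varepsilon$. Applying $\nabla_x^i$ for $i\le 2$ and pairing with $w_i^2\,\nabla_x^i h^\varepsilon$, the time derivative of $X(t)$ inside $w_i$ generates the positive term $\varepsilon^{1+\kappa}Y(t)\|(1+|v|)w_i\nabla_x^i h^\varepsilon\|^2$ in $\mathcal{D}(t)$, which is precisely what controls the quadratic $v\cdot\nabla_v$ loss from the field drift by Cauchy--Schwarz, provided $\epsilon_1$ is small. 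The pointwise bound $|\mathbf{M}-\mu|\lesssim\epsilon_1\mu^{1/2-\eta}$ gives $\|\CL_d h^\varepsilon\|\lesssim\epsilon_1\|h^\varepsilon\|_\mathbf{D}$, whose prefactor $\epsilon_1/\varepsilon$ is defeated by the $\varepsilon^{1+\kappa}$ weight on the $h^\varepsilon$ energy, producing a net $\epsilon_1\varepsilon^\kappa\mathcal{D}$ that is absorbable. Nonlinear terms $\varepsilon^{k-1}\Gamma(h^\varepsilon,h^\varepsilon)$ and cross terms $\varepsilon^{n-1}\Gamma(\mu^{-1/2}F_n,h^\varepsilon)$ are handled by weighted trilinear estimates, producing respectively $\sqrt{\mathcal{E}}\,\mathcal{D}$ and $\mathcal{R}$. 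The main obstacle I expect is the delicate $\varepsilon$-accounting: the $h^\varepsilon$ energy enters $\mathcal{E}$ at order $\varepsilon^{1+\kappa}$ while the $f^\varepsilon$ energy enters at order $1$, so every cross contribution mediated by $\mathbf{M}^{1/2}f^\varepsilon=\mu^{1/2}h^\varepsilon$ must be weighted with exactly matching $\varepsilon$-powers to avoid either losing the $\gamma=-3$ velocity dissipation or overcounting the shared macroscopic modes.
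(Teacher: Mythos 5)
There are genuine gaps. First, your master inequality $\frac{d}{dt}\mathcal{E}+\mathcal{D}\lesssim(\varepsilon^{k-1}+\sqrt{\mathcal{E}})\mathcal{D}+\mathcal{R}$ cannot close under the stated hypotheses: the theorem only assumes $\mathcal{E}(0)\lesssim 1$, not smallness, so on your bootstrap interval $\sqrt{\mathcal{E}}$ is an $O(1)$ coefficient and the term $\sqrt{\mathcal{E}}\,\mathcal{D}$ cannot be absorbed by the left-hand side. The paper never produces such a term: the quadratic remainder enters as $\varepsilon^{k-1}\Gamma_{\mathbf{M}}(f^{\varepsilon},f^{\varepsilon})$, and under the mild {\it a priori} assumption \eqref{aps-vml} (namely $\mathcal{E}\lesssim\varepsilon^{-1/2}$, which is compatible with $O(1)$ data) one has $\varepsilon^{k-1}\|f^{\varepsilon}\|_{H^2}\lesssim\varepsilon^{1/2}$ as in \eqref{fbd-vml}, so the nonlinearity gains smallness in $\varepsilon$, not from smallness of the energy; the final inequality is linear in $\mathcal{E}$ and closed by Gronwall. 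Second, your claim that $\mathcal{R}$ is integrable on $[0,\infty)$ "thanks to the decay of the Hilbert coefficients" is false: by \eqref{em-Fn-es}--\eqref{em-EB-es} the coefficients $[F_n,E_n,B_n]$ \emph{grow} like $(1+t)^n$, and the source terms are of size $\varepsilon^{2k+1}(1+t)^{4k+2}$ with a prefactor $\varepsilon^{k}(1+t)^{2k}$ multiplying the energy. Nothing is globally integrable; the estimate only survives because it is run on the finite horizon $0\le t\le\varepsilon^{-\kappa}$ with $\kappa=\tfrac13$ and $k\ge 3$, where the powers of $\varepsilon$ beat the polynomial growth and the Gronwall exponent $\int_0^{\varepsilon^{-\kappa}}(\varepsilon^{\kappa}+(1+s)^{-p_0}+\varepsilon^{k}(1+s)^{2k})\,ds$ stays $O(1)$.

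Third, you dispose of the central difficulty \eqref{nordiff} too quickly: the inhomogeneity $-\mathbf{M}^{-1/2}f^{\varepsilon}(\partial_t+v\cdot\nabla_x)\mathbf{M}^{1/2}$ is not a source and is not "a quadratic form in $\nabla_x^{\alpha}f^{\varepsilon}$ with decaying coefficient"; it carries a cubic velocity weight $\langle v\rangle^{3}$ that the unweighted norm $\|f^{\varepsilon}\|^2$ cannot see. This is precisely where the interplay must be used quantitatively: the paper splits $\mathbb{R}^3_v$ at an $\varepsilon$-dependent threshold, bounds the low-velocity part by $\varepsilon^{-1}\|\nabla_x^i(\mathbf{I}-\mathbf{P}_{\mathbf{M}})[f^{\varepsilon}]\|^2_{\mathbf{D}}+\|f^{\varepsilon}\|^2_{H^i}$, and converts the high-velocity part to $h^{\varepsilon}$ through $\mathbf{M}^{1/2}f^{\varepsilon}=\mu^{1/2}h^{\varepsilon}$, which produces the exponentially small factor $C_{\epsilon_1}\exp\bigl(-\epsilon_1/(8C_0RT_c^2\sqrt{\varepsilon})\bigr)$ in front of $\|h^{\varepsilon}\|^2_{H^i}$ (and of $\|h^{\varepsilon}\|^2_{H^i_{\mathbf{D}}}$ for the terms with $\nabla_v f^{\varepsilon}$, since $\mathcal{E}$ in \eqref{eg-vml} contains no velocity derivatives of $f^{\varepsilon}$); the proof then \emph{chooses} $\varepsilon_0$ so that this factor is $\le\varepsilon_0^{2+\kappa}$, which is what lets the $f^{\varepsilon}$-estimates feed back into the $\varepsilon^{1+\kappa}$-weighted $h^{\varepsilon}$ block without destroying the $\varepsilon$-bookkeeping. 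Without this mechanism (or an equivalent one) your $f^{\varepsilon}$ energy estimate does not close, and your proposed extra step of closing $\mathbf{P}_{\mathbf{M}}[f^{\varepsilon}]$ via macroscopic conservation laws is neither used nor needed in the paper, since no macroscopic dissipation appears in \eqref{dn-vml} and Gronwall on the finite horizon suffices.
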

The following remark is concerned with the initial condition of the coefficients $F_n$$(1\leq n\leq2k-1)$ in the Hilbert expansion.
\begin{remark}
The $t-$derivatives in \eqref{Fn-id-vml} are implied by the moment equations from the evolution equations in \eqref{expan2}. For example, $\partial_t[\rho_{n,0},u_{n,0},T_{n,0},E_{n,0},B_{n,0}]$ can be defined through the evolution equation of $\rho_{n},u_{n},T_{n},E_{n},B_{n}$, as described in \eqref{em-abc-eq}. It's important to note that only the initial data of the macroscopic part of the coefficients are specified, as the microscopic component is determined iteratively according to the relations in \eqref{Fn-id-vml}.

\end{remark}

%%%%%%%%%%%%%%%%%%%%%%%%%%%%%%%%%%%%%%%%%%%%%%%%%%%%%%%%%%%%%%%%%%%%%%%%%%%%%%%%%%
\subsubsection{Main result for the non-cutoff VMB system}
%%%%%%%%%%%%%%%%%%%%%%%%%%%%%%%%%%%%%%%%%%%%%%%%%%%%%%%%%%%%%%%%%%%%%%%%%%%%%%%%%%
Our second result is concerned with the global Hilbert expansion of the non-cutoff VMB system.
%For brevity, we mainly focus on the non-cutoff VMB system since the cutoff VMB system case is similar and simpler.

In addition to the previous definitions, we also define the following energy functional:
\begin{align}\label{eg-vmb}
\mathcal{E}(t)=&\sum\limits_{|\alpha|\leq 4}\vps^{|\alpha|}\big(\|\sqrt{4\pi RT}\partial^{\alpha} f^{\varepsilon}\|^2+\|\partial^{\alpha} E_R^{\varepsilon}\|^2+\|\partial^{\alpha} B^{\varepsilon}_R\|^2\big)\nonumber\\
&+\sum\limits_{|\alpha|+|\beta|\leq 4,|\beta|\geq1}\vps^{|\alpha|+|\beta|}\|\pa^\alpha _\beta ({\bf I}-{\bf P}_{\mathbf{M}})[f^{\varepsilon}]\|^2+\sum_{|\alpha|+|\beta|\leq 4}\vps^{|\alpha|+|\beta|+1+\kappa}\|\overline{w} \partial^\alpha_\beta  h^{\varepsilon}\|^2,
\end{align}
where $\kappa=\frac{1}{3}$, and the dissipation rate:
\begin{align}\label{dn-vmb}
\mathcal{D}(t)\backsimeq	&\sum\limits_{|\alpha|+|\beta|\leq 4}\vps^{|\alpha|+|\beta|-1}\|\pa^\alpha _\beta({\bf I}-{\bf P}_{\mathbf{M}})[f^{\varepsilon}]\|^2_{\bf D}+Y(t)\sum_{|\alpha|+|\beta|\leq 4}\vps^{|\alpha|+|\beta|+\kappa}\big\|{\lag v\rag}^{\frac{1}{2}}\overline{w} \partial^\alpha_\beta h^{\varepsilon}\big\|^2   \nonumber\\
&+\sum_{|\alpha|+|\beta|\leq 4}\vps^{|\alpha|+|\beta|+\kappa}\|\overline{w} \partial^\alpha_\beta
  h^{\varepsilon}]\|^2_{\bf D}.
\end{align}

Our second main result can now be stated as follows

\begin{theorem}[Global Hilbert expansion of the non-cutoff VMB system]\label{resultVMB} Assume that
$$\max\left\{-3, -\frac{3}{2}-2s\right\} < \gamma < 0$$
 for the non-cutoff collision kernel, and consider the following conditions:
\begin{itemize}
\item The initial datum $[F^\varepsilon_0(x,v), E^{\varepsilon}_0(x), B^{\varepsilon}_0(x)]$ satisfies
\begin{eqnarray*}
F^\vps_0&=&\sum_{n=0}^{2k-1}\varepsilon^nF_{n,0}+\varepsilon^kF^{\varepsilon}_{R,0}\geq0,
\end{eqnarray*}
and
\begin{align*}
E^{\varepsilon}_0=E_0+\sum_{n=1}^{2k-1}\varepsilon^n E_{n,0}+\varepsilon^kE^{\varepsilon}_{R,0},\quad
B^{\varepsilon}_0=B_0+\sum_{n=0}^{2k-1}\varepsilon^n B_{n,0}+\varepsilon^kB^{\varepsilon}_{R,0},
\end{align*}
where $[\rho(t,x), u(t,x), T(t,x), E(t,x), B(t,x)]$ is the smooth solution constructed in Proposition \ref{em-ex-lem} for the Euler-Maxwell system \eqref{EM};
\item Choose $T_c$ suitably so that \eqref{tt1} is valid for all $(t,x) \in [0,\infty) \times \mathbb{R}^3$;
\item There exists a constant $C > 0$ such that for $N \geq 4$,
\begin{align}
\sum\limits_{\al_0+|\al|\leq N+4k-2n+2}&\left\|\pa_t^{\al_0}\pa^\al[\rho_{n,0},u_{n,0},T_{n,0},E_{n,0},B_{n,0}](x)\right\|\leq C,\ \ 1\leq n\leq2k-1;\label{Fn-id}
\end{align}
\item  Let  $k\geq 4$ and
\begin{align*}
    \mathcal{E}(0)\lesssim 1.
\end{align*}
\end{itemize}
Then, there exists a constant $\varepsilon_1 > 0$ such that for $0 < \varepsilon \leq \varepsilon_1$, the Cauchy problem of the VMB system \eqref{main1} and \eqref{VL-id} admits a unique smooth solution $[F^{\varepsilon}(t,x,v), E^{\varepsilon}(t,x), B^{\varepsilon}(t,x)]$ satisfying
 \begin{align}%\label{expan}
\begin{aligned}
&F^{\varepsilon}=\sum_{n=0}^{2k-1}\varepsilon^nF_n+\varepsilon^kF^{\varepsilon}_R\geq0,\quad
E^{\varepsilon}=E+\sum_{n=1}^{2k-1}\varepsilon^n E_n+\varepsilon^kE^{\varepsilon}_R,\quad
B^{\varepsilon}=B+\sum_{n=1}^{2k-1}\varepsilon^n B_n+\varepsilon^kB^{\varepsilon}_R.
\end{aligned}\notag
\end{align}
Furthermore, for $\kappa = \frac{1}{3}$, it holds that
\begin{align}\label{TVML1}
&\mathcal{E}(t)+\int_0^{t}\mathcal{D}(s)\, d s\lesssim \mathcal{E}(0)+1,\ 0\leq t\leq\varepsilon^{-\kappa},
\end{align}
and
\begin{align*}%\label{TT0}
   \lim_{\varepsilon\rightarrow 0^+} \sup_{0\leq t\leq \varepsilon^{-\kappa}}\Big(\left\|\FM^{-\frac{1}{2}}(t)\left(F^{\varepsilon}(t)-\mathbf{M}(t)\right)\right\|_{H^4}+\left\|E^{\varepsilon}(t)-E(t)\right\|_{H^4}
   +\left\|B^{\varepsilon}(t)-B(t)\right\|_{H^4}\Big)=0.
\end{align*}

\end{theorem}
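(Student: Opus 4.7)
The plan is to establish Theorem \ref{resultVMB} by a bootstrap argument that closes the combined inequality $\mathcal{E}(t) + \int_0^t \mathcal{D}(s)\, ds \lesssim \mathcal{E}(0) + 1$ on the long time interval $[0, \varepsilon^{-\kappa}]$. First I would construct all expansion coefficients $F_n, E_n, B_n$ for $1 \leq n \leq 2k-1$ iteratively: at each step, the macroscopic part of $F_n$ together with the Maxwell fields $(E_n, B_n)$ solves a linear symmetric hyperbolic system of the type displayed in \eqref{examp1}, while the microscopic component is recovered by inverting $\mathcal{L}_{\mathbf{M}}$ against the known source coming from the lower-order coefficients and from derivatives of $\mathbf{M}$. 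Hypothesis \eqref{Fn-id} together with Proposition \ref{em-ex-lem} then propagates uniform control in sufficiently many derivatives, with the polynomial-in-$t$ decay of the Euler--Maxwell solution carried through each $F_n$.

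The core of the proof is the \emph{simultaneous} control of $f^\varepsilon = \mathbf{M}^{-1/2} F_R^\varepsilon$ via \eqref{VMLf} (the local-Maxwellian framework, which respects the coefficient structure) and of $h^\varepsilon = \mu^{-1/2} F_R^\varepsilon$ via \eqref{h-equation} (the global-Maxwellian framework, needed to tame the large-velocity regime through the time-dependent weight $\overline{w}$). For each $|\alpha|+|\beta| \leq 4$ I would apply $\partial^\alpha_\beta$ to both equations, test against the corresponding weighted unknowns, and sum over multi-indices with the $\varepsilon$-weights prescribed in \eqref{eg-vmb}. The coercivity $\langle \mathcal{L}_{\mathbf{M}} g, g\rangle \gtrsim |(\mathbf{I}-\mathbf{P}_{\mathbf{M}})g|_{\mathbf{D}}^2$ delivers the microscopic dissipation on the $f^\varepsilon$ side, while its analogue for $\mathcal{L}$ delivers the dissipation of $h^\varepsilon$. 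Macroscopic control of $\mathbf{P}_{\mathbf{M}} f^\varepsilon$ is recovered through the local conservation laws combined with the Maxwell equations \eqref{fM-2}, yielding $\|\mathbf{P}_{\mathbf{M}} \partial^\alpha f^\varepsilon\|^2$ in terms of $\varepsilon^{-1}\|(\mathbf{I}-\mathbf{P}_{\mathbf{M}})\partial^\alpha f^\varepsilon\|_{\mathbf{D}}^2$ up to $\varepsilon^{k}$-small source terms coming from the coefficients $F_n$.

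The decisive technical step is the $\overline{w}$-weighted estimate on $h^\varepsilon$: differentiating the time-dependent exponential weight produces the extra dissipation $Y(t) \| \langle v\rangle^{1/2} \overline{w}\, \partial^\alpha_\beta h^\varepsilon\|^2$ appearing in \eqref{dn-vmb}. This gain is exactly what absorbs the dangerous linear term $\mathcal{L}_d[h^\varepsilon]/\varepsilon$ on the right-hand side of \eqref{h-equation}: by Corollary \ref{corollary-1} one has $\mathbf{M}-\mu = O(\epsilon_1)$, so $\mathcal{L}_d$ carries a free $\epsilon_1$ factor that is calibrated against $Y(t)$ over the time window $[0,\varepsilon^{-\kappa}]$. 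The non-cutoff structure enters through estimates on $\Gamma(f,g)$ in the fractional norm $|\cdot|_{H^s_\gamma}$: mixed $\partial^\alpha_\beta$ derivatives of the microscopic part require commutator-type bounds in the spirit of Alexandre--Morimoto--Ukai--Xu--Yang, and pure top-order velocity derivatives are pushed back onto lower-order quantities by Burnett-type relations that convert $\partial_\beta (\mathbf{I}-\mathbf{P}_{\mathbf{M}})f^\varepsilon$ into a combination of spatial derivatives of $h^\varepsilon$ and controllable remainders.

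The main obstacle I anticipate is the two-framework coupling. The field source $(E_R^\varepsilon + v\times B_R^\varepsilon)\cdot \nabla_v F_0$ from the remainder equation produces, in the $h^\varepsilon$ test, an integrand proportional to $\mathbf{M}\mu^{-1/2}$; since $F_0 = \mathbf{M}$ is not globally in the $\mu$-Gaussian $L^2$, the inequality $T(t,x) > T_c$ from Corollary \ref{corollary-1} must be invoked carefully so that $\mathbf{M}/\sqrt{\mu}$ remains Gaussian, yielding a bounded but not small cross-term. Controlling the resulting interplay forces precise balancing: the weighting $\varepsilon^{1+\kappa}$ on $h^\varepsilon$-quantities versus $\varepsilon$ on $f^\varepsilon$-quantities in \eqref{eg-vmb} is exactly tuned so that a factor $\varepsilon^{\kappa/2}$ is always available to transfer between the two. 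Once this interplay is in hand, the nonlinear term $\varepsilon^{k-1}\Gamma(h^\varepsilon,h^\varepsilon)$ is controlled by $\varepsilon^{k-1}\mathcal{E}(t)^{1/2}\mathcal{D}(t)$ via Sobolev embedding (this is why $k\geq 4$ is imposed), the Maxwell fields close through the standard identity associated with \eqref{fM-2}, and a Gronwall argument over $[0,\varepsilon^{-\kappa}]$ yields the desired uniform bound. The limit statement follows at once from the decomposition $F^\varepsilon - \mathbf{M} = \sum_{n\geq 1}\varepsilon^n F_n + \varepsilon^k F_R^\varepsilon$ and the $H^4$-control of each summand.
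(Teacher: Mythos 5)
There is a genuine gap in your treatment of the singular difference operator $\varepsilon^{-1}\mathcal{L}_d[h^{\varepsilon}]$ in \eqref{h-equation}, which is precisely the crux of the paper. You claim that $\mathcal{L}_d$ ``carries a free $\epsilon_1$ factor that is calibrated against $Y(t)$''; this cannot work. The factor $\epsilon_1$ coming from \eqref{tt1} is a fixed small constant determined by the Euler--Maxwell data and does not shrink with $\varepsilon$, so the term is of size $\epsilon_1\varepsilon^{-1}\|\overline{w}h^{\varepsilon}\|^2_{\bf D}$ (cf.\ \eqref{wGLd-Non}), while the extra dissipation generated by the weight is only $Y(t)\|\langle v\rangle^{1/2}\overline{w}h^{\varepsilon}\|^2$ with $Y(t)\sim (1+t)^{-1}\ln^{-2}(\mathrm{e}+t)\ll \epsilon_1\varepsilon^{-1}$ on $[0,\varepsilon^{-\kappa}]$; moreover the $\bf D$-norm and the $Y(t)$-weighted norm are not comparable. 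In the paper the microscopic part of this term is absorbed by the $\delta\varepsilon^{-1}$-dissipation of $\mathcal{L}$, but the weighted coercivity \eqref{wLL-Non} (and the unweighted discussion \eqref{Ld ex1}--\eqref{Ld ex2}) necessarily leaves a low-velocity/macroscopic remainder of size $C\varepsilon^{-1}$ which cannot be Gronwalled in $h^{\varepsilon}$ alone over a time interval of length $\varepsilon^{-\kappa}$. The essential idea you are missing is the interplay: using $\sqrt{\mathbf{M}}f^{\varepsilon}=\sqrt{\mu}h^{\varepsilon}$ and $T_c<\inf T$, this remainder is bounded by $\|f^{\varepsilon}\|$, so the $h^{\varepsilon}$-estimate carries a singular source $\varepsilon^{-1}\|f^{\varepsilon}\|^2_{H^{|\alpha|}}$, which is then rendered harmless by the $\varepsilon$-hierarchy in \eqref{eg-vmb} ($\varepsilon^{|\alpha|+|\beta|+1+\kappa}$ on $h^{\varepsilon}$-norms versus $\varepsilon^{|\alpha|}$ on $f^{\varepsilon}$-norms). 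You do invoke this hierarchy, but only for the field--coefficient cross term, and you never close the loop for $\mathcal{L}_d$; as written, your energy inequality does not close.

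Two further points. First, your claim that $\|\mathbf{P}_{\mathbf{M}}\partial^{\alpha}f^{\varepsilon}\|^2$ is recovered from local conservation laws in terms of $\varepsilon^{-1}\|(\mathbf{I}-\mathbf{P}_{\mathbf{M}})\partial^{\alpha}f^{\varepsilon}\|^2_{\bf D}$ up to $\varepsilon^{k}$-small sources is neither used in the paper nor plausible as stated: the linear coupling of the macroscopic part to the Euler--Maxwell background (e.g.\ through $f^{\varepsilon}\mathbf{M}^{-1/2}(\partial_t+v\cdot\nabla_x)\mathbf{M}^{1/2}$) is of size $\epsilon_1(1+t)^{-p_0}$, not $\varepsilon^{k}$; the dissipation \eqref{dn-vmb} contains no macroscopic part, and the paper simply lets $\|\partial^{\alpha}f^{\varepsilon}\|^2$ evolve under Gronwall with integrable coefficients. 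Second, in the non-cutoff VMB setting two ingredients you omit are genuinely needed: the refined exponentially weighted trilinear estimate of Lemma \ref{non-nonlinear}, without which the term $\Gamma(\mu^{-1/2}F_1,h^{\varepsilon})$ produces an uncontrollable factor growing linearly in $t$ (cf.\ \eqref{vmb-Fn-es}), and the construction of the coefficients with $e^{q\langle v\rangle}$ weights relative to $\mu$ rather than $\mathbf{M}$ (Lemma \ref{vmb-Fn-lem}), since $\mathbf{M}\mu^{-1}\sim e^{c_1\langle v\rangle^2}$ makes $\mathbf{M}$-based weighted bounds insufficient for the $h^{\varepsilon}$-equation.
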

\begin{remark}
	%There are some remarks for Theorem \ref{resultVMB}:
	%\begin{itemize}
 A similar result also holds for the cutoff VMB system with $-3<\gamma\leq 1$. The only difference is that we replace the weight function $\overline{w}$ with $w=\exp\big(\frac{\langle v\rangle^2}{8RT_c\ln(\mathrm{e}+t)}\big)$.
 %In fact, we can close the energy in $H^N_{x, v}$ with $N=3$ for the non-cutoff VMB system case. Here we take $N=4$ to treat the non-cutoff VMB system and cutoff VMB system in a uniform way for simplicity.
		%\item We omit the hard potentials case since it is simpler.
%	\end{itemize}
\end{remark}

\subsection{Strategies and ideas}%\label{SI}

In this subsection, we will outline the main ideas and strategies applied in our analysis.

A prevalent challenge encountered when validating the Hilbert expansion for non-relativistic kinetic equations within the realm of perturbation lies in a phenomenon known as ``velocity growth". It can be expressed as follows:

\begin{align}\label{nordiff}
	f^{\varepsilon}(t,x,v)\mathbf{M}^{-\frac12}(t,x,v)\Big\{\partial_t +v\cdot\nabla_x\Big\}\mathbf{M}^{\frac12}(t,x,v) \sim |v|^3f^{\varepsilon}(t,x,v).
\end{align}

To overcome this difficulty, Caflisch \cite{Caflisch-CPAM-1980} introduced a significant decomposition:
 $$F^{\varepsilon}_R=\underbrace{\mathbf{M}^{\frac12}\bar{f}^{\varepsilon}}_\text{low velocity part}+\underbrace{\mu^{\frac12}\bar{h}^{\varepsilon}}_\text{high velocity part},$$
 by partitioning $F^{\varepsilon}_R$ in this manner, the equation corresponding to the angular cutoff Boltzmann equation was artificially separated into a coupled system involving $\bar{f}^{\varepsilon}$ and $\bar{h}^{\varepsilon}$. Employing an energy-based approach, $\bar{f}^{\varepsilon}$ and $\bar{h}^{\varepsilon}$ were subsequently solved, with the assumption that $\bar{f}^{\varepsilon}(0,x,v)=\bar{h}^{\varepsilon}(0,x,v)=0$. However, it's worth pointing out that this assumption may not guarantee the non-negativity of the density distribution function $F^{\varepsilon}(t,x,v)$.
 To address this concern, Guo-Jang-Jiang \cite{Guo-JJ-KRM-2009, Guo-JJ-CPAM-2010} introduced the $L^2-L^{\infty}$ method, initially proposed in \cite{Guo-ARMA-2010} for solving the initial boundary value problem of the Boltzmann equation. They utilized this method to justify the Hilbert expansion of the angular cutoff Boltzmann equation and established the non-negativity of the density distribution function $F^{\varepsilon}(t,x,v)$. In their works \cite{Guo-JJ-KRM-2009, Guo-JJ-CPAM-2010}, the intricate term \eqref{nordiff} was estimated as follows:
 \begin{align}
 	\Big\langle\Big| \mathbf{M}^{-\frac12}(\partial_t +v\cdot\nabla_x)\mathbf{M}^{\frac12}\Big|\bar{f}^{\varepsilon},\bar{f}^{\varepsilon}\Big\rangle\lesssim \|\nabla_{t,x}(\rho, u, T)\| \|\bar{h}^{\varepsilon}\|_{\infty}\|\bar{f}^{\varepsilon}\|.\label{gjj-in}
 \end{align}
 Here, $\bar{h}^{\varepsilon}$ satisfies
 \begin{align*}
 	(1+|v|)^{-\beta}\mu^{\frac12}\bar{h}^{\varepsilon}=\mathbf{M}^{\frac12}\bar{f}^{\varepsilon}=F^{\varepsilon}_R , \quad \beta>\frac{9-2\gamma}{2}.
 \end{align*}
 We also emphasize that \eqref{gjj-in} holds under the constraint that $\frac{1}{2}\sup_{t,x}T(t,x)<T_c<\inf_{t,x}T(t,x)$ for a positive constant $T_c$. Furthermore, it's noteworthy that the estimates for $\|\bar{h}^{\varepsilon}\|_{\infty}$ were established using a characteristic method, and the total energy was balanced through a clever interplay between norms $\|\bar{f}^{\varepsilon}\|$ and $\|\bar{h}^{\varepsilon}\|_{\infty}$.

 This method found success in demonstrating the validity of the Hilbert expansion for various systems, including the relativistic Boltzmann equation \cite{Speck-Strain-CMP-2011}, the Vlasov-Poisson-Boltzmann system \cite{Guo-Jang-CMP-2010}, and the relativistic VMB system \cite{Guo-Xiao-CMP-2021}.

 It's worth noting that the adapted $L^2-L^{\infty}$ method developed in \cite{Guo-Xiao-CMP-2021} faces limitations when applied to the non-relativistic VMB system. This limitation arises due to the absence of a corresponding Glassey-Strauss expression \cite{GS-1986} for the electromagnetic field within the relativistic framework. Furthermore, since this method relies on the characteristic method and the structured nature of the linearized Boltzmann collision operator with cut-off potentials, it becomes challenging to adapt this method to tackle the same problem for Landau-type equations and non-cutoff Boltzmann-type equations.

 In the relativistic framework, the troublesome term akin to \eqref{nordiff} exhibits linear growth only in momentum $p$. Exploiting this gradual momentum growth, Ouyang-Wu-Xiao \cite{Ouyang-Wu-Xiao-arxiv-2022-rLM} recently introduced a weighted energy method that involves the local Maxwellian to investigate the Hilbert expansion of relativistic Landau-type equations. Their key strategy was to control the linear momentum growth term by incorporating an additional dissipation term induced by a special time-velocity dependent weight function. However, this method exclusively applies to relativistic kinetic equations, and addressing the case for non-relativistic Landau-type equations and non-cutoff Boltzmann-type equations remains an open problem.

 This article centers on precisely this problem. Our primary objective is to establish the global validity of the Hilbert expansion for both the VML system and the non-cutoff VMB system, which serve as fundamental models for studying the dynamics of dilute charged particles.

 Inspired by the works of \cite{Caflisch-CPAM-1980} and \cite{Ouyang-Wu-Xiao-arxiv-2022-rLM}, we aspire to develop an enhanced energy method that accommodates both the local Maxwellian $\FM$ and the global Maxwellian $\mu$. Our approach hinges on establishing the following pivotal relationship:

 \begin{align}\label{relation-local-globsl}
 	F^\varepsilon_R = \sqrt{\FM}f^\varepsilon = \sqrt{\mu}h^\varepsilon, \quad \text{with} \quad T_c < \inf_{t,x}T(t,x).
 \end{align}

 The crux of our analysis revolves around the interplay of energy estimates for both $f^{\varepsilon}$ and $h^{\varepsilon}$. This harmonious interplay enables us to effectively close the energy estimates and, as a result, establish the well-posedness of the remainder $F_R^\varepsilon$ within the framework of Sobolev space. It's noteworthy that the global Maxwellian $\mu$ adheres to the same functional form as presented in \eqref{Global-Maxwellian}. The specific range of the positive constant $T_c > 0$ is meticulously specified in Corollary \ref{corollary-1}, underscoring the precision of our approach.
 %Our novel idea is to introduce a new energy method that combines energy estimates around the local Maxwellian $\FM$ and the global Maxwellian $\mu$ and the pivotal aspect is to make use of an interplay estimates for  $f^\varepsilon$  and  $h^\vps$ so as to close the total energy estimates.
 To make our presentation more clear, we will present precise explanations for each case individually.

For the VML system, our main ideas are to introduce an interplay of energy estimates of $f^{\varepsilon}$ and $h^{\varepsilon}$, which can be summarized as follows.
\begin{itemize}
\item  To effectively estimate $f^{\varepsilon}$ and tackle the difficult term represented by \eqref{nordiff}, we adopt a strategy that involves dividing the velocity space ${\mathbb R}^3_v$ into two distinct regions: the low-velocity portion and the high-velocity portion.
In the low-velocity region, we are able to provide bounds for both the microscopic and macroscopic components of $f^{\varepsilon}$. Specifically, we employ terms like $\varepsilon^{-1}\|\partial^{\alpha}({\bf I}-{\bf P}_{\mathbf{M}})[f^{\varepsilon}]\|_{\bf D}^2$ and $\sum_{|\alpha'|\leq|\alpha|}\|\partial^{\alpha'}f^{\varepsilon}\|^2$ to effectively control these components.
However, for the high-velocity portion, the following significant estimate was used
\begin{align*}%\label{refh}
	\left|\partial^{\alpha}f^{\varepsilon}\right|
	\lesssim\sum_{j=0}^{|\alpha|}{\langle v\rangle}^{2(|\alpha|-j)}\exp\left(-\frac{(T-T_c)|v|^2}{8C_0RTT_c}\right) \left|\nabla_x^jh^{\varepsilon}\right|,\quad 0\leq |\alpha|\leq 2.
\end{align*}
This estimate directly follows from the assumption that $\mathbf{M}$ and $\mu$ are sufficiently close. To effectively control the high-velocity portion, we utilize norms like $\sum_{|\alpha'|\leq|\alpha|}
\|\partial^{\alpha'}h^{\varepsilon}\|^2$. Similar techniques can also be applied to handle other nonlinear terms, such as $-(E+v \times B)\cdot\nabla_vf^{\varepsilon}$, which involve velocity growth and/or velocity derivatives.
The dissipation norms $\|\partial^{\alpha}h^{\varepsilon}\|^2_{{\bf D}}$ for $0\leq |\alpha|\leq 2$ are specifically applied to control estimates related to $\nabla_vf^{\varepsilon}$, since the energy functional $\mathcal{E}(t)$ for the VML system does not include any velocity derivatives.
\item
For the estimates of $h^{\varepsilon}$, we encounter a problematic term  $\varepsilon^{-1}\big|\big\langle\partial^{\alpha}\mathcal{L}_d[h^{\varepsilon}], \partial^{\alpha}h^{\varepsilon}\big\rangle\big|$, which can be bounded as follows:
\begin{align}\label{Ld ex1}
\frac{1}{\varepsilon}\big|\big\langle\partial^{\alpha}\mathcal{L}_d[h^{\varepsilon}], \partial^{\alpha}h^{\varepsilon}\big\rangle\big|\lesssim \frac{\epsilon_1}{\varepsilon}\big\|h^{\varepsilon}\big\|_{H^{|\alpha|}_{\bf D}}^2\lesssim \frac{\epsilon_1}{\varepsilon}\Big(\big\|({\bf I}-{\bf P})[h^{\varepsilon}]\big\|_{H^{|\alpha|}_{\bf D}}^2+\big\|{\bf P}[h^{\varepsilon}]\big\|_{H^{|\alpha|}_{\bf D}}^2\Big).
\end{align}

However, controlling the macroscopic component $\epsilon_1\varepsilon^{-1}\big\|{\bf P}[h^{\varepsilon}]\big\|_{H^{|\alpha|}_{\bf D}}^2$ presents difficulties. To address this issue, our paper introduces another approach to bound this troublesome term by using $f^\varepsilon$. It can be shown that
\begin{align}\label{Ld ex2}
\left\|{\bf P}\left[h^{\varepsilon}\right]\right\|^2_{H^{|\alpha|}_{\bf D}}\lesssim& \left\|{\bf P}\left[f^{\varepsilon}\right]\right\|^2_{H^{|\alpha|}_{\bf D}} \lesssim \left\|f^{\varepsilon}\right\|^2_{H^{|\alpha|}},
\end{align}
which follows from \eqref{relation-local-globsl} once again.

In addition to this main difficulty in estimating $h^\vps$, we also face additional difficulties arising from the interaction between particles and the electromagnetic field. These include terms like $\frac{E\cdot v }{ 2RT_c}h^{\varepsilon}$, $(E+v \times B)\cdot\nabla_vh^{\varepsilon}$, etc., which involve velocity growth and derivatives and thus cannot be directly controlled in the energy estimates.

In fact, direct estimates of the delicate term $\frac{E\cdot v }{ 2RT_c}h^{\varepsilon}$ leads to temporal decay but velocity growth in the form of
\begin{align}
\epsilon_1(1+t)^{-p_0}\|\sqrt{{\lag v\rag}}w_0h^{\varepsilon}\|^2\label{e-gth}
\end{align}
for some $p_0>1$. This growth cannot be controlled by either the energy functional $\mathcal{E}(t)$ via Gronwall's inequality or the dissipation terms induced by the linearized Landau collision operator. To tackle this issue, we introduce a time-velocity dependent exponential weight function $\exp\left(\frac{\lag v\rag^2}{8RT_c\ln(\mathrm{e}+t)}\right)$
which leads to the following additional dissipative term:
$$
\frac{1}{8RT_c(\mathrm{e}+t)\big(\ln(\mathrm{e}+t)\big)^2}\|{\lag v\rag}w_0h^{\varepsilon}\|^2.
$$
With this additional term and the favorable temporal decay of solutions to the Euler-Maxwell system \eqref{EM}, we can then control \eqref{e-gth} in a suitable manner. Note that this kind of time-velocity exponential weight was originally introduced in \cite{DYZ-2012,DYZ-2013,Duan-2014-vml}to construct global smooth solutions near Maxwellians to some complex kinetic equations.

Regarding the term $(E+v \times B)\cdot\nabla_vh^{\varepsilon}$, we employ a similar approach to that in \cite{Guo-JAMS-2011}. We design a polynomial weight depending on the order of the $x-$derivatives to alleviate the velocity growth. However, this special design of the polynomial part also introduces new difficulties for the weighted trilinear estimates of the nonlinear collision operator $\varepsilon^{k-1}\Gamma(h^{\varepsilon},h^{\varepsilon})$, where Sobolev's inequalities like $\|f\|_{L^{\infty}_x}\lesssim \|f\|_{H^2_x}$ have to be used, and additional polynomial weights may arise.
Our  approach to solve this problem is to replace the additional polynomial weights with the exponential weight $\exp\left(\frac{\langle v\rangle^2}{8RT_c\ln(\mathrm{e}+t)}\right)$, as shown in estimates \eqref{gah-1}-\eqref{zh2} for more details.
\item
To capture the interplay between the estimates of $f^\varepsilon$ and $h^\varepsilon$, we introduce the energy functional $\mathcal{E}(t)$ defined in \eqref{eg-vml}. In fact, during our energy estimates for the function $f^\varepsilon$, as observed in \cite{Ouyang-Wu-Xiao-arxiv-2022-rLM}, we encounter a difficulty arising from the non-commutative structure of the linearized collision operator term $\varepsilon^{-1}\mathcal{L}_{\mathbf{M}} [f^{\varepsilon}]$ and ${\bf P}_{\mathbf{M}}[f^{\varepsilon}]$ with the spatial derivative operator $\partial^{\alpha}$. This leads to the appearance of singularities characterized by $\varepsilon^{-|\alpha|}$ in the estimate of $\|\partial^{\alpha}f^{\varepsilon}\|^2$. Similarly, when we derive an estimate for $\|h^{\varepsilon}\|^2_{H^{|\alpha|}}$ as a consequence of \eqref{Ld ex1} and \eqref{Ld ex2}, we encounter a term $\varepsilon^{-1}\|f^{\varepsilon}\|^2_{H^{|\alpha|}}$ with a singular factor of $\varepsilon^{-1}$ in its upper bound estimates.

To remove these singularities and ensure that $\|f^{\varepsilon}\|^2_{H^{|\alpha|}}$ remains integrable over the time interval $[0,\vps^{-\frac{1}{3}})$, which is the lifespan of our Hilbert expansion, we introduce coefficients $\varepsilon^{|\alpha|}$ for the norm $\|\partial^{\alpha}f^{\varepsilon} \|^2$ with $0\leq |\alpha|\leq 2$. Additionally, by multiplying the norms $\|\partial^{{\alpha}}h^{\varepsilon}\|^2$ by a weight factor of $\varepsilon^{|\alpha|+1+\kappa}$ and
 combining these energy estimates for $\|f^{\varepsilon}\|^2_{H^2}$ and $\|h^{\varepsilon}\|^2_{H^2}$, we are able to close the {\it a priori} estimates.

%, this weighting scheme ensures that we achieve the desired energy estimates.
\end{itemize}

In the case of the non-cutoff VMB system, compared to the VML case, the corresponding problem becomes more intricate, and the main difficulties can be summarized in two aspects:
\begin{itemize}
\item [1.] The dissipation property of the non-cutoff linearized Boltzmann collision operator  is weaker than that of the Landau case;
  \item [2.]
  In contrast to the Landau case, where we have weighted estimates of the form $e^{q\langle v\rangle^2}$ (with $q > 0$) for the Landau collision operators, the existence of similar estimates for the non-cutoff Boltzmann collision operators remains uncertain at present.

  %Limited nonlinear estimates with exponential weight. While in the VML system, one can obtain nonlinear estimates with weight functions of the form $\mathrm{e}^{q{\langle v\rangle}^2}$ where $0<q<\frac{1}{2}$, in the case of the VMB system, such favorable estimates may not be readily available.  Instead, the available estimates involve weight functions expressed as $\mathrm{e}^{q{\langle v\rangle}}$, where $q$ takes on positive values, as demonstrated in \cite{Duan-Liu-Yang-Zhao-2013-KRM}.
%
%In summary, , in particular, the additional dissipation we obtain from our time-dependent weight function $\exp\left(\frac{\lag v\rag}{8RT_c\ln(\mathrm{e}+t)}\right)$ may be less effective when applied to the non-cutoff VMB system.
\end{itemize}

 Our main strategies for addressing the aforementioned problems can be summarized as follows:

 Firstly, we recognize the importance of introducing additional terms involving velocity derivatives into our energy functional, as expressed in \eqref{eg-vmb}. This step is crucial for effectively handling nonlinear terms such as $\frac{E\cdot v }{2RT}h^{\varepsilon}$ and $-(E+v \times B)\cdot\nabla_vh^{\varepsilon}$ within the system.

%\item  We must estimate the term $|\langle\partial^\alpha_\beta (v\cdot \nabla_x h^{\varepsilon}),\overline{w}^2\partial^\alpha_\beta h^\varepsilon\rangle|, |\beta|\geq1$, which is associated with the transport term. Thanks to the extra dissipation arising from our weight function $\exp\left(\frac{\langle v\rangle}{8RT_c\ln(\mathrm{e}+t)}\right)$, we have the following observation:
%    \begin{align*}
%|\langle\partial^\alpha_\beta (v\cdot \nabla_x h^{\varepsilon}),\overline{w}^2\partial^\alpha_\beta h^\varepsilon\rangle|\lesssim& \big\|\overline{w} \nabla_x^{|\alpha|+1}\nabla^{|\beta|-1}_vh^\varepsilon\big\| \left\|\overline{w}\partial^\alpha_\beta h^\varepsilon\right\|\nonumber\\
%		\lesssim&\varepsilon^{\frac{1}{5}} \Big(\frac{1}{\varepsilon}\big\|\overline{w}\partial^\alpha_\beta h^\varepsilon\big\|_{\bf D}^2+Y(t)\big\|\langle v\rangle^{\frac{1}{2}}\overline{w}\partial^\alpha_\beta h^\varepsilon\big\|^2\\
%&+\frac{1}{\varepsilon}\big\|\overline{w} \nabla_x^{|\alpha|+1}\nabla^{|\beta|-1}_vh^\varepsilon\big\|^2_{\bf D}+Y(t)\big\|\langle v\rangle^{\frac{1}{2}}\overline{w} \nabla_x^{|\alpha|+1}\nabla^{|\beta|-1}_vh^\varepsilon\big\|^2\Big).\nonumber
%\end{align*}
%This resembles the interpolation technique given in \cite{{XXZ-JFA-2017}}. For more details, one can refer to Lemma \ref{transp}.
%\end{itemize}
%
%Regarding the second problem, our approach to solving it can be explained as follows:
%\begin{itemize}

Secondly, we leverage the estimates on the non-cutoff Boltzmann collision operators obtained in \cite{Duan-Liu-Yang-Zhao-2013-KRM, FLLZ-SCM-2018} to derive refined versions of these estimates with respect to the weight function $\overline{w}(t,v)=\exp\big(\frac{\lag v\rag}{8RT_c\ln(\mathrm{e}+t)}\big)$, as detailed in Lemma \ref{non-nonlinear}. With these refined estimates, we are able to handle the terms involving the interaction between particles and the electromagnetic field, the nonlinear collision terms, and the associated coefficients, as demonstrated in the following:

\begin{itemize}
\item   The interaction between particles and the electromagnetic field, exemplified by terms like $\frac{E\cdot v }{ 2RT_c}h^{\varepsilon}$ and $(E+v \times B)\cdot\nabla_vh^{\varepsilon}$, can be effectively controlled through the additional dissipation terms induced by the weight function $\overline{w}(t,v)$. It's worth noting that the weight function $\overline{w}(t,v)$ lacks polynomial components and remains unchanged regardless of the order of derivatives. This distinctive feature simplifies our estimations and renders them relatively more concise.

\item To estimate the nonlinear terms related to $F_1$, such as
$$\big|\big\langle[\Gamma(\mu^{-\frac{1}{2}}F_1,h^{\varepsilon})+\Gamma(
 h^{\varepsilon}, \mu^{-\frac{1}{2}} F_1)], \overline{w}^2h^{\varepsilon}\big\rangle\big|,$$
if we apply the estimates from
 \cite[Lemma 2.3]{Duan-Liu-Yang-Zhao-2013-KRM},
 such a term would be bounded by
\begin{equation}\label{unp-term}
	[\ln(\mathrm{e}+t)]^{-2}  \big\|\mu^{\frac{\lambda_0}{128}}F_1\big\|_{L^{\infty}_xL^2_v}\left\|\overline{w}h^{\varepsilon}\right\|^2_{1+\gamma/2},
\end{equation}
 which becomes unmanageable due to the linear time growth of $F_1$ as indicated in \eqref{vmb-Fn-es}.
 To overcome this difficulty, we improved the estimate of $\left|\left\langle {\Gamma}\left(g_1,g_2\right), \overline{w}^2 g_3\right\rangle\right|$ by employing a more precise angular integration estimate, resulting in the removal of undesirable terms like \eqref{unp-term}. For details, please refer to the proof of \eqref{Gamma-noncut-2} in Lemma \ref{non-nonlinear}. Indeed, the estimates in Lemma \ref{non-nonlinear} are inherently interesting.
 \item  For the coefficients $F_n$ ($1\leq n\leq 2k-1$), it is essential to establish velocity decay estimates of the form
 \begin{align}\label{vt-example}
 	\left|F_n\right|\lesssim \mu^{\frac{1}{2}} e^{-q\langle v\rangle},\qquad q>0.
 \end{align}
  In previous works such as \cite{Guo-JJ-CPAM-2010, Guo-Jang-CMP-2010}, similar estimates to \eqref{vt-example} were derived directly from $e^{q\langle v\rangle^2}$ ($q>0$) type weighted estimates on $\mathbf{M}^{-\frac{1}{2}}F_n$.
 However, at the current stage, it is evident that $\mathbf{M}\mu^{-1}\sim e^{c_1\langle v\rangle^2}$ for some $c_1>0$, even though $\mathbf{M}$ and $\mu$ are sufficiently close as our assumption \eqref{tt1}. This implies that \eqref{vt-example} cannot be derived through $e^{q\langle v\rangle}$ ($q>0$) type weighted estimates on $\mathbf{M}^{-\frac{1}{2}}F_n$. To overcome this difficulty, our strategy is to use $e^{q\langle v\rangle}$ ($q>0$) type weighted estimates on $\mathbf{\mu}^{-\frac{1}{2}}F_n$ instead of $\mathbf{M}^{-\frac{1}{2}}F_n$.

 In other words, when constructing the coefficients $F_n$ for $1\leq n\leq 2k-1$, we set $F_n=\mathbf{M}^{\frac{1}{2}}f_n=\mu^{\frac{1}{2}}h_n$. We then establish estimates like \eqref{vt-example} using $h_n$. For more details, please refer to Lemma \ref{vmb-Fn-lem}.
\end{itemize}

 %although generally speaking, we can only deduce that the coefficients $[F_n, E_n, B_n]$ enjoy the polynomial temporal growth for $1\leq n\leq 2k-1$, by employing the nice temporal-decay properties of solutions to the Euler-Maxwell system \eqref{EM}, we can kill the above mentioned time growth via corresponding power of $\varepsilon$ and prove that the Hilbert expansion of the VML system \eqref{main1} is valid for $t\leq\vps^{-\frac{1}{3}}$. It is worth to pointing out that, to guarantee that $\|f^{\varepsilon}\|^2_{H^{|\alpha|}}$ is integrable over $t\leq\vps^{-\frac{1}{3}}$, compared with the design of the energy functional $\mathcal{E}(t)$ to the Landau equation \eqref{LE} defined by \eqref{eg-le}, an additional weight $\vps^{\frac{1}{3}}$ should be added to the norms $\|h^{\varepsilon}\|^2_{H^{|\alpha|}}$ to kill the time growth. With the well-designed energy functional $\mathcal{E}(t)$ given by \eqref{eg-vml}, we arrive at a closed {\it a priori} estimates by an interplay of the energy estimates $\|f^{\varepsilon}\|^2_{H^2} $ and $\|wh^{\varepsilon}\|^2_{H^2} $ around both the local Maxwellian $\FM$ and the global Maxwellian $\mu$, respectively.

\subsection{Literatures}

Numerous studies have investigated the connections between kinetic equations and fluid dynamics systems, such as the Euler or Navier-Stokes equations. These relationships between the fluid-dynamic perspective and kinetic descriptions can be explored through various mathematical approaches. One such approach involves examining the equivalence of kinetic equations and fluid systems in a time-asymptotic sense. A lot of contributions to this area including works on Boltzmann-type equations like \cite{Kawashima-Matsumura-Nishida-1979, DL-2015, Huang-Xin-Yang, Huang-Yang, Li-Wang-Wang-2022, Liu-Yang-Yu-Zhao-2006}, and for Landau-type equations, references like \cite{DYY-Landau-RW, DYY-Landau-cw} have been made.

Another well-known strategy is to explore the small Knudsen number limits of kinetic equations. Typically, there are three distinct approaches to investigate such approximations:
\begin{itemize}
\item One approach is based on the Chapman-Enskog procedure \cite{Chapman-1990, Glassey-1996}, where the first-order approximation leads to the compressible Euler equations, and the second-order approximation results in the compressible Navier-Stokes system. In the context of the Boltzmann equation, the compressible Euler approximation was rigorously established in \cite{Nishida-1978, Ukai-Asano-1983} within the framework of analytical function spaces, and in \cite{Yu-2005, Xin-Zeng-2010, Huang-Wang-Yang-2010-1, Huang-Wang-Yang-2010-2, Huang-Wang-Wang-Yang} near the Riemann solutions of the one-dimensional Euler equations. Regarding the global validity of the compressible Navier-Stokes approximation for Boltzmann-type equations, references \cite{Lachowicz-1992, LYZ-2014, DL-2021} address this issue for the Boltzmann equation, and \cite{Duan-Yang-Yu-arXiv-VMB} considers the VMB system.

For the Landau equation, the validity of the Chapman-Enskog expansion was established in \cite{Duan-Yang-Yu-arXiv-Landau}.

\item  The second program focuses on the incompressible limit. For Boltzmann-type equations, the incompressible Navier-Stokes-Fourier limit has been extensively studied in various works such as \cite{Bardos-Golse-Levermore-1991, Bardos-Golse-Levermore-1993, Golse-S-04, Golse-S-09, Jiang-Masmoudi-CPAM-2017, Lions-Masmoudi-2001, Masmoudi-Saint-Raymond-2003, Saint-Raymond-2009, AS-2019} in the framework of renormalized solutions, and \cite{ELM, ELM2, Guo-CPAM-2006, Arsenio-ARMA-2012, Jang-ARMA-2009, Jiang-Xu-Zhao-Indiana-2018} for classical solutions. Additionally, investigations into the incompressible Euler limit for Boltzmann-type equations can be found in works like \cite{Lions-Masmoudi-2001, Saint-Raymond-ARMA-2003, Saint-Raymond-2009, Jang-Kim-2021}.

For the Landau equation, the incompressible Navier-Stokes-Fourier limit was explored in \cite{Guo-CPAM-2006, Rachid-2021}.

\item
The third program involves rigorously justifying the small Knudsen number limits of kinetic equations using the Hilbert expansion. For the Boltzmann equation, the validity of this expansion was established in \cite{Caflisch-CPAM-1980, Lachowicz-1987, Lachowicz-1991} through a decomposition of the remainder equation. However, it was noted that these solutions constructed using this method did not guarantee non-negativity. This issue was resolved in \cite{Guo-JJ-KRM-2009, Guo-JJ-CPAM-2010} with the help of the $L^2-L^\infty$ approach developed in \cite{Guo-ARMA-2010}. This approach has been successfully extended to address systems like the Vlasov-Poisson-Boltzmann system \cite{Guo-Jang-CMP-2010}, relativistic Boltzmann-type equations \cite{Speck-Strain-CMP-2011, Guo-Xiao-CMP-2021}, and more. For the initial boundary value problems associated with these equations, references like \cite{GHW-ARAM-2021, JLT-21-1, JLT-21-2} provide further insights.

It's important to note that the $L^2-L^\infty$ argument relies on the favorable properties of the linearized Boltzmann collision operator with cut-off potentials. Adapting this method to deal with the same problem for the VMB system with cut-off potentials, let alone the Landau-type equations and non-cutoff Boltzmann-type equations, appears challenging.

For the relativistic VML system, the problematic term analogous to \eqref{nordiff} shows linear growth only in momentum $p$. The global validity of its Hilbert expansion was established in \cite{Ouyang-Wu-Xiao-arxiv-2022-rLM} using a weighted energy method based on a time-momentum related exponential weight function $\exp\Big\{\frac{c_0\sqrt{1+|p|^2}}{\ln(\mathrm{e}+t)}\Big\}$, where $c_0>0$. However, it's important to note that this method is specific to relativistic kinetic equations. Addressing the same problem for non-relativistic Landau-type equations and non-cutoff Boltzmann-type equations remains an open problem. Considering that the VML  and the non-cutoff VMB systems serve as fundamental models for the dynamics of dilute charged particles, this paper concentrates on investigating these two systems.

\end{itemize}

It should be pointed out that partial of our results about the VML system was announced in \cite{Lei-Liu-Xiao-Zhao}.

\subsection{The structure}
The remainder of our paper is organized as follows. In Section \ref{h-VML}, we present the proof of the validity of the global Hilbert expansion for the VML system. Section \ref{sec-vmb} is dedicated to demonstrating the validity of the global Hilbert expansion for the non-cutoff VMB system. In Section \ref{sec-app}, we provide the methodology for determining the coefficients $[F_n, E_n, B_n] (1\leq n\leq 2k-1)$, particularly within the context of the non-cutoff VMB system.

%%%%%%%%%%%%%%%%%%%%%%%%%%%%%%%%%%%%%%%%%%%%%%%%%%%%%%%%%%%%%%%%%%%%%%%%%%%%%%%%%%
\section{Global Hilbert expansion for the VML system} \label{h-VML}
In this section, our focus is on justifying the validity of the Hilbert expansion \eqref{expan} for the VML system \eqref{main1}. As discussed in the previous section, the proof relies on two main ingredients:
\begin{itemize}
	\item Determining the coefficients $[F_n, E_n, B_n]$ for $1\leq n\leq 2k-1$. These coefficients are obtained through the iteration equations \eqref{expan2}.
	\item Proving the well-posedness of the remainder terms $[f^\vps, E^\vps_R, B^\vps_R]$ and $h^\vps$ using the equations \eqref{VMLf} and \eqref{h-equation} with prescribed initial data given by \eqref{fEB-initial} and \eqref{hvps-initial}:
	\begin{align}\label{fEB-initial}
		f^\vps(0,x,v)=&f^\vps_0(x,v)=\vps^{-k}\FM^{-\frac{1}{2}}\left\{F^\vps_0(x,v)-\sum\limits_{n=0}^{2k-1}\vps^kF_{n,0}(x,v)\right\},\notag\\
		E_R^\vps(0,x)=&E^\vps_{R,0}(x)=\vps^{-k}\left\{E^\vps_0(x)-\sum\limits_{n=0}^{2k-1}\vps^kE_{n,0}(x)\right\},\\
		B_R^\vps(0,x)=&B^\vps_{R,0}(x)=\vps^{-k}\left\{B^\vps_0(x)-\sum\limits_{n=0}^{2k-1}\vps^kB_{n,0}(x)\right\},\notag
	\end{align}
	and
	\begin{align}\label{hvps-initial}
		h^\vps(0,x,v)=h^\vps_0(x,v)=\mu^{-\frac{1}{2}}\FM_{[\rho_0,u_0,T_0]}^{\frac{1}{2}}f^\vps_0(x,v).
	\end{align}
	These ingredients combined with the iteration equations \eqref{expan2} will provide the necessary framework to establish the Hilbert expansion for the VML system.
\end{itemize}

\setcounter{equation}{0}
\subsection{Preliminaries }%\label{pr}
In this subsection, we will compile some estimates for the Landau collision operators $\mathcal{L}_{\mathbf{M}}$, $\Gamma_{\mathbf{M}}$, and $\mathcal{L}_d$ as defined in \eqref{lm-def}, \eqref{gm-def}, and \eqref{dL}, respectively. Although estimates for the operators $\mathcal{L}$ and $\Gamma$ have been well-established in previous works such as \cite{Guo-CMP-2002,Strain-Guo-ARMA-2008, Guo-JAMS-2011}, it is not straightforward to derive corresponding estimates in the scenario of the local Maxwellian $\mathbf{M}$. In this specific case, the formulation of Landau collision operators becomes more intricate, with additional terms involving the fluid velocity $u$. Apart from dealing with these additional terms, the main difficulty arises from the non-commutativity of the spatial derivatives $\partial^{\alpha}$ and the Landau collision operators, as well as the projection operator $\FP_{\mathbf{M}}$. However, due to the assumption of smallness in $u$ and the favorable time decay of $[\rho, u, T]$ in \eqref{em-decay}, the coercive property of the linearized collision operator $\mathcal{L}_{\mathbf{M}}$ is maintained. Trilinear estimates for $\Gamma_{\mathbf{M}}$, similar to those provided in \cite{Guo-CMP-2002, Strain-Guo-ARMA-2008, Guo-JAMS-2011}, can be derived through careful analysis and estimation.
			
			Before delving into the estimates for the Landau collision operators, we will first provide explicit expressions for $\mathcal{A}_{\mathbf{M}}$, $\mathcal{K}_{\mathbf{M}}$, $\mathcal{A}_d$, $\mathcal{K}_d$, and $\Gamma_{\mathbf{M}}$.

\begin{lemma}\label{AK0d} The expressions for the operators $\mathcal{A}_{\mathbf{M}}$, $\mathcal{K}_{\mathbf{M}}$, $\mathcal{A}_d$, $\mathcal{K}_d$, and $\Gamma_{\mathbf{M}}$ are provided as follows:
	For $\mathcal{A}_{\mathbf{M}}$ and $\mathcal{K}_{\mathbf{M}}$,
\begin{align}\label{AK0}
\left\{\begin{array}{rll}
\mathcal{A}_{\mathbf{M}}[f]
=&\partial_i\Big[\sigma^{ij}_{\mathbf{M}}\partial_jf
\Big]
-\sigma^{ij}_{\mathbf{M}}\frac{(v_i-u_i)(v_j-u_j)}{4R^2T^{2}}f+\partial_i\Big[\sigma^{ij}_{\mathbf{M}}\frac{v_j-u_j}{2RT}\Big]f,\\[2mm]
\mathcal{K}_{\mathbf{M}}[f]=&-\mathbf{M}^{-\frac{1}{2}}\partial_i\Big[\mathbf{M} \Big(\phi^{ij}\ast \mathbf{M}^{\frac{1}{2}}\Big(\partial_jf+\frac{v_j-u_j}{2RT}\Big)\Big)\Big].
\end{array}\right.
\end{align}
For $\mathcal{A}_d$ and $\mathcal{K}_d$,
\begin{align}\label{AKd}
\left\{\begin{array}{rll}
\mathcal{A}_d[h]
=&\partial_i\Big[\sigma^{ij}_{\mathbf{M}}\Big(\frac{-u_j}{ RT}+\frac{T_c-T}{RT_cT}v_j\Big)h
\Big]
-\sigma^{ij}_{\mathbf{M}}\frac{v_i}{ 2RT_c}\Big(\frac{-u_j}{ RT}+\frac{T_c-T}{RT_cT}v_j\Big)h\\[2mm]
&+\partial_i\Big[\sigma^{ij}_{(\mathbf{M}-\mu)}\partial_jh
\Big]
-\sigma^{ij}_{(\mathbf{M}-\mu)}\frac{v_iv_j}{4R^2T_cT}f+\partial_i\Big[\sigma^{ij}_{(\mathbf{M}-\mu)}\frac{v_j}{ 2RT_c}\Big]h,\\[2mm]
\mathcal{K}_d[h]=&\mu^{-\frac{1}{2}}\partial_i\Big[\Big(\frac{u_j}{ RT}\mathbf{M}+\frac{(T-T_c)\mu-T_c(\mathbf{M}-\mu)}{RT_cT}v_j\Big)\Big(\phi^{ij}\ast \mu^{\frac{1}{2}}h\Big)\Big]\\[2mm]
&- \mu^{-\frac{1}{2}}\partial_i\Big[\big(\mathbf{M}-\mu\big) \Big(\phi^{ij}\ast \mathbf{M}^{\frac{1}{2}}\Big(\partial_jh-\frac{v_j}{ 2RT_c}h\Big)\Big)\Big].
\end{array}\right.
\end{align}
For $\Gamma_{\mathbf{M}}$,
\begin{align*}%\label{ga0L}
\Gamma_{\mathbf{M}}(f_1,f_2)
=&\partial_i\Big[\Big(\phi^{ij}\ast\mathbf{M}^{\frac{1}{2}}f_1\Big)
\partial_jf_2-\Big(\phi^{ij}\ast\mathbf{M}^{\frac{1}{2}}\partial_jf_1\Big)
f_2
\Big]
\\
&-\Big(\phi^{ij}\ast \frac{v_j-u_j}{2RT}\mathbf{M}^{\frac{1}{2}}f_1\Big)
\partial_jf_2+\Big(\phi^{ij}\ast \frac{v_j-u_j}{2RT}\mathbf{M}^{\frac{1}{2}}\partial_jf_1\Big)
f_2.\nonumber
\end{align*}
Here, $\partial_i = \partial_{v_i}$ for $i=1,2,3$, and
$$
\sigma^{ij}_{g}=\int_{{\R}^{3}}\phi^{ij}(v-v')g(v')dv'.
$$
\end{lemma}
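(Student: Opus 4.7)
The plan is to derive all five expressions in Lemma \ref{AK0d} by directly expanding the definitions \eqref{lm-def}, \eqref{gm-def}, \eqref{dL} in terms of the symmetric form of the Landau collision operator
\[
\mathcal{C}(G,H)=\sum_{i,j}\partial_{v_i}\int\phi^{ij}(v-v')\bigl[G(v')\partial_{v_j}H(v)-H(v)\partial_{v'_j}G(v')\bigr]dv',
\]
and then reorganizing via the convolution notation $\sigma^{ij}_g(v)=\int\phi^{ij}(v-v')g(v')dv'$. The central computational tool is the pair of Maxwellian identities $\partial_{v_j}\mathbf{M}^{1/2}=-\tfrac{v_j-u_j}{2RT}\mathbf{M}^{1/2}$ and $\partial_{v_j}\mathbf{M}=-\tfrac{v_j-u_j}{RT}\mathbf{M}$ (with $u=0$, $T=T_c$ for $\mu$), which yield the conjugation relations
\[
\mathbf{M}^{-1/2}\partial_{v_j}\bigl(\mathbf{M}^{1/2}g\bigr)=\partial_{v_j}g-\tfrac{v_j-u_j}{2RT}g,\qquad \mathbf{M}^{1/2}\partial_{v_j}\bigl(\mathbf{M}^{-1/2}g\bigr)=\partial_{v_j}g+\tfrac{v_j-u_j}{2RT}g.
\]

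For $\Gamma_{\mathbf{M}}(f_1,f_2)=\mathbf{M}^{-1/2}\mathcal{C}(\mathbf{M}^{1/2}f_1,\mathbf{M}^{1/2}f_2)$, I would substitute and apply the identities above to $\partial_{v_j}(\mathbf{M}^{1/2}f_2)(v)$ and $\partial_{v'_j}(\mathbf{M}^{1/2}f_1)(v')$, noting that $u_j(t,x)$ and $T(t,x)$ are inert under $v'$-derivatives. Then I would commute $\mathbf{M}^{-1/2}(v)$ past the outer $\partial_{v_i}$, which generates a compensating $+\tfrac{v_i-u_i}{2RT}$ factor; grouping the resulting four terms into ``derivative'' and ``weight'' pairs gives exactly the stated formula for $\Gamma_{\mathbf{M}}$.

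For the $\mathcal{L}_{\mathbf{M}}$-decomposition, the same procedure applied to $\mathcal{C}(\mathbf{M},\mathbf{M}^{1/2}f)$ collapses the $v'$-integral into the pointwise coefficient $\sigma^{ij}_{\mathbf{M}}(v)$, and after an integration by parts in $v'$ using $\partial_{v'_j}\mathbf{M}(v')=-\tfrac{v'_j-u_j}{RT}\mathbf{M}(v')$, the quadratic-in-$v$ term $-\sigma^{ij}_{\mathbf{M}}\tfrac{(v_i-u_i)(v_j-u_j)}{4R^2T^2}f$ of $\mathcal{A}_{\mathbf{M}}$ drops out together with the transport-type term $\partial_i[\sigma^{ij}_{\mathbf{M}}\tfrac{v_j-u_j}{2RT}]f$; the piece $\mathcal{C}(\mathbf{M}^{1/2}f,\mathbf{M})$ retains a genuine convolution in $f$ and supplies $\mathcal{K}_{\mathbf{M}}$. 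The same template then runs for $\mathcal{L}_d$ with $\mu$ replacing $\mathbf{M}$ in the outer conjugation and $\mathbf{M}-\mu$ inserted in the first slot of $\mathcal{C}$; the scalar weights $-\tfrac{u_j}{RT}$ and $\tfrac{T_c-T}{RT_cT}v_j$ in $\mathcal{A}_d$ come from rewriting $\mu^{-1/2}\partial_{v_j}\mathbf{M}=-\mu^{-1/2}\tfrac{v_j-u_j}{RT}\mathbf{M}$ and isolating the part that persists after subtracting the $\mathcal{L}$-contribution, while the splitting $\sigma^{ij}_{\mathbf{M}}=\sigma^{ij}_{\mu}+\sigma^{ij}_{\mathbf{M}-\mu}$ separates the remaining $\sigma^{ij}_{\mathbf{M}-\mu}$-type terms into $\mathcal{A}_d$ and the convolution-in-$h$ terms into $\mathcal{K}_d$.

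The only real obstacle is bookkeeping: one must track which argument each of $\partial_{v_i}$ and $\partial_{v'_j}$ acts on, correctly account for the compensating weights produced whenever $\mathbf{M}^{\pm 1/2}$ or $\mu^{\pm 1/2}$ is pulled through the outer divergence, and cleanly sort ``local-in-$v$'' pieces (that collapse into a coefficient $\sigma^{ij}_g(v)$) into the $\mathcal{A}$-parts from ``nonlocal-in-$f$'' pieces (involving $\phi^{ij}*(\cdots)$) into the $\mathcal{K}$-parts. No new analytic input beyond these algebraic manipulations is needed for this lemma.
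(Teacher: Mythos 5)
Your plan is exactly the paper's route: the paper proves this lemma by the same direct calculation (it writes out only the $\mathcal{A}_d$, $\mathcal{K}_d$ case and refers to Guo's Lemma~1 in CMP 2002 for the remaining formulas), i.e.\ substituting the symmetric form of $\mathcal{C}$, using $\partial_{v_j}\mathbf{M}^{1/2}=-\tfrac{v_j-u_j}{2RT}\mathbf{M}^{1/2}$ and its $\mu$-analogue, conjugating the outer divergence through $\mathbf{M}^{\pm 1/2}$ or $\mu^{\pm1/2}$, and splitting $\sigma^{ij}_{\mathbf{M}}=\sigma^{ij}_{\mu}+\sigma^{ij}_{\mathbf{M}-\mu}$.

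There is, however, one concrete ingredient your proposal leaves out, and your closing claim that ``no new analytic input beyond these algebraic manipulations is needed'' is not quite right without it: the null property of the Landau kernel, $\phi^{ij}(v-v')\,(v_j-v'_j)=0$ (equivalently $\phi^{ij}(w)w_j=\phi^{ij}(w)w_i=0$), together with the symmetry $\sigma^{ij}=\sigma^{ji}$. This is where the ``local-in-$v$'' collapse you invoke actually comes from. Concretely: (i) the loss part of $\mathcal{C}(\mathbf{M},\mathbf{M}^{1/2}f)$ produces $\mathbf{M}^{1/2}f\,\phi^{ij}\ast\big(\tfrac{v'_j-u_j}{RT}\mathbf{M}\big)$, a genuine convolution, which becomes the local coefficient $\tfrac{v_j-u_j}{RT}\sigma^{ij}_{\mathbf{M}}$ only via the null property; only after this can the gain and loss pieces be merged into $\mathbf{M}^{-1/2}\partial_i\big[\mathbf{M}^{1/2}\sigma^{ij}_{\mathbf{M}}(\partial_jf+\tfrac{v_j-u_j}{2RT}f)\big]$, and the resulting cross terms $\sigma^{ij}_{\mathbf{M}}\tfrac{v_j-u_j}{2RT}\partial_if-\tfrac{v_i-u_i}{2RT}\sigma^{ij}_{\mathbf{M}}\partial_jf$ cancel only by the symmetry of $\sigma^{ij}$, yielding the stated $\mathcal{A}_{\mathbf{M}}$ (and likewise $\mathcal{K}_{\mathbf{M}}$, where the two weight contributions combine into $\partial_jf+\tfrac{v_j-u_j}{2RT}f$). (ii) For $\Gamma_{\mathbf{M}}$, the raw expansion has more than the four displayed terms; the reduction uses the null property in both indices, both to cancel the $\tfrac{v_j-u_j}{2RT}f_2$-contributions and to rewrite the compensating factor $\tfrac{v_i-u_i}{2RT}(\phi^{ij}\ast g)$ as a weighted convolution (note the commutation of $\mathbf{M}^{-1/2}$ past $\partial_{v_i}$ puts that weight in the $i$-slot, as in Guo's Lemma~1). (iii) In $\mathcal{A}_d$, the weight $\big(\tfrac{-u_j}{RT}+\tfrac{T_c-T}{RT_cT}v_j\big)\sigma^{ij}_{\mathbf{M}}$ is obtained by combining $\partial_{v'_j}\mathbf{M}$ and $\partial_{v'_j}\mu$ inside the convolution and then replacing $v'_j$ by $v_j$ — precisely this identity, which is the silent step in the paper between its second and third displayed lines. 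So the approach is right, but you must state and use $\phi^{ij}(w)w_j=0$ explicitly; it is a property of the kernel, not mere bookkeeping, and without it the computation does not close to the stated compact forms.
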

\begin{proof}
The proof will be carried out in a similar manner as in \cite[Lemma 1, p. 395]{Guo-CMP-2002}. For brevity, we will only present \eqref{AKd}.

To this end, a direct calculation yields,
\begin{align*}
\mathcal{A}_d[h]=&\mu^{-\frac{1}{2}}
\mathcal{C}( \mathbf{M}-\mu,\mu^{\frac{1}{2}}h  )\\
=&\mu^{-\frac{1}{2}}\partial_i\Big[\sigma^{ij}_{(\mathbf{M}-\mu)}\mu^{\frac{1}{2}}\Big(\partial_jh-\frac{v_j}{ 2RT_c}h\Big)
-\mu^{\frac{1}{2}}h \Big(\phi^{ij}\ast \Big(\frac{-v_j+u_j}{ RT}\mathbf{M}+\frac{v_j}{ RT_c}\mu\Big)\Big)\Big]\\
=&\mu^{-\frac{1}{2}}\partial_i\Big[\mu^{\frac{1}{2}}h\Big(\phi^{ij}\ast \Big(\frac{-u_j}{ RT}+\frac{T_c-T}{RT_cT}v_j\Big)\mathbf{M}\Big)\Big]+\mu^{-\frac{1}{2}}\partial_i\Big[\mu^{\frac{1}{2}}\sigma^{ij}_{(\mathbf{M}-\mu)}
\Big(\partial_jh
+\frac{v_j}{ 2RT_c}h\Big)\Big]\\
=&\partial_i\Big[\sigma^{ij}_{\mathbf{M}}\Big(\frac{-u_j}{ RT}+\frac{T_c-T}{RT_cT}v_j\Big)h
\Big]
-\sigma^{ij}_{\mathbf{M}}\frac{v_i}{ 2RT_c}\Big(\frac{-u_j}{ RT}+\frac{T_c-T}{RT_cT}v_j\Big)h\\
&+\partial_i\Big[\sigma^{ij}_{(\mathbf{M}-\mu)}\partial_jh
\Big]
-\sigma^{ij}_{(\mathbf{M}-\mu)}\frac{v_iv_j}{4R^2T_cT}h+\partial_i\Big[\sigma^{ij}_{(\mathbf{M}-\mu)}\frac{v_j}{ 2RT_c}\Big]h.
\end{align*}
Similarly,
\begin{align*}
\mathcal{K}_d[h]=&\mu^{-\frac{1}{2}}
\mathcal{C}( \mu^{\frac{1}{2}}h, \mathbf{M}-\mu )\\
=&\mu^{-\frac{1}{2}}\partial_i\Big[\Big(\phi^{ij}\ast \mu^{\frac{1}{2}}h\Big)\Big(\frac{-v_j+u_j}{ RT}\mathbf{M}+\frac{v_j}{ RT_c}\mu\Big)\\
&-\big(\mathbf{M}-\mu\big) \Big(\phi^{ij}\ast \mu^{\frac{1}{2}}\Big(\partial_jh-\frac{v_j}{ 2RT_c}h\Big)\Big)\Big]\\
=&\mu^{-\frac{1}{2}}\partial_i\Big[\Big(\frac{u_j}{ RT}\mathbf{M}+\frac{(T-T_c)\mu-T_c(\mathbf{M}-\mu)}{RT_cT}v_j\Big)\Big(\phi^{ij}\ast \mu^{\frac{1}{2}}h\Big)\Big]\nonumber\\
&- \mu^{-\frac{1}{2}}\partial_i\Big[\big(\mathbf{M}-\mu\big) \Big(\phi^{ij}\ast \mathbf{M}^{\frac{1}{2}}\Big(\partial_jh-\frac{v_j}{ 2RT_c}h\Big)\Big)\Big].
\end{align*}
%As for the nonlinear operator $\Gamma_{\mathbf{M}}(f_1,f_2)$, one has
%\begin{align*}
%\Gamma_{\mathbf{M}}(f_1,f_2)
%=&\mathbf{M}^{-\frac{1}{2}}
%\mathcal{C}( \mathbf{M}^{\frac{1}{2}}f_1,\mathbf{M}^{\frac{1}{2}}f_2  )\\
%=&\partial_i\Big[\Big(\phi^{ij}\ast\mathbf{M}^{\frac{1}{2}}f_1\Big)
%\Big(\partial_jf_2-\frac{v_j-u_j}{2RT}f_2\Big)\\&-f_2 \Big(\phi^{ij}\ast\mathbf{M}^{\frac{1}{2}} \Big(\partial_jf_1-\frac{v_j-u_j}{2RT}f_1\Big)\Big)\Big]\\
%=&\partial_i\Big[\Big(\phi^{ij}\ast\mathbf{M}^{\frac{1}{2}}f_1\Big)
%\Big(\partial_jf_2-\frac{v_j-u_j}{2RT}f_2\Big)
%-f_2 \Big(\phi^{ij}\ast\mathbf{M}^{\frac{1}{2}} \Big(\partial_jf_1-\frac{v_j-u_j}{2RT}f_1\Big)\Big)\Big]\\
%&-\frac{v_i-u_i}{2RT}\Big[\Big(\phi^{ij}\ast\mathbf{M}^{\frac{1}{2}}f_1\Big)
%\Big(\partial_jf_2-\frac{v_j-u_j}{2RT}f_2\Big)
%\\&\qquad\qquad-f_2 \Big(\phi^{ij}\ast\mathbf{M}^{\frac{1}{2}} \Big(\partial_jf_1-\frac{v_j-u_j}{2RT}f_1\Big)\Big)\Big]\\
%=&\partial_i\Big[\Big(\phi^{ij}\ast\mathbf{M}^{\frac{1}{2}}f_1\Big)
%\partial_jf_2-\Big(\phi^{ij}\ast\mathbf{M}^{\frac{1}{2}}\partial_jf_1\Big)
%f_2
%\Big]
%-\Big(\phi^{ij}\ast \mathbf{M}^{\frac{1}{2}}\frac{v_i-u_i}{2RT}f_1\Big)
%\partial_jf_2\\
%&+\Big(\phi^{ij}\ast \frac{v_i-u_i}{2RT}\mathbf{M}^{\frac{1}{2}}\partial_jf_1\Big)
%f_2.
%\end{align*}
This is exactly \eqref{AKd} and the proof of Lemma \ref{AK0d} is complete.
\end{proof}

\subsubsection{ The linearized collision operators}\label{1cl}
The following lemma is quoted from \cite[Corollary 1, p. 399]{Guo-CMP-2002}, which states the lower bound of ${\bf D}-$norm defined by \eqref{D-norm}.
\begin{lemma}\label{lower norm}
Let the ${\bf D}$-norm be defined as in \eqref{D-norm}. Then there exists a constant $C > 0$ such that
\begin{equation}
|g|_{\bf D}^2\geq C\left\{\left|(1+|v|)^{-\frac{3}{2}}{\bf P}_v[\nabla_vg]\right|_{L^2}^2
+\left|(1+|v|)^{-\frac{1}{2}}({\bf I}-{\bf P}_v)[\nabla_vg]\right|_{L^2}^2
+\left|(1+|v|)^{-\frac{1}{2}}g\right|_{L^2}^2\right\},\notag
\end{equation}
where ${\bf P}_v$ is the projection defined by
$$
{\bf P}_vh_j=\sum{h_kv_k}\frac{v_j}{|v|^2}, \ \ 1\leq j\leq3,
$$
for any vector-valued function $h(v)=[h_1(v),h_2(v),h_3(v)]$.
\end{lemma}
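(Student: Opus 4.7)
The plan is to perform a direct spectral analysis of the symmetric positive semi-definite matrix $\sigma^{ij}(v)=\int_{\mathbb{R}^3}\phi^{ij}(v-v')\mu(v')\,dv'$, and then exploit the induced orthogonal decomposition of the velocity gradient. Fix $v\neq 0$ and split $\mathbb{R}^3=\mathrm{span}(v)\oplus v^{\perp}$. Since $\phi^{ij}(v-v')=\bigl(\delta_{ij}-(v-v')_i(v-v')_j/|v-v'|^2\bigr)|v-v'|^{\gamma+2}$ is a scalar multiple of the projection onto the plane perpendicular to $v-v'$, the matrix $\sigma^{ij}(v)$ is rotationally symmetric about the axis $v/|v|$. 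Thus it has two distinct eigenvalues: $\lambda_\parallel(v)$ in the direction of $v$ and $\lambda_\perp(v)$ (of multiplicity two) on $v^{\perp}$. The heart of the proof is the sharp asymptotics
\begin{align*}
\lambda_\parallel(v)\ \gtrsim\ (1+|v|)^{\gamma},\qquad \lambda_\perp(v)\ \gtrsim\ (1+|v|)^{\gamma+2},
\end{align*}
which I would obtain by splitting the $v'$-integral into the regions $\{|v'|\leq |v|/2\}$ and its complement, using the fast Gaussian decay of $\mu$ to control the far region, and then performing explicit computations in polar coordinates centered at $v$ on the near region.

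Next, write $\nabla_v g={\bf P}_v[\nabla_v g]+({\bf I}-{\bf P}_v)[\nabla_v g]$. Since ${\bf P}_v$ projects onto $\mathrm{span}(v)$, the quadratic form $\sigma^{ij}\partial_i g\,\partial_j g$ pointwise decomposes as $\lambda_\parallel(v)|{\bf P}_v[\nabla_v g]|^2+\lambda_\perp(v)|({\bf I}-{\bf P}_v)[\nabla_v g]|^2$, and inserting the eigenvalue lower bounds (using $\gamma=-3$ to obtain the exponents $-3$ and $-1$ appearing in the statement) yields the first two terms in the asserted inequality. For the last term $|(1+|v|)^{-1/2}g|^2_{L^2}$, I would use $\sigma^{ij}(v)v_iv_j=\lambda_\parallel(v)|v|^2\gtrsim (1+|v|)^{-3}|v|^2\gtrsim (1+|v|)^{-1}$ for $|v|$ large, together with an elementary lower bound on a compact set covering small $|v|$, absorbing the constant $1/(4R^2T_c^2)$ into $C$.

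The main obstacle is the lower bound for $\lambda_\parallel(v)$ at large $|v|$: since $\phi^{ij}(v-v')$ vanishes along $v-v'$, one might naively expect the parallel component of $\sigma^{ij}(v)$ to be negligible; the point is that $v-v'$ is not exactly aligned with $v$ for generic $v'$, so the averaging against $\mu(v')$ still produces a nontrivial contribution, namely the extra factor $(1+|v|)^{-2}$ relative to $\lambda_\perp(v)$. Quantifying this requires a careful accounting of the transverse fluctuation of $v'$ around the axis of $v$, exploiting the Gaussian weight to restrict effectively to $|v'|\lesssim 1$ and then expanding $|v-v'|^{\gamma+2}$ in $v'/|v|$. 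Once this sharp parallel asymptotic is established, the remaining steps reduce to straightforward algebraic manipulations.
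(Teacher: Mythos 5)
Your overall strategy is sound and is in fact the same one behind the result the paper is quoting: this lemma is not proved in the paper at all but cited from Guo (CMP 2002, Corollary 1), whose proof rests exactly on the spectral asymptotics of $\sigma^{ij}(v)$ that you propose (parallel eigenvalue $\approx (1+|v|)^{\gamma}$, perpendicular eigenvalue $\approx (1+|v|)^{\gamma+2}$, here with $\gamma=-3$), together with the pointwise decomposition of $\sigma^{ij}\partial_ig\,\partial_jg$ along ${\bf P}_v[\nabla_vg]$ and $({\bf I}-{\bf P}_v)[\nabla_vg]$. Your identification of the delicate point in the eigenvalue analysis (the parallel eigenvalue being produced by the transverse fluctuation of $v'$ against the Gaussian weight, costing a factor $(1+|v|)^{-2}$ relative to $\lambda_\perp$) is correct, and the gradient part of the claimed inequality follows as you describe.

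There is, however, a genuine gap in your treatment of the zeroth-order term. You propose to get $\left|(1+|v|)^{-1/2}g\right|_{L^2}^2$ from $\sigma^{ij}v_iv_j\,|g|^2$ using the large-$|v|$ bound plus ``an elementary lower bound on a compact set covering small $|v|$''. No such pointwise lower bound exists: since $\sigma^{ij}(v)$ is a bounded positive matrix near the origin, $\sigma^{ij}(v)v_iv_j\approx \lambda_\parallel(v)|v|^2$ vanishes quadratically as $v\to 0$, so the weight in the second integral of \eqref{D-norm} is \emph{not} bounded below on $\{|v|\le 1\}$, and the zeroth-order part of the ${\bf D}$-norm alone cannot control $\int_{|v|\le 1}|g|^2\,dv$ (test with $g$ concentrated near $v=0$). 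The missing ingredient is a Poincar\'e-type step that borrows from the gradient part of the norm: e.g.\ integrating along rays from the sphere $\{|v|=2\}$ gives $\int_{|v|\le 1}|g|^2\,dv\lesssim \int_{|v|\le 2}|\nabla_vg|^2\,dv+\int_{1\le |v|\le 2}|g|^2\,dv$, and both right-hand terms are controlled by the pieces you have already established, because the weights $(1+|v|)^{-3}$ and $(1+|v|)^{-1}$ are bounded below on the compact set $\{|v|\le 2\}$ and $\sigma^{ij}v_iv_j\gtrsim 1$ on the annulus. This is precisely how the cited Corollary 1 of Guo closes the argument; with that step added, your proof is complete.
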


We will now provide coercivity estimates for the linearized operators $\CL$ and $\CL_\FM$, along with an upper bound for the difference operator $\CL_d$.
\begin{lemma}%\label{LL-co-lem}
The linearized operators $\CL$ and $\CL_\FM$ satisfy the following coercivity estimates:
\begin{itemize}
\item [(i)] For $\CL$ as given by \eqref{lm-def}, we have
\begin{equation}
\big\lag \mathcal{L}h, h\big\rag\gtrsim |({\bf I}-{\bf P})h|_{\bf D}^2.\label{coLL}
\end{equation}
\item [(ii)] Let $\FM=\FM_{[\rho,u,T]},$ where $[\rho,u,T]$ is determined by Proposition \ref{em-ex-lem}, then there exists a constant $\de>0$ such that
\begin{align}
\big(\mathcal{L}_{\mathbf{M}}f, f\big)\geq\de \big| ({\bf I}-{\bf P}_{\mathbf{M}})f\big|_{\bf D}^2. \label{coLL0}
\end{align}
Moreover, for $\ell\geq 0$, $q\in [0,1)$, and $|\beta|>0$, we have
\begin{align}\label{wLLLM}
&\big(\partial^{\alpha}_{\beta}\mathcal{L}_{\mathbf{M}}f, \lag v\rag^{2(\ell-|\beta|)}\mathbf{M}^{-q} \partial_\beta^{\alpha}f\big)\nonumber\\
\geq& \de\big|\lag v\rag^{(\ell-|\beta|)}\mathbf{M}^{-q/2}\partial^{\alpha}_{\beta}f\big|_{\bf D}^2-\big(\eta+C\epsilon_1\big)\sum_{\al'\leq \alpha}\sum_{|\beta'|=|\beta|}\big|\lag v\rag^{(\ell-|\beta'|)}\mathbf{M}^{-q/2}\partial^{\al'}_{\beta'}f\big|_{\bf D}^2\\
&-C(\eta)\sum_{\al'\leq \alpha}\sum_{|\beta'|<|\beta|}\big|\lag v\rag^{(\ell-|\beta'|)}\mathbf{M}^{-q/2}\partial^{\al'}_{\beta'}f\big|^2_{\bf D}.\nonumber
\end{align}
For $|\beta|=0$, we have
\begin{align}\label{wLLLM0}
&\big(\partial^{\alpha}\mathcal{L}_{\mathbf{M}}f, \lag v\rag^{2\ell}\mathbf{M}^{-q} \partial^{\alpha}f\big)\nonumber\\
\geq& \de\big|\lag v\rag^{\ell}\mathbf{M}^{-q/2}\partial^{\alpha}f\big|_{\bf D}^2
-C\epsilon_1\sum_{\al'< \alpha}{\bf 1}_{|\alpha|\geq1}\big|\lag v\rag^{\ell}\mathbf{M}^{-q/2}\partial^{\al'}f\big|_{\bf D}^2\\
&-C(\eta)\sum_{\al'\leq \alpha}\big|\lag v\rag^{\ell}\mathbf{M}^{-q/2}\partial^{\al'}f\big|^2_{L^2(B_{C(\eta)})}.\nonumber
\end{align}
Here $\eta>0$ is some small constant,  $B_{C(\eta)}$ is the ball in ${\mathbb R}^3$ with center $0$ and radius $C(\eta)$.\\
In addition, for $|\alpha|>0$, there exists a polynomial $\CU(\cdot)$ with $\CU(0)=0$ such that
\begin{align}\label{coLLh}
\big\langle\partial^{\alpha}\mathcal{L}_{\mathbf{M}}f, \partial^{\alpha}f\big\rangle\geq&\frac{3\de}{4}\|\partial^{\alpha}({\bf I}-{\bf P}_{\mathbf{M}})f\|_{\bf D}^2-C\CU(\|\nabla_x[\rho,u,T]\|_{W^{|\alpha|-1,\infty}})\\
&\times\Big( \frac{1}{\varepsilon}\|({\bf I}-{\bf P}_{\mathbf{M}})f\|_{H^{|\alpha|-1}_{\bf D}}^2+\varepsilon\|f\|^2_{H^{|\alpha|}}\Big).\nonumber
\end{align}
\end{itemize}
\end{lemma}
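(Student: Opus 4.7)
My plan is to bootstrap all four estimates from the classical spectral gap for the linearized Landau operator around a global Maxwellian. For (i), I would cite directly the coercivity of Guo \cite{Guo-CMP-2002} since $\mathcal{L}$ is defined via $\mu$. For (ii), the key observation is that after the translation $v \mapsto v + u(t,x)$, the operator $\mathcal{L}_{\mathbf{M}}$ is structurally identical to a linearized Landau operator around a centered Maxwellian $\rho(2\pi RT)^{-3/2}\exp(-|v|^2/2RT)$. Proposition \ref{em-ex-lem} together with Corollary \ref{corollary-1} supplies uniform bounds $0 < c \leq T(t,x), \rho(t,x) \leq C$ on $[0,\infty)\times\mathbb{R}^3$, so the spectral gap transfers with a constant $\delta > 0$ independent of $(t,x)$ and $\varepsilon$; the null space $\mathcal{N}_{\mathbf{M}}$ corresponds to $\mathcal{N}$ under the translation, so $({\bf I}-{\bf P}_{\mathbf{M}})f$ is the natural microscopic part in the translated frame.

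For the weighted estimates \eqref{wLLLM} and \eqref{wLLLM0}, I would use the splitting $-\mathcal{L}_{\mathbf{M}} = \mathcal{A}_{\mathbf{M}} + \mathcal{K}_{\mathbf{M}}$ from \eqref{AK0} and test against $\langle v\rangle^{2(\ell-|\beta|)}\mathbf{M}^{-q}\partial^\alpha_\beta f$. The principal second-order part of $\mathcal{A}_{\mathbf{M}}$ yields, after integration by parts, the positive dissipation term $\delta|\langle v\rangle^{\ell-|\beta|}\mathbf{M}^{-q/2}\partial^\alpha_\beta f|_{\mathbf{D}}^2$; this uses $q<1$ so that the Gaussian budget is preserved. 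Commutators of $\partial^\alpha_\beta$ with the weight and with $\mathcal{A}_{\mathbf{M}}, \mathcal{K}_{\mathbf{M}}$ split into two families: those that keep $|\beta'|=|\beta|$ (arising when $\partial_\beta$ lands on $\sigma^{ij}_{\mathbf{M}}$ or on $(v-u)/2RT$) are absorbed via Young's inequality with parameter $\eta$ on the same-order dissipation, while those that fall on the macroscopic coefficients $[\rho,u,T]$ are small by $\|\nabla_x[\rho,u,T]\|_{L^\infty}\lesssim\epsilon_1$ from \eqref{em-decay}. The genuinely lower-order $|\beta'|<|\beta|$ contribution comes from $\mathcal{K}_{\mathbf{M}}$ via differentiation of $\phi^{ij}\ast\mathbf{M}^{1/2}$, giving the $C(\eta)$ term. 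The $|\beta|=0$ version \eqref{wLLLM0} follows similarly, with the $L^2(B_{C(\eta)})$ remainder obtained from the standard large-velocity decay of $\mathcal{K}_{\mathbf{M}}$.

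For \eqref{coLLh}, I would exploit $\mathcal{L}_{\mathbf{M}}{\bf P}_{\mathbf{M}} f = 0$ and write
\begin{equation*}
\partial^\alpha \mathcal{L}_{\mathbf{M}} f = \mathcal{L}_{\mathbf{M}}\partial^\alpha({\bf I}-{\bf P}_{\mathbf{M}})f + [\partial^\alpha,\mathcal{L}_{\mathbf{M}}]({\bf I}-{\bf P}_{\mathbf{M}})f - \mathcal{L}_{\mathbf{M}}[\partial^\alpha,{\bf P}_{\mathbf{M}}]f.
\end{equation*}
Pairing with $\partial^\alpha f$ and applying (ii) to the first piece supplies $\delta\|\partial^\alpha({\bf I}-{\bf P}_{\mathbf{M}})f\|_{\mathbf{D}}^2$, from which I keep $\tfrac{3\delta}{4}$ after absorbing the remaining commutator errors. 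Every such commutator is a finite sum of products $\partial^{\alpha''}[\rho,u,T] \cdot (\text{Landau-type operator of order}~\leq |\alpha|-1)$ whose coefficients are bounded by $\mathcal{U}(\|\nabla_x[\rho,u,T]\|_{W^{|\alpha|-1,\infty}})$. Writing $\partial^\alpha f = \partial^\alpha{\bf P}_{\mathbf{M}}f + \partial^\alpha({\bf I}-{\bf P}_{\mathbf{M}})f$ and applying Cauchy--Schwarz with weights $\varepsilon^{-1/2}$ versus $\varepsilon^{1/2}$ produces exactly the asymmetric upper bound $\tfrac{1}{\varepsilon}\|({\bf I}-{\bf P}_{\mathbf{M}})f\|_{H^{|\alpha|-1}_{\mathbf{D}}}^2 + \varepsilon\|f\|_{H^{|\alpha|}}^2$.

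The main obstacle I foresee is the combined presence of $\alpha$- and $\beta$-derivatives in \eqref{wLLLM}: the weight $\langle v\rangle^{2(\ell-|\beta|)}\mathbf{M}^{-q}$ itself depends on $(t,x)$ through $\mathbf{M}$, so commuting $\partial^\alpha$ through the weight generates extra terms proportional to $\partial^{\alpha'}[\rho,u,T]$ times the dissipative weighted norm, and I must verify that each such contribution can be routed either into the $\eta$-small same-order term, into the $\epsilon_1$-small piece, or into the $C(\eta)$ lower-order $|\beta'|<|\beta|$ piece, without ever creating a negative power of $\varepsilon$. Tracking that $q<1$ propagates through iterated integration by parts on $\mathcal{A}_{\mathbf{M}}$ without eating the Gaussian tail is the other delicate point; both are handled by taking $\epsilon_1$ and $\eta$ small after all commutators have been expanded.
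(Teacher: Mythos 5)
Your proposal follows essentially the same route as the paper: (i) and \eqref{coLL0} by transferring Guo's coercivity for the Landau operator (the paper invokes \cite[Lemma 5]{Guo-CMP-2002} and the decay \eqref{em-decay} rather than an explicit translation, but the content is identical), the weighted bounds \eqref{wLLLM}--\eqref{wLLLM0} via the splitting $-\mathcal{L}_{\mathbf{M}}=\mathcal{A}_{\mathbf{M}}+\mathcal{K}_{\mathbf{M}}$ of \eqref{AK0} with the same $\eta$/$\epsilon_1$/$C(\eta)$ absorption scheme modeled on Strain--Guo, and \eqref{coLLh} via the commutators $\Lbrack\partial^{\alpha},\mathcal{L}_{\mathbf{M}}\Rbrack$, $\Lbrack\partial^{\alpha},{\bf P}_{\mathbf{M}}\Rbrack$ bounded by $\CU(\|\nabla_x[\rho,u,T]\|_{W^{|\alpha|-1,\infty}})$ and an $\varepsilon$-weighted Cauchy--Schwarz. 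The only blemish is your displayed identity for $\partial^{\alpha}\mathcal{L}_{\mathbf{M}}f$, whose three terms are inconsistent as written (either drop $-\mathcal{L}_{\mathbf{M}}\Lbrack\partial^{\alpha},{\bf P}_{\mathbf{M}}\Rbrack f$, or replace the first term by $\mathcal{L}_{\mathbf{M}}({\bf I}-{\bf P}_{\mathbf{M}})\partial^{\alpha}f$); the error terms you then control are exactly the correct ones, so the argument goes through unchanged.
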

\begin{proof} \eqref{coLL} is directly taken from \cite[Lemma 5, p. 400]{Guo-CMP-2002}. Since $[\rho, u, T]$ satisfies \eqref{em-decay}, \eqref{coLL0} follows once more from a similar argument as presented in the proof of \cite[Lemma 5, p. 400]{Guo-CMP-2002}.

We will now prove \eqref{wLLLM}. Recall that $-\CL_\FM=\mathcal{A}_{\mathbf{M}}+\mathcal{K_{\mathbf{M}}}$, where $\mathcal{A_{\mathbf{M}}}$ and $\mathcal{K_{\mathbf{M}}}$ are defined by \eqref{AK0}. Direct calculations yield the following expression:
\begin{align}
&\big(\partial^{\alpha}_{\beta}\mathcal{A}_{\mathbf{M}}f,\lag v\rag^{2(\ell-|\beta|)}\mathbf{M}^{-q} \partial^{\alpha}_\beta f\big)\notag\\
=&-\big|\lag v\rag^{(\ell-|\beta|)}\mathbf{M}^{-q/2}\partial^{\alpha}_{\beta}f\big|_{\tilde{{\bf D}}}^2\notag\\
&-{\bf 1}_{\al+\beta>0}\sum\limits_{0<\al'+\beta'\leq\al+\beta} C_\al^{\al'}C_\beta^{\beta'}
\big(\pa^{\al'}_{\beta'}\si^{ij}_\FM\pa^{\al-\al'}_{\beta-\beta'}\pa_jf,\lag v\rag^{2(\ell-|\beta|)}\mathbf{M}^{-q} \partial^{\alpha}_\beta\pa_if\big)\notag\\
&-\sum\limits_{\al'\leq\al,\beta'\leq\beta}C_\al^{\al'}C_\beta^{\beta'}
\big(\pa_i\left(\lag v\rag^{(\ell-|\beta|)}\mathbf{M}^{-q/2}\right)\pa^{\al'}_{\beta'}\si^{ij}_\FM\pa^{\al-\al'}_{\beta-\beta'}\pa_jf, \partial^{\alpha}_\beta\pa_if\big)\notag\\
&-{\bf 1}_{\al+\beta>0}\sum\limits_{0<\al'+\beta'\leq\al+\beta}C_\al^{\al'}C_\beta^{\beta'}
\big(\pa^{\al'}_{\beta'}\left(\sigma^{ij}_{\mathbf{M}}\frac{(v_i-u_i)(v_j-u_j)}{4R^2T^{2}}\right)\pa^{\al-\al'}_{\beta-\beta'}\pa_jf,\lag v\rag^{2(\ell-|\beta|)}\mathbf{M}^{-q} \partial^{\alpha}_\beta\pa_if\big)\notag\\
&+\sum\limits_{\al'+\beta'\leq\al+\beta}C_\al^{\al'}C_\beta^{\beta'}
\big(\pa^{\al'}_{\beta'}\left(\sigma^{ij}_{\mathbf{M}}\frac{v_j-u_j}{2RT}\right)\pa^{\al-\al'}_{\beta-\beta'}\pa_jf,\lag v\rag^{2(\ell-|\beta|)}\mathbf{M}^{-q} \partial^{\alpha}_\beta\pa_if\big).\label{FA-ep}
\end{align}
Here, we define:
\begin{align}
\big|\lag v\rag^{(\ell-|\beta|)}\mathbf{M}^{-q/2}\partial^{\alpha}_{\beta}f\big|_{\tilde{{\bf D}}}^2=&
\int_{{\mathbb R}^3}\lag v\rag^{2(\ell-|\beta|)}\mathbf{M}^{-q}\sigma_\FM^{ij}\partial_{v_i}\partial^{\alpha}_{\beta} f\partial_{v_j}\partial^{\alpha}_{\beta}f\, dv
\notag\\&+\frac{1}{4R^2T^{2}}\int_{{\mathbb R}^3}\lag v\rag^{2(\ell-|\beta|)}\mathbf{M}^{-q}\sigma_\FM^{ij}(v_i-u_i)(v_j-u_j)|\partial^{\alpha}_{\beta}f|^2\, dv.\notag
\end{align}

Considering that $[\rho,u,T]$ satisfies \eqref{em-decay}, we can establish the following inequality:
\begin{align}
\big|\lag v\rag^{(\ell-|\beta|)}\mathbf{M}^{-q/2}\partial^{\alpha}_{\beta}f\big|_{\tilde{{\bf D}}}^2\gtrsim&
\big|\lag v\rag^{(\ell-|\beta|)}\mathbf{M}^{-q/2}\partial^{\alpha}_{\beta}f\big|_{\bf D}^2-C\epsilon_1\big|\lag v\rag^{(\ell-|\beta|)}\mathbf{M}^{-q/2}\partial^{\alpha}_{\beta}f\big|_{\bf D}^2.\notag
\end{align}

Subsequently, by employing \eqref{em-decay} and conducting calculations analogous to those found in the proof of \cite[Lemma 8, p. 319]{Strain-Guo-ARMA-2008}, we observe that the remaining terms on the R.H.S. of \eqref{FA-ep} can be bounded by
\begin{align}
\big(\eta+C\epsilon_1\big)\sum_{\al'\leq \alpha}\sum_{|\beta'|=|\beta|}\big|\lag v\rag^{(\ell-|\beta'|)}\mathbf{M}^{-q/2}\partial^{\al'}_{\beta'}f\big|_{\bf D}^2+C(\eta)\sum_{\al'\leq \alpha}\sum_{|\beta'|<|\beta|}\big|\lag v\rag^{(\ell-|\beta'|)}\mathbf{M}^{-q/2}\partial^{\al'}_{\beta'}f\big|^2_{\bf D},\notag
\end{align}
where $0<\eta\ll1.$

Moving on to estimate $\CK_\FM$, we start with \eqref{AK0}, from which we derive the following expression:
\begin{align}
&\big(\partial^{\alpha}_{\beta}\CK_{\mathbf{M}}f,\lag v\rag^{2(\ell-|\beta|)}\mathbf{M}^{-q} \partial^{\alpha}_\beta f\big)\notag\\
=&-\left(\partial_i\left[\lag v\rag^{2(\ell-|\beta|)}\mathbf{M}^{-q} \partial^{\alpha}_{\beta}\left\{\mathbf{M}^{\frac{1}{2}} \Big(\phi^{ij}\ast \mathbf{M}^{\frac{1}{2}}\Big(\partial_jf+\frac{v_j-u_j}{2RT}f\Big)\Big)\right\}\right]
,\partial^{\alpha}_\beta f\right)\notag\\
&+\left(\partial_i\left[\lag v\rag^{2(\ell-|\beta|)}\mathbf{M}^{-q} \right]\partial^{\alpha}_{\beta}\left\{\mathbf{M}^{\frac{1}{2}} \Big(\phi^{ij}\ast \mathbf{M}^{\frac{1}{2}}\Big(\partial_jf+\frac{v_j-u_j}{2RT}f\Big)\Big)\right\}
,\partial^{\alpha}_\beta f\right)\notag\\
&+\left(\lag v\rag^{2(\ell-|\beta|)}\mathbf{M}^{-q} \partial^{\alpha}_{\beta}\left\{\frac{v_i-u_i}{2RT}\mathbf{M}^{\frac{1}{2}} \Big(\phi^{ij}\ast \mathbf{M}^{\frac{1}{2}}\Big(\partial_jf+\frac{v_j-u_j}{2RT}f\Big)\Big)\right\}
,\partial^{\alpha}_\beta f\right).\notag
\end{align}

As for any finite $l\in\mathbb{N}$ and any multi-indices $\alpha, \alpha', \beta',$ it can be deduced from \eqref{em-decay} that
\[\Big|\na_v^l\left\{\lag v\rag^{(\ell-|\beta|)}\mathbf{M}^{-q/2}\right\}\FM^{1/2}\Big|\leq C\FM^{\frac{1-q}{4}},\]
and
\[\Big|\pa_{\beta'}^{\al'}\left\{\frac{v_i-u_i}{2RT}\mathbf{M}^{\frac{1}{2}}\right\}\Big|\leq C\FM^{\frac{1}{4}}.\]
Using the same reasoning that was employed to derive estimates for ${\bf K}$ in \cite[Lemma 8, p. 319]{Strain-Guo-ARMA-2008}, we can express it as
\begin{align}
\big(\partial^{\alpha}_{\beta}&\CK_{\mathbf{M}}f,\lag v\rag^{2(\ell-|\beta|)}\mathbf{M}^{-q} \partial^{\alpha}_\beta f\big)\notag\\
\leq&\left\{\eta\big|\lag v\rag^{(\ell-|\beta|)}\partial^{\alpha}_{\beta}f\big|_{\bf D}+C(\eta)\big|\lag v\rag^{(\ell-|\beta|)}f\big|_{L^2(B_{C(\eta)})}\right\}\big|\lag v\rag^{(\ell-|\beta|)}\mathbf{M}^{-q/2}\partial^{\alpha}_{\beta}f\big|_{\bf D}.\notag
\end{align}

When combining the above estimates for $\CA_\FM$ and $\CK_\FM$, we can conclude that \eqref{wLLLM} holds true. \eqref{wLLLM0} can be proven similarly as described above.

Regarding \eqref{coLLh}, after applying \eqref{coLL0}, we can derive the following:
\begin{align*}
&\big\langle \partial^{\alpha}\mathcal{L}_{\mathbf{M}}f,\partial^{\alpha}_\beta f\big\rangle\\
=&\big\langle \mathcal{L}_{\mathbf{M}}\partial^{\alpha}({\bf I}-{\bf P}_{\mathbf{M}})f,({\bf I}-{\bf P}_{\mathbf{M}})\partial^{\alpha}f\big\rangle
+\big\langle \Lbrack\partial^{\alpha},\mathcal{L}_{\mathbf{M}}\Rbrack ({\bf I}-{\bf P}_{\mathbf{M}})f,\partial^{\alpha}f\big\rangle\\
=&\big\langle \mathcal{L}_{\mathbf{M}}\partial^{\alpha}({\bf I}-{\bf P}_{\mathbf{M}})f,\partial^{\alpha}({\bf I}-{\bf P}_{\mathbf{M}})f+\Lbrack\partial^{\alpha},{{\bf P}_{\mathbf{M}}}\Rbrack f\big\rangle
+\big\langle \Lbrack\partial^{\alpha},\mathcal{L}_{\mathbf{M}}\Rbrack ({\bf I}-{\bf P}_{\mathbf{M}})f,\partial^{\alpha}f\big\rangle\\
\geq&\delta\|\partial^{\alpha}({\bf I}-{\bf P}_{\mathbf{M}})f\|_{\bf D}^2+\big\langle \mathcal{L}_{\mathbf{M}}\partial^{\alpha}({\bf I}-{\bf P}_{\mathbf{M}})f,\Lbrack\partial^{\alpha},{{\bf P}_{\mathbf{M}}}\Rbrack f\big\rangle\\&+\big\langle \Lbrack \partial^{\alpha},\mathcal{L}_{\mathbf{M}}\Rbrack ({\bf I}-{\bf P}_{\mathbf{M}})f,\partial^{\alpha}f\big\rangle\\
\geq&\delta\|\partial^{\alpha}({\bf I}-{\bf P}_{\mathbf{M}})f\|_{\bf D}^2-C\|\Lbrack\partial^{\alpha},{{\bf P}_{\mathbf{M}}}\Rbrack f\|_{\bf D}^2+\big\langle \Lbrack \partial^{\alpha},\mathcal{L}_{\mathbf{M}}\Rbrack ({\bf I}-{\bf P}_{\mathbf{M}})f,\partial^{\alpha}f\big\rangle.
\end{align*}
Here, $\Lbrack A,B\Rbrack=AB-BA$ denotes the commutator of operators $A$ and $B$.

Noting that if $|\alpha| > 0$, $\partial^{\alpha}$ acts on the local Maxwellian $\mathbf{M}$ at least once in the commutators $\Lbrack\partial^{\alpha},{{\bf P}{\mathbf{M}}}\Rbrack$ and $\Lbrack \partial^{\alpha},\mathcal{L}{\mathbf{M}}\Rbrack$. By the definition of the operator ${{\bf P}_{\mathbf{M}}}$, we can conclude that there exists a polynomial $\CU(\cdot)$ with $\CU(0) = 0$ such that
\begin{align*}
\|\Lbrack\partial^{\alpha},{{\bf P}_{\mathbf{M}}}\Rbrack f\|_{\bf D}^2\lesssim&
\CU(\|\nabla_x[\rho,u,T]\|_{W^{|\alpha|-1,\infty}})\|{{\bf P}_{\mathbf{M}}}f\|^2_{H^{|\alpha|-1}}\\
\lesssim&
\CU(\|\nabla_x[\rho,u,T]\|_{W^{|\alpha|-1,\infty}})
\|f\|^2_{H^{|\alpha|-1}}.
\end{align*}

Furthermore, by using \eqref{AK0} and Cauchy-Schwarz's inequality with $\varepsilon > 0$, we obtain
\begin{align*}
&\big|\big\langle \Lbrack \partial^{\alpha},\mathcal{L}_{\mathbf{M}}\Rbrack ({\bf I}-{\bf P}_{\mathbf{M}})f,\partial^{\alpha}f\big\rangle\big|\\
\lesssim &
\CU(\|\nabla_x[\rho,u,T]\|_{W^{|\alpha|-1,\infty}})\|({\bf I}-{\bf P}_{\mathbf{M}})f\|_{H^{|\alpha|-1}_{\bf D}}\|\partial^{\alpha}f\|_{\bf D}\\
\lesssim&
\CU(\|\nabla_x[\rho,u,T]\|_{W^{|\alpha|-1,\infty}})\|({\bf I}-{\bf P}_{\mathbf{M}})f\|_{H^{|\alpha|-1}_{\bf D}}
\big(\|\partial^{\alpha}({\bf I}-{\bf P}_{\mathbf{M}})f\|_{\bf D}+\|\partial^{\alpha}{{\bf P}_{\mathbf{M}}}f\|_{\bf D}\big)\\
\lesssim& \epsilon_1\|\partial^{\alpha}({\bf I}-{\bf P}_{\mathbf{M}})f\|_{\bf D}^2+
\CU(\|\nabla_x[\rho,u,T]\|_{W^{|\alpha|-1,\infty}})
\Big( \frac{1}{\varepsilon}\|({\bf I}-{\bf P}_{\mathbf{M}})f\|_{H^{|\alpha|-1}_{\bf D}}^2+\varepsilon\|f\|^2_{H^{|\alpha|}}\Big).
\end{align*}

Combining the aforementioned estimates yields the expression for \eqref{coLLh}.
\end{proof}

\subsubsection{ The nonlinear collision operators}%\label{ncl}
Recalling that the nonlinear collision operators $\Gamma$ and $\Gamma_{\mathbf{M}}$ are defined as
\begin{align*}
\Gamma(f,g)=\mu^{-\frac{1}{2}} \mathcal{C}( \mu^{\frac{1}{2}} f, \mu^{\frac{1}{2}} g)
\end{align*}
and
\begin{align*}
\Gamma_{\mathbf{M}}(f,g)=&\mathbf{M}^{-\frac{1}{2}} \mathcal{C}( \mathbf{M}^{\frac{1}{2}} f, \mathbf{M}^{\frac{1}{2}} g)
\end{align*}
respectively, we now focus on the trilinear estimates of these operators. The first result is concerned with estimates of $\Gamma$ and $\Gamma_{\mathbf{M}}$  without velocity weight.
\begin{lemma}\label{GLL0} It holds that
\begin{align}\label{GLL}
\big|\big(\partial^{\alpha}\Gamma(f,g),\partial^{\alpha} h\big)\big|
\lesssim& \sum_{\al'+\tilde{\alpha}= \alpha}\Big(\big|\partial^{\al'}f\big|_{L^2}\big|\partial^{\tilde{\alpha}}g\big|_{\bf D}+\big|\partial^{\al'}f\big|_{\bf D}
\big|\partial^{\tilde{\alpha}}g\big|_{L^2}\Big)\big|\partial^{\alpha}h\big|_{\bf D}
\end{align}
and
\begin{align}\label{GLLM}
&\big|\big(\partial^{\alpha}\Gamma_{\mathbf{M}}(f,g), \partial^{\alpha}h\big)\big|\nonumber\\
\lesssim& \sum_{\al'+\tilde{\alpha}= \alpha}\Big(\big|\partial^{\al'}f\big|_{L^2}\big|\partial^{\tilde{\alpha}}g\big|_{\bf D}+\big|\partial^{\al'}f\big|_{\bf D}
\big|\partial^{\tilde{\alpha}}g\big|_{L^2}\Big)\big|\partial^{\alpha}h\big|_{\bf D}\\
&+\sum_{\al'+\tilde{\alpha}+\overline{\alpha}= \alpha} {\bf 1}_{|\overline{\alpha}|\geq1}\CU(\|\nabla_x[\rho,u,T]\|_{W^{|\overline{\alpha}|-1,\infty}})\Big(\big|\partial^{\al'}f\big|_{L^2}\big|\partial^{\tilde{\alpha}}g\big|_{\bf D}+\big|\partial^{\al'}f\big|_{\bf D}
\big|\partial^{\tilde{\alpha}}g\big|_{L^2}\Big)\big|\partial^{\alpha}h\big|_{\bf D}.\nonumber
\end{align}
Here and in the sequel, ${\bf 1}_{\Omega}$ is the usual indicator function of the set $\Omega$.
\end{lemma}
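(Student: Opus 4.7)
The plan is to follow the standard Guo-type trilinear estimate framework (originally \cite{Guo-CMP-2002}), adapted to the local Maxwellian case for \eqref{GLLM}. First I would start from the explicit expressions for $\Gamma$ and $\Gamma_{\FM}$ analogous to the identity in Lemma \ref{AK0d}. Both have the same structural form: a divergence part
$$\partial_i\Big[\big(\phi^{ij}\ast \FM^{1/2}f\big)\partial_j g - \big(\phi^{ij}\ast \FM^{1/2}\partial_j f\big)\, g\Big]$$
plus a ``lower order'' part carrying the factor $(v_j-u_j)/(2RT)$ (with $u=0$, $T=T_c$ and $\mu$ in place of $\FM$ for $\Gamma$). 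The divergence form allows integration by parts against $\partial^{\alpha}h$, transferring $\partial_i$ onto $\partial^{\alpha}h$ and exposing the $\sigma^{ij}$-structure that appears in the dissipation norm $|\cdot|_{\bf D}$.

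For \eqref{GLL}, I would apply $\partial^{\alpha}$ via the Leibniz rule; since $\mu$ is independent of $(t,x)$, all spatial derivatives split between $f$ and $g$ only. The key pointwise bounds (Guo's convolution estimates, \cite[Lemma 3]{Guo-CMP-2002}) give control of $\phi^{ij}\ast(\mu^{1/2}\partial^{\alpha'}f)$ and $\phi^{ij}\ast(\mu^{1/2}\partial_j\partial^{\alpha'}f)$ by quantities compatible with the $\sigma^{ij}$-weights. After pairing with $\partial_i\partial^{\alpha}h$ (or with $\partial^{\alpha}h$ for the lower-order part) and invoking Cauchy--Schwarz in $v$, one rearranges the result as the desired product of $L^2$ and ${\bf D}$ norms, summed over splittings $\alpha = \alpha' + \tilde\alpha$.

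For \eqref{GLLM}, the same strategy applies, except now $\FM = \FM_{[\rho,u,T]}$ depends on $(t,x)$, so $\partial^{\alpha}$ may additionally fall on $\FM^{1/2}$, on $u$, or on $T$. I would therefore use a Leibniz decomposition $\alpha = \alpha' + \tilde\alpha + \overline{\alpha}$ in which $\overline{\alpha}$ collects all derivatives absorbed by the fluid-dependent coefficients. When $\overline{\alpha}=0$, we recover exactly the $\Gamma$-type bound. When $|\overline{\alpha}|\geq 1$, one encounters factors of the form $\partial^{\overline{\alpha}}(\FM^{1/2})$ and $\partial^{\overline{\alpha}}\big(\tfrac{v-u}{2RT}\FM^{1/2}\big)$; each of these is pointwise bounded by a polynomial $\CU(\|\nabla_x[\rho,u,T]\|_{W^{|\overline{\alpha}|-1,\infty}})$ times a Gaussian factor comparable (by Corollary \ref{corollary-1} and \eqref{tt1}) to $\mu^{1/4}$, which is still enough decay to apply the same convolution bounds as in the global case. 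Summing over the Leibniz splittings produces the second term on the right-hand side of \eqref{GLLM}.

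The main obstacle is verifying that the pointwise convolution estimates survive intact when $\FM^{1/2}$ (rather than $\mu^{1/2}$) appears inside $\phi^{ij}\ast(\cdots)$ and when $x$-derivatives have landed on it: one must confirm that the resulting expressions still admit the sharp bounds in terms of $\sigma^{ij}(v)$ that underlie Guo's argument. This reduces, via \eqref{tt1} and the smallness of $\epsilon_1$ in Proposition \ref{em-ex-lem}, to the analogous bounds for $\mu$, with the polynomial $\CU(\cdot)$ absorbing the fluid-derivative prefactors. Once this is in place, the trilinear bounds \eqref{GLL} and \eqref{GLLM} follow by a routine Cauchy--Schwarz summation over the Leibniz decomposition.
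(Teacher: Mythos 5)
Your proposal is correct and follows essentially the same route as the paper: for \eqref{GLL} the paper simply invokes \cite[Lemma 3, p.~406]{Guo-CMP-2002} (whose proof is exactly the divergence-form/Leibniz/convolution argument you describe), and for \eqref{GLLM} it defers to the weighted estimate \eqref{GGGLm} of Lemma \ref{wggmL}, whose proof uses precisely your decomposition according to where the spatial derivatives fall, with the fluid-derivative bounds $|\partial^{\overline{\alpha}}\mathbf{M}^{1/2}|\lesssim \CU(\|\nabla_x[\rho,u,T]\|_{W^{|\overline{\alpha}|-1,\infty}})\FM^{3/8}$ playing the role of your Gaussian-absorption step.
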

\begin{proof} Using \cite[Lemma 3, p. 406]{Guo-CMP-2002} with $\ta=0$ and $\beta=0$ gives us \eqref{GLL}. Since \eqref{GLLM} is a special case of \eqref{GGGLm} below, we'll omit the details for brevity. This concludes the proof of Lemma \ref{GLL0}.
\end{proof}

The following lemma is dedicated to trilinear estimates involving the nonlinear operators $\Gamma$ and $\Gamma_{\mathbf{M}}$,  incorporating velocity weights and velocity derivatives.
\begin{lemma}\label{wggmL}
\begin{itemize}
\item [(i)]  For any $\ell\geq|\alpha|$, the following inequality holds
\begin{align}\label{wGGL}
&\big|\big(\partial^{\alpha}\Gamma(f,g), w^2_{|\alpha|}\partial^{\alpha}h\big)\big|\\
\lesssim&  \sum_{\al'\leq\alpha} \Big(\big|\lag v\rag^{\ell-|\al|}\partial^{\al'}f\big|_{L^2}|w_{|\alpha|}\partial^{\al-\al'}g|_{\bf D}
+\big|\lag v\rag^{\ell-|\al|}\partial^{\al'}f\big|_{\bf D}\big|w_{|\alpha|}\partial^{\al-\al'}g|_{L^2}\Big)
\big|w_{|\alpha|}\partial^{\alpha}h|_{\bf D}.\nonumber
\end{align}
\item [(ii)]Let $q\in[0,1)$ and $\ell\geq0$, the following inequality holds
\begin{align}\label{GGGLm}
&\big|\big(\partial^{\alpha}_{\beta}\Gamma_{\mathbf{M}}(f,g), {\lag v\rag}^{2(\ell-|\beta|)}\mathbf{M}^{-q}\partial^{\alpha}_{\beta}h\big)\big|\\
\lesssim& \sum_{\al'+\al''= \alpha\atop{\beta'+\beta''\leq \beta}} \Big(\big|{\lag v\rag}^{(\ell-|\beta'|)}\partial^{\al'}_{\beta'}f\big|_{L^2}\big|{\lag v\rag}^{(\ell-|\beta''|)}\mathbf{M}^{-q/2}\partial^{\al''}_{\beta''}g\big|_{\bf D}\nonumber\\
&\qquad\qquad+\big|{\lag v\rag}^{(\ell-|\beta'|)}\partial^{\al'}_{\beta'}f\big|_{\bf D}
\big|{\lag v\rag}^{(\ell-|\tilde{\beta}|)}\mathbf{M}^{-q/2}\partial^{\tilde{\alpha}}_{\tilde{\beta}}g\big|_{L^2}\Big)\big|{\lag v\rag}^{(\ell-|\beta|)}\mathbf{M}^{-q/2}\partial^{\alpha}_{\beta}h\big|_{\bf D}\nonumber\\
&+\sum_{|\al'+\al''|\leq|\alpha|-m-1\atop{\beta'+\beta''\leq \beta}}{\bf 1}_{m\geq0}\CU(\|\nabla_x[\rho,u,T]\|_{W^{m,\infty}})\Big(\big|{\lag v\rag}^{(\ell-|\beta'|)}\partial^{\al'}_{\beta'}f\big|_{L^2}\big|{\lag v\rag}^{(\ell-|\beta''|)}\mathbf{M}^{-q/2}\partial^{\al''}_{\beta''}g\big|_{\bf D}\notag\\&\qquad\qquad+\big|{\lag v\rag}^{(\ell-|\beta'|)}\partial^{\al'}_{\beta'}f\big|_{\bf D}
\big|{\lag v\rag}^{(\ell-|\beta''|)}\mathbf{M}^{-q/2}\partial^{\al''}_{\beta''}g\big|_{L^2}\Big)\big|{\lag v\rag}^{(\ell-|\beta|)}\mathbf{M}^{-q/2}\partial^{\alpha}_{\beta}h\big|_{\bf D}.\nonumber
\end{align}
\end{itemize}
\end{lemma}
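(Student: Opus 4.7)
The plan is to adapt the kernel-based trilinear estimates of Guo \cite{Guo-CMP-2002} and Strain-Guo \cite{Strain-Guo-ARMA-2008} to the weighted setting involving the time-velocity weight $w_{|\alpha|}$ and the local Maxwellian $\mathbf{M}$. For part (i), I would first rewrite $\Gamma$ in the divergence form obtained by replacing $\mathbf{M}$ with $\mu$ in Lemma \ref{AK0d}, apply Leibniz to $\partial^{\alpha}$ to redistribute spatial derivatives between $f$ and $g$ (none fall on $\mu$, which is $(t,x)$-independent), and integrate by parts in $v$ to remove the outer $\partial_{i}$. Each resulting term has the form of a convolution $\bigl(\phi^{ij}\ast\mu^{1/2}\partial^{\alpha'}f\bigr)\partial^{\alpha-\alpha'}g$ paired against $w_{|\alpha|}^{2}\partial^{\alpha}h$. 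The crucial observation is that $w_{|\alpha|}(t,v)\mu^{1/4}(v)$ is uniformly bounded in $t\geq 0$ since the exponential factor in $w_{|\alpha|}$ carries $1/(8RT_{c}\ln(\mathrm{e}+t))$, which is absorbed by the Gaussian decay of $\mu^{1/4}$. Consequently the weight can be pulled inside the convolution, and the standard kernel estimate $|\phi^{ij}\ast g|\lesssim\langle v\rangle^{\gamma+2}|g|_{L^{2}}$ combined with Cauchy–Schwarz and the pointwise lower bound of $|\cdot|_{\bf D}$ from Lemma \ref{lower norm} yields \eqref{wGGL} exactly as in \cite[Lemma 10]{Strain-Guo-ARMA-2008}.

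For part (ii), I proceed in the same spirit, but now $\partial^{\alpha}_{\beta}$ can land on the local Maxwellian $\mathbf{M}(t,x,v)$ wherever it appears — inside the convolution kernel $\phi^{ij}\ast\mathbf{M}^{1/2}f$, in the zeroth-order coefficient $\frac{v_{j}-u_{j}}{2RT}\mathbf{M}^{1/2}$, and in the weight $\mathbf{M}^{-q/2}$ multiplying $\partial^{\alpha}_{\beta}h$. Using Leibniz on each occurrence, I split $\partial^{\alpha}_{\beta}\Gamma_{\mathbf{M}}(f,g)$ into (a) a main part in which all $\alpha$-derivatives fall on $f$ and $g$, and (b) commutator parts in which at least $m+1$ spatial derivatives hit $\mathbf{M}^{\pm 1/2}$ or the coefficient $\frac{v_{j}-u_{j}}{2RT}$. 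Velocity derivatives $\partial_{\beta'}$ are distributed freely among $\phi^{ij}$, the coefficient, $f$, $g$, and the weight; derivatives on $\phi^{ij}$ produce factors bounded by $\langle v\rangle^{\gamma+2-|\beta'|}$, which are harmless.

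For the main part, the argument of part (i) applies verbatim once one checks that the weight $\langle v\rangle^{\ell-|\beta|}\mathbf{M}^{-q/2}$, with $q<1$, is compatible with the Gaussian factor $\mathbf{M}^{1/2}$ inside the convolution; indeed $\mathbf{M}^{(1-q)/2}$ retains Gaussian decay, which absorbs every polynomial weight and velocity moment that arises. This produces the first sum on the right-hand side of \eqref{GGGLm}. For the commutator part, each spatial derivative $\partial^{\overline{\alpha}}$ falling on $\mathbf{M}^{\pm 1/2}$ or on the coefficient $\frac{v_{j}-u_{j}}{2RT}$ produces a polynomial in the derivatives of $[\rho,u,T]$ times a Gaussian of type $\mathbf{M}^{(1-q)/2}$ (with possibly polynomial growth in $v$), uniformly bounded by $\CU(\|\nabla_{x}[\rho,u,T]\|_{W^{m,\infty}})\mathbf{M}^{1/4}$ for a suitable polynomial $\CU$ with $\CU(0)=0$, where $m+1$ counts the spatial derivatives consumed by $\mathbf{M}$ and its coefficients. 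Pulling this scalar factor out of the integral reduces the commutator part to weighted trilinear integrals of the same structure as the main part, and the estimate proceeds as before, giving the second sum of \eqref{GGGLm}.

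The main obstacle is the careful combinatorial bookkeeping of the Leibniz expansions: $\partial^{\alpha}_{\beta}$ may act simultaneously on four different $\mathbf{M}$- or $[\rho,u,T]$-dependent objects, and at each step one must verify that the surviving Gaussian factor absorbs both the polynomial weight $\langle v\rangle^{\ell-|\beta|}$ and the Maxwellian weight $\mathbf{M}^{-q/2}$ — this is where the restriction $q\in[0,1)$ is essential. Once this bookkeeping is organized, the arguments of \cite[Lemma 3]{Guo-CMP-2002} and \cite[Lemma 10]{Strain-Guo-ARMA-2008} transfer directly to deliver \eqref{GGGLm}.
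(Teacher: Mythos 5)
Your proposal follows essentially the same route as the paper: for (i) the paper simply invokes \cite[Lemma 10, p.~327]{Strain-Guo-ARMA-2008} (your re-derivation is that same computation), and for (ii) it performs exactly your Leibniz splitting into a main part with no spatial derivatives on $\mathbf{M}$ (treated as in Strain--Guo, using $\rho\sim1$, $u\sim0$, $T\sim T_c$) plus commutator terms where $|\alpha''|\geq1$ derivatives hit $\mathbf{M}^{\frac12}$ or $\frac{v_i-u_i}{2RT}\mathbf{M}^{\frac12}$, bounded by $\CU(\|\nabla_x[\rho,u,T]\|_{W^{|\alpha''|,\infty}})\mathbf{M}^{3/8}$, which is how the second sum in \eqref{GGGLm} arises. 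Two minor imprecisions, neither fatal: $\partial^{\alpha}_{\beta}$ never falls on the weight $\langle v\rangle^{2(\ell-|\beta|)}\mathbf{M}^{-q}$ by Leibniz --- the weight-differentiated contributions (the terms $G^5,G^6$ in the paper) come from the velocity integration by parts --- and $w_{|\alpha|}\mu^{1/4}$ is bounded only up to the polynomial factor $\langle v\rangle^{\ell-|\alpha|}$ (at $t=0$ the exponentials cancel exactly), which is harmless because the Gaussian $\mu^{1/2}$ carried inside $\Gamma$ absorbs it.
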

\begin{proof} \eqref{wGGL} directly follows from \cite[Lemma 10, p. 327]{Strain-Guo-ARMA-2008}.
	We will only prove \eqref{GGGLm} here. To do this, we first denote $W={\lag v\rag}^{(\ell-|\beta|)}\mathbf{M}^{-q/2}$ for simplicity, and then write
	\begin{align}%\label{s-G6}
		\big|\big(\partial^{\alpha}_{\beta}&\Gamma_{\mathbf{M}}(f,g),W^2\partial^{\alpha}_{\beta}h\big)\big|=\sum C_\al^{\al',\al''}C_\beta^{\beta'}G_{\al',\al'',\beta'},\notag
	\end{align}
	where $G_{\al',\al'',\beta'}=\sum\limits_{i=1}^6G_{\al',\al'',\beta'}^i$ with
	\begin{align}%\label{G1}
		G_{\al',\al'',\beta'}^1=-\left(W^2\left(\phi^{ij}\ast\pa_{\beta'}\left\{\pa^{\al''}
		\left\{\mathbf{M}^{\frac{1}{2}}\right\}\pa^{\al'-\al''}f\right\}\right)
		\partial_j\pa_{\beta-\beta'}^{\al-\al'}g,\partial_i\partial^{\alpha}_{\beta}h\right),\notag
	\end{align}
\begin{align}%\label{G2}
G_{\al',\al'',\beta'}^2=\left(W^2\left(\phi^{ij}\ast\pa_{\beta'}\left\{\pa^{\al''}
\left\{\mathbf{M}^{\frac{1}{2}}\right\}\pa^{\al'-\al''}\partial_jf\right\}\right)
\pa_{\beta-\beta'}^{\al-\al'}g,\partial_i\partial^{\alpha}_{\beta}h\right),\notag
\end{align}
\begin{align}%\label{G3}
G_{\al',\al'',\beta'}^3=-\left(W^2\left(\phi^{ij}\ast\pa_{\beta'}
\left\{\pa^{\al''}\left\{\mathbf{M}^{\frac{1}{2}}\frac{v_i-u_i}{2RT}\right\}\pa^{\al'-\al''}f\right\}\right)
\pa_{\beta-\beta'}^{\al-\al'}\partial_jg,\partial^{\alpha}_{\beta}h\right),\notag
\end{align}
\begin{align}%\label{G4}
G_{\al',\al'',\beta'}^4=\left(W^2\left(\phi^{ij}\ast\pa_{\beta'}
\left\{\pa^{\al''}\left\{\mathbf{M}^{\frac{1}{2}}\frac{v_i-u_i}{2RT}\right\}\pa^{\al'-\al''}\pa_jf\right\}\right)
\pa_{\beta-\beta'}^{\al-\al'}g,\partial^{\alpha}_{\beta}h\right),\notag
\end{align}
\begin{align}%\label{G5}
G_{\al',\al'',\beta'}^5=-\left(\pa_iW^2\left(\phi^{ij}\ast\pa_{\beta'}\left\{\pa^{\al''}
\left\{\mathbf{M}^{\frac{1}{2}}\right\}\pa^{\al'-\al''}f\right\}\right)
\partial_j\pa_{\beta-\beta'}^{\al-\al'}g,\partial^{\alpha}_{\beta}h\right),\notag
\end{align}
\begin{align}%\label{G6}
G_{\al',\al'',\beta'}^6=\left(\partial_iW^2\left(\phi^{ij}\ast\pa_{\beta'}\left\{\pa^{\al''}
\left\{\mathbf{M}^{\frac{1}{2}}\right\}\pa^{\al'-\al''}\partial_jf\right\}\right)
\pa_{\beta-\beta'}^{\al-\al'}g,\partial^{\alpha}_{\beta}h\right).\notag
\end{align}

If $\al''=0$, since it follows from \eqref{tt1} that
$$
\rho\sim1,\ u\sim0,\ T\sim T_c,
$$
then performing the same calculation as in the proof of \cite[Lemma 10, p. 327]{Strain-Guo-ARMA-2008}, one can see that all of $G_{\alpha',\alpha'',\beta'}^i$ $(1\leq i\leq 6)$ can be controlled by the first part on the R.H.S. of \eqref{GGGLm}.

If $|\alpha''|>0$, using \eqref{em-decay} again, we obtain
$$
\left|\pa^{\al''}
\mathbf{M}^{\frac{1}{2}}\right|\leq C\CU(\|\nabla_x[\rho,u,T]\|_{W^{|\al''|,\infty}})\FM^{\frac{3}{8}}
$$
and
$$
 \left|\pa^{\al''}\left\{\mathbf{M}^{\frac{1}{2}}\frac{v_i-u_i}{2RT}\right\}\right|\leq C\CU(\|\nabla_x[\rho,u,T]\|_{W^{|\al''|,\infty}})\FM^{\frac{3}{8}}.
$$

From the above estimates and by repeating the similar calculations as those used to derive \cite[Lemma 10, p. 327]{Strain-Guo-ARMA-2008}, one can see that all of $G_{\alpha',\alpha'',\beta'}^i$ $(1\leq i\leq 6)$ can be bounded by the second part of the right-hand side of \eqref{GGGLm}.
This completes the proof of Lemma \ref{wggmL}.
\end{proof}

\subsubsection{ The difference operator $\CL_d$}%\label{dcl}
The subsequent lemma focuses on estimates associated with the difference operator $\CL_d.$
\begin{lemma}\label{WGLLL0}
	Let $\rho(t,x), u(t,x)$, and $T(t,x)$ satisfy \eqref{em-decay}, and assume that \eqref{tt1} is valid. Define the weight function $w_i$ as in \eqref{tt 01}. Then the following estimates hold:
	\begin{itemize}
		\item [i)] For any multi-index $\alpha$ and the corresponding weight $w_{|\alpha|}$, we have
		\begin{align}
			\big|\big\langle\partial^{\alpha}\mathcal{L}_d[h], w_{|\alpha|}^2 \partial^{\alpha}h\big\rangle\big|\lesssim& \epsilon_1\big\|wh\big\|_{H^{|\alpha|}_{\bf D}}^2.\label{wGLLd}
		\end{align}
	\item [ii)]If we set $\sqrt{\FM}f=\sqrt{\mu}h$, then it holds that
	\begin{align}
		\big|\big\langle\partial^{\alpha}\mathcal{L}_dh, \partial^{\alpha}h\big\rangle\big|\lesssim& \epsilon_1\Big(\big\|({\bf I}-{\bf P})h\|_{H^{|\alpha|}_{\bf D}}^2+\big\|f\|_{H^{|\alpha|}_{\bf D}}^2\Big).\label{LLdh}
	\end{align}
	\item [iii)] There exists a positive constant $\delta$ such that for $\alpha$ and the corresponding weight $w_{|\alpha|}$:
	\begin{align}
		\big(\partial^{\alpha}\mathcal{L}[h], w_{|\alpha|}^2 \partial^{\alpha}h\big)\geq& \de\big|w_{|\alpha|}\partial^{\alpha}h\big|_{\bf D}^2-C\sum_{\al'\leq \alpha}|\partial^{\al'}f|^2_{L^2}.\label{wLLL}
	\end{align}
	\end{itemize}
	
\end{lemma}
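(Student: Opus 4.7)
The plan is to exploit the smallness of $\mathbf{M}-\mu$, which is $O(\epsilon_1)$ uniformly in $(t,x,v)$ thanks to \eqref{em-decay} and \eqref{tt1}, and to leverage the explicit splittings already prepared in Lemma \ref{AK0d}. Every coefficient appearing in $\mathcal{A}_d$ and $\mathcal{K}_d$ from \eqref{AKd} --- namely $u_j/(RT)$, $(T_c-T)/(RT_cT)$, and the kernel $\sigma^{ij}_{\mathbf{M}-\mu}$ --- is pointwise of order $\epsilon_1$, and the same remains true for any spatial derivative of these coefficients by \eqref{em-decay}. This smallness, together with the condition $T_c<T$ that allows the exponential weight $w_{|\alpha|}$ to be absorbed into a Maxwellian factor, is the engine of the whole lemma.

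For the weighted bound (i), I would apply $\partial^{\alpha}$ to $\mathcal{A}_d[h]+\mathcal{K}_d[h]$ via Leibniz, test against $w_{|\alpha|}^2\partial^{\alpha}h$, and integrate by parts in $v$ exactly as in the standard estimates for $\mathcal{L}_{\mathbf{M}}$ (cf.\ the proof of \eqref{wLLLM0}), but now each resulting term carries the small prefactor just discussed. Cauchy--Schwarz combined with the weighted $\mathbf{D}$-norm then yields $\epsilon_1\|wh\|^2_{H^{|\alpha|}_{\mathbf{D}}}$, which is \eqref{wGLLd}. The convolution terms $\phi^{ij}\ast(\cdots)$ are treated by the kernel estimates of \cite{Guo-CMP-2002,Strain-Guo-ARMA-2008}, noting that the extra Gaussian weight in $v$ is more than compensated by the $\mu^{1/4}$ or $\mathbf{M}^{1/4}$ factors left over from the convolution, provided $T>T_c$.

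For (ii), the observation that drives the improved bound is $\mu^{1/2}h=\mathbf{M}^{1/2}f$. Substituting this into \eqref{dL} yields
$$-\mathcal{L}_d[h]=\mu^{-1/2}\Big[\mathcal{C}(\mathbf{M}-\mu,\mathbf{M}^{1/2}f)+\mathcal{C}(\mathbf{M}^{1/2}f,\mathbf{M}-\mu)\Big],$$
so the unweighted estimate becomes a bound on an operator which is linear in $f$ with $O(\epsilon_1)$ coefficients. Decomposing $h=\mathbf{P}h+(\mathbf{I}-\mathbf{P})h$ and likewise $f=\mathbf{P}_{\mathbf{M}}f+(\mathbf{I}-\mathbf{P}_{\mathbf{M}})f$, the kernel/convolution analysis of (i) produces $\epsilon_1\|(\mathbf{I}-\mathbf{P})h\|^2_{H^{|\alpha|}_{\mathbf{D}}}+\epsilon_1\|f\|^2_{H^{|\alpha|}_{\mathbf{D}}}$ on the right-hand side: the macroscopic part of $f$ must be kept since the $\mathbf{P}$ versus $\mathbf{P}_{\mathbf{M}}$ mismatch does not vanish but is of order $\epsilon_1$, and $\|f\|_{\mathbf{D}}$ dominates $\|\mathbf{P}_{\mathbf{M}}f\|_{\mathbf{D}}$. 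Distributing $\partial^{\alpha}$ through $h=\sqrt{\mathbf{M}/\mu}\,f$ by Leibniz is harmless because derivatives of $\sqrt{\mathbf{M}/\mu}$ are bounded by $\CU(\|\nabla_x[\rho,u,T]\|_{W^{|\alpha|-1,\infty}})\lesssim 1$ thanks to \eqref{em-decay}.

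For (iii), which is a weighted coercivity for the global operator $\mathcal{L}$, I would follow the strategy of \eqref{wLLLM0} with $\mathbf{M}$ replaced by $\mu$ and $\mathbf{P}_{\mathbf{M}}$ by $\mathbf{P}$. The principal positive contribution $\delta|w_{|\alpha|}\partial^{\alpha}h|_{\mathbf{D}}^2$ arises from the kernel part of $\mathcal{A}$; what remains is a bounded-ball $L^2$-term in $\partial^{\alpha}h$, which one converts to an $L^2$ bound on $\partial^{\alpha'}f$ ($\alpha'\leq\alpha$) by writing $h=\sqrt{\mathbf{M}/\mu}\,f$ and applying Leibniz. Inside a bounded velocity ball the factor $\sqrt{\mathbf{M}/\mu}$ and its derivatives are uniformly bounded, which accounts for the term $-C\sum_{\alpha'\leq\alpha}|\partial^{\alpha'}f|^2_{L^2}$. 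The main obstacle throughout will be the careful bookkeeping of the time-dependent Gaussian weight $w_{|\alpha|}$ against the various Maxwellian factors, and in particular in step (ii) the treatment of the $\mathbf{P}$ versus $\mathbf{P}_{\mathbf{M}}$ mismatch, which must be shown to cost only $\epsilon_1$ rather than spoiling the bound.
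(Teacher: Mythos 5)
Your proposal is correct and in substance follows the same route as the paper: the whole lemma rests on the smallness $|\mathbf{M}-\mu|\lesssim\epsilon_1\mu^{5/6}$ coming from \eqref{tt1}, \eqref{em-decay} and the mean value theorem, on the relation $\sqrt{\FM}f=\sqrt{\mu}h$ with $\|\mathbf{P}h\|_{\bf D}\lesssim\|\mathbf{P}h\|\lesssim\|f\|$ for part (ii), and on the weighted coercivity of $\mathcal{L}$ plus conversion of the leftover localized $L^2$-term in $h$ into $\sum_{\alpha'\leq\alpha}|\partial^{\alpha'}f|_{L^2}$ for part (iii). The only differences are presentational: for (i) the paper does not estimate $\mathcal{A}_d,\mathcal{K}_d$ term by term but writes $-\mathcal{L}_d h=\Gamma\big(\mu^{-\frac12}(\mathbf{M}-\mu),h\big)+\Gamma\big(h,\mu^{-\frac12}(\mathbf{M}-\mu)\big)$ and applies the already-established weighted trilinear estimate \eqref{wGGL} together with the pointwise bound \eqref{mmbL}, and for (iii) it simply cites the weighted coercivity of \cite{Strain-Guo-ARMA-2008} with $q=1/(8\ln(\mathrm{e}+t))$ rather than re-deriving it as you propose, so your hands-on computation is a workable but longer version of the same argument.
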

\begin{proof}
By \eqref{dL}, we can rewrite it as
\begin{align}\label{dL-gaL}
-\CL_df=\Ga\left(\frac{\FM-\mu}{\sqrt{\mu}},f\right)+\Ga\left(f,\frac{\FM-\mu}{\sqrt{\mu}}\right).
\end{align}
From \eqref{tt1}, we have
\begin{align}\label{alphaxL}
 |\partial^{\alpha}\mathbf{M}|\lesssim \epsilon_1\mu^{5/6}
 \end{align}
for any multi-index $\al$ with $|\al|>0.$ Moreover, using \eqref{em-decay} and the mean value theorem, we get
\begin{align}\label{mvtL}
|\mathbf{M}-\mu|=\left|[\na_{[\rho,u,T]}\FM](\bar{\rho},\bar{u},\bar{T})\cdot(\rho-1,u,T-T_c)\right|
\lesssim\epsilon_1\mu^{5/6},
\end{align}
where $\bar{\rho}\in(\min{\rho,1},\max{\rho,1})$, $\bar{u}_j\in(\min{u_j,0},\max{0,u_j})$ with $1\leq j\leq3$, and $\bar{T}\in(T_c,T).$

Combining \eqref{alphaxL} and \eqref{mvtL}, we obtain
\begin{equation}\label{mmbL}
\Big|\partial^{\alpha}\Big[\mu^{-\frac{1}{2}}(\mathbf{M}-\mu)\Big]\Big|\lesssim \epsilon_1\mu^{\frac{1}{3}}.
\end{equation}
With \eqref{dL-gaL}, \eqref{mmbL}, and \eqref{wGGL} from Lemma \ref{wggmL}, we can derive \eqref{wGLLd}.

Once \eqref{wGLLd} is established, we can derive \eqref{LLdh} from
\begin{align}
 h=\mu^{-\frac{1}{2}}\mathbf{M}^{\frac{1}{2}}f\label{fh-rlL}
\end{align}
and the fact that
\begin{align*}%\label{pdhf}
\left\|\partial^{\alpha}{\bf P}\left[h^{\varepsilon}\right]\right\|^2_{\bf D}\lesssim \left\|\partial^{\alpha}{\bf P}\left[h^{\varepsilon}\right]\right\|^2 \lesssim \left\|f^{\varepsilon}\right\|^2_{H^{|\alpha|}}.
\end{align*}

To complete the proof of \eqref{wLLL}, we use \cite[Lemma 9, p. 323]{Strain-Guo-ARMA-2008} with $q=1/(8\ln(\mathrm{e}+t))$ to obtain
\begin{align*}
\big(\partial^{\alpha}\mathcal{L}h, w_{|\alpha|}^2 \partial^{\alpha}h\big)\geq& \de\big|w_{|\alpha|}\partial^{\alpha}h\big|_{\bf D}^2-C(\eta)\big|\partial^{\alpha}h\big|^2_{L^2(B_{C(\eta)})}.
\end{align*}
This, combined with \eqref{fh-rlL}, yields \eqref{wLLL}. The proof of Lemma \ref{WGLLL0} is now complete.
\end{proof}

\subsection{Estimates of $f^{\varepsilon}, E_R^{\varepsilon}, B_R^{\varepsilon}$}
In this subsection, our goal is to derive estimates for the remainders $f^{\varepsilon}, E_R^{\varepsilon}$, and $B_R^{\varepsilon}$, which satisfy the equations \eqref{VMLf} and \eqref{fM-2} with the initial data specified in \eqref{fEB-initial}.

The global existence of \eqref{VMLf}, \eqref{fM-2}, and \eqref{h-equation} with initial condition given by
 \begin{align}
\left[f^\vps(0,x,v),E_R^\vps(0,x),B_R^\vps(0,x), h^\vps(0,x,v)\right]=\left[f^\vps_0(x,v),E^\vps_{R,0}(x),B^\vps_{R,0}(x), h^\vps_0(x,v)\right]\label{VML-id-pt}
\end{align}
can be constructed through the local-in-time existence, the {\it a priori} energy estimate and the continuation argument.
Here, we focus on proving the a priori energy estimate \eqref{TVML1} under the {\it a priori} assumption:
\begin{align}\label{aps-vml}
    \sup_{0\leq t\leq \vps^{-\kappa}}\mathcal{E}(t)\lesssim \varepsilon^{-\frac{1}{2}},\qquad \kappa=\frac{1}{3}.
\end{align}
Before deriving the estimates for the remainders $f^{\varepsilon}, E_R^{\varepsilon}$, and $B_R^{\varepsilon}$, we first establish crucial estimates in the following lemmas. The first lemma deals with the velocity growth terms involving the electromagnetic field without velocity derivatives. Our strategy involves partitioning ${\mathbb R}^3_v$ into $\vps$-dependent low-velocity and high-velocity regions. Subsequently, we estimate each region separately, utilizing the dissipation norms $\varepsilon^{-1}\|({\bf I}-{\bf P}_{\mathbf{M}})[f^{\varepsilon}]\|_{H^i_{\bf D}}^2 (i\leq 2)$ and exploiting the relationship between $f^{\varepsilon}$ and $h^{\varepsilon}$.

\begin{lemma}\label{dxlm VML}
Assume that $f^{\varepsilon}, E_R^{\varepsilon}, B_R^{\varepsilon}$ and $h^{\varepsilon}$ is the smooth solution to the Cauchy problem \eqref{VMLf},\eqref{fM-2}, \eqref{h-equation} and \eqref{VML-id-pt} for the VML system \eqref{main1} and satisfies \eqref{aps-vml}, then it holds that
\begin{align}\label{growdxi VML}
&\Big|\Big\langle \nabla_x^i\Big(f^{\varepsilon}\mathbf{M}^{-\frac{1}{2}}\big(\partial_t+v\cdot\nabla_x\big)\mathbf{M}^{\frac{1}{2}}\Big), 4\pi RT\nabla_x^if^{\varepsilon}\Big\rangle\Big|
    \lesssim\,\epsilon_1(1+t)^{-p_0}{\bf Z}_{1,i}(t),
   \end{align}
 \begin{align}\label{EBdxi}
   &\Big|\Big\langle \nabla_x^i\Big[\Big(E+v \times B \Big)\cdot\frac{v-u }{ T}f^{\varepsilon}\Big], 2\pi T \nabla_x^if^{\varepsilon}\Big\rangle\Big|
    \lesssim\,\epsilon_1(1+t)^{-p_0}{\bf Z}_{1,i}(t),
   \end{align}
  \begin{align}\label{EBRdxi}
  &\varepsilon^k\Big|\Big\langle \nabla_x^i\Big[\big(E_R^{\varepsilon}+v \times B_R^{\varepsilon}\big) \cdot\frac{(v-u)}{ T}f^{\varepsilon}\Big], 2\pi T \nabla_x^if^{\varepsilon}\Big\rangle\Big|
    \lesssim\,\varepsilon {\bf Z}_{1,i}(t),
   \end{align}
 and   \begin{align}\label{EBndxi}
  &\sum_{n=1}^{2k-1}\varepsilon^n\Big|\Big\langle \nabla^i_x\Big[\Big(E_n+v \times B_n \Big)\cdot\frac{(v-u)}{ T}f^{\varepsilon}\Big], 2\pi T \nabla_x^if^{\varepsilon}\Big\rangle\Big|
    \lesssim\,\varepsilon^{\frac{2}{3}}{\bf Z}_{1,i}(t),
   \end{align}
   for $i= 1, 2$, and $0\leq t\leq \vps^{-\kappa}$ with $\kappa=\frac{1}{3}$, where
   $${\bf Z}_{1,i}(t):=\|f^{\varepsilon}\|^2_{H^i}+ \frac{1}{\varepsilon}\left\|({\bf I}-{\bf P}_{\mathbf{M}})[f^{\varepsilon}]\right\|_{H^i_{\bf D}}^2 +C_{\epsilon_1}\exp\left(-\frac{\epsilon_1}{8C_0RT^2_c\sqrt{\varepsilon}}\right)\|h^{\varepsilon}\|^2_{H^i}.$$
\end{lemma}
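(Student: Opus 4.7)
The unifying idea for all four estimates is a velocity-space splitting at the threshold $|v|=\varepsilon^{-1/4}$. On the low-velocity side, polynomial weights $\langle v\rangle^{k}$ cost only a fixed negative power of $\varepsilon$ and can be absorbed either by $\|f^{\varepsilon}\|_{H^{i}}^{2}$ (for the macroscopic piece ${\bf P}_{\mathbf{M}}f^{\varepsilon}$, which already carries Gaussian velocity decay) or by the strong dissipation $\varepsilon^{-1}\|({\bf I}-{\bf P}_{\mathbf{M}})[f^{\varepsilon}]\|_{H^{i}_{\bf D}}^{2}$, whose ${\bf D}$-norm supplies an intrinsic $(1+|v|)^{-1}$-weight via Lemma \ref{lower norm}. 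On the high-velocity side, the identity $\sqrt{\mathbf{M}}f^{\varepsilon}=\sqrt{\mu}h^{\varepsilon}$ lets me substitute $f^{\varepsilon}=(\mu/\mathbf{M})^{1/2}h^{\varepsilon}$, and the Gaussian ratio produces the pointwise gain
\begin{equation*}
(\mu/\mathbf{M})^{1/2}\lesssim\exp\!\left(-\frac{(T-T_c)|v|^{2}}{8C_{0}RTT_{c}}\right),
\end{equation*}
which, at $|v|^{2}=\varepsilon^{-1/2}$ and under \eqref{tt1}, becomes $\exp(-\epsilon_{1}/(8C_{0}RT_{c}^{2}\sqrt{\varepsilon}))$, precisely the factor appearing in ${\bf Z}_{1,i}$.

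For \eqref{growdxi VML}, I first rewrite
\begin{equation*}
\mathbf{M}^{-1/2}(\partial_{t}+v\cdot\nabla_{x})\mathbf{M}^{1/2}=\frac{(\partial_{t}+v\cdot\nabla_{x})\rho}{2\rho}+\frac{v-u}{RT}\cdot(\partial_{t}+v\cdot\nabla_{x})u+\frac{|v-u|^{2}-3RT}{4RT^{2}}(\partial_{t}+v\cdot\nabla_{x})T,
\end{equation*}
a polynomial in $v$ of degree at most $3$ whose coefficients, after using the Euler--Maxwell system \eqref{EM} to trade $\partial_{t}$ for spatial derivatives, are bounded by $\epsilon_{1}(1+t)^{-p_{0}}$ thanks to \eqref{em-decay}. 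Distributing $\nabla_{x}^{i}$ by Leibniz yields sums of the form $(\nabla_{x}^{i_{1}}[\text{coef}])\cdot P_{3}(v)\cdot(\nabla_{x}^{i_{2}}f^{\varepsilon})$ with $i_{1}+i_{2}\leq i$. Pairing against $4\pi RT\nabla_{x}^{i}f^{\varepsilon}$, decomposing $f^{\varepsilon}={\bf P}_{\mathbf{M}}f^{\varepsilon}+({\bf I}-{\bf P}_{\mathbf{M}})f^{\varepsilon}$, and integrating over $\{|v|\leq\varepsilon^{-1/4}\}$ versus its complement, I control the low-velocity microscopic piece by Cauchy--Schwarz with the asymmetric split $|v|^{3}\lesssim\varepsilon^{-1}\langle v\rangle^{-1}$, which turns the polynomial loss into the ${\bf D}$-norm and produces the contribution $\varepsilon^{-1}\|({\bf I}-{\bf P}_{\mathbf{M}})[f^{\varepsilon}]\|_{H^{i}_{\bf D}}^{2}$. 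On the high-velocity set the substitution $f^{\varepsilon}=(\mu/\mathbf{M})^{1/2}h^{\varepsilon}$ converts the entire integral into the third term of ${\bf Z}_{1,i}$.

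The remaining three estimates follow the same template. In \eqref{EBdxi}, $(E+v\times B)\cdot(v-u)/T$ is at most quadratic in $v$, and Proposition \ref{em-ex-lem} supplies $\|\nabla_{x}^{i}(E,B,u)\|_{W^{1,\infty}}\lesssim\epsilon_{1}(1+t)^{-p_{0}}$, so the argument is identical with $|v|^{3}$ replaced by $|v|^{2}$. In \eqref{EBRdxi} the remainder fields lack time decay, but Sobolev embedding $H^{2}\hookrightarrow L^{\infty}$ in $x$ combined with the bootstrap \eqref{aps-vml} yields $\|(E_{R}^{\varepsilon},B_{R}^{\varepsilon})\|_{W^{|\alpha|,\infty}}\lesssim\varepsilon^{-|\alpha|/2-1/4}$; the overall $\varepsilon^{k}$ prefactor with $k\geq 3$ then gives the required factor $\varepsilon$ on the right. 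In \eqref{EBndxi}, the Hilbert coefficients $[E_{n},B_{n}]$ are constructed iteratively from \eqref{examp1}, have Gaussian-type velocity decay inherited from $F_{n}=\mathbf{M}^{1/2}f_{n}$, and grow at most polynomially in $t$; on $0\leq t\leq\varepsilon^{-1/3}$ the prefactor $\varepsilon^{n}$ with $n\geq 1$ dominates and yields $\varepsilon^{2/3}$.

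The principal obstacle is the exponent bookkeeping on the low-velocity set: the factor $P_{3}(v)$ must be split asymmetrically between $\nabla_{x}^{i_{2}}f^{\varepsilon}$ and $\nabla_{x}^{i}f^{\varepsilon}$ in Cauchy--Schwarz so that the combined polynomial loss is exactly $\varepsilon^{-1}$, matching the $\varepsilon^{-1}$ weight of the dissipation in ${\bf Z}_{1,i}$ rather than a worse negative power. Once the threshold $|v|=\varepsilon^{-1/4}$ is fixed, the high-velocity regime is immediate because the Gaussian gain uniformly beats every polynomial prefactor on the whole lifespan $t\leq\varepsilon^{-1/3}$, and the a priori control \eqref{aps-vml} supplies the nonlinear $L^{\infty}$ bounds on $(E_{R}^{\varepsilon},B_{R}^{\varepsilon})$ needed in \eqref{EBRdxi}.
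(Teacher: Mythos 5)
Your plan follows essentially the same route as the paper's proof: the same $\varepsilon$-dependent velocity splitting at $\langle v\rangle\sim\varepsilon^{-1/4}$, with the low-velocity macroscopic part absorbed into $\|f^{\varepsilon}\|_{H^i}^2$, the low-velocity microscopic part absorbed into $\varepsilon^{-1}\|({\bf I}-{\bf P}_{\mathbf{M}})[f^{\varepsilon}]\|_{H^i_{\bf D}}^2$ through the $(1+|v|)^{-1}$ weight supplied by Lemma \ref{lower norm}, the high-velocity part converted to $h^{\varepsilon}$ via $\sqrt{\mathbf{M}}f^{\varepsilon}=\sqrt{\mu}h^{\varepsilon}$ with the gain $C_{\epsilon_1}\exp\bigl(-\epsilon_1/(8C_0RT_c^2\sqrt{\varepsilon})\bigr)$, and the same $(\varepsilon,t)$-bookkeeping from \eqref{em-decay}, \eqref{aps-vml}, \eqref{em-EB-es} and $t\leq\varepsilon^{-1/3}$ for the remaining three estimates.

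One ingredient is misstated: the claim $\|(E_R^{\varepsilon},B_R^{\varepsilon})\|_{W^{|\alpha|,\infty}}\lesssim\varepsilon^{-|\alpha|/2-1/4}$ for $|\alpha|\leq 2$ does not follow from \eqref{aps-vml}, because the VML energy functional \eqref{eg-vml} controls only two spatial derivatives of $E_R^{\varepsilon},B_R^{\varepsilon}$, so $\nabla_x^2E_R^{\varepsilon}\in L^{\infty}_x$ is not available. This is harmless for \eqref{EBRdxi}: when both derivatives fall on the fields, keep $\nabla_x^2[E_R^{\varepsilon},B_R^{\varepsilon}]$ in $L^2_x$ and put the underived, velocity-weighted copy of $f^{\varepsilon}$ in $L^{\infty}_xL^2_v$ (or use the $L^6\times L^3\times L^2$ H\"older splitting the paper uses for the analogous terms in Lemma \ref{dxvlm VML}); since $k\geq3$ gives $\varepsilon^{k}\|[E_R^{\varepsilon},B_R^{\varepsilon}]\|_{H^2}\lesssim\varepsilon^{7/4}$ under \eqref{aps-vml}, the $\varepsilon$-budget still closes and the stated factor $\varepsilon$ is recovered.
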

\begin{proof}
	First of all, we will focus on proving \eqref{growdxi VML}, while noting that \eqref{EBdxi}, \eqref{EBRdxi}, and \eqref{EBndxi} can be established in a similar manner. We begin by considering the following:
 From \eqref{em-decay}, we can deduce that
\begin{align}\label{dxfhi}
   &\Big|\Big\langle \nabla_x^i\Big(f^{\varepsilon}\mathbf{M}^{-\frac{1}{2}}\big(\partial_t+v\cdot\nabla_x\big)\mathbf{M}^{\frac{1}{2}}\Big), \nabla_x^if^{\varepsilon}\Big\rangle\Big|\nonumber\\
    \lesssim& \sum_{j=0}^i\CU(\|\nabla_x[\rho,u,T]\|_{W^{j,\infty}})
    \left\|{\lag v\rag}^{3/2+2(i-j)}\nabla_x^jf^{\varepsilon}\right\|\left\|{\lag v\rag}^{3/2}\nabla_x^if^{\varepsilon}\right\|\\
     \lesssim& \sum_{j=0}^i\epsilon_1
    \left\|{\lag v\rag}^{3/2+2(i-j)}\nabla_x^jf^{\varepsilon}\right\|\left\|{\lag v\rag}^{3/2}\nabla_x^if^{\varepsilon}\right\|.\nonumber
\end{align}

In order to eliminate the velocity growth in the right hand side of \eqref{dxfhi}, we divide the domain of the $v-$integration ${\mathbb R}^3_v$ into $\vps-$dependent low-velocity and high-velocity regions. This separation allows us to bound the microscopic and macroscopic components of the low-velocity part by  $\varepsilon^{-1}\|({\bf I}-{\bf P}_{\mathbf{M}})[f^{\varepsilon}]\|_{H^i_{\bf D}}^2$ and $\|f^{\varepsilon}\|^2_{H^i}$ respectively. Simultaneously, the high velocity part can be controlled by $\|h^{\varepsilon}\|^2_{H^i}$, facilitated by the choice of $T_c$ in the global Maxwellian $\mu$.
More specifically, by \eqref{tt1}, one has for suitably small $\epsilon_1>0$,
\begin{align*}
\mathbf{M}^{-\frac{1}{2}}\mu^{\frac{1}{2}}=&\Big(\frac{1 }{ \rho}\Big)^{\frac{1}{2}}\Big(\frac{T }{ T_c}\Big)^{3/2}
 \exp\left(-\frac{(T-T_c)|v|^2}{4RTT_c}+\frac{v\cdot u}{2RT}+\frac{|u|^2}{4RT}\right)\\
\lesssim& \exp\left(-\frac{(T-T_c)|v|^2}{8C_0RTT_c}\right) \lesssim C_{\epsilon_1}\exp\left(-\frac{\epsilon_1|v|^2}{16C_0RT^2_c}\right),
\end{align*}
which further implies that
 \begin{align}\label{dxihfL}
\left|\nabla_x^if^{\varepsilon}\right|=&\left|\nabla_x^i\Big(\mathbf{M}^{-\frac{1}{2}}\mu^{\frac{1}{2}}h^{\varepsilon}\Big)\right|
\lesssim C_{\epsilon_1}\sum_{j=0}^i\left|\nabla_x^jh^{\varepsilon}\right| \exp\left(-\frac{\epsilon_1|v|^2}{16C_0RT^2_c}\right).
\end{align}
Thus we get from \eqref{dxihfL} that
\begin{align}\label{dxfhi0}
&\sum_{j=0}^i\left\|{\lag v\rag}^{3/2+2(i-j)}\nabla_x^jf^{\varepsilon}\right\|\nonumber\\
\lesssim&\sum_{j=0}^i\int_{{\mathbb R}^3}\int_{{\lag v\rag}^{4(i+1-j)}\leq\varepsilon^{-(i+1-j)}}{\lag v\rag}^{3+4(i-j)}\left|\nabla_x^jf^{\varepsilon}\right|^2\, dv dx\notag\\
&+\sum_{j=0}^i\int_{{\mathbb R}^3}\int_{{\lag v\rag}^{4(i+1-j)}\geq\varepsilon^{-(i+1-j)}}{\lag v\rag}^{3+4(i-j)}\left|\nabla_x^jf^{\varepsilon}\right|^2\, dv dx\nonumber\\
\lesssim& \sum_{j=0}^i\left\|{\lag v\rag}^{3/2+2(i-j)}\nabla_x^j{{\bf P}_{\mathbf{M}}}[f^{\varepsilon}]\right\|^2+\sum_{j=0}^i\frac{1}{\varepsilon^{(i+1-j)}} \left\|\nabla_x^j({\bf I}-{\bf P}_{\mathbf{M}})[f^{\varepsilon}]\right\|_{\bf D}^2\\
&+\sum_{j=0}^iC_{\epsilon_1}\int_{{\mathbb R}^3}\int_{{\lag v\rag}^{4(i+1-j)}\geq\varepsilon^{-(i+1-j)}} \exp\left(-\frac{\epsilon_1|v|^2}{8C_0RT^2_c}\right)|h^{\varepsilon}|^2\, dv dx\nonumber\\
\lesssim&\,\|f^{\varepsilon}\|^2_{H^i}+\sum_{j=0}^i\frac{1}{\varepsilon^{(i+1-j)}}\left\|\nabla_x^j({\bf I}-{\bf P}_{\mathbf{M}})[f^{\varepsilon}]\right\|_{\bf D}^2\notag\\ &+C_{\epsilon_1}\exp\left(-\frac{\epsilon_1}{8C_0RT^2_c\sqrt{\varepsilon}}\right)\|h^{\varepsilon}\|^2_{H^i}\lesssim\,{\bf Z}_{1,i}(t).\nonumber
\end{align}
\end{proof}

The next lemma is to control the velocity derivative  terms associated with the Lorentz force. This aspect leads to more intricacies, particularly since our energy functional $\mathcal{E}(t)$ lacks velocity derivative term. In addition to the techniques employed in Lemma \ref{dxlm VML}, our approach is to control $\nabla_v f^{\varepsilon}$ by using the dissipation norm $|\cdot|_{\bf D}$ derived from the linearized Landau collision operator, which includes first-order velocity derivative terms.
\begin{lemma}\label{dxvlm VML}
Under the assumptions in Lemma \ref{dxlm VML}, for $i=0, 1, 2$,  it holds that
  \begin{align}\label{EBLvdxi-0}
   &\Big|\Big\langle \nabla_x^i\Big[\Big(E+v \times B \Big)\cdot\nabla_vf^{\varepsilon}\Big], 4\pi R T \nabla_x^if^{\varepsilon}\Big\rangle\Big|
    \lesssim\,\epsilon_1(1+t)^{-p_0}{\bf 1}_{i\geq1}{\bf Z}_{2,i}(t),
   \end{align}
   \begin{align}\label{EBRLvdxi}
   &\varepsilon^k\Big|\Big\langle \nabla_x^i\Big[\big(E_R^{\varepsilon}+v \times B_R^{\varepsilon}\big) \cdot\nabla_vf^{\varepsilon}\Big], 4\pi RT \nabla_x^if^{\varepsilon}\Big\rangle\Big|
    \lesssim\,\varepsilon {\bf 1}_{i\geq1}{\bf Z}_{2,i}(t),
   \end{align}
  and for $\kappa=\frac13$,
   \begin{align}\label{EBLnvdxi}
   &\sum_{n=1}^{2k-1}\varepsilon^n\Big|\Big\langle \nabla_x^i\Big[\Big(E_n+v \times B_n \Big)\cdot\nabla_vf^{\varepsilon}+\Big(E_R^{\varepsilon}+v \times B_R^{\varepsilon} \Big)\cdot\nabla_v F_n\Big], 4\pi RT \nabla_x^if^{\varepsilon}\Big\rangle\Big|\nonumber\\
    \lesssim&\,\varepsilon^{2\kappa}\Big[\|E_R^{\varepsilon}\|^2_{H^i} +\|B_R^{\varepsilon}\|^2_{H^i} +\|f^{\varepsilon}\|^2_{H^i}+{\bf 1}_{i\geq1}{\bf Z}_{2,i}(t)\Big],
   \end{align}
   where $${\bf Z}_{2,i}(t):={\bf Z}_{1,i}(t)+C_{\epsilon_1}\exp\Big(-\frac{\epsilon_1}{8C_0RT^2_c\sqrt{\varepsilon}}\Big)
    \|h^{\varepsilon}\|^2_{H^{i}_{\bf D}}.$$
\end{lemma}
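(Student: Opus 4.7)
The plan is to follow the scheme of Lemma \ref{dxlm VML}, with the new wrinkle that $\mathcal{E}(t)$ contains no velocity derivatives, so the factor $\nabla_vf^{\varepsilon}$ must be absorbed through the dissipation norm $|\cdot|_{\bf D}$ (via Lemma \ref{lower norm}) and the identity $f^{\varepsilon}=\mathbf{M}^{-\frac{1}{2}}\mu^{\frac{1}{2}}h^{\varepsilon}$. The four main steps are, in order: Leibniz expansion in $x$; showing that the $i'=0$ principal term vanishes by integration by parts in $v$; bounding each commutator term by extracting the electromagnetic field in $L^\infty_x$; and estimating the remaining $\langle v\rangle\,\nabla_vf^{\varepsilon}$-type factors via the low/high velocity split used in \eqref{dxfhi0}.

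For \eqref{EBLvdxi-0}, Leibniz gives $\nabla_x^i[(E+v\times B)\cdot\nabla_vf^{\varepsilon}]=\sum_{i'\leq i}\binom{i}{i'}(\nabla_x^{i'}E+v\times\nabla_x^{i'}B)\cdot\nabla_x^{i-i'}\nabla_vf^{\varepsilon}$. The $i'=0$ contribution paired against $4\pi RT\,\nabla_x^if^{\varepsilon}$ vanishes: since $T=T(t,x)$ and $\nabla_v\cdot(v\times B)=0$, integration by parts in $v$ applied to $(E+v\times B)\cdot\nabla_v(\nabla_x^if^{\varepsilon})^2/2$ produces the identically zero divergence $\nabla_v\cdot(E+v\times B)$. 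This is exactly what forces the ${\bf 1}_{i\geq 1}$ in the upper bound. For the remaining commutator terms with $i'\geq 1$, \eqref{em-decay} supplies $\|\nabla_x^{i'}(E,B)\|_{L^\infty_x}\lesssim\epsilon_1(1+t)^{-p_0}$, so it remains to bound inner products of the form $\langle\,\langle v\rangle\,\nabla_x^{i-i'}\nabla_vf^{\varepsilon},\,\langle v\rangle\,\nabla_x^if^{\varepsilon}\rangle$ (the $\langle v\rangle$ factors arising from $v\times B$ and from multiplication by $T$; additional weights on $\nabla_x^if^{\varepsilon}$ are absorbed verbatim as in \eqref{dxfhi0}).

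The main obstacle is the estimate of $\|\langle v\rangle\,\nabla_x^{k}\nabla_vf^{\varepsilon}\|$ without any velocity derivative in $\mathcal{E}(t)$, which I plan to resolve by splitting $\mathbb R^3_v$ at $\langle v\rangle^4\sim \varepsilon^{-1}$ exactly as in \eqref{dxfhi0}. On the low-velocity region I decompose $f^{\varepsilon}={\bf P}_{\mathbf M}[f^{\varepsilon}]+({\bf I}-{\bf P}_{\mathbf M})[f^{\varepsilon}]$: the macroscopic part is a polynomial-$\times\sqrt{\mathbf M}$ combination of smooth hydrodynamic fields, so $\nabla_v{\bf P}_{\mathbf M}[f^{\varepsilon}]$ reduces to a multiple of $\|f^{\varepsilon}\|_{H^k}$; the microscopic part is handled, via Lemma \ref{lower norm}, by $\varepsilon^{-m}\|({\bf I}-{\bf P}_{\mathbf M})[f^{\varepsilon}]\|_{H^k_{\bf D}}^2$ after absorbing the truncated $\langle v\rangle$-weight into the $(1+|v|)^{-1/2}$ and $(1+|v|)^{-3/2}$ tails that the ${\bf D}$-norm provides. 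On the high-velocity region, \eqref{dxihfL} converts $\nabla_x^k\nabla_vf^{\varepsilon}$ into a sum of $\nabla_x^k\partial_\beta h^{\varepsilon}$ (with $|\beta|\leq 1$) weighted by $\exp(-\epsilon_1|v|^2/(16C_0RT_c^2))$; evaluated on $\langle v\rangle^4\geq\varepsilon^{-1}$ this yields the super-exponentially small constant $C_{\epsilon_1}\exp(-\epsilon_1/(8C_0RT_c^2\sqrt\varepsilon))$ in front of $\|h^{\varepsilon}\|^2_{H^i_{\bf D}}$, which is precisely the additional summand defining ${\bf Z}_{2,i}(t)$. Combining everything yields \eqref{EBLvdxi-0}.

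The bound \eqref{EBRLvdxi} follows the same template after replacing $\|\nabla_x^{i'}(E,B)\|_{L^\infty_x}$ by $\|\nabla_x^{i'}(E_R^{\varepsilon},B_R^{\varepsilon})\|_{L^\infty_x}\lesssim\sqrt{\mathcal{E}(t)/\varepsilon^{i'+2}}$ via Sobolev embedding and the a priori assumption \eqref{aps-vml}; the $\varepsilon^k$ prefactor with $k\geq 3$ absorbs all such losses and produces the $\varepsilon$ on the right-hand side. For \eqref{EBLnvdxi}, the portion $(E_n+v\times B_n)\cdot\nabla_vf^{\varepsilon}$ is treated exactly as above, its $i'=0$ principal term again vanishing by the same divergence-free argument, and uses the polynomial-in-$t$ bounds on $\|\nabla_x^{i'}(E_n,B_n)\|_{L^\infty_x}$ established in Section \ref{sec-app}; the portion $(E_R^{\varepsilon}+v\times B_R^{\varepsilon})\cdot\nabla_v F_n$ is simpler because $F_n$ has Gaussian decay in $v$, so a direct Hölder estimate produces terms of shape $\varepsilon^n\,(\text{poly in }t)\,(\|E_R^{\varepsilon}\|_{H^i}+\|B_R^{\varepsilon}\|_{H^i})\|f^{\varepsilon}\|_{H^i}$. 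Choosing $\kappa=\frac{1}{3}$ so that $t\leq\varepsilon^{-\kappa}$ keeps $\varepsilon^n\cdot t^{\text{poly}}\lesssim\varepsilon^{2\kappa}$ uniformly and delivers the stated bound.
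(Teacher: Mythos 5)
Your overall strategy is the same as the paper's: reduce everything to controlling $\|\langle v\rangle\nabla_v\nabla_x^jf^{\varepsilon}\|$ (since $\mathcal{E}(t)$ for VML carries no velocity derivatives), then split $\mathbb{R}^3_v$ as in \eqref{dxfhi0}, using the macro/micro decomposition together with Lemma \ref{lower norm} on the low-velocity region and the conversion $f^{\varepsilon}=\mathbf{M}^{-\frac12}\mu^{\frac12}h^{\varepsilon}$ (as in \eqref{dxihfL}) on the high-velocity region, which is exactly where the factor $C_{\epsilon_1}\exp\big(-\tfrac{\epsilon_1}{8C_0RT_c^2\sqrt{\varepsilon}}\big)\|h^{\varepsilon}\|^2_{H^i_{\bf D}}$ in ${\bf Z}_{2,i}$ comes from. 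Making the $v$-integration-by-parts cancellation of the zeroth-order commutator term explicit is a good addition: the paper only proves \eqref{EBRLvdxi} and leaves this implicit, yet it is precisely what justifies the indicator ${\bf 1}_{i\geq1}$ at $i=0$.

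There is, however, one step that fails as written. For \eqref{EBRLvdxi} you bound the commutator coefficients by $\|\nabla_x^{i'}(E_R^{\varepsilon},B_R^{\varepsilon})\|_{L^\infty_x}\lesssim\sqrt{\mathcal{E}(t)/\varepsilon^{i'+2}}$ ``via Sobolev embedding and \eqref{aps-vml}.'' Sobolev embedding gives $\|\nabla_x^{i'}E_R^{\varepsilon}\|_{L^\infty_x}\lesssim\|\nabla_x^{i'}E_R^{\varepsilon}\|_{H^2_x}$, i.e.\ it needs up to $i'+2$ spatial derivatives of the remainder fields, while the energy functional \eqref{eg-vml} (and hence the a priori bound \eqref{aps-vml}) controls only $\|\nabla_x^jE_R^{\varepsilon}\|$, $\|\nabla_x^jB_R^{\varepsilon}\|$ for $j\leq2$. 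So for $i'=1,2$ the quantity you invoke is simply not controlled, and the claimed inequality has no basis. The repair is the derivative distribution the paper actually uses: keep the fields in $L^2_x$ or $L^6_x$ (so never more than $\|[E_R^{\varepsilon},B_R^{\varepsilon}]\|_{H^2}$) and apply Sobolev in $x$ to the $f^{\varepsilon}$-factor carrying fewer $x$-derivatives; this is why the paper's intermediate bound is $\varepsilon^k\big(\|E_R^{\varepsilon}\|_{H^2}+\|B_R^{\varepsilon}\|_{H^2}\big)\big(\sum_{j\le i}\|\langle v\rangle\nabla_v\nabla_x^jf^{\varepsilon}\|^2+\|\nabla_x^if^{\varepsilon}\|^2\big)$, with the full sum over $j\le i$, and why $\varepsilon^k\|[E_R^{\varepsilon},B_R^{\varepsilon}]\|_{H^2}\lesssim\varepsilon$ (from \eqref{aps-vml} with $k\ge3$) rather than an $i'$-dependent loss. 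Note the same issue does not arise in your treatment of \eqref{EBLvdxi-0} and of the $(E_n,B_n)$-part of \eqref{EBLnvdxi}, since there the pointwise decay bounds \eqref{em-decay} and \eqref{em-EB-es} do provide genuine $L^\infty_x$ control of the field derivatives.
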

\begin{proof}
	We only prove \eqref{EBRLvdxi} for brevity, as analogous methods can be employed to derive \eqref{EBLvdxi-0} and \eqref{EBLnvdxi}.
	
	Note that by the \textit{a priori} assumption \eqref{aps-vml},
	
\begin{align}\label{fbd-vml}
\varepsilon^{k-1}\|f^{\varepsilon}\|_{H^2}\lesssim \varepsilon^{\frac{1}{2}}.
\end{align}

Applying \eqref{aps-vml} and Sobolev's inequalities, one has
\begin{align*}% \label{preEBR}
&\varepsilon^k\Big|\Big\langle \nabla_x^i\Big[\big(E_R^{\varepsilon}+v \times B_R^{\varepsilon}\big) \cdot\nabla_vf^{\varepsilon}\Big], 4\pi RT \nabla_x^if^{\varepsilon}\Big\rangle\Big|\notag\\
\lesssim& \varepsilon^k\Big(\|E_R^{\varepsilon}\|_{H^2}+\|B_R^{\varepsilon}\|_{H^2}\Big)\Big(\sum_{j=0}^{i}\left\|{\lag v\rag}\nabla_v \nabla_x^{j}f^{\varepsilon}\right\|^2+\left\|\nabla_x^if^{\varepsilon}\right\|^2\Big)\\
\lesssim&\varepsilon\Big(\left\|{\lag v\rag}\nabla_v \nabla_x^{i}f^{\varepsilon}\right\|^2+\left\|\nabla_x^if^{\varepsilon}\right\|^2\Big).\nonumber
\end{align*}
Then it suffices to control $|{\lag v\rag}\nabla_v \nabla_x^if^{\varepsilon}|^2$ with $i=0, 1, 2.$

To see this, as in \eqref{dxihfL}, one has
\begin{align*}
\left|\nabla_v\nabla_x^if^{\varepsilon}\right|\lesssim& \sum_{j=0}^i\exp\left(-\frac{\epsilon_1|v|^2}{8C_0RTT_c}\right) \left[{\lag v\rag}^{2(i-j)+1}|h^{\varepsilon}|
+{\lag v\rag}^{2(i-j)}|\nabla_v\nabla_x^jh^{\varepsilon}|\right]\\
\lesssim& \sum_{j=0}^iC_{\epsilon_1}\exp\left(-\frac{\epsilon_1|v|^2}{12C_0RT^2_c}\right) \big[|h^{\varepsilon}|+|\nabla_v\nabla_x^jh^{\varepsilon}|\big].
\end{align*}

Subsequently, $\left\|{\lag v\rag}\nabla_v \nabla_x^{i}f^{\varepsilon}\right\|^2$ can be further bounded by using similar arguments to those in \eqref{dxfhi0}. The difference lies in the inclusion of additional $\|h^{\varepsilon}\|^2_{H^i_{\bf D}}$ norms to control terms $\sum_{j=0}^i|\nabla_v\nabla_x^jh^{\varepsilon}|$ for $i= 1, 2$. This leads to the desired estimate \eqref{EBRLvdxi}.

%Next, similar to  \eqref{dxfhi0}, we use Lemma \ref{lower norm} to obtain
%  \begin{align}\label{dxfhi0 VML}
%  &\left\|{\lag v\rag}\nabla_v \nabla_x^if^{\varepsilon}\right\|^2\nonumber\\
%  \lesssim& \int_{{\mathbb R}^3}\int_{{\lag v\rag}^4\leq\frac{1}{\varepsilon}}{\lag v\rag}^2\left|\nabla_v\nabla_x^if^{\varepsilon}\right|^2\, dv dx+\int_{{\mathbb R}^3}\int_{{\lag v\rag}^4\geq\frac{1}{\varepsilon}}{\lag v\rag}^2\left|\nabla_v\nabla_x^if^{\varepsilon}\right|^2\, dv dx\nonumber\\
%    \lesssim&\,\left\|\nabla_x^if^{\varepsilon}\right\|^2+\frac{1}{\varepsilon}\left\|\nabla_x^i({\bf I}-{\bf P}_{\mathbf{M}})[f^{\varepsilon}]\right\|_{\bf D}^2\\
%    &+C_{\epsilon_1}\int_{{\mathbb R}^3}\int_{{\lag v\rag}^4\geq\frac{1}{\varepsilon}}\exp\left(-\frac{\epsilon_1|v|^2}{8C_0RT^2_c}\right) \sum_{j=0}^i\big(|\nabla_v\nabla_x^jh^{\varepsilon}|^2+|\nabla_x^jh^{\varepsilon}|^2\big)\, dv dx\nonumber\\
%    \lesssim&\,\|\nabla_x^if^{\varepsilon}\|^2+\frac{1}{\varepsilon}\left\|\nabla_x^i({\bf I}-{\bf P}_{\mathbf{M}})[f^{\varepsilon}]\right\|_{\bf D}^2+C_{\epsilon_1}\exp\left(-\frac{\epsilon_1}{8C_0RT^2_c\sqrt{\varepsilon}}\right)
%    \Big(\|h^{\varepsilon}\|^2_{H^i}+\|h^{\varepsilon}\|^2_{H^i_{\bf D}}\Big).\nonumber
%\end{align}
%
%Finally, combing \eqref{dxfhi0 VML} and \eqref{preEB}, \eqref{preEBR} gives the desired estimates. This ends the proof of Lemma \ref{dxvlm VML}.
\end{proof}

With the above estimates, we are now prepared to derive the following energy estimates for the remainders $[f^{\varepsilon}, E_R^{\varepsilon}, B_R^{\varepsilon}]$.
\begin{proposition}\label{f-eng-vml}
Under the assumptions in Lemma \ref{dxlm VML}, it holds that for $\kappa = \frac{1}{3}$,
\begin{align}
	\label{f-eng-vml-sum}
	&\frac{\mathrm{d}}{\mathrm{d} t}\sum\limits_{i=0}^2 \varepsilon^i\Big(\|\sqrt{4\pi RT}\nabla_x^if^{\varepsilon}\|^2 + \|\nabla_x^iE_R^{\varepsilon}\|^2 + \|\nabla_x^iB_R^{\varepsilon}\|^2\Big) \nonumber \\
	&+ \delta\sum\limits_{i=0}^2 \varepsilon^{i-1}\|\nabla_x^i({\bf I}-{\bf P}_{\mathbf{M}})[f^{\varepsilon}]\|^2_{\bf D}   \\
	\lesssim& \sum\limits_{i=0}^2 \varepsilon^i\big[(1+t)^{-p_0} + \varepsilon^{\kappa}\big] \nonumber \\
	&\times \Big(\|\nabla_x^if^{\varepsilon}\|^2 + \left\|[E_R^{\varepsilon},B_R^{\varepsilon}]\right\|_{H^i}^2 + C_{\epsilon_1}\exp\left(-\frac{\epsilon_1}{8C_0RT^2_c\sqrt{\varepsilon}}\right)(\|h^{\varepsilon}\|_{H^i}^2 + \|h^{\varepsilon}\|_{H^i_{\bf D}}^2)\Big) \nonumber \\
	&+ \sum\limits_{i=0}^2 \left[\varepsilon^{2k+1+i}(1+t)^{4k+2} + \varepsilon^{k+i}(1+t)^{2k}\|\nabla_x^if^{\varepsilon}\|\right]. \nonumber
\end{align}
	
\end{proposition}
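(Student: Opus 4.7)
The plan is to carry out a weighted energy estimate on the remainder system \eqref{VMLf}--\eqref{fM-2}, tested against $4\pi RT\,\nabla_x^i f^{\varepsilon}$ in the kinetic equation and against $\nabla_x^i E_R^{\varepsilon}$, $\nabla_x^i B_R^{\varepsilon}$ in the Maxwell equations, for $i=0,1,2$, and then to sum with weight $\varepsilon^i$. The overall architecture follows the standard macro/micro energy method, but with two distinctive features: (i) the two coupling terms $\bigl\langle (E_R^{\varepsilon}+v\times B_R^{\varepsilon})\cdot \frac{v-u}{RT}\mathbf{M}^{1/2},\, 4\pi RT\,\nabla_x^i f^{\varepsilon}\bigr\rangle$ and $4\pi\langle\int v\mathbf{M}^{1/2} f^{\varepsilon}dv,\nabla_x^i E_R^{\varepsilon}\rangle$ should be shown to cancel up to harmless lower-order contributions coming from $u$, and (ii) every velocity-growth/velocity-derivative term is already packaged by Lemmas \ref{dxlm VML} and \ref{dxvlm VML}, so the proof reduces essentially to combining these with the coercivity \eqref{coLLh} and the trilinear estimate \eqref{GLLM}.

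Concretely, after applying $\nabla_x^i$ to \eqref{VMLf} and pairing with $4\pi RT\,\nabla_x^i f^{\varepsilon}$, the time derivative gives $\frac{1}{2}\frac{\mathrm{d}}{\mathrm{d} t}\|\sqrt{4\pi RT}\,\nabla_x^i f^{\varepsilon}\|^2$ plus a $\partial_t T$ remainder bounded by $\epsilon_1(1+t)^{-p_0}\|\nabla_x^i f^{\varepsilon}\|^2$ via Proposition \ref{em-ex-lem}; the transport term $\langle v\cdot\nabla_x\nabla_x^i f^{\varepsilon},4\pi RT\nabla_x^i f^{\varepsilon}\rangle$ produces, after integration by parts, only a $\nabla_x T$ contribution of the same form. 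The linear collision term, by \eqref{coLLh}, delivers the dissipation $\delta\varepsilon^{-1}\|\nabla_x^i(\mathbf{I}-{\bf P}_{\mathbf{M}})[f^{\varepsilon}]\|_{\mathbf{D}}^2$ modulo commutators of size $\varepsilon^{-1}\|(\mathbf{I}-{\bf P}_{\mathbf{M}})[f^{\varepsilon}]\|_{H^{i-1}_{\mathbf{D}}}^2+\varepsilon\|f^{\varepsilon}\|_{H^i}^2$, which is absorbed by weighting with $\varepsilon^i$ and summing over $i=0,1,2$. The Maxwell part, treated by the classical identity $\frac{1}{2}\frac{\mathrm{d}}{\mathrm{d} t}(\|\nabla_x^i E_R^{\varepsilon}\|^2+\|\nabla_x^i B_R^{\varepsilon}\|^2)=4\pi\langle \nabla_x^i\!\int v\mathbf{M}^{1/2}f^{\varepsilon}dv,\nabla_x^i E_R^{\varepsilon}\rangle$, combines with the $E_R^{\varepsilon}$-part of the kinetic coupling term so that the leading $E_R^{\varepsilon}\cdot v$ pieces cancel; the residual $u$-dependent contributions and the magnetic piece $v\times B_R^{\varepsilon}\cdot(v-u)$, which collapses to $-(u\times B_R^{\varepsilon})\cdot v$, are all $O(\epsilon_1(1+t)^{-p_0})$ thanks to \eqref{em-decay}.

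All remaining terms on the right-hand side of \eqref{VMLf} fall into one of two classes. The electromagnetic/fluid terms $(E+v\times B)\cdot\frac{v-u}{2RT}f^{\varepsilon}$, $(E+v\times B)\cdot\nabla_v f^{\varepsilon}$, $\varepsilon^k(E_R^{\varepsilon}+v\times B_R^{\varepsilon})\cdot\ldots$, and the $\sum \varepsilon^n(E_n+v\times B_n)\cdot\ldots$ family are handled directly by Lemmas \ref{dxlm VML} and \ref{dxvlm VML}, which already yield bounds of the exact shape $\epsilon_1(1+t)^{-p_0}{\bf Z}_{j,i}(t)+\varepsilon^{2\kappa}(\cdots)$ required by \eqref{f-eng-vml-sum}; note that the $h^{\varepsilon}$-contributions in ${\bf Z}_{1,i}$ and ${\bf Z}_{2,i}$ carry the super-exponential small factor $\exp\bigl(-\epsilon_1/(8C_0RT_c^2\sqrt{\varepsilon})\bigr)$, exactly what appears on the right of \eqref{f-eng-vml-sum}. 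The nonlinear term $\varepsilon^{k-1}\Gamma_{\mathbf{M}}(f^{\varepsilon},f^{\varepsilon})$ is estimated via \eqref{GLLM} followed by Sobolev embedding and the \emph{a priori} bound \eqref{fbd-vml}, producing a factor $\varepsilon^{k-1}\|f^{\varepsilon}\|_{H^2}\lesssim \varepsilon^{1/2}$ and hence absorbable into $\delta\varepsilon^{i-1}\|\nabla_x^i(\mathbf{I}-{\bf P}_{\mathbf{M}})[f^{\varepsilon}]\|_{\mathbf{D}}^2$; the coupling terms $\sum \varepsilon^{n-1}[\Gamma_{\mathbf{M}}(\mathbf{M}^{-1/2}F_n,f^{\varepsilon})+\Gamma_{\mathbf{M}}(f^{\varepsilon},\mathbf{M}^{-1/2}F_n)]$ are handled the same way, using the growth bounds on $F_n$ coming from \eqref{Fn-id-vml} and the moment equations like \eqref{examp1}. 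Finally, the purely inhomogeneous source $\varepsilon^k\mathcal{Q}_0$, whose coefficients are polynomially growing in $t$, contributes $\varepsilon^{k+i}(1+t)^{2k}\|\nabla_x^i f^{\varepsilon}\|$; applying Cauchy--Schwarz produces the alternative $\varepsilon^{2k+1+i}(1+t)^{4k+2}$ placeholder on the right of \eqref{f-eng-vml-sum}.

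The principal obstacle, and the reason for the $\varepsilon^i$ weighting scheme, is the commutator $[\partial^{\alpha},\mathcal{L}_{\mathbf{M}}]$ and $[\partial^{\alpha},{\bf P}_{\mathbf{M}}]$ appearing through \eqref{coLLh}: for $i=1,2$ they transfer losses of one order of $\varepsilon^{-1}$ from the dissipation into the macroscopic energy. The chosen weights $\varepsilon^i$ in \eqref{eg-vml}, together with the favorable time decay of $\nabla_x[\rho,u,T]$ from \eqref{em-decay}, are exactly what is needed to upgrade the lossy commutator terms $\varepsilon^{-1}\|(\mathbf{I}-{\bf P}_{\mathbf{M}})[f^{\varepsilon}]\|_{H^{i-1}_{\mathbf{D}}}^2$ to quantities that are dominated after summation over $i$ by $\delta\sum_{i=0}^{2}\varepsilon^{i-1}\|\nabla_x^i(\mathbf{I}-{\bf P}_{\mathbf{M}})[f^{\varepsilon}]\|_{\mathbf{D}}^2$; a careful bookkeeping of these powers of $\varepsilon$ and of the constants produced by the \emph{a priori} bound \eqref{aps-vml} closes the estimate and yields \eqref{f-eng-vml-sum}.
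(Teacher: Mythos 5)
Your proposal is correct and follows essentially the same route as the paper: the paper's proof is exactly this weighted energy scheme, organized as three steps (the $L^2$, first-order, and second-order estimates \eqref{L2f VML}, \eqref{H1f VML}, \eqref{H2f VML}), using the Maxwell-coupling cancellation identity, the coercivity \eqref{coLLh} with its commutator losses absorbed via the $\varepsilon^i$ weights, Lemmas \ref{dxlm VML}--\ref{dxvlm VML} for the Lorentz-force and velocity-growth terms, \eqref{GLLM} with \eqref{fbd-vml} for $\Gamma_{\mathbf{M}}$, and \eqref{em-Fn-es}, \eqref{em-EB-es} for the $F_n$-couplings and $\CQ_0$. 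The only slight imprecision is your claim that the transport term yields a $\nabla_xT$ contribution directly bounded by $\epsilon_1(1+t)^{-p_0}\|\nabla_x^if^{\varepsilon}\|^2$; it carries linear velocity growth and should be controlled through the low/high-velocity splitting of Lemma \ref{dxlm VML}, i.e.\ by $\epsilon_1(1+t)^{-p_0}{\bf Z}_{1,i}(t)$, which is harmless for the final bound.
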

\begin{proof}
Our proof is divided into three steps.
\vskip 0.2cm

\noindent\underline{{\it Step 1. Basic energy estimate of the remainders.}} In this step, we derive the $L^2$ estimates on $[f^{\varepsilon}, E_R^{\varepsilon}, B_R^{\varepsilon}]$. Taking the $L^2$ inner product of \eqref{VMLf} with $4\pi RT f^{\varepsilon}$, we use \eqref{coLL0} to obtain
\begin{align}
	\label{L2f1 VML}
	&\frac{1}{2}\frac{\mathrm{d}}{\mathrm{d} t}\Big(\|\sqrt{4\pi RT}f^{\varepsilon}\|^2+\|E_R^{\varepsilon}\|^2+\|B_R^{\varepsilon}\|^2\Big)
	+\frac{\delta}{\varepsilon}\|({\bf I}-{\bf P}_{\mathbf{M}})[f^{\varepsilon}]\|^2_{\bf D} \\
	\leq&\;\frac{1}{2}\big|\big\langle \partial_tT f^{\varepsilon}, 4\pi R f^{\varepsilon}\big\rangle\big|+\Big|\Big\langle \big(E_R^{\varepsilon}+v \times B_R^{\varepsilon} \big) \cdot u\mathbf{M}^{\frac{1}{2}},4\pi  f^{\varepsilon}\Big\rangle\Big|\nonumber\\
	&+\Big|\Big\langle \Big(E+v \times B \Big)\cdot(v-u )f^{\varepsilon}, 2\pi  f^{\varepsilon}\Big\rangle\Big|\nonumber\\
	&+    \Big|\Big\langle f^{\varepsilon}\mathbf{M}^{-\frac{1}{2}}\big(\partial_t+v\cdot\nabla_x\big)\mathbf{M}^{\frac{1}{2}}, 4\pi RT f^{\varepsilon}\Big\rangle\Big|
	+\varepsilon^{k-1}\big|\big\langle\Gamma_{\mathbf{M}} ( f^{\varepsilon},
	f^{\varepsilon} ), 4\pi RT f^{\varepsilon}\big\rangle\big|\nonumber\\
	&+\sum_{n=1}^{2k-1}\varepsilon^{n-1}\big|\big\langle[\Gamma_{\mathbf{M}}(\mathbf{M}^{-\frac{1}{2}}F_n,f^{\varepsilon})+\Gamma_{\mathbf{M}}(
	f^{\varepsilon}, \mathbf{M}^{-\frac{1}{2}} F_n)\big], 4\pi RT f^{\varepsilon}\big\rangle\big|\nonumber\\
	&+\varepsilon^k \big|\big\langle \big(E_R^{\varepsilon}+v \times B_R^{\varepsilon}\big) \cdot(v-u)f^{\varepsilon},2\pi  f^{\varepsilon}\big\rangle\big|\nonumber\\
	&+\sum_{n=1}^{2k-1}\varepsilon^n\Big|\Big\langle \Big(E_R^{\varepsilon}+v \times B_R^{\varepsilon} \Big)\cdot\nabla_v F_n,4\pi RT f^{\varepsilon}\Big\rangle\Big|\nonumber\\
	&+\sum_{n=1}^{2k-1}\varepsilon^n\Big|\Big\langle \Big(E_n+v \times B_n \Big)\cdot(v-u)f^{\varepsilon},2\pi f^{\varepsilon}\Big\rangle\Big|
	+\big|\big\langle \CQ_0,4\pi RT f^{\varepsilon}\big\rangle\big|.\nonumber
\end{align}

Here we have used the following identity
\begin{align*}%\label{L2max}
\frac{1}{2}\frac{d}{dt}\left(\|E_R^{\varepsilon}(t)\|^2+\|B_R^{\varepsilon}(t)\|^2\right)=4\pi \Big\langle v\cdot E_R^{\varepsilon}\mathbf{M}^{\frac{1}{2}}, f^{\varepsilon}\Big\rangle
\end{align*}
from \eqref{fM-2}.

Now, let's estimate the terms on the right-hand side (R.H.S.) of \eqref{L2f1 VML} individually.

By \eqref{em-decay}, we can observe that the first two terms on the R.H.S. of \eqref{L2f1 VML} are bounded by
\begin{align*} C(1+t)^{-p_0}\epsilon_1 \Big(\|f^{\varepsilon}\|^2+\|E_R^{\varepsilon}\|^2
+\|B_R^{\varepsilon}\|^2\Big).
\end{align*}

Considering Lemma \ref{dxlm VML} and Lemma \ref{dxvlm VML}, we can control the 3rd, 5th, 7th-9th terms on the R.H.S. of \eqref{L2f1 VML} as follows:

\begin{align*}%\label{growL2}
 C\Big[\epsilon_1(1+t)^{-p_0}&+\varepsilon^{2\kappa}\Big]\Big(\|E_R^{\varepsilon}\|^2
+\|B_R^{\varepsilon}\|^2+{\bf Z}_{1,0}(t)\Big).
\end{align*}

For the 5th term on the R.H.S. of \eqref{L2f1 VML}, we use \eqref{GLLM}, Sobolev's inequality, and \eqref{aps-vml} to obtain
\begin{align}\label{f0L}
&\varepsilon^{k-1}\big|\big\langle\Gamma_{\mathbf{M}} ( f^{\varepsilon},
    f^{\varepsilon} ), 4\pi RT f^{\varepsilon}\big\rangle\big|\nonumber\\
    \lesssim&\varepsilon^{k-1}\|f^{\varepsilon}\|_{H^2}\Big(\|({\bf I}-{\bf P}_{\mathbf{M}})[f^{\varepsilon}]\|_{\bf D}+\|{{\bf P}_{\mathbf{M}}}[f^{\varepsilon}]\|_{\bf D}\Big)\|({\bf I}-{\bf P}_{\mathbf{M}})[f^{\varepsilon}]\|_{\bf D}\\
    \lesssim &\,\|({\bf I}-{\bf P}_{\mathbf{M}})[f^{\varepsilon}]\|_{\bf D}^2+\varepsilon\|{{\bf P}_{\mathbf{M}}}[f^{\varepsilon}]\|^2\lesssim \|({\bf I}-{\bf P}_{\mathbf{M}})[f^{\varepsilon}]\|_{\bf D}^2+\varepsilon\|f^{\varepsilon}\|^2 .\nonumber
\end{align}

For the 6th term on the R.H.S. of \eqref{L2f1 VML}, we can use \eqref{GLLM} and \eqref{em-Fn-es} to derive the following:
\begin{align}\label{fn0L}
&\sum_{n=1}^{2k-1}\varepsilon^{n-1}\big|\big\langle[\Gamma_{\mathbf{M}}(\mathbf{M}^{-\frac{1}{2}} F_n,f^{\varepsilon})+\Gamma_{\mathbf{M}}(
 f^{\varepsilon}, \mathbf{M}^{-\frac{1}{2}} F_n)\big], 4\pi RT f^{\varepsilon}\big\rangle\big|\nonumber\\
 \lesssim&\sum_{n=1}^{2k-1}\varepsilon^{n-1}(1+t)^n\|f^{\varepsilon}\|_{H^2}\Big(\|({\bf I}-{\bf P}_{\mathbf{M}})[f^{\varepsilon}]\|_{\bf D}+\|{{\bf P}_{\mathbf{M}}}[f^{\varepsilon}]\|_{\bf D}\Big)\|({\bf I}-{\bf P}_{\mathbf{M}})[f^{\varepsilon}]\|_{\bf D}\\
    \lesssim&\, \frac{o(1)}{\varepsilon}\|({\bf I}-{\bf P}_{\mathbf{M}})[f^{\varepsilon}]\|_{\bf D}^2+\varepsilon(1+t)^2\|{{\bf P}_{\mathbf{M}}}[f^{\varepsilon}]\|_{\bf D}^2\nonumber\\
    \lesssim& \frac{  o(1)}{\varepsilon}\|({\bf I}-{\bf P}_{\mathbf{M}})[f^{\varepsilon}]\|_{\bf D}^2+\varepsilon^{\kappa}\|f^{\varepsilon}\|^2\nonumber,
\end{align}
where we also used the fact that $0\leq t\leq \varepsilon^{-\kappa}$ with $\kappa=\frac13$.

For the last term on the R.H.S. of \eqref{L2f1 VML}, we can apply Cauchy's inequality, \eqref{em-Fn-es}, and \eqref{em-EB-es} to obtain
\begin{align*}
    \big|\big\langle \CQ_0, 4\pi RT f^{\varepsilon}\big\rangle\big|\lesssim& \frac{o(1)}{\varepsilon}\|({\bf I}-{\bf P}_{\mathbf{M}})[f^{\varepsilon}]\|_{\bf D}^2+\varepsilon
    \sum_{\substack{i+j\geq 2k+1\\2\leq i,j\leq2k-1}}\varepsilon^{2(i+j-k-1)}\|F_i\|^2_{H^2}\|F_j\|_{\bf D}^2\\
    &+\sum_{\substack{i+j\geq 2k\\1\leq i,j\leq2k-1}}\varepsilon^{i+j-k}\Big(\|E_n\|_{L^\infty}+\|B_n\|_{L^\infty}\Big)\big\|(1+v) \mathbf{M}^{-\frac{1}{2}}\nabla_vF_j\big\|\|f^{\varepsilon}\|\\
    \lesssim& \frac{o(1)}{\varepsilon}\|({\bf I}-{\bf P}_{\mathbf{M}})[f^{\varepsilon}]\|_{\bf D}^2+\varepsilon^{2k+1}(1+t)^{4k+2}+\varepsilon^{k}(1+t)^{2k}\|f^{\varepsilon}\|.\nonumber
\end{align*}

Substituting the above estimates into \eqref{L2f1 VML} leads to:
\begin{align}\label{L2f VML}
&\frac{\mathrm{d}}{\mathrm{d} t}\Big(\|\sqrt{4\pi RT}f^{\varepsilon}\|^2+\|E_R^{\varepsilon}\|^2+\|B_R^{\varepsilon}\|^2\Big)
    +\frac{\delta}{\varepsilon}\|({\bf I}-{\bf P}_{\mathbf{M}})[f^{\varepsilon}]\|^2_{\bf D}\nonumber\\
     \lesssim&\,\Big[(1+t)^{-p_0}+\varepsilon^{\kappa}\Big]\Big(\|f^{\varepsilon}\|^2+\|E_R^{\varepsilon}\|^2+\|B_R^{\varepsilon}\|^2
     +C_{\epsilon_1}\exp\left(-\frac{\epsilon_1}{8C_0RT^2_c\sqrt{\varepsilon}}\right)\|h^{\varepsilon}\|^2\Big)\\
     &+\varepsilon^{2k+1}(1+t)^{4k+2}+\varepsilon^{k}(1+t)^{2k}\|f^{\varepsilon}\|.\nonumber
\end{align}

\vskip 0.2cm
\noindent\underline{{\it Step 2. Estimates on the first order derivative of the remainders.}}

In this step, we proceed with the estimation of $\nabla_x f^{\varepsilon}$, $\nabla_x E^{\varepsilon}$, and $\nabla_x B_R^{\varepsilon}$. To do this, applying $\partial^{\alpha}$ ($1\leq |\alpha|\leq 2$) to \eqref{VMLf} gives
\begin{align}\label{m0xVMLL}
\big(\partial_t&+v\cdot\nabla_x\big)\partial^{\alpha}f^{\varepsilon}
        +\partial^{\alpha}\Big[\frac{\big(E_R^{\varepsilon}+v \times B_R^{\varepsilon} \big) }{ RT}\cdot \big(v-u\big)\mathbf{M}^{\frac{1}{2}}\Big]\nonumber\\
&+\partial^{\alpha}\Big[\Big(E+v \times B \Big)\cdot\frac{v-u }{ 2RT}f^{\varepsilon}\Big]-\partial^{\alpha}\Big[\Big(E+v \times B \Big)\cdot\nabla_vf^{\varepsilon}\Big]+\frac{\partial^{\alpha}\mathcal{L}_{\mathbf{M}}[f^{\varepsilon}]}{\varepsilon}\nonumber\\
 =&-\partial^{\alpha}\Big[\mathbf{M}^{-\frac{1}{2}}f^{\varepsilon}\Big[\partial_t+\hat{p}\cdot\nabla_x-\Big(E+v \times B \Big)\cdot\nabla_v\Big]\mathbf{M}^{\frac{1}{2}}\Big]+\varepsilon^{k-1}\partial^{\alpha}\Gamma_{\mathbf{M}}(f^{\varepsilon},f^{\varepsilon})\nonumber\\
 &+\sum_{n=1}^{2k-1}\varepsilon^{n-1}[\partial^{\alpha}\Gamma_{\mathbf{M}}(\mathbf{M}^{-\frac{1}{2}}F_n, f^{\varepsilon})+\partial^{\alpha}\Gamma_{\mathbf{M}}(f^{\varepsilon}, \mathbf{M}^{-\frac{1}{2}} F_n)\big]+\varepsilon^k \partial^{\alpha}\Big[\Big(E_R^{\varepsilon}+v \times B_R^{\varepsilon}\Big)\cdot\nabla_vf^{\varepsilon}\Big]\nonumber\\
 &-\varepsilon^k \partial^{\alpha}\Big[\Big(E_R^{\varepsilon}+v \times B_R^{\varepsilon}\Big) \cdot\frac{v-u }{ 2RT}f^{\varepsilon}\Big]\\
 &+\sum_{n=1}^{2k-1}\varepsilon^n\partial^{\alpha}\Big[\Big(E_n+v \times B_n \Big)\cdot\nabla_vf^{\varepsilon}+\Big(E_R^{\varepsilon}+v \times B_R^{\varepsilon} \Big)\cdot\nabla_v F_n\Big]\nonumber\\
 &-\sum_{n=1}^{2k-1}\varepsilon^n\partial^{\alpha}\Big[\Big(E_n+v \times B_n \Big)\cdot\frac{\big(v-u\big)}{ 2RT}f^{\varepsilon}\Big]+\varepsilon^{k}\partial^{\alpha}\CQ_0.\nonumber
\end{align}
Furthermore, by taking the inner product of $4\pi RT\partial^\alpha f^{\varepsilon}$ and \eqref{m0xVMLL} with $|\alpha|=1$, one obtains
\begin{align}\label{H1f1 VML}
&\frac{1}{2}\frac{\mathrm{d}}{\mathrm{d} t}\Big(\|\sqrt{4\pi RT}\nabla_xf^{\varepsilon}\|^2+\|[\nabla_xE_R^{\varepsilon},\nabla_xB_R^{\varepsilon}]\|^2\Big)
    +\frac{1}{\varepsilon}\big\langle \nabla_x\mathcal{L}_{\mathbf{M}}[f^{\varepsilon}],4\pi RT\nabla_xf^{\varepsilon}\big\rangle \\
    \lesssim&\;\big|\big\langle \partial_tT \nabla_xf^{\varepsilon},   2\pi R\nabla_xf^{\varepsilon}\big\rangle\big|+\sum_{|\alpha+\alpha'|=1}\Big|\Big\langle \big(E_R^{\varepsilon}+v \times \partial^\alpha B_R^{\varepsilon} \big) \cdot \partial^{\alpha'}\Big[\frac{v-u}{ T}\mathbf{M}^{\frac{1}{2}}\Big],4\pi T \nabla_xf^{\varepsilon}\Big\rangle\Big|\nonumber\\
    &+\sum\limits_{|\alpha|=1}\big|\big\langle v\big(\nabla_x \mathbf{M}^{\frac{1}{2}}\big)f^{\varepsilon},   \nabla_x E_R^{\varepsilon}\big\rangle\big|+\Big|\Big\langle \nabla_x\left[\big(E+v \times B \big) \cdot \nabla_v f^{\varepsilon}\right], 4\pi R T \nabla_xf^{\varepsilon}\Big\rangle\Big|\nonumber\\
    &+\Big|\Big\langle \nabla_x\Big[\Big(E+v \times B \Big)\cdot\frac{v-u }{ 2T}f^{\varepsilon}\Big],4\pi  T \nabla_xf^{\varepsilon}\Big\rangle\Big|\nonumber\\
    &+    \Big|\Big\langle \nabla_x\Big[f^{\varepsilon}\mathbf{M}^{-\frac{1}{2}}\big(\partial_t+v\cdot\nabla_x\big)\mathbf{M}^{\frac{1}{2}}\Big],4\pi R T \nabla_xf^{\varepsilon}\Big\rangle\Big|
    +\varepsilon^{k-1}\big|\big\langle\nabla_x\Gamma_{\mathbf{M}} ( f^{\varepsilon},
    f^{\varepsilon} ),4\pi R T \nabla_xf^{\varepsilon}\big\rangle\big|\nonumber\\
    &+\sum_{n=1}^{2k-1}\varepsilon^{n-1}\big|\big\langle[\nabla_x\Gamma_{\mathbf{M}}(\mathbf{M}^{-\frac{1}{2}}F_n, f^{\varepsilon})+\nabla_x\Gamma_{\mathbf{M}}(
 f^{\varepsilon}, \mathbf{M}^{-\frac{1}{2}} F_n)\big],4\pi R T \nabla_xf^{\varepsilon}\big\rangle\big|\nonumber\\
 &+\varepsilon^k \big|\big\langle \nabla_x\Big[\big(E_R^{\varepsilon}+v \times B_R^{\varepsilon}\big) \cdot\nabla_vf^{\varepsilon}\Big],4\pi R T  \nabla_xf^{\varepsilon}\big\rangle\big|\nonumber\\
 &+\varepsilon^k \big|\big\langle \nabla_x\Big[\big(E_R^{\varepsilon}+v \times B_R^{\varepsilon}\big) \cdot\frac{(v-u)}{ 2T}f^{\varepsilon}\Big],4\pi  T \nabla_xf^{\varepsilon}\big\rangle\big|\nonumber\\
 &+\sum_{n=1}^{2k-1}\varepsilon^n\Big|\Big\langle \nabla_x\Big[\big(E_n+v \times B_n\big) \cdot\nabla_vf^{\varepsilon}+\Big(E_R^{\varepsilon}+v \times B_R^{\varepsilon} \Big)\cdot\nabla_v F_n\Big],4\pi R T \nabla_xf^{\varepsilon}\Big\rangle\Big|\nonumber\\
 &+\sum_{n=1}^{2k-1}\varepsilon^n\Big|\Big\langle \nabla_x\Big[\Big(E_n+v \times B_n \Big)\cdot\frac{(v-u)}{2 T}f^{\varepsilon}\Big],4\pi  T\nabla_xf^{\varepsilon}\Big\rangle\Big|
  +\big|\big\langle \nabla_x\CQ_0,4\pi R T \nabla_xf^{\varepsilon}\big\rangle\big|.\nonumber
\end{align}

For the 2nd term on the L.H.S. of \eqref{H1f1 VML}, applying \eqref{coLLh} and \eqref{em-decay}, we have
\begin{align*}
\frac{1}{\varepsilon}\big\langle \nabla_x\mathcal{L}_{\mathbf{M}}[f^{\varepsilon}],4\pi RT \nabla_xf^{\varepsilon}\big\rangle
\geq&\frac{3\delta}{4\varepsilon}\|\nabla_x({\bf I}-{\bf P}_{\mathbf{M}})[f^{\varepsilon}]\|_{\bf D}^2-\frac{C\epsilon_1}{\varepsilon^2}\|({\bf I}-{\bf P}_{\mathbf{M}})[f^{\varepsilon}]\|_{\bf D}^2\\
&-\frac{C\epsilon_1}{\varepsilon}(1+t)^{-p_0}\big(\varepsilon\|f^{\varepsilon}\|^2_{H^1}+\|f^{\varepsilon}\|^2\big).
\end{align*}

Now, let's estimate the terms on the R.H.S. of \eqref{H1f1 VML} separately.

Referring again to \eqref{em-decay}, the upper bound of the 1st to 3rd terms on the R.H.S. of \eqref{H1f1 VML} is
$C\epsilon_1(1+t)^{-p_0}\Big(\|f^{\varepsilon}\|^2_{H^1}+\|E_R^{\varepsilon}\|^2_{H^1}
+\|B_R^{\varepsilon}\|^2_{H^1}\Big).
$
Lemmas \ref{dxlm VML} and \ref{dxvlm VML} show that the 4th-6th, 9th-12th terms on the R.H.S. of \eqref{H1f1 VML} are dominated by

\begin{align*}%\label{growL2}
C\Big[\epsilon_1(1+t)^{-p_0}+\varepsilon^{2\kappa}\Big]\Big[\|E_R^{\varepsilon}\|^2_{H^1} +\|B_R^{\varepsilon}\|^2_{H^1}+{\bf Z}_{2,1}(t)\Big].
\end{align*}
By applying \eqref{GLLM}, \eqref{fbd-vml}, and Sobolev's inequalities, we can bound the 7th term on the R.H.S. of \eqref{H1f1 VML} by
\begin{align*}
   C\Big[\|\nabla_x({\bf I}-{\bf P}_{\mathbf{M}})[f^{\varepsilon}]\|_{\bf D}^2+\|({\bf I}-{\bf P}_{\mathbf{M}})[f^{\varepsilon}]\|_{\bf D}^2+\epsilon_1(1+t)^{-p_0}\Big(\|f^{\varepsilon}\|^2+ \varepsilon\|f^{\varepsilon}\|_{H^1}^2\Big)\Big].
\end{align*}
Similarly, by virtue of \eqref{em-Fn-es}, the upper bound of the 8th term on the R.H.S. of \eqref{H1f1 VML} is
\begin{align*}
\frac{o(1)}{\varepsilon}\|\nabla_x({\bf I}-{\bf P}_{\mathbf{M}})[f^{\varepsilon}]\|_{\bf D}^2+C\Big[\frac{\epsilon_1}{\varepsilon}\|({\bf I}-{\bf P}_{\mathbf{M}})[f^{\varepsilon}]\|_{\bf D}^2 +\frac{\epsilon_1}{\varepsilon}(1+t)^{-p_0}\|f^{\varepsilon}\|^2+\varepsilon^{\kappa}\|f^{\varepsilon}\|^2_{H^1}\Big]\nonumber.
\end{align*}
For the last term on the R.H.S. of \eqref{H1f1 VML}, we can further use \eqref{em-EB-es} to have
\begin{align*}
    \big|\big\langle \nabla_x\CQ_0,4\pi R T \nabla_xf^{\varepsilon}\big\rangle\big|
    \lesssim& \frac{o(1)}{\varepsilon}\|\nabla_x({\bf I}-{\bf P}_{\mathbf{M}})[f^{\varepsilon}]\|_{\bf D}^2+\frac{\epsilon_1}{\varepsilon}\|({\bf I}-{\bf P}_{\mathbf{M}})[f^{\varepsilon}]\|_{\bf D}^2\\
    &+\frac{\epsilon_1}{\varepsilon}(1+t)^{-p_0}\|f^{\varepsilon}\|^2 +\varepsilon^{2k+1}(1+t)^{4k+2}+\varepsilon^{k}(1+t)^{2k}\|\nabla_xf^{\varepsilon}\|.\nonumber
\end{align*}
Finally, by plugging the above estimates into \eqref{H1f1 VML} and multiplying the resulting inequality by $\varepsilon$, we obtain
\begin{align}\label{H1f VML}
\frac{\mathrm{d}}{\mathrm{d} t}\Big[\varepsilon\Big(&\|\sqrt{4\pi RT}\nabla_xf^{\varepsilon}\|^2+\|\nabla_xE_R^{\varepsilon}\|^2+\|\nabla_xB_R^{\varepsilon}\|^2\Big)\Big]
    +\delta\|\nabla_x({\bf I}-{\bf P}_{\mathbf{M}})[f^{\varepsilon}]\|^2_{\bf D}  \\
     \lesssim&\, \varepsilon\Big((1+t)^{-p_0}+\varepsilon^{\kappa}\Big)\Big[\|\nabla_xf^{\varepsilon}\|^2+\frac{1}{\varepsilon}\|f^{\varepsilon}\|^2
     +\|E_R^{\varepsilon}\|^2_{H^1}+\|B_R^{\varepsilon}\|^2_{H^1}
     \nonumber\\
     &+ C_{\epsilon_1}\exp\left(-\frac{\epsilon_1}{8C_0RT^2_c\sqrt{\varepsilon}}\right)
    \Big(\|h^{\varepsilon}\|^2_{H^1}+\|h^{\varepsilon}\|^2_{H^1_{\bf D}}\Big)\Big]+\frac{\epsilon_1}{\varepsilon}\|({\bf I}-{\bf P}_{\mathbf{M}})[f^{\varepsilon}]\|^2_{\bf D}\nonumber\\
    &+\varepsilon^{2k+2}(1+t)^{4k+2}
     +\varepsilon^{k+1}(1+t)^{2k}\|\nabla_xf^{\varepsilon}\|.\nonumber
\end{align}
\vskip 0.2cm
\noindent\underline{{\it Step 3. Estimates on the second order derivative of the remainders.}}
%%%%%%%%%%%%%%%%%%%%%%%%%%%%%%%%%%%%%%%%%%%%%%%%%%%%%%%%%%%%%%%%%%%%%%%%%%%%%%%%%%
In this step, we proceed to derive the estimate of $\|[\nabla^2_x f^{\varepsilon},\nabla_x^2 E_R^{\varepsilon}, \nabla_x^2 B_R^{\varepsilon}]\|$. For results in this direction, we have
\begin{align}\label{H2f VML}
\frac{\mathrm{d}}{\mathrm{d} t}\Big[\varepsilon^2&\Big(\|\sqrt{4\pi RT}\nabla_x^2f^{\varepsilon}\|^2+\|\nabla_x^2E_R^{\varepsilon}\|^2+\|\nabla_x^2B_R^{\varepsilon}\|^2\Big)\Big]
    +\delta\varepsilon\|\nabla_x^2({\bf I}-{\bf P}_{\mathbf{M}})[f^{\varepsilon}]\|^2_{\bf D}   \\
     \lesssim&\,\varepsilon^2\Big((1+t)^{-p_0}+\varepsilon^{\kappa}\Big)\Big[\|\nabla_x^2f^{\varepsilon}\|^2 +\frac{1}{\varepsilon}\|f^{\varepsilon}\|^2_{H^1}
     +\|E_R^{\varepsilon}\|^2_{H^2}+\|B_R^{\varepsilon}\|^2_{H^2}\Big)
     \nonumber\\
     &+ C_{\epsilon_1}\exp\left(-\frac{\epsilon_1}{8C_0RT^2_c\sqrt{\varepsilon}}\right)
    \Big(\|h^{\varepsilon}\|^2_{H^2}+\|h^{\varepsilon}\|^2_{H^2_{\bf D}}\Big)\Big]+\epsilon_1\|\nabla_x({\bf I}-{\bf P}_{\mathbf{M}})[f^{\varepsilon}]\|^2_{\bf D}\nonumber\\
    &+\epsilon_1\|({\bf I}-{\bf P}_{\mathbf{M}})[f^{\varepsilon}]\|^2_{\bf D}+\varepsilon^{2k+3}(1+t)^{4k+2}
     +\varepsilon^{k+2}(1+t)^{2k}\|\nabla_x^2f^{\varepsilon}\|.\nonumber
\end{align}
To prove \eqref{H2f VML}, we first take the inner product of $4\pi RT\partial^{\alpha} f^{\varepsilon}$ and \eqref{m0xVMLL} for $|\alpha|=2$ to obtain
\begin{align}\label{H2f1 VML}
&\frac{1}{2}\frac{\mathrm{d}}{\mathrm{d} t}\Big(\|\sqrt{4\pi RT}\nabla_x^2f^{\varepsilon}\|^2+\|\nabla_x^2E_R^{\varepsilon}\|^2+\|\nabla_x^2B_R^{\varepsilon}\|^2\Big)
    +\frac{1}{\varepsilon}\big\langle \nabla_x^2\mathcal{L}_{\mathbf{M}}[f^{\varepsilon}],4\pi RT\nabla_x^2f^{\varepsilon}\big\rangle \nonumber\\
    \lesssim&\;\big|\big\langle \partial_tT \nabla_x^2f^{\varepsilon}, 2\pi R \nabla_x^2f^{\varepsilon}\big\rangle\big|+\sum_{|\alpha+\alpha'|=2}\Big|\Big\langle \partial^\alpha\big(E_R^{\varepsilon}+v \times B_R^{\varepsilon} \big) \cdot \partial^{\alpha'}\Big[\frac{v-u}{ T}\mathbf{M}^{\frac{1}{2}}\Big],4\pi  T \nabla_x^2f^{\varepsilon}\Big\rangle\Big|\nonumber\\
    &+\sum_{|\alpha+\alpha'|=2}\big|\big\langle v\partial^\alpha\big(\mathbf{M}^{-\frac{1}{2}}\big)\partial^{\alpha'}f^{\varepsilon},   \nabla_x^2E_R^{\varepsilon}\big\rangle\big|+\Big|\Big\langle \nabla_x^2\Big[\big(E+v \times B \big) \cdot \nabla_v f^{\varepsilon}\Big],4\pi R T \nabla_x^2f^{\varepsilon}\Big\rangle\Big|\nonumber\\
    &+\Big|\Big\langle \nabla_x^2\Big[\Big(E+v \times B \Big)\cdot\frac{v-u }{ 2T}f^{\varepsilon}\Big],4\pi  T \nabla_x^2f^{\varepsilon}\Big\rangle\Big|\nonumber\\
    &+    \Big|\Big\langle \nabla_x^2\Big[f^{\varepsilon}\mathbf{M}^{-\frac{1}{2}}\big(\partial_t+v\cdot\nabla_x\big)\mathbf{M}^{\frac{1}{2}}\Big],4\pi R T \nabla_x^2f^{\varepsilon}\Big\rangle\Big|
    +\varepsilon^{k-1}\big|\big\langle\nabla_x^2\Gamma_{\mathbf{M}} ( f^{\varepsilon},
    f^{\varepsilon} ),4\pi R T \nabla_x^2f^{\varepsilon}\big\rangle\big|\nonumber\\
    &+\sum_{n=1}^{2k-1}\varepsilon^{n-1}\big|\big\langle[\nabla_x^2\Gamma_{\mathbf{M}}(\mathbf{M}^{-\frac{1}{2}}F_n, f^{\varepsilon})+\nabla_x^2\Gamma_{\mathbf{M}}(
 f^{\varepsilon}, \mathbf{M}^{-\frac{1}{2}} F_n)\big],4\pi R T \nabla_x^2f^{\varepsilon}\big\rangle\big|\\
 &+\varepsilon^k \big|\big\langle \nabla_x^2\Big[\big(E_R^{\varepsilon}+v \times B_R^{\varepsilon}\big) \cdot\nabla_vf^{\varepsilon}\Big],4\pi R  T  \nabla_x^2f^{\varepsilon}\big\rangle\big|\nonumber\\
 &+\varepsilon^k \big|\big\langle \nabla_x^2\Big[\big(E_R^{\varepsilon}+v \times B_R^{\varepsilon}\big) \cdot\frac{(v-u)}{ 2T}f^{\varepsilon}\Big],4\pi T \nabla_x^2f^{\varepsilon}\big\rangle\big|\nonumber\\
 &+\sum_{n=1}^{2k-1}\varepsilon^n\Big|\Big\langle \nabla_x^2\Big[\big(E_n+v \times B_n\big) \cdot\nabla_vf^{\varepsilon}+\Big(E_R^{\varepsilon}+v \times B_R^{\varepsilon} \Big)\cdot\nabla_v F_n\Big],4\pi R T \nabla_x^2f^{\varepsilon}\Big\rangle\Big|\nonumber\\
 &+\sum_{n=1}^{2k-1}\varepsilon^n\Big|\Big\langle \nabla_x^2\Big[\Big(E_n+v \times B_n \Big)\cdot\frac{(v-u)}{ 2T}f^{\varepsilon}\Big], 4\pi  T\nabla_x^2f^{\varepsilon}\Big\rangle\Big|
  +\big|\big\langle \nabla_x^2\CQ_0,4\pi R T \nabla_x^2f^{\varepsilon}\big\rangle\big|.\nonumber
\end{align}
By using \eqref{em-decay} and \eqref{coLLh}, the 2nd term on the L.H.S. of \eqref{H2f1 VML} can be estimated as
\begin{align*}
\frac{1}{\varepsilon}\big\langle \nabla_x^2\mathcal{L}_{\mathbf{M}}[f^{\varepsilon}],4\pi RT\nabla_x^2f^{\varepsilon}\big\rangle
\geq&\frac{3\delta}{4\varepsilon}\|\nabla_x^2({\bf I}-{\bf P}_{\mathbf{M}})[f^{\varepsilon}]\|_{\bf D}^2-\frac{C\epsilon_1}{\varepsilon^2}\|({\bf I}-{\bf P}_{\mathbf{M}})[f^{\varepsilon}]\|_{H^1_{\bf D}}^2\\
&-\frac{C\epsilon_1}{\varepsilon}(1+t)^{-p_0}\Big(\varepsilon\|f^{\varepsilon}\|^2_{H^2}+\|f^{\varepsilon}\|^2_{H^1}\Big).
\end{align*}
It follows from \eqref{em-decay} that the first three terms on the R.H.S. of \eqref{H2f1 VML} can be bounded by
$C\Big[\epsilon_1(1+t)^{-p_0}+\varepsilon^{2\kappa}\Big]\Big[\|E_R^{\varepsilon}\|^2_{H^2} +\|B_R^{\varepsilon}\|^2_{H^2}+{\bf Z}_{2,2}(t)\Big].$ In view of Lemma \ref{dxlm VML} and Lemma \ref{dxvlm VML}, the 4th-6th, 9th-12th terms on the R.H.S. of \eqref{H2f1 VML} are no more than
\begin{align*}%\label{growL2}
  C\Big[\epsilon_1(1+t)^{-p_0}&+\varepsilon^{2\kappa}\Big]\Big[\|E_R^{\varepsilon}\|^2_{H^2} +\|B_R^{\varepsilon}\|^2_{H^2}+{\bf Z}_{2,2}(t)\Big].
\end{align*}

Similar to the corresponding terms in \eqref{H1f1 VML}, the 7th and 8th terms on the R.H.S. of \eqref{H1f1 VML} can be bounded by
\begin{align*}
\frac{o(1)}{\varepsilon}\|({\bf I}-{\bf P}_{\mathbf{M}})[f^{\varepsilon}]\|_{H^2_{\bf D}}^2+\frac{C\epsilon_1}{\varepsilon}(1+t)^{-p_0}\|f^{\varepsilon}\|_{H^1}^2+C\varepsilon^{\kappa}\|f^{\varepsilon}\|^2_{H^2}\nonumber.
\end{align*}
By using \eqref{em-Fn-es} and \eqref{em-EB-es}, we can further estimate the last term on the R.H.S. of \eqref{H2f1 VML} as
\begin{align*}
    \big|\big\langle \nabla_x^2\CQ_0,4\pi R T \nabla_x^2f^{\varepsilon}\big\rangle\big|
    \lesssim& \frac{o(1)}{\varepsilon}\|\nabla_x^2({\bf I}-{\bf P}_{\mathbf{M}})[f^{\varepsilon}]\|_{\bf D}^2+\frac{\epsilon_1}{\varepsilon}\|\nabla_x({\bf I}-{\bf P}_{\mathbf{M}})[f^{\varepsilon}]\|_{\bf D}^2\\
    &+\frac{\epsilon_1}{\varepsilon}(1+t)^{-p_0}\|f^{\varepsilon}\|^2_{H^1} +\varepsilon^{2k+1}(1+t)^{4k+2}+\varepsilon^{k}(1+t)^{2k}\|\nabla_x^2f^{\varepsilon}\|.\nonumber
\end{align*}
Substituting the aforementioned estimates into \eqref{H2f1 VML} and multiplying the resulting inequality by $\varepsilon^2$ gives us \eqref{H2f VML}.

In conclusion, \eqref{f-eng-vml-sum} follows from \eqref{L2f VML}, \eqref{H1f VML}, and \eqref{H2f VML}.
\end{proof}

%%%%%%%%%%%%%%%%%%%%%%%%%%%%%%%%%%%%%%%%%%%%%%%%%%%%%%%%%%%%%%%%%%%%%%%%%%%%%%%%%%
%\subsection{Weighted Energy Estimates of $h^{\varepsilon}$} \label{Sec:Energy-h}
%%%%%%%%%%%%%%%%%%%%%%%%%%%%%%%%%%%%%%%%%%%%%%%%%%%%%%%%%%%%%%%%%%%%%%%%%%%%%%%%%%

\subsection{Estimates of $h^{\varepsilon}$}

Before proceeding to derive the energy estimates for $h^{\varepsilon}$, let's establish some lemmas for later use. The first lemma deals with the velocity growth terms involving the electric field. In contrast to Lemma \ref{dxlm VML} for the corresponding estimates of $f^{\varepsilon}$, we control these velocity growth terms using additional dissipation terms $Y(t)\|{\lag v\rag}w_i \nabla_x^ih^{\varepsilon}\|^2$. These terms arise due to the special weight function $\exp\left(\frac{\langle v\rangle^2}{8RT_c\ln(\mathrm{e}+t)}\right)$. Additionally, the estimates make use of the terms $\|w h^{\varepsilon}\|^2_{H^i}$ for $i=0, 1, 2$ and rely on the favorable time-decay characteristics described in \eqref{em-decay} for $[\rho, u, T]$ as well as the smallness of $\varepsilon$.

\begin{lemma}\label{dxlmh VML}
	Under the assumptions in Lemma \ref{dxlm VML}, for $i=0, 1, 2$, the following inequalities hold:
  \begin{align}%\label{EBdxih}
   \Big|\Big\langle \nabla_x^i\Big[\frac{E\cdot v }{ 2RT_c}h^{\varepsilon}\Big], w_i^2 \nabla_x^ih^{\varepsilon}\Big\rangle\Big|\lesssim \epsilon_1Y(t)\|{\lag v\rag}w_i \nabla_x^ih^{\varepsilon}\|^2+\epsilon_1(1+t)^{-p_0}{\bf 1}_{i\geq1}\|w h^{\varepsilon}\|^2_{H^{i-1}},\notag
\end{align}
   \begin{align}%\label{EBRdxih}
   \varepsilon^k\Big|\Big\langle \nabla_x^i\Big[\frac{E_R^{\varepsilon}\cdot v }{ 2RT_c}h^{\varepsilon}\Big], w_i^2 \nabla_x^ih^{\varepsilon}\Big\rangle\Big|
    \lesssim\,\varepsilon^{\frac{1}{2}}Y(t)\big\|{\lag v\rag}w_i \nabla_x^i h^{\varepsilon}\big\|^2+{\bf 1}_{i\geq1}\varepsilon\|w h^{\varepsilon}\|^2_{H^i}.\notag
   \end{align}
For $\kappa=\frac{1}{3}$, the following inequality holds
   \begin{align}\label{EBLndxih}
   &\sum_{n=1}^{2k-1}\varepsilon^n\Big|\Big\langle \nabla_x^i\Big[\frac{E_n\cdot v }{ 2RT_c}h^{\varepsilon}\Big], w_i^2 \nabla_x^ih^{\varepsilon}\Big\rangle\Big|
    \lesssim\,\varepsilon^{\frac{\kappa}{2}}Y(t)\|{\lag v\rag}w_i \nabla_x^ih^{\varepsilon}\|^2+\varepsilon^{2\kappa}{\bf 1}_{i\geq1}\|w h^{\varepsilon}\|^2_{H^{i-1}}.
   \end{align}
\end{lemma}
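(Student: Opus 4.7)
The plan is to treat all three estimates by a single mechanism: apply Leibniz to $\nabla_x^i$ acting on the product, isolate the top-order contribution in which every derivative falls on $h^{\varepsilon}$, and control the remaining cross terms by Sobolev embedding. The two structural tools that make everything close are the weight inequality
\[
  \langle v\rangle\, w_i(t,v) \;=\; \langle v\rangle^{\ell-i+1}\exp\!\Bigl(\tfrac{\langle v\rangle^2}{8RT_c\ln(\mathrm{e}+t)}\Bigr) \;\leq\; w_{i-j}(t,v) \qquad (j\geq 1),
\]
which trades one factor of $\langle v\rangle$ for a drop of one in the weight index, together with the temporal comparison
\[
  (1+t)^{-p_0} \;\lesssim\; Y(t),\qquad t\geq 0,
\]
which is valid because $p_0>1$ whereas $Y(t)\sim [(1+t)\ln^2(\mathrm{e}+t)]^{-1}$. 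These two identities let the dissipation $Y(t)\|\langle v\rangle w_i\nabla_x^i h^{\varepsilon}\|^2$ produced by the time-velocity exponential absorb the top-order velocity-growth contribution in each estimate.

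For the first estimate, expand $\nabla_x^i[E\cdot v\, h^{\varepsilon}] = \sum_{j=0}^i \binom{i}{j}(\nabla_x^j E)\cdot v(\nabla_x^{i-j} h^{\varepsilon})$. The $j=0$ term is bounded by an $L^\infty_x$--$L^2_{x,v}$ split, using $\|E\|_{L^\infty_x}\lesssim \epsilon_1(1+t)^{-p_0}\lesssim \epsilon_1 Y(t)$ from Proposition~\ref{em-ex-lem}, yielding $\epsilon_1 Y(t)\|\langle v\rangle w_i\nabla_x^i h^{\varepsilon}\|^2$. For $1\leq j\leq i$, Young's inequality combined with the weight inequality swallows the extra $\langle v\rangle$ into $w_{i-j}$, producing both an absorbable contribution to the leading term and the required $\epsilon_1(1+t)^{-p_0}\|w h^{\varepsilon}\|^2_{H^{i-1}}$.

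For the second estimate, the a priori hypothesis $\mathcal{E}(t)\lesssim\varepsilon^{-1/2}$ gives $\varepsilon^2\|\nabla_x^2 E_R^{\varepsilon}\|^2\leq \mathcal{E}(t)$, so by Sobolev embedding $H^2_x\hookrightarrow L^\infty_x$ we have $\|E_R^{\varepsilon}\|_{L^\infty_x}\lesssim \varepsilon^{-5/4}$. Since $k\geq 3$ and $Y(t)\gtrsim \varepsilon^{\kappa}/|\log\varepsilon|^2$ on $[0,\varepsilon^{-\kappa}]$, the leading term is bounded by $\varepsilon^{1/2}Y(t)\|\langle v\rangle w_i\nabla_x^i h^{\varepsilon}\|^2$ for sufficiently small $\varepsilon$. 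Cross terms involving $\nabla_x^j E_R^{\varepsilon}$ with $j\geq 1$ are estimated by Hölder in $x$ with indices $(3,6,2)$, Sobolev embeddings $H^{1/2}_x\hookrightarrow L^3_x$ and $H^1_x\hookrightarrow L^6_x$, and the a priori bound, producing $\varepsilon\|wh^{\varepsilon}\|^2_{H^i}$. The third estimate runs identically: the smallness now comes from $\varepsilon^n\|\nabla_x^j E_n\|_{L^\infty_x}\lesssim \varepsilon^n(1+t)^n\lesssim \varepsilon^{n(1-\kappa)}$ on the Hilbert lifespan, which for $\kappa=1/3$ and $n\geq 1$ satisfies $\varepsilon^{n(1-\kappa)}\lesssim \varepsilon^{\kappa/2}Y(t)$, giving $\varepsilon^{\kappa/2}Y(t)$ on the leading term and $\varepsilon^{2\kappa}\|wh^{\varepsilon}\|^2_{H^{i-1}}$ on the cross terms.

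The main obstacle is the simultaneous control of velocity growth, temporal growth, and powers of $\varepsilon$: the $|v|$ coming from the Lorentz force must be absorbed by the $\langle v\rangle^2$-density of the dissipation $Y(t)\|\langle v\rangle w_i\nabla_x^i h^{\varepsilon}\|^2$; the potentially time-growing $E_n$ must be dominated by its accompanying $\varepsilon^n$-factor on $[0,\varepsilon^{-\kappa}]$; and the remainder $E_R^{\varepsilon}$ must be controlled solely through the Sobolev ceiling $H^2_x\hookrightarrow L^\infty_x$, since $\mathcal{E}(t)$ in the VML case only carries two spatial derivatives. Once the weight inequality $\langle v\rangle w_i\leq w_{i-j}$ and the temporal comparison $(1+t)^{-p_0}\lesssim Y(t)$ are correctly deployed, each estimate reduces to standard Cauchy--Schwarz, Young, and Sobolev arguments.
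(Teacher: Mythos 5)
Your treatment of the first and third inequalities is essentially the paper's own argument (the paper only writes out the third: bound $|\partial^\alpha E_n|\lesssim (1+t)^n$ in $L^\infty_x$, keep the full factor $\|\lag v\rag w_i\nabla_x^i h^{\varepsilon}\|$, and use $t\le \varepsilon^{-\kappa}$ together with $(1+t)^{-p_0}\lesssim Y(t)$); your bookkeeping there, including $\varepsilon(1+t)^2\ln^2(\mathrm{e}+t)\lesssim \varepsilon^{\kappa/2}$ and the trade $\lag v\rag w_i\le w_{i-j}$ for $j\ge 1$, is correct, and the leading ($j=0$) term of the second estimate via $\|E_R^{\varepsilon}\|_{H^2}\lesssim \varepsilon^{-5/4}$ and $k\ge 3$ is also fine.

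The gap is in the cross terms of the second inequality. When $1\le j\le i$ derivatives fall on $E_R^{\varepsilon}$ you cannot place $\nabla_x^j E_R^{\varepsilon}$ in $L^\infty_x$ (only $\|E_R^{\varepsilon}\|_{H^2}$ is available from \eqref{aps-vml}), so a Sobolev embedding must land on an $h^{\varepsilon}$-factor. If, as you propose, the Lorentz factor $\lag v\rag$ is traded onto the lower-order factor via $\lag v\rag w_i\le w_{i-j}$ and that factor is then put in $L^3_x$ or $L^6_x$, the embedding costs one more $x$-derivative and you end up with norms such as $\|w_0\nabla_x h^{\varepsilon}\|$ (case $i=j=1$) or $\|w_1\nabla_x^2 h^{\varepsilon}\|$ (case $i=2$), i.e. $\|\lag v\rag w_i\nabla_x^i h^{\varepsilon}\|$ with no $Y(t)$ prefactor; since $\lag v\rag w_i=w_{i-1}>w_i$, these are not controlled by $\varepsilon\|wh^{\varepsilon}\|^2_{H^i}$, so the route ``H\"older $(3,6,2)$ + Sobolev, producing $\varepsilon\|wh^{\varepsilon}\|^2_{H^i}$'' does not close as written. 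The fix — and it is exactly how the paper handles the analogous $E_R^{\varepsilon},B_R^{\varepsilon}$ terms in Lemma \ref{key-VML-2} — is to keep the velocity factor on the top-order term: estimate the cross term by $\varepsilon^k\|E_R^{\varepsilon}\|_{H^2}\,\|w_i\nabla_x^{i-j}h^{\varepsilon}\|_{L^2_vL^{p}_x}\,\|\lag v\rag w_i\nabla_x^i h^{\varepsilon}\|$ with $p\in\{3,6,\infty\}$ chosen according to $j$; the middle factor now embeds into $\|wh^{\varepsilon}\|_{H^i}$ because its weight index has not been lowered, and Young's inequality splits the product into $\varepsilon^{1/2}Y(t)\|\lag v\rag w_i\nabla_x^i h^{\varepsilon}\|^2+C\varepsilon^{2k-1/2}Y(t)^{-1}\|E_R^{\varepsilon}\|^2_{H^2}\|wh^{\varepsilon}\|^2_{H^i}$, where $\varepsilon^{2k-1/2}Y(t)^{-1}\|E_R^{\varepsilon}\|^2_{H^2}\lesssim \varepsilon^{2k-10/3}|\ln\varepsilon|^2\le \varepsilon$ for $k\ge 3$ and $t\le\varepsilon^{-\kappa}$. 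In short, the cross terms of the second estimate require the $Y(t)$-dissipation (which the lemma's right-hand side provides), not only $\varepsilon\|wh^{\varepsilon}\|^2_{H^i}$; with that adjustment your argument goes through.
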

\begin{proof} To verify \eqref{EBLndxih}, we will focus on this particular inequality, as the other two can be handled in a similar manner.
	
	Using \eqref{em-decay}, for $t\leq \varepsilon^{-\kappa}$, we have the following:
\begin{align*}
\sum_{n=1}^{2k-1}\varepsilon^n\Big|\Big\langle \nabla_x^i\Big[\frac{E_n\cdot v }{ 2RT_c}h^{\varepsilon}\Big], w_i^2 \nabla_x^ih^{\varepsilon}\Big\rangle\Big|
\lesssim& \sum_{n=1}^{2k-1}\varepsilon^n(1+t)^{n}\Big(\|{\lag v\rag}w_i \nabla_x^ih^{\varepsilon}\|^2+{\bf I}_{i\geq1}\|w h^{\varepsilon}\|^2_{H^{i-1}}\Big)\\
\lesssim&\,\varepsilon(1+t)\Big(\|{\lag v\rag}w_i \nabla_x^ih^{\varepsilon}\|^2+{\bf I}_{i\geq1}\|w h^{\varepsilon}\|^2_{H^{i-1}}\Big)\\
\lesssim&\varepsilon^{\frac{\kappa}{2}}Y(t)\|{\lag v\rag}w_i \nabla_x^ih^{\varepsilon}\|^2+\varepsilon^{2\kappa}{\bf I}_{i\geq1}\|w h^{\varepsilon}\|^2_{H^{i-1}}.
\end{align*}
\end{proof}

The following lemma is dedicated to controlling the velocity derivative terms associated with the Lorentz force. Since the energy functional $\mathcal{E}(t)$ defined in \eqref{eg-vml} does not include any terms involving velocity derivatives, we need to bound the velocity derivative terms using the dissipation norm $\|\cdot\|^2_{\bf D}$. However, this norm becomes degenerate for large velocities, and we must combine it with additional dissipative terms induced by our special weight functions to compensate for the velocity degeneracy.
	
	Additionally, we need to deal with terms like $\varepsilon^k \big(v \times B_R^{\varepsilon}\big) \cdot\nabla_vh^{\varepsilon}$, where both the velocity $v$ and a velocity derivative are involved. Fortunately, as demonstrated in Lemma \ref{lower norm}, the velocity degeneracy for $({\bf I}-{\bf P}v)[\nabla_vh^{\varepsilon}]$ is of order $1$ in $|\cdot|^2_{\bf D}$, in contrast to the order $3$ degeneracy of ${\bf P}_v[\nabla_vh^{\varepsilon}]$. Although the $|v|^2$ growth in the dissipation norms $Y(t)\|{\lag v\rag}w_i \nabla_x^ih^{\varepsilon}\|^2 (i=0, 1, 2)$ may not fully compensate for the velocity degeneracy due to the possible order $3$ degeneracy of ${\bf P}_v[\nabla_vh^{\varepsilon}]$, we overcome this difficulty by specifically designing the polynomial velocity weight part $\lag v\rag^{\ell-i}$ (where $i$ is the order of $x$-derivatives) to compensate for the weak dissipation of the linearized Landau operator.
			
			However, this polynomial velocity weight also leads to velocity growth when estimating the nonlinear collision operator $\varepsilon^{k-1}\Gamma ( h^{\varepsilon}, h^{\varepsilon} )$. Furthermore, these growths can be absorbed by the exponential weight, as indicated in \eqref{pevi}, where we ensure that

\begin{align}\label{pevi}
	\sup_{v\in {\mathbb R}^3}\Big[{\lag v\rag}^i\exp\left(\frac{-\epsilon_1\langle v\rangle^2}{8C_0RT_c\ln(\mathrm{e}+t)}\right)\Big]\lesssim \Big(\frac{\ln(\mathrm{e}+t)}{\epsilon_1}\Big)^{\frac{i}{2}}.
\end{align}
Further details on this can be found in the proof of Proposition \ref{h-vml-eng-prop}, particularly in inequalities like \eqref{zh1} and \eqref{zh2}.

\begin{lemma}\label{key-VML-2} Under the assumptions of Lemma \ref{dxlm VML}, for $i=0, 1, 2$, we have
  \begin{align}\label{EBvdxih}
   &\Big|\Big\langle \nabla_x^i\Big[\Big(E+v \times B \Big)\cdot\nabla_vh^{\varepsilon}\Big], w_i^2 \nabla_x^ih^{\varepsilon}\Big\rangle\Big|\lesssim\epsilon_1\Big(Y(t)\|{\lag v\rag}w_i \nabla_x^ih^{\varepsilon}\|^2+{\bf I}_{i\geq1}\|w h^{\varepsilon}\|^2_{H^{i-1}_{\bf D}}\Big),
   \end{align}
   \begin{align}\label{EBRvdxih}
   &\varepsilon^k\Big|\Big\langle \nabla_x^i\Big[\big(E_R^{\varepsilon}+v \times B_R^{\varepsilon}\big) \cdot\nabla_vh^{\varepsilon}\Big], w_i^2 \nabla_x^ih^{\varepsilon}\Big\rangle\Big|
    \lesssim\,\varepsilon^{\frac{1}{2}}Y(t)\big\|{\lag v\rag}w_i \nabla_x^i h^{\varepsilon}\big\|^2+\varepsilon\|w h^{\varepsilon}\|^2_{H^i_{\bf D}}.
   \end{align}
 For $\kappa=\frac{1}{3}$,
   \begin{align}\label{EBnvdxih}
   \sum_{n=1}^{2k-1}\varepsilon^n\Big|\Big\langle \nabla_x^i\big[\big(E_n&+v \times B_n \big)\cdot\nabla_vh^{\varepsilon}+\big(E_R^{\varepsilon}+v \times B_R^{\varepsilon} \big)\cdot\nabla_v F_n\big], w_i^2 \nabla_x^ih^{\varepsilon}\Big\rangle\Big|\\
       \lesssim&\,\varepsilon^{\frac{\kappa}{2}}Y(t)\|{\lag v\rag}w_i \nabla_x^ih^{\varepsilon}\|^2+\varepsilon^{2\kappa}\Big(\|E_R^{\varepsilon}\|_{H^i}^2+\|B_R^{\varepsilon}\|_{H^i}^2+{\bf I}_{i\geq1}\|w h^{\varepsilon}\|^2_{H^{i-1}_{\bf D}}\Big).\nonumber
   \end{align}
\end{lemma}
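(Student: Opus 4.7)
The three estimates share a common mechanism: an integration by parts in $v$ that exploits two structural identities, namely $\nabla_v\cdot(E+v\times B)=0$ and $\nabla_v w_i\parallel v$ (since $w_i$ depends on $v$ only through $|v|$). The latter gives $(v\times B)\cdot\nabla_v w_i^2=0$, so when all $\nabla_x^i$ derivatives land on $h^\varepsilon$ the magnetic contribution vanishes after IBP and the term collapses to $-\tfrac{1}{2}\int (h^\varepsilon)^2\,E\cdot\nabla_v w_i^2$. Using
\[
\nabla_v w_i^2 \;=\; 2 w_i^2\,v\left[\frac{\ell-i}{\lag v\rag^2}+\frac{1}{4RT_c\ln(\mathrm{e}+t)}\right],
\]
the integrand is pointwise dominated by $|E|\big(w_i^2+\lag v\rag w_i^2/\ln(\mathrm{e}+t)\big)(h^\varepsilon)^2$. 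Combining $|E|\lesssim\epsilon_1(1+t)^{-p_0}$ from \eqref{em-decay} with the elementary inequality $(1+t)^{-p_0}/\ln(\mathrm{e}+t)\lesssim Y(t)$, which holds on $[0,\infty)$ for any $p_0>1$ because $1/Y(t)=8RT_c(\mathrm{e}+t)(\ln(\mathrm{e}+t))^2$, the contribution is absorbed into $\epsilon_1 Y(t)\|\lag v\rag w_i\nabla_x^i h^\varepsilon\|^2$, giving \eqref{EBvdxih} for $i=0$.

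For $i\ge 1$ one expands $\nabla_x^i$ by Leibniz; terms in which some $x$-derivatives fall on $[E,B]$ are estimated directly by Cauchy-Schwarz, producing $\lag v\rag$-weighted velocity derivatives of $\nabla_x^{i-|\alpha'|}h^\varepsilon$ paired against $w_i^2\nabla_x^i h^\varepsilon$. The crucial weight identity
\[
\lag v\rag\, w_i^2 \;\sim\; \lag v\rag^{-1} w_{i-1}^2
\]
matches the velocity degeneracy of the Landau $\mathbf{D}$-norm (Lemma \ref{lower norm}) at weight $w_{i-1}$, so these remainders are controlled by $\epsilon_1\|wh^\varepsilon\|^2_{H^{i-1}_{\mathbf{D}}}$. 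Estimate \eqref{EBRvdxih} is then obtained by running the same IBP with $[E_R^\varepsilon,B_R^\varepsilon]$ replacing $[E,B]$: Sobolev embedding and the a priori assumption \eqref{aps-vml} give $\|[E_R^\varepsilon,B_R^\varepsilon]\|_{L^\infty_x}\lesssim \varepsilon^{-5/4}$, so that $k\ge 3$ furnishes the small prefactor $\varepsilon^{k-5/4}\leq\varepsilon^{1/2}$ after a further Cauchy-Schwarz against $Y(t)$ on $0\le t\le\varepsilon^{-\kappa}$; the Leibniz remainder is absorbed into $\varepsilon\|wh^\varepsilon\|_{H^i_{\mathbf{D}}}^2$ via the same polynomial weight identity.

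For \eqref{EBnvdxih} the sum splits into the $\nabla_v h^\varepsilon$ part and the $\nabla_v F_n$ part. The former is handled by the same IBP, with the pointwise bounds $\|\partial^\alpha[E_n,B_n]\|_{L^\infty_x}\lesssim(1+t)^n$ from \eqref{em-Fn-es}; on $0\le t\le \varepsilon^{-\kappa}$ with $\kappa=1/3$, the factor $\varepsilon^n(1+t)^n\lesssim\varepsilon^{n(1-\kappa)}$, and redistributing time weights between $(1+t)^{-1}/\ln(\mathrm{e}+t)$ and $Y(t)$ yields the stated $\varepsilon^{\kappa/2}Y(t)$ prefactor. For the $\big(E_R^\varepsilon+v\times B_R^\varepsilon\big)\cdot\nabla_v F_n$ contribution no IBP is needed: a direct Cauchy-Schwarz using $\|\mathbf{M}^{-1/2}\lag v\rag\nabla_v F_n\|_{L^\infty_x L^2_v}\lesssim(1+t)^n$ and $\|[E_R^\varepsilon,B_R^\varepsilon]\|_{H^i}$ produces the remaining $\varepsilon^{2\kappa}(\|E_R^\varepsilon\|_{H^i}^2+\|B_R^\varepsilon\|_{H^i}^2+\|wh^\varepsilon\|_{H^{i-1}_{\mathbf{D}}}^2)$ term.

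The main obstacle is the magnetic contribution $(v\times B)\cdot\nabla_v h^\varepsilon$ at leading order: it carries simultaneously a $|v|$-growth and a velocity derivative, a combination that neither the $\mathbf{D}$-norm nor the dissipation $Y(t)\|\lag v\rag w_i\nabla_x^i h^\varepsilon\|^2$ can absorb if estimated directly. The rotational identity $(v\times B)\cdot v=0$, which via the radial structure of $w_i$ translates into $(v\times B)\cdot\nabla_v w_i^2=0$, is precisely what lets the IBP close this term without invoking any additional dissipation; securing this cancellation throughout the Leibniz expansion and matching the resulting lower-order terms to the polynomial-weight identity above is the technical heart of the proof.
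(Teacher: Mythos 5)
Your treatment of the leading-order term (all spatial derivatives on $h^{\varepsilon}$) matches the paper's mechanism: integrate by parts in $v$, use that $w_i$ is radial so $(v\times B)\cdot\nabla_v w_i^2=0$, and absorb the surviving $E\cdot\nabla_v w_i^2$ contribution into $\epsilon_1 Y(t)\|\langle v\rangle w_i\nabla_x^i h^{\varepsilon}\|^2$ via $(1+t)^{-p_0}\lesssim Y(t)$. However, there is a genuine gap in how you handle the Leibniz (commutator) terms where derivatives fall on the fields. First, for the magnetic commutator terms $\big\langle (v\times\partial^{\alpha}B)\cdot\nabla_v\partial^{\alpha'}h^{\varepsilon},\,w_i^2\nabla_x^i h^{\varepsilon}\big\rangle$ with $|\alpha|\geq 1$, your proposed "direct Cauchy--Schwarz" fails: by Lemma \ref{lower norm} the $\mathbf{D}$-norm controls ${\bf P}_v[\nabla_v\cdot]$ only with the weight $(1+|v|)^{-3/2}$, and the extra factor $|v|$ from $v\times\partial^{\alpha}B$ makes the weight budget impossible — even granting the identity $w_i=\langle v\rangle^{-1}w_{i-1}$ and the $Y(t)$-dissipation, a short count shows one is left with a deficit of at least $\langle v\rangle^{1/2}$ when $|\alpha'|=i-1$. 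The cancellation you invoke at leading order has a counterpart here that you do not use: since ${\bf P}_v$ projects along $v$, one has $(v\times\partial^{\alpha}B)\cdot{\bf P}_v[\nabla_v\partial^{\alpha'}h^{\varepsilon}]=0$, so only the $({\bf I}-{\bf P}_v)$ part survives, which carries the milder $(1+|v|)^{-1/2}$ degeneracy; this observation is exactly how the paper closes these terms, and without it (or an equivalent mechanism) the estimate does not close.

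Second, for \eqref{EBRvdxih} your plan puts $[E_R^{\varepsilon},B_R^{\varepsilon}]$ in $L^\infty_x$, which only handles the term with no derivatives on the field: for $\partial^{\alpha}E_R^{\varepsilon},\partial^{\alpha}B_R^{\varepsilon}$ with $|\alpha|=1,2$, the a priori assumption \eqref{aps-vml} gives no $L^\infty_x$ control (that would require $H^3$ or $H^4$ of the remainder fields). The paper instead places $\partial^{\alpha}E_R^{\varepsilon},\partial^{\alpha}B_R^{\varepsilon}$ in $L^6_x$ and the $h^{\varepsilon}$-factor in $L^2_vL^3_x$, and then uses the interpolation
\begin{align*}
\big\|\langle v\rangle^{-1}w_i\nabla_v\partial^{\alpha'}h^{\varepsilon}\big\|_{L^2_vL^3_x}^2
\lesssim \big\|w_{i-1}\nabla_x^{|\alpha'|}h^{\varepsilon}\big\|_{\bf D}\,\big\|w_i\nabla_x^{|\alpha'|+1}h^{\varepsilon}\big\|_{\bf D},
\end{align*}
which is precisely where the $i$-dependent polynomial part of $w_i$ is used (the paper remarks this is the reason for that design). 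Your sentence "the Leibniz remainder is absorbed \dots via the same polynomial weight identity" does not specify any admissible distribution of the $x$-integrability, and with only $\|[E_R^{\varepsilon},B_R^{\varepsilon}]\|_{L^\infty_x}$ available the derived-field terms cannot be estimated. The remaining parts of your plan (the leading electric term, the $\nabla_vF_n$ contribution in \eqref{EBnvdxih}, and the $\varepsilon$-bookkeeping through $\varepsilon\lesssim\varepsilon^{1/2}Y(t)$ on $0\le t\le\varepsilon^{-\kappa}$) are consistent with the paper.
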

\begin{proof}
	We will prove \eqref{EBRvdxih} and omit the proof of \eqref{EBvdxih} and \eqref{EBnvdxih} for the sake of brevity. For \eqref{EBRvdxih}, we have
\begin{align*}
&\varepsilon^k \Big|\Big\langle \nabla_x^i\Big[\Big(E_R^{\varepsilon}+v \times B_R^{\varepsilon} \Big)\cdot\nabla_vh^{\varepsilon}\Big], w^2_i \nabla_x^i h^{\varepsilon}\Big\rangle\Big|\\
    \lesssim&\,\varepsilon^k\sum_{\substack{|\alpha+\alpha'|=i\\ |\alpha'|<i}}{\bf I}_{i\geq1}\big|\big\langle \partial^{\alpha}E_R^{\varepsilon}\cdot\nabla_v\partial^{\alpha'}h^{\varepsilon},w^2_i \nabla_x ^i h^{\varepsilon}\big\rangle\big|
+\varepsilon^k\big|\big\langle E_R^{\varepsilon} \cdot\nabla_v\big(w^2_i\big)  \nabla_x^i h^{\varepsilon},  \nabla_x^i h^{\varepsilon}\big\rangle\big|\\
&+\varepsilon^k\sum_{\substack{|\alpha+\alpha'|=i\\ |\alpha'|<i}}{\bf I}_{i\geq1}\big|\big\langle\big( v \times \partial^{\alpha}B_R^{\varepsilon}\big)\cdot\big({\bf I}-{\bf P}_v)[\nabla_v\partial^{\alpha'}h^{\varepsilon}], w^2_i \nabla_x^i h^{\varepsilon}\big\rangle\big|\\
\lesssim &\varepsilon^k\sum_{\substack{|\alpha+\alpha'|=i\\ |\alpha'|<i}}{\bf I}_{i\geq1}\|\partial^{\alpha}E_R^{\varepsilon}\|_{L^6} \big\|{\lag v\rag}^{-1}w_i\nabla_v\partial^{\alpha'}h^{\varepsilon}\big\|_{L^2_vL^3_x}\|{\lag v\rag}w_i\nabla_x^i h^{\varepsilon}\|\\
&+\varepsilon^k\|E_R^{\varepsilon}\|_{H^2} \big\|{\lag v\rag}w_i \nabla_x^i h^{\varepsilon}\big\|^2\\
+&\varepsilon^k\sum_{\substack{|\alpha+\alpha'|=i\\ |\alpha'|<i}}{\bf I}_{i\geq1}\|\partial^{\alpha}B_R^{\varepsilon}\|_{L^6} \Big\|\frac{|v|}{{\lag v\rag}}w_i\big({\bf I}-{\bf P}_v)[\nabla_v \partial^{\alpha'}h^{\varepsilon}]\Big\|_{L^2_vL^3_x}\|{\lag v\rag}w_i\nabla_x^i h^{\varepsilon}\|.
\end{align*}
Furthermore, by Lemma \ref{lower norm}, for $|\alpha'|<i$ with $1\leq i\leq 2$, we can get that
\begin{align*}
&\big\|{\lag v\rag}^{-1}w_i\nabla_v\partial^{\alpha'}h^{\varepsilon}\big\|_{L^2_vL^3_x}^2=\int_{{\mathbb R}^3_v} {\lag v\rag}^{-2}w_i^2|\nabla_v\partial^{\alpha'}h^{\varepsilon}|^2_{L^3_x}dv\\
\leq& \int_{{\mathbb R}^3_v} \Big({\lag v\rag}^{-\frac{3}{2}}w_i|\nabla_v\partial^{\alpha'}h^{\varepsilon}|_{L^6_x}\Big)\Big({\lag v\rag}^{-\frac{1}{2}}w_i|\nabla_v\partial^{\alpha'}h^{\varepsilon}|_{L^2_x}\Big)\\
\lesssim& \left\|w_{i-1}\nabla_x^{|\alpha'|} h^{\varepsilon}\right\|_{\bf D} \left\|w_i\nabla_x^{|\alpha'|+1}h^{\varepsilon}\right\|_{\bf D}.
\end{align*}
And similarly,
\begin{align*}
\Big\|\frac{|v|}{{\lag v\rag}}w_i\big({\bf I}-{\bf P}_v)[\nabla_v \partial^{\alpha'}h^{\varepsilon}]\Big\|_{L^2_vL^3_x}^2%\leq\int_{{\mathbb R}^3_v}\overline{w}^2|\big({\bf I}-{\bf P}_v)[\nabla_v \partial^{\alpha'}h^{\varepsilon}]|^2_{L^3_x}dv\\
%\leq& \int_{{\mathbb R}^3_v} \Big({\lag v\rag}^{-\frac{1}{2}}w_i|\big({\bf I}-{\bf P}_v)[\nabla_v\partial^{\alpha'} h^{\varepsilon}]|_{L^6_x}\Big)\Big({\lag v\rag}^{-\frac{1}{2}}w_{i-1}|\big({\bf I}-{\bf P}_v)[\nabla_v \partial^{\alpha'}h^{\varepsilon}]|_{L^2_x}\Big)dv\\
\lesssim& \left\|w_{i-1} \nabla_x^{|\alpha'|}h^{\varepsilon}\right\|_{\bf D} \left\|w_i\nabla_x^{|\alpha'|+1}h^{\varepsilon}\right\|_{\bf D}.
\end{align*}
The two aforementioned estimates are the very reason why we design our weight functions $w_i$ to depend on the order $i$ of spatial derivatives.

Noting that $(1+t)^{-\beta}\lesssim Y(t)$ and $\varepsilon\lesssim \varepsilon^{\frac{1}{2}}Y(t)$ for $t\leq \varepsilon^{-\kappa}$, we can further use \eqref{fbd-vml} to get
\begin{align*}
 &\varepsilon^k \Big|\Big\langle \nabla_x^i\Big[\Big(E_R^{\varepsilon}+v \times B_R^{\varepsilon} \Big)\cdot\nabla_vh^{\varepsilon}\Big], w^2_i \nabla_x^i h^{\varepsilon}\Big\rangle\Big|\\
\lesssim & \varepsilon^k\Big(\|E_R^{\varepsilon}\|_{H^2}+\|B_R^{\varepsilon}\|_{H^2}\Big)\Big( \big\|{\lag v\rag}w_i \nabla_x^i h^{\varepsilon}\big\|^2+ \|w_i\nabla_x^ih^{\varepsilon}\|_{\bf D}^2+{\bf I}_{i\geq1}\|w h^{\varepsilon}\|_{H^{i-1}_{\bf D}}^2\Big)\\
\lesssim&
\varepsilon^{\frac{1}{2}}Y(t)\big\|{\lag v\rag}w_i \nabla_x^i h^{\varepsilon}\big\|^2+\varepsilon\|w h^{\varepsilon}\|^2_{H^i_{\bf D}}.
\end{align*}
\end{proof}

We have now reached the stage of applying weighted energy estimates to the function $h^\varepsilon$.
\begin{proposition}\label{h-vml-eng-prop}
Under the assumptions of Lemma \ref{dxlm VML}, we can establish the following weighted energy estimate for $h^\varepsilon$ with $\kappa=\frac{1}{3}$:
\begin{align}\label{h-vml-eng}
\frac{\mathrm{d}}{\mathrm{d} t}\sum\limits_{i=0}^2&\big(\varepsilon^{i+1+\kappa}\|w_i\nabla_x^ih^{\varepsilon}\|^2\big)
    +\sum\limits_{i=0}^2\varepsilon^{i+1+\kappa}Y(t)\big\|{\lag v\rag}w_i\nabla_x^ih^{\varepsilon}\big\|^2
    +\de\sum\limits_{i=0}^2\varepsilon^{i+\kappa}\|w_i\nabla_x^ih^{\varepsilon}\|_{\bf D}^2 \\
    \lesssim&\sum\limits_{i=0}^2\varepsilon^{i+\kappa}\|f^{\varepsilon}\|_{H^i}^2
    +\sum\limits_{i=0}^2\varepsilon^{i+1+2\kappa}\Big(\|E_R^{\varepsilon}\|_{H^i}^2+\|B_R^{\varepsilon}\|_{H^i}^2+\|wh^{\varepsilon}\|_{H^i}^2\Big)
       \nonumber\\
    &+\varepsilon^{3+\kappa}\|wh^{\varepsilon}\|_{H^2_{\bf D}}^2+\epsilon_1\sum\limits_{i=0}^1\varepsilon^{i+1+\kappa}\Big((1+t)^{-p_0}\|w h^{\varepsilon}\|_{H^i}^2+\|w h^{\varepsilon}\|_{H^i_{\bf D}}^2\Big)
    \nonumber\\
    &+\sum\limits_{i=0}^2\varepsilon^{2k+i+2+\kappa}(1+t)^{4k+2}
    +\sum\limits_{i=0}^2\varepsilon^{k+i+1+\kappa}(1+t)^{2k}\|w_i\na_x^ih^{\varepsilon}\|.\nonumber
\end{align}
\end{proposition}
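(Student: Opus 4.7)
The plan is to differentiate \eqref{h-equation} in $x$ with $\partial^\alpha$ for $|\alpha|=i\in\{0,1,2\}$, pair the resulting equation in $L^2_{x,v}$ against $w_i^2(t,v)\partial^\alpha h^\varepsilon$, multiply by $\varepsilon^{i+1+\kappa}$, and sum over $i$. The time-derivative term produces $\tfrac{1}{2}\tfrac{d}{dt}\|w_i\partial^\alpha h^\varepsilon\|^2$ together with a \emph{free} dissipative contribution $\varepsilon^{i+1+\kappa}Y(t)\|\langle v\rangle w_i\partial^\alpha h^\varepsilon\|^2$ from differentiating the time-velocity factor in $w_i$; this is exactly the $Y(t)$-dissipation on the left of \eqref{h-vml-eng} and is the only source of control on unbounded linear-in-$v$ multipliers. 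The $v\cdot\nabla_x$ term vanishes after integration by parts, and \eqref{wLLL} applied to $\varepsilon^{-1}\mathcal{L}[h^\varepsilon]$ delivers the coercive dissipation $\delta\varepsilon^{i+\kappa}\|w_i\partial^\alpha h^\varepsilon\|_{\bf D}^2$, modulo a low-velocity remainder absorbed into the $\varepsilon^{i+\kappa}\|f^\varepsilon\|_{H^i}^2$ term on the right via the identity $\sqrt{\mathbf{M}}f^\varepsilon=\sqrt{\mu}h^\varepsilon$.

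The remaining right-hand terms of \eqref{h-equation} are handled group by group. The source $\tfrac{E_R^\varepsilon+v\times B_R^\varepsilon}{RT}\cdot(v-u)\,\mu^{-1/2}\mathbf{M}$, after using \eqref{tt1} to ensure $\mu^{-1/2}\mathbf{M}$ has Gaussian decay in $v$, contributes at most $\varepsilon^{i+1+2\kappa}(\|E_R^\varepsilon\|_{H^i}^2+\|B_R^\varepsilon\|_{H^i}^2)$ after Cauchy's inequality. The linear multiplicative terms $\tfrac{E\cdot v}{2RT_c}h^\varepsilon$, $\varepsilon^k\tfrac{E_R^\varepsilon\cdot v}{2RT_c}h^\varepsilon$, and $\sum\varepsilon^n\tfrac{E_n\cdot v}{2RT_c}h^\varepsilon$ are precisely what Lemma \ref{dxlmh VML} is designed to handle, each being bounded by a small constant multiple of $Y(t)\|\langle v\rangle w_i\partial^\alpha h^\varepsilon\|^2$ plus lower-order pieces. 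The Lorentz terms $(E+v\times B)\cdot\nabla_v h^\varepsilon$, $\varepsilon^k(E_R^\varepsilon+v\times B_R^\varepsilon)\cdot\nabla_v h^\varepsilon$ and their $F_n$-analogues are controlled via Lemma \ref{key-VML-2}; the key point is that the polynomial weight $\langle v\rangle^{\ell-i}$ in $w_i$ is deliberately chosen to shrink with the order $i$ of $x$-derivatives so that Lemma \ref{lower norm} can balance $\nabla_v h^\varepsilon$ against the degenerate $\|\cdot\|_{\bf D}$ dissipation. The $\mathcal{L}_d$ term contributes $\varepsilon^{i+\kappa}\epsilon_1\|wh^\varepsilon\|_{H^i_{\bf D}}^2$ by \eqref{wGLLd}; its top-order part is absorbed into the coercive dissipation by smallness of $\epsilon_1$ while the lower-order parts appear in the $\epsilon_1\sum_{i\le 1}\varepsilon^{i+1+\kappa}\|wh^\varepsilon\|^2_{H^i_{\bf D}}$ term on the right-hand side.

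The main obstacle is the weighted trilinear estimate for $\varepsilon^{k-1}\Gamma(h^\varepsilon,h^\varepsilon)$ paired with $w_i^2\partial^\alpha h^\varepsilon$. Applying \eqref{wGGL} and Leibniz, for $|\alpha|=2$ one of the two $h^\varepsilon$ factors must be placed in $L^\infty_x$ via the Sobolev embedding $\|\cdot\|_{L^\infty_x}\lesssim\|\cdot\|_{H^2_x}$, which would naively demand an extra polynomial velocity weight $\langle v\rangle^j$ beyond what $w_i$ already supplies. This is resolved using \eqref{pevi}: an extra $\langle v\rangle^j$ is absorbed into the exponential portion of $w$ at the price of a harmless factor $[\ln(\mathrm{e}+t)]^{j/2}$, which is swallowed by the prefactor $\varepsilon^{k-1}$ together with the a priori bound \eqref{aps-vml} (recall $k\ge 3$), producing the single term $\varepsilon^{3+\kappa}\|wh^\varepsilon\|_{H^2_{\bf D}}^2$ on the right of \eqref{h-vml-eng}. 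The coupling collision terms $\sum\varepsilon^{n-1}[\Gamma(\mu^{-1/2}F_n,h^\varepsilon)+\Gamma(h^\varepsilon,\mu^{-1/2}F_n)]$ are handled analogously, with the polynomial time-growth $(1+t)^n$ of $F_n$ tamed by $t\le\varepsilon^{-\kappa}$ and $\kappa=\tfrac13$, yielding the $\varepsilon^{i+1+2\kappa}\|wh^\varepsilon\|_{H^i}^2$ contribution. Finally, the residual $\varepsilon^k\mathcal{Q}_1$ splits into a pure forcing piece giving $\varepsilon^{2k+i+2+\kappa}(1+t)^{4k+2}$ and a linear-in-$h^\varepsilon$ piece giving $\varepsilon^{k+i+1+\kappa}(1+t)^{2k}\|w_i\nabla_x^i h^\varepsilon\|$, exactly matching the last two lines of \eqref{h-vml-eng}. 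Summing the three levels $i=0,1,2$ completes the proof.
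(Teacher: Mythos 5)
Your proposal is correct and follows essentially the same route as the paper: the paper's proof runs the same $w_i^2\partial^\alpha$-weighted energy identity level by level ($i=0,1,2$ in \eqref{L2h VML}, \eqref{H1h VML}, \eqref{H2h VML}), with the $Y(t)$-dissipation generated by the time-dependent weight, coercivity from \eqref{wLLL} producing the $\|f^{\varepsilon}\|^2_{H^i}$ remainder, Lemmas \ref{dxlmh VML} and \ref{key-VML-2} for the field terms, \eqref{wGLLd} for $\mathcal{L}_d$, and the \eqref{zh1}--\eqref{zh2} absorption of the extra polynomial velocity weights into the exponential factor (your use of \eqref{pevi}) for the trilinear terms, before multiplying by $\varepsilon^{i+1+\kappa}$ and summing. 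No gaps; your unified presentation matches the paper's three-step argument term for term.
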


\begin{proof} The proof is divided into three steps.
\vskip 0.2cm
\noindent{\underline{\it Step 1. Basic Weighted Energy Estimate of $h^{\varepsilon}$.}} In this step, we deduce the weighted $L^2$ estimate of $h^{\varepsilon}$. For results in this direction, we have
\begin{align}\label{L2h VML}
    \frac{\mathrm{d}}{\mathrm{d} t}\big(\varepsilon^{1+\kappa}&\|w_0h^{\varepsilon}\|^2\big)+\varepsilon^{1+\kappa}Y\big\|{\lag v\rag}w_0h^{\varepsilon}\big\|^2
    +\delta\varepsilon^{\kappa}\|w_0h^{\varepsilon}\|_{\bf D}^2 \\
    \lesssim&\,\varepsilon^{\kappa}\|f^{\varepsilon}\|^2+\varepsilon^{1+2\kappa}\Big(\|E_R^{\varepsilon}\|^2+\|B_R^{\varepsilon}\|^2
    +\|w_0h^{\varepsilon}\|^2\Big)+\varepsilon^{3+\kappa}\|wh^{\varepsilon}\|_{H^2_{\bf D}}^2\nonumber\\
    &+\varepsilon^{2k+2+\kappa}(1+t)^{4k+2}+\varepsilon^{k+1+\kappa}(1+t)^{2k}\|w_0h^{\varepsilon}\|.\nonumber
\end{align}
To prove this, we first take the $L^2$ inner product of $w^2_0 h^{\varepsilon}$ with \eqref{h-equation} and use \eqref{wLLL} to obtain
\begin{align}\label{L2h1 VML}
    \frac{1}{2}\frac{\mathrm{d}}{\mathrm{d} t}\|w_0h^{\varepsilon}\|^2&+Y\|{\lag v\rag}w_0h^{\varepsilon}\|^2
    +\frac{\delta}{\varepsilon}\|w_0h^{\varepsilon}]\|^2_{\bf D} \\
    \leq&\;\frac{C}{\varepsilon}\|f^{\varepsilon}\|^2+\frac{1}{\varepsilon}\big|\big\langle \mathcal{L}_d[ h^{\varepsilon}], w^2_0h^{\varepsilon}\big\rangle\big|+\Big|\Big\langle \big(E_R^{\varepsilon}+v \times B_R^{\varepsilon} \big) \cdot \frac{v-u}{ RT}\mu^{-\frac{1}{2}}\mathbf{M},w^2_0 h^{\varepsilon}\Big\rangle\Big| \nonumber\\
    & +\Big|\Big\langle \big(E+v \times B \big) \cdot \nabla_v h^{\varepsilon}, w^2_0 h^{\varepsilon}\Big\rangle\Big|  +\Big|\Big\langle \frac{E\cdot v }{ 2RT_c}h^{\varepsilon}, w^2_0 h^{\varepsilon}\Big\rangle\Big|\nonumber\\
    &+\varepsilon^{k-1}\big|\big\langle\Gamma ( h^{\varepsilon},
    h^{\varepsilon} ), w^2_0 h^{\varepsilon}\big\rangle\big|+\sum_{n=1}^{2k-1}\varepsilon^{n-1}\big|\big\langle[\Gamma(\mu^{-\frac{1}{2}}F_n, h^{\varepsilon})
    +\Gamma(
 h^{\varepsilon}, \mu^{-\frac{1}{2}} F_n)], w^2_0 h^{\varepsilon}\big\rangle\big|\nonumber\\
 &+\varepsilon^k \big|\big\langle \Big(E_R^{\varepsilon}+v \times B_R^{\varepsilon} \Big)\cdot\nabla_vh^{\varepsilon},w^2_0 h^{\varepsilon}\big\rangle\big|+\varepsilon^k \big|\big\langle \frac{E_R^{\varepsilon} \cdot v}{ 2RT_c}h^{\varepsilon},w^2_0 h^{\varepsilon}\big\rangle\big|\nonumber\\
 &+\sum_{n=1}^{2k-1}\varepsilon^n\Big|\Big\langle \big(E_n+v \times B_n \Big)\cdot\nabla_vh^{\varepsilon}+\big(E_R^{\varepsilon}+v \times B_R^{\varepsilon} \big)\cdot\nabla_v F_n,w^2_0 h^{\varepsilon}\Big\rangle\Big|\nonumber\\
 &+\sum_{n=1}^{2k-1}\varepsilon^n\Big|\Big\langle \frac{E_n\cdot v}{ 2RT_c}h^{\varepsilon},w^2_0 h^{\varepsilon}\Big\rangle\Big|
  +\big|\big\langle \CQ_1,w^2_0 h^{\varepsilon}\big\rangle\big|.\nonumber
\end{align}
We now proceed to estimate the terms on the R.H.S. of \eqref{L2h1 VML} term by term.

From \eqref{wGLLd}, the upper bound of the 2nd term on the R.H.S. of \eqref{L2h1 VML} is $C\epsilon_1\varepsilon^{-1} |w_0h^{\varepsilon}]|^2_{\bf D}$. Noting that ${\lag v\rag}w_0\mu^{-\frac{1}{2}}\mathbf{M}$ is bounded, the 3rd term can be bounded by $ \frac{o(1)}{\varepsilon}\|w_0h^{\varepsilon}]\|^2_{\bf D} +C\varepsilon\big(\|E_R^{\varepsilon}|^2+\|B_R^{\varepsilon}|^2\big)$. Applying Lemma \ref{dxlmh VML} and Lemma \ref{key-VML-2}, we can control the 4th, 5th, and 8th-11th terms by
\begin{eqnarray}
	&&\frac{o(1)}{\varepsilon}\|w_0h^{\varepsilon}]\|^2_{\bf D} +C\epsilon_1 Y\|{\lag v\rag}w_0h^{\varepsilon}\|^2+C\varepsilon^{2\kappa}\Big(\|E_R^{\varepsilon}\|^2+\|B_R^{\varepsilon}\|^2\Big).\nonumber
\end{eqnarray}

For the 6th term on the R.H.S. of \eqref{L2h1 VML},
we get from \eqref{wGGL} and Sobolev's inequality that
\begin{align}\label{gah-1}
    &\varepsilon^{k-1}\big|\big\langle\Gamma ( h^{\varepsilon},
    h^{\varepsilon} ), w^2_0  h^{\varepsilon}\big\rangle\big|\nonumber\\
    \lesssim&\varepsilon^{k-1}\int_{{\mathbb R}^3}\Big(\big|{\lag v\rag}^{\ell}h^{\varepsilon}\big|_{L^2}|w_0h^{\varepsilon}|_{\bf D}
    +|w_0h^{\varepsilon}|_{L^2}\big|{\lag v\rag}^{\ell}h^{\varepsilon}\big|_{\bf D}\Big)|w_0h^{\varepsilon}|_{\bf D}dx\, \\
    \lesssim&\varepsilon^{k-1}\Big(\big\|{\lag v\rag}^{\ell}h^{\varepsilon}\big\|_{H^2}\|w_0h^{\varepsilon}\|_{\bf D}^2
    +\|w_0h^{\varepsilon}\|\big\|{\lag v\rag}^{\ell}h^{\varepsilon}\big\|_{H^2_{\bf D}}\|w_0h^{\varepsilon}\|_{\bf D}\Big).\notag
\end{align}
Here, we've used the Sobolev's inequality $\|f\|_{L^{\infty}_x}\lesssim \|f\|_{H^2_x}$ to bound both $\big|{\lag v\rag}^{\ell}h^{\varepsilon}\big|_{L^2}$ and $\big|{\lag v\rag}^{\ell}h^{\varepsilon}\big|_{\bf D}$, as either $|w_0h^{\varepsilon}|_{L^2}$ or $|w_0h^{\varepsilon}|_{\bf D}$ cannot accommodate more $x-$derivatives. Note that in norms $\big\|{\lag v\rag}^{\ell}h^{\varepsilon}\big\|_{H^2}$ and $\big\|{\lag v\rag}^{\ell}h^{\varepsilon}\big\|_{H^2_{\bf D}}$, $\nabla_x^2h^{\varepsilon}$ can only bear the polynomial weight ${\lag v\rag}^{\ell-2}$ and an additional polynomial weight ${\lag v\rag}^{2}$ arises. We replace ${\lag v\rag}^{2}$ with the exponential weight $\exp\left(\frac{\langle v\rangle^2}{8RT_c\ln(\mathrm{e}+t)}\right)$ to obtain
\begin{align}\label{zh1}
\big\|{\lag v\rag}^{\ell}h^{\varepsilon}\big\|_{H^2}
\lesssim& \sup_{v\in {\mathbb R}^3}\Big[{\lag v\rag}^2\exp\left(\frac{-\epsilon_1\langle v\rangle^2}{8C_0RT_c\ln(\mathrm{e}+t)}\right)\Big] \big\|wh^{\varepsilon}\big\|_{H^2}\lesssim \frac{1}{\epsilon_1}\ln(\mathrm{e}+t) \|wh^{\varepsilon}\|_{H^2}.
\end{align}
Similarly, for the 7th term, we have
\begin{align}\label{zh2}
\big\|{\lag v\rag}^{\ell}h^{\varepsilon}\big\|_{H^2_{\bf D}}\lesssim& \sup_{v\in {\mathbb R}^3}\Big[{\lag v\rag}^2\exp
\left(\frac{-\epsilon_1\langle v\rangle^2}{8C_0RT_c\ln(\mathrm{e}+t)}\right)\Big] \big\|wh^{\varepsilon}\big\|_{H^2_{\bf D}}\lesssim \frac{1}{\epsilon_1}\ln(\mathrm{e}+t) \|wh^{\varepsilon}\|_{H^2_{\bf D}}.
\end{align}
Moreover, for $k\geq3$, it follows from the {\it a priori} assumption \eqref{aps-vml} that
\begin{align}\label{hf2b VML}
\varepsilon^{k-1}\|wh^{\varepsilon}\|_{H^i}\lesssim \varepsilon^{k-2-\frac{i}{2}}\lesssim \varepsilon^{1-\frac{i}{2}}.
\end{align}

Thus, by using $t\leq \varepsilon^{-\kappa}$ and the assumption $\varepsilon\ll\epsilon_1$, we can plug \eqref{zh1} and \eqref{zh2} into \eqref{gah-1} to get
\begin{align*}
    \varepsilon^{k-1}\big|\big\langle\Gamma ( h^{\varepsilon},
    h^{\varepsilon} ), w^2_0  h^{\varepsilon}\big\rangle\big|
     \lesssim& \frac{1}{\epsilon_1}\ln(\mathrm{e}+t)\|w_0h^{\varepsilon}\|_{\bf D}^2+\frac{o(1)}{\varepsilon}\|w_0h^{\varepsilon}\|_{\bf D}^2+\epsilon_1^{-2}\ln^2(\mathrm{e}+t)\varepsilon^3 \big\|wh^{\varepsilon}\big\|_{H^2_{\bf D}}\\
           \lesssim& \frac{o(1)}{\varepsilon}\|w_0h^{\varepsilon}\|_{\bf D}^2
    +\varepsilon^{2}\|wh^{\varepsilon}\|_{H^2_{\bf D}}^2.
\end{align*}

For the 7th term on the R.H.S. of \eqref{L2h1 VML}, we use
 \eqref{em-Fn-es} and Lemma \ref{wggmL} to bound it by
\begin{align*}
   %&\sum_{n=1}^{2k-1}\varepsilon^{n-1}\big|\big\langle[\Gamma(\mu^{-\frac{1}{2}}F_i,h^{\varepsilon})+\Gamma(
% h^{\varepsilon}, \mu^{-\frac{1}{2}} F_n)], w^2_0h^{\varepsilon}\big\rangle\big|\\
%    \lesssim&\,
C\sum_{i=1}^{2k-1}[\varepsilon(1+t)]^{i-1}(1+t)\Big(\|w_0h^{\varepsilon}\|+\|w_0h^{\varepsilon}\|_{\bf D}\Big)\|w_0h^{\varepsilon}\|_{\bf D}
\lesssim&\frac{o(1)}{\varepsilon}\|w_0h^{\varepsilon}\|_{\bf D}^2+\varepsilon^{\kappa}\|w_0h^{\varepsilon}\|^2\nonumber.
\end{align*}

Similarly, for the last term on the R.H.S. of \eqref{L2h1 VML}, we have
\begin{align*}
\big|\big\langle \CQ_1, w^2_0 h^{\varepsilon}\big\rangle\big|\lesssim \frac{\epsilon_1}{\varepsilon} \|w_0h^{\varepsilon}]\|^2_{\bf D} +\varepsilon^{2k+1}(1+t)^{4k+2}+\varepsilon^{k}(1+t)^{2k}\|w_0h^{\varepsilon}\|.
\end{align*}

Putting these estimates into \eqref{L2h1 VML} and multiplying the resulting inequality by $\varepsilon^{1+\kappa}$ yields \eqref{L2h VML}.
\vskip 0.2cm
\noindent\underline{{\it Step 2. First order derivative estimates of $h^{\varepsilon}$ with weight.} }
In this step, we continue deducing the estimate of $\|w_1\nabla_xh^{\varepsilon}\|$. We apply $\partial^{\alpha}$ for $1\leq |\alpha|\leq 2$ to \eqref{h-equation}, resulting in the following equation:
\begin{align}\label{mh1x VML}
 \partial_t\partial^{\alpha}h^{\varepsilon}&+v\cdot\nabla_x\partial^{\alpha}h^{\varepsilon}+\partial^{\alpha}\Big[\frac{\big(E_R^{\varepsilon}+v \times B_R^{\varepsilon} \big) }{ RT}\cdot \big(v-u\big)\mu^{-\frac{1}{2}}\mathbf{M}\Big]\nonumber\\
&+\partial^{\alpha}\Big[\frac{E\cdot v }{ 2RT_c}h^{\varepsilon}\Big]-\partial^{\alpha}\big[\big(E+v \times B \big)\cdot\nabla_vh^{\varepsilon}\big]+\frac{\partial^{\alpha}\mathcal{L}[h^{\varepsilon}]}{\varepsilon}\nonumber\\
 =&-\frac{\partial^{\alpha}\mathcal{L}_d[h^{\varepsilon}]}{\varepsilon}+\varepsilon^{k-1}\partial^{\alpha}\Gamma(h^{\varepsilon},h^{\varepsilon})
 +\sum_{n=1}^{2k-1}\varepsilon^{n-1}[\partial^{\alpha}\Gamma(\mu^{-\frac{1}{2}}F_n, h^{\varepsilon})+\partial^{\alpha}\Gamma(h^{\varepsilon}, \mu^{-\frac{1}{2}}F_n)]\nonumber\\
 &+\varepsilon^k \partial^{\alpha}\Big[\Big(E_R^{\varepsilon}+v \times B_R^{\varepsilon}\Big)\cdot\nabla_vh^{\varepsilon}\Big]-\varepsilon^k  \partial^{\alpha}\Big[\frac{E_R^{\varepsilon}\cdot v}{ 2RT_c}h^{\varepsilon}\Big]\\
 &+\sum_{n=1}^{2k-1}\varepsilon^n\partial^{\alpha}\Big[\Big(E_n+v \times B_n \Big)\cdot\nabla_vh^{\varepsilon}+\Big(E_R^{\varepsilon}+v \times B_R^{\varepsilon} \Big)\cdot\nabla_v F_n\Big]\nonumber\\
 &-\sum_{n=1}^{2k-1}\varepsilon^n\partial^{\alpha}\Big(\frac{E_n \cdot v}{ 2RT_c}h^{\varepsilon}\Big)+\varepsilon^{k}\partial^{\alpha}\CQ_1.
\nonumber
\end{align}
Next, we take the $L^2$ inner product of $w^2_1 \partial^{\alpha} h^{\varepsilon}$ with \eqref{mh1x VML}, considering $|\alpha|=1$, and apply \eqref{wLLL}:
\begin{align}\label{H1h1 VML}
&\frac{1}{2}\frac{\mathrm{d}}{\mathrm{d} t}\|w_1 \nabla_xh^{\varepsilon}\|^2+Y(t)\big\|{\lag v\rag}w_1\nabla_xh^{\varepsilon}\big\|^2
    +\frac{\delta}{\varepsilon}\|w_1 \nabla_xh^{\varepsilon}]\|^2_{\bf D} \\
    \leq&\;\frac{C}{\varepsilon}\|f^{\varepsilon}\|^2_{H^1}+\frac{1}{\varepsilon}\big|\big\langle \nabla_x\mathcal{L}_d[ h^{\varepsilon}], w^2_1 \nabla_xh^{\varepsilon}\big\rangle\big|\nonumber\\
    &+\Big|\Big\langle \nabla_x\Big[\big(E_R^{\varepsilon}+v \times B_R^{\varepsilon} \big) \cdot \frac{v-u}{ RT}\mu^{-\frac{1}{2}}\mathbf{M}\Big], w^2_1 \nabla_xh^{\varepsilon}\Big\rangle\Big|
    +\Big|\Big\langle \nabla_x\Big[\frac{E\cdot v }{ 2RT_c}h^{\varepsilon}\Big], w^2_1 \nabla_xh^{\varepsilon}\Big\rangle\Big|\nonumber\\
    &  +\big|\big\langle \nabla_x\big[\big(E+v \times B \big)\cdot\nabla_vh^{\varepsilon}\big],w^2_1 \nabla_x h^{\varepsilon}\big\rangle\big|  +\varepsilon^{k-1}\big|\big\langle \nabla_x\Gamma ( h^{\varepsilon},
    h^{\varepsilon} ), w^2_1 \nabla_x h^{\varepsilon}\big\rangle\big|\nonumber\\
    &+\sum_{n=1}^{2k-1}\varepsilon^{n-1}\big|\big\langle[\nabla_x\Gamma(\mu^{-\frac{1}{2}}F_n,h^{\varepsilon})
    +\nabla_x\Gamma(
 h^{\varepsilon}, \mu^{-\frac{1}{2}} F_n)], w^2_1 \nabla_x h^{\varepsilon}\big\rangle\big|\nonumber\\
 &+\varepsilon^k \big|\big\langle \nabla_x\Big[\Big(E_R^{\varepsilon}+v \times B_R^{\varepsilon} \Big)\cdot\nabla_vh^{\varepsilon}\Big], w^2_1 \nabla_x h^{\varepsilon}\big\rangle\big|+\varepsilon^k \Big|\Big\langle \nabla_x\Big[\frac{E_R^{\varepsilon} \cdot v}{ 2RT_c}h^{\varepsilon}\Big], w^2_1 \nabla_x h^{\varepsilon}\Big\rangle\Big|\nonumber\\
 &+\sum_{n=1}^{2k-1}\varepsilon^n\Big|\Big\langle \nabla_x\Big[\big(E_n+v \times B_n \big)\cdot\nabla_vh^{\varepsilon}+\big(E_R^{\varepsilon}+v \times B_R^{\varepsilon} \big)\cdot\nabla_v F_n\Big], w^2_1 \nabla_x h^{\varepsilon}\Big\rangle\Big|\nonumber\\
 &+\sum_{n=1}^{2k-1}\varepsilon^n\Big|\Big\langle \nabla_x\Big(\frac{E_n\cdot v}{ 2RT_c}h^{\varepsilon}\Big), w^2_1 \nabla_x h^{\varepsilon}\Big\rangle\Big|
  +\big|\big\langle \nabla_x\CQ_1, w^2_1 \nabla_x h^{\varepsilon}\big\rangle\big|.\nonumber
\end{align}
Now let's estimate the R.H.S. of equation \eqref{H1h1 VML} individually.

The upper bound for the 2nd term on the R.H.S. of \eqref{H1h1 VML} is
$C\epsilon_1\varepsilon^{-1}\|wh^{\varepsilon}\|^2_{H^1_{\bf D}}.$
The 3rd term on the R.H.S. of \eqref{H1h1 VML} can be bounded by
$ o(1)\varepsilon^{-1}\|w_1 \nabla_xh^{\varepsilon}\|^2_{\bf D}+C\varepsilon\big(\|E_R^{\varepsilon}\|_{H^1}^2+\|B_R^{\varepsilon}\|_{H^1}^2\big).$
Using Lemma \ref{dxlmh VML} and Lemma \ref{key-VML-2}, we can bound the 4th, 5th, and 8th to 11th terms on the R.H.S. of \eqref{H1h1 VML} by
\begin{align*}
C\Big[\epsilon_1Y(t)\|{\lag v\rag}w_1 \nabla_xh^{\varepsilon}\|^2+\big[(1+t)^{-p_0}+\varepsilon^{2\kappa}\big]\big(\|w_0 h^{\varepsilon}\|^2
+\|E_R^{\varepsilon}\|_{H^1}^2+\|B_R^{\varepsilon}\|_{H^1}^2\big)+\epsilon_1\|w h^{\varepsilon}\|^2_{H^{1}_{\bf D}}\Big].
\end{align*}
Regarding the 6th term on the R.H.S. of \eqref{H1h1 VML}, one can use \eqref{wGGL} and Sobolev's inequalities to bound it by
\begin{align*}
C\varepsilon^{k-1}\Big(\big\|{\lag v\rag}^{\ell-1}h^{\varepsilon}\big\|_{H^2}\|w_1\nabla_xh^{\varepsilon}\|_{\bf D}^2
    +\|w_1\nabla_xh^{\varepsilon}\|\big\|{\lag v\rag}^{\ell-1}h^{\varepsilon}\big\|_{H^2_{\bf D}}\|w_1\nabla_xh^{\varepsilon}\|_{\bf D}\Big).
\end{align*}
On the other hand, similar to \eqref{zh1} and \eqref{zh2}, one has
\begin{align*}
\big\|{\lag v\rag}^{\ell-1}h^{\varepsilon}\big\|_{H^2}
\lesssim \frac{\sqrt{\ln(\mathrm{e}+t)}}{\sqrt{\epsilon_1}} \|wh^{\varepsilon}\|_{H^2}, \quad
\big\|{\lag v\rag}^{\ell-1}h^{\varepsilon}\big\|_{H^2_{\bf D}}
\lesssim \frac{\sqrt{\ln(\mathrm{e}+t)}}{\sqrt{\epsilon_1}} \|wh^{\varepsilon}\|_{H^2_{\bf D}}.
\end{align*}
Then we can further bound this term by
$ o(1)\varepsilon^{-1}\|w_1\nabla_xh^{\varepsilon}\|_{\bf D}^2
    +C\varepsilon\|wh^{\varepsilon}\|_{H^2_{\bf D}}^2$.

For the 7th term on the R.H.S. of \eqref{H1h1 VML}, considering \eqref{em-Fn-es} and Lemma \ref{wggmL}, we can establish its upper bound as
$o(1)\varepsilon^{-1}\|w_1\nabla_xh^{\varepsilon}\|_{\bf D}^2+C\varepsilon^{\kappa}\big(\|wh^{\varepsilon}\|^2_{H^1}
    +\|w_0h^{\varepsilon}\|^2_{\bf D}\big)$
     for $t\leq \varepsilon^{-\kappa}$.

Similarly, for the last term on the R.H.S. of \eqref{H1h1 VML}, it can be estimated as
\begin{align*}
\big|\big\langle \nabla_x\CQ_1, w^2_1\nabla_x h^{\varepsilon}\big\rangle\big|\lesssim \frac{\epsilon_1}{\varepsilon} \|w_1\nabla_xh^{\varepsilon}]\|^2_{\bf D} +\varepsilon^{2k+1}(1+t)^{4k+2}+\varepsilon^{k}(1+t)^{2k}\|w_1\nabla_xh^{\varepsilon}\|.
\end{align*}
Plugging these estimates into \eqref{H1h1 VML} and multiplying the resulting inequality by $\varepsilon^{2+\kappa}$, we get
\begin{align}\label{H1h VML}
&\frac{\mathrm{d}}{\mathrm{d} t}\big(\varepsilon^{2+\kappa}\|w_1\nabla_xh^{\varepsilon}\|^2\big)+\varepsilon^{2+\kappa}Y\big\|{\lag v\rag}w_1\nabla_xh^{\varepsilon}\big\|^2
    +\delta\varepsilon^{1+\kappa}\|w_1\nabla_xh^{\varepsilon}\|_{\bf D}^2 \\
    \lesssim&\,\varepsilon^{1+\kappa}\|f^{\varepsilon}\|^2_{H^1}+\varepsilon^{2+2\kappa}\Big(\|E_R^{\varepsilon}\|^2_{H^1}+\|B_R^{\varepsilon}\|^2_{H^1}
    +\|wh^{\varepsilon}\|^2_{H^1}\Big)+\varepsilon^{3+\kappa}\|wh^{\varepsilon}\|_{H^2_{\bf D}}^2\nonumber\\
    &+\epsilon_1\varepsilon^{2+\kappa}\Big((1+t)^{-p_0}\|w_0 h^{\varepsilon}\|^2+\|w_0 h^{\varepsilon}\|^2_{\bf D}\Big)\nonumber\\
    &+\varepsilon^{2k+3+\kappa}(1+t)^{4k+2}+\varepsilon^{k+2+\kappa}(1+t)^{2k}\|w_1\nabla_xh^{\varepsilon}\|.\nonumber
\end{align}

\vskip 0.2cm
\noindent\underline{{\it Step 3. Second order derivative estimates of $h^{\varepsilon}$ with weight.} }
In this final step, we proceed to derive the following estimate for $\|w_2\nabla_x^2h^{\varepsilon}\|$:
\begin{align}\label{H2h VML}
\frac{\mathrm{d}}{\mathrm{d} t}\big(\varepsilon^{3+\kappa}&\|w_2\nabla_x^2h^{\varepsilon}\|^2\big)+\varepsilon^{3+\kappa}Y\big\|{\lag v\rag}w_2\nabla_x^2h^{\varepsilon}\big\|^2
    +\delta\varepsilon^{2+\kappa}\|w_2\nabla_x^2h^{\varepsilon}\|_{\bf D}^2 \\
    \lesssim&\,\varepsilon^{2+\kappa}\|f^{\varepsilon}\|^2_{H^2}+\varepsilon^{3+2\kappa}\Big(\|E_R^{\varepsilon}\|^2_{H^2}+\|B_R^{\varepsilon}\|^2_{H^2}
    +\|wh^{\varepsilon}\|^2_{H^2}\Big)\nonumber\\
    &+\epsilon_1\varepsilon^{3+\kappa}\Big[(1+t)^{-p_0}\|w h^{\varepsilon}\|^2_{H^1}+\|wh^{\varepsilon}\|_{H^1_{\bf D}}^2\Big]\nonumber\\
    &+\varepsilon^{2k+4+\kappa}(1+t)^{4k+2}+\varepsilon^{k+3+\kappa}(1+t)^{2k}\|w_2\nabla_x^2h^{\varepsilon}\|.\nonumber
\end{align}
To demonstrate this, we first take the $L^2$ inner product of $w^2_2 \partial^{\alpha} h^{\varepsilon}$ and \eqref{mh1x VML} with $|\alpha|=2$ and use \eqref{wLLL} to obtain
\begin{align}\label{H2h1 VML}
&\frac{1}{2}\frac{\mathrm{d}}{\mathrm{d} t}\|w_2 \nabla_x^2h^{\varepsilon}\|^2+Y(t)\big\|{\lag v\rag}w_2\nabla_x^2h^{\varepsilon}\big\|^2
    +\frac{\delta}{\varepsilon}\|w_2 \nabla_x^2h^{\varepsilon}]\|^2_{\bf D} \\
    \leq&\;\frac{C}{\varepsilon}\|f^{\varepsilon}\|^2_{H^2}+\frac{1}{\varepsilon}\big|\big\langle \nabla_x^2\mathcal{L}_d[ h^{\varepsilon}], w^2_2 \nabla_x^2h^{\varepsilon}\big\rangle\big|\nonumber\\
    &+\Big|\Big\langle \nabla_x^2\Big[\big(E_R^{\varepsilon}+v \times B_R^{\varepsilon} \big) \cdot \frac{v-u}{ RT}\mu^{-\frac{1}{2}}\mathbf{M}\Big], w^2_2 \nabla_x^2h^{\varepsilon}\Big\rangle\Big|+\Big|\Big\langle \nabla_x^2\Big[\frac{E\cdot v }{ 2RT_c}h^{\varepsilon}\Big], w^2_2 \nabla_x^2h^{\varepsilon}\Big\rangle\Big|\nonumber\\
    &  +\big|\big\langle \nabla_x^2\big[\big(E+v \times B \big)\cdot\nabla_vh^{\varepsilon}\big],w^2_2 \nabla_x^2 h^{\varepsilon}\big\rangle\big|  +\varepsilon^{k-1}\big|\big\langle \nabla_x^2\Gamma ( h^{\varepsilon},
    h^{\varepsilon} ), w^2_2 \nabla_x^2 h^{\varepsilon}\big\rangle\big|\nonumber\\
    &+\sum_{n=1}^{2k-1}\varepsilon^{n-1}\big|\big\langle[\nabla_x^2\Gamma(\mu^{-\frac{1}{2}} F_n,h^{\varepsilon})
    +\nabla_x^2\Gamma(
 h^{\varepsilon}, \mu^{-\frac{1}{2}} F_n)], w^2_2 \nabla_x^2 h^{\varepsilon}\big\rangle\big|\nonumber\\
 &+\varepsilon^k \big|\big\langle \nabla_x^2\Big[\Big(E_R^{\varepsilon}+v \times B_R^{\varepsilon} \Big)\cdot\nabla_vh^{\varepsilon}\Big], w^2_2 \nabla_x^2 h^{\varepsilon}\big\rangle\big|+\varepsilon^k \Big|\Big\langle \nabla_x^2\Big[\frac{E_R^{\varepsilon} \cdot v}{ 2RT_c}h^{\varepsilon}\Big], w^2_2 \nabla_x^2 h^{\varepsilon}\Big\rangle\Big|\nonumber\\
 &+\sum_{n=1}^{2k-1}\varepsilon^n\Big|\Big\langle \nabla_x^2\Big[\big(E_n+v \times B_n \big)\cdot\nabla_vh^{\varepsilon}+\big(E_R^{\varepsilon}+v \times B_R^{\varepsilon} \big)\cdot\nabla_v F_n\Big], w^2_2 \nabla_x^2 h^{\varepsilon}\Big\rangle\Big|\nonumber\\
 &+\sum_{n=1}^{2k-1}\varepsilon^n\Big|\Big\langle \nabla_x^2\Big(\frac{E_n\cdot v}{ 2RT_c}h^{\varepsilon}\Big), w^2_2 \nabla_x^2 h^{\varepsilon}\Big\rangle\Big|
  +\big|\big\langle \nabla_x^2\CQ_1, w^2_2 \nabla_x^2 h^{\varepsilon}\big\rangle\big|.\nonumber
\end{align}
Now, let's estimate the R.H.S. of \eqref{H2h1 VML} term by term.

For the 2nd and 3rd terms on the R.H.S. of \eqref{H2h1 VML}, their upper bounds are
$C\epsilon_1\varepsilon^{-1}\|w h^{\varepsilon}\|^2_{H^2_{\bf D}}+C\varepsilon\Big(\|E_R^{\varepsilon}\|_{H^2}^2+\|B_R^{\varepsilon}\|_{H^2}^2\Big).$
For the 4th, 5th, 8th-11th terms on the R.H.S. of \eqref{H2h1 VML}, based on Lemma \ref{dxlmh VML} and Lemma \ref{key-VML-2}, we can bound them as follows:
\begin{align*}
C\Big[\epsilon_1Y(t)\|{\lag v\rag}w_2\nabla_x^2h^{\varepsilon}\|^2+\big[(1+t)^{-p_0}+\varepsilon^{2\kappa}\big]
\big(\|w h^{\varepsilon}\|^2_{H^1}+\|E_R^{\varepsilon}\|_{H^2}^2+\|B_R^{\varepsilon}\|_{H^2}^2\big)+\epsilon_1\|w h^{\varepsilon}\|^2_{H^{2}_{\bf D}}\Big].
\end{align*}
Regarding the 6th and 7th terms on the R.H.S. of \eqref{H2h1 VML},
from \eqref{wGGL}, \eqref{em-Fn-es}, \eqref{hf2b VML},  and Sobolev's inequalities, we can control them by
\begin{align*}
C \|wh^{\varepsilon}\|_{H^2_{\bf D}}^2+\frac{o(1)}{\varepsilon}\|w_2\nabla_x^2h^{\varepsilon}\|_{\bf D}^2 +C\varepsilon^{\kappa}\Big(\|wh^{\varepsilon}\|^2_{H^2}
    +\|wh^{\varepsilon}\|^2_{H^1_{\bf D}}\Big)\nonumber.
\end{align*}

Similarly, for the last term on the R.H.S. of \eqref{H2h1 VML}, we can express it as
\begin{align*}
\big|\big\langle \nabla_x^2\CQ_1, w^2_2\nabla_x^2 h^{\varepsilon}\big\rangle\big|\lesssim \frac{o(1)}{\varepsilon} \|w_2\nabla_x^2h^{\varepsilon}]\|^2_{\bf D} +\varepsilon^{2k+1}(1+t)^{4k+2}+\varepsilon^{k}(1+t)^{2k}\|w_2\nabla_x^2h^{\varepsilon}\|.
\end{align*}

By substituting these estimates into \eqref{H2h1 VML} and multiplying the resulting inequality by $\varepsilon^{3+\kappa}$, we obtain \eqref{H2h VML}.

Finally, \eqref{h-vml-eng} follows from \eqref{L2h VML}, \eqref{H1h VML}, and \eqref{H2h VML}.
\end{proof}

%%%%%%%%%%%%%%%%%%%%%%%%%%%%%%%%%%%%%%%%%%%%%%%%%%%%%%%%%%%%%%%%%%%%%%%%%%%%%%%%%%
\subsection{Proof of the Theorem \ref{resultVML}} %\label{Sec:thmVML}
%%%%%%%%%%%%%%%%%%%%%%%%%%%%%%%%%%%%%%%%%%%%%%%%%%%%%%%%%%%%%%%%%%%%%%%%%%%%%%%%%%
We are now prepared to conclude the proof of Theorem~\ref{resultVML}.  For simplicity, we will exclusively derive the energy estimates \eqref{TVML1} and omit details of non-negativity proof of $F^{\varepsilon}$ as it closely resembles the proof provided in \cite{Ouyang-Wu-Xiao-arxiv-2022-rLM}.

For this purpose, combing Propositions \ref{h-vml-eng-prop} and \ref{f-eng-vml}, one has
\begin{align*}
\frac{\mathrm{d}}{\mathrm{d} t}\mathcal{E}(t)
&+\frac{\delta}{2}\sum_{i=0}^2 \varepsilon^i\Big[\frac{1}{\varepsilon}\|\nabla_x^i({\bf I}-{\bf P}_{\mathbf{M}})[f^{\varepsilon}]\|^2_{\bf D}+\varepsilon^{\kappa}\|w_i\nabla_x^i h^{\varepsilon}\|^2_{\bf D}+\varepsilon^{1+\kappa}Y\|{\lag v\rag}w_i\nabla_x^i h^{\varepsilon}\|^2\Big]\\
\lesssim & \sum_{i=0}^2\varepsilon^{i+\kappa}\Big[\|\nabla_x^if^{\varepsilon}\|^2+\Big(\varepsilon^{1+\kappa}
+\varepsilon(1+t)^{-p_0}\Big)\|w_i\nabla_x^ih^{\varepsilon}\|^2+\epsilon_1\varepsilon\|w_i\nabla_x^ih^{\varepsilon}\|^2_{\bf D}\Big)\\
&+ \varepsilon^{3+\kappa}\|w h^{\varepsilon}\|^2_{H^2_{\bf D}}+\sum_{i=0}^2\varepsilon^i\Big((1+t)^{-p_0}+\varepsilon^{\kappa}\Big)
\Big[\|\nabla_x^if^{\varepsilon}\|^2
     +\|E_R^{\varepsilon}\|^2_{H^i}+\|B_R^{\varepsilon}\|^2_{H^i}
     \nonumber\\
     &+ C_{\epsilon_1}\exp\left(-\frac{\epsilon_1}{8C_0RT^2_c\sqrt{\varepsilon}}\right)
    \Big(\|h^{\varepsilon}\|^2_{H^i}+\|h^{\varepsilon}\|^2_{H^i_{\bf D}}\Big)\Big]\\
      &+\varepsilon^{2k+1}(1+t)^{4k+2}+\varepsilon^{k}(1+t)^{2k}\sum_{i=0}^2\varepsilon^i\Big(\|\nabla_x^if^{\varepsilon}\|+\varepsilon^{1+\kappa}\|w_i\nabla_x^i h^{\varepsilon}\|\Big),
\end{align*}
where $\mathcal{E}(t)$ is defined in \eqref{eg-vml}.

Next, choosing $\varepsilon_0\geq \varepsilon>0$ sufficiently small such that
$$C_{\epsilon_1}\exp\left(\frac{-\epsilon_1}{8C_0RT^2_c\sqrt{\varepsilon}}\right)\leq \varepsilon_0^{2+\kappa}, \qquad \kappa=\frac13,$$
we have
\begin{align*}
&\frac{\mathrm{d}}{\mathrm{d} t}\mathcal{E}(t)+\mathcal{D}(t)\lesssim \Big[\varepsilon^{\kappa}+(1+t)^{-p_0}+\varepsilon^{k}(1+t)^{2k}\Big] \mathcal{E}(t)+\varepsilon^{2k+1}(1+t)^{4k+2},
\end{align*}
where $\mathcal{D}(t)$ is given by \eqref{dn-vml}. Then for $0<\varepsilon\leq \varepsilon_0$, we apply Gronwall's inequality over $t\in [0, \varepsilon^{-\kappa}]$ with $\kappa=\frac13$ to obtain
\begin{align*}
&\mathcal{E}(t)+\int_0^t\mathcal{D}(s)\, d s\lesssim \mathcal{E}(0)+1.
\end{align*}
This completes the proof of Theorem~\ref{resultVML}.

%%%%%%%%%%%%%%%%%%%%%%%%%%%%%%%%%%%%%%%%%%%%%%%%%%%%%%%%%%%%%%%%%%%%%%%%%%%%%%%%%%

\setcounter{equation}{0}

%%%%%%%%%%%%%%%%%%%%%%%%%%%%%%%%%%%%%%%%%%%%%%%%%%%%%%%%%%%%%%%%%%%%%%%%%%%%%%%%%%

%%%%%%%%%%%%%%%%%%%%%%%%%%%%%%%%%%%%%%%%%%%%%%%%%%%%%%%%%%%%%%%%%%%%%%%%%%%%%%%%%%

%%%%%%%%%%%%%%%%%%%%%%%%%%%%%%%%%%%%%%%%%%%%%%%%%%%%%%%%%%%%%%%%%%%%%%%%%%%%%%%%%%
\section{Global Hilbert expansion for the non-cutoff VMB system}\label{sec-vmb}
%%%%%%%%%%%%%%%%%%%%%%%%%%%%%%%%%%%%%%%%%%%%%%%%%%%%%%%%%%%%%%%%%%%%%%%%%%%%%%%%%%
To justify the validity of the Hilbert expansion \eqref{expan} for the non-cutoff VMB system, as discussed in Section \ref{h-VML},
our first task is to determine the coefficients $[F_n,E_n,B_n] (1\leq n\leq 2k-1)$, which is given by the iteration equations \eqref{expan2}. Subsequently, we need to prove the wellposedness of the remainders $[f^\vps,E^\vps_R,B^\vps_R]$ and $h^\vps$ via the equations \eqref{VMLf} and \eqref{h-equation} with prescribed initial data \eqref{fEB-initial} and \eqref{hvps-initial}, respectively.
%\begin{align*}
%f^\vps(0,x,v)=&f^\vps_0(x,v)=\vps^{-k}\FM^{-\frac{1}{2}}\left\{F^\vps_0(x,v)-\sum\limits_{n=0}^{2k-1}\vps^kF_{n,0}(x,v)\right\},\\
%E_R^\vps(0,x)=&E^\vps_{R,0}(x)=\vps^{-k}\left\{E^\vps_0(x)-\sum\limits_{n=0}^{2k-1}\vps^kE_{n,0}(x)\right\},\\
%B_R^\vps(0,x)=&B^\vps_{R,0}(x)=\vps^{-k}\left\{B^\vps_0(x)-\sum\limits_{n=0}^{2k-1}\vps^kB_{n,0}(x)\right\}.
%\end{align*}
%Recall that $h^\vps(0,x,v)=h^\vps_0(x,v)=\mu^{-\frac{1}{2}}\FM_{[\rho_0,u_0,T_0]}^{\frac{1}{2}}f^\vps_0(x,v).$

\subsection{Preliminaries}%\label{VMB}
In this subsection, we gather various estimates related to the collision operators of the non-cutoff VMB system. To facilitate later analysis, we will derive estimates for the linearized collision operators $\mathcal{L}_{\mathbf{M}}$, $\mathcal{L}_d$, and the nonlinear collision operator $\Gamma_{\mathbf{M}}$. These estimates are adaptations of those corresponding to the global Maxwellian $\mu$, as established in prior works like \cite{AMUXY-JFA-2012, Gressman-Strain-2011}. To achieve this, we must overcome difficulties arising from the local Maxwellian, such as the non-commutative property of the collision operators $\mathcal{L}_{\mathbf{M}}$, $\Gamma_{\mathbf{M}}$, and the differential operators $\partial^{\alpha}$. Simultaneously, we aim to improve the exponentially weighted estimate for $\Gamma$, previously proven in \cite{Duan-Liu-Yang-Zhao-2013-KRM, FLLZ-SCM-2018}. The improved estimate for $\Gamma$ holds significance not only for the weighted estimates associated with the coefficient $F_1$ but also of its own interest.

\subsubsection{ The linearized collision operators} In contrast to the VML case, the expression of $\CL_\FM$ is now less complicated. Similar to Subsection \ref{1cl}, we can also establish corresponding estimates on the operators $\CL$ and $\CL_\FM$ in the non-cutoff VMB case.
\begin{lemma}%\label{Lco-lem-Non}
 The linearized collision operators $\CL$ and $\CL_\FM$ satisfy the following coercivity estimates:
	\begin{itemize}
		\item [(i)] For the operator $\CL$ corresponding to the global Maxwellian $\mu$, it holds that
		\begin{equation}
			\big\lag \mathcal{L}h, h\big\rag\gtrsim |({\bf I}-{\bf P})h|_{\bf D}^2.\label{coL-Non}
		\end{equation}
		Moreover, let $|\beta|>0$, $\ell\geq0$  and $q\in [0, \infty)$, for $\eta>0$ small enough, there exists $C_{\eta}>0$ such that
		\begin{align}\label{L_v-Non}
			\left\langle \partial_{\beta}\mathcal{L}h, \lag v\rag^{2\ell}e^{2q\langle v\rangle}\partial_{\beta}h\right\rangle\geq& \left|\lag v\rag^{\ell}e^{q\langle v\rangle}\partial_{\beta}f\right|^2_{\bf D}
			-\eta\sum_{|\beta'|<|\beta|}\left|w_{\ell}\partial_{\beta'}({\bf I}-{\bf P})h\right|_{\bf D}^2\\
&-C_{\eta}\left|\chi_{\{| v|\leq2C_{\eta}\}}h\right|^2_{L^2}.\notag
		\end{align}
		\item [(ii)] Let $\FM=\FM_{[\rho,u,T]},$ where $[\rho,u,T]$ is determined by  Proposition \ref{em-ex-lem}, then there exists a constant $\de>0$ such that
		\begin{align}
			\big(\mathcal{L}_{\mathbf{M}}f, f\big)\geq\de \big| ({\bf I}-{\bf P}_{\mathbf{M}})f\big|_{\bf D}^2. \label{coL0-Non}
		\end{align}
		Moreover, for $\ell\geq 0$, $q\in [0, \infty)$, $|\beta|>0$, and some small constant $\eta>0$, it holds that
		\begin{align}\label{wLLM-Non}
			&\big(\partial^{\alpha}_{\beta}\mathcal{L}_{\mathbf{M}}f,   \lag v\rag^{2\ell}e^{2q\langle v\rangle}\partial_\beta^{\alpha}f\big)\nonumber\\
			\geq& \de\big| \lag v\rag^{\ell}e^{q\langle v\rangle}\partial^{\alpha}_{\beta}f\big|_{\bf D}^2-C(\eta+\epsilon_1)\sum_{\al'\leq \alpha}\sum_{|\beta'|\leq|\beta|}\big| \lag v\rag^{\ell}e^{q\langle v\rangle}\partial^{\al'}_{\beta'}f\big|_{\bf D}^2\\
			&-C(\eta)\sum_{\al'\leq \alpha}\sum_{|\beta'|<|\beta|}\big| \partial^{\al'}_{\beta'}f\big|^2_{L^2(B_{C(\eta)})},\nonumber
		\end{align}
			and for $|\beta|=0$, it holds that
		\begin{align}\label{wLLM0-Non}
			&\big(\partial^{\alpha}\mathcal{L}_{\mathbf{M}}f, \lag v\rag^{2\ell}e^{2q\langle v\rangle} \partial^{\alpha}f\big)\nonumber\\
			\geq& \de\big|\lag v\rag^{\ell}e^{q\langle v\rangle}\partial^{\alpha}f\big|_{\bf D}^2
			-C\epsilon_1\sum_{\al'< \alpha}{\bf 1}_{|\alpha|\geq1}\big|\lag v\rag^{\ell}e^{q\langle v\rangle}\partial^{\al'}f\big|_{\bf D}^2\\
			&-C(\eta)\sum_{\al'\leq \alpha}\big|\lag v\rag^{\ell}e^{q\langle v\rangle}\partial^{\al'}f\big|^2_{L^2(B_{C(\eta)})}.\nonumber
		\end{align}
				Furthermore, for $|\alpha|>0$, there exists a polynomial $\CU(\cdot)$ with $\CU(0)=0$ such that
		\begin{align}\label{coLh-Non}
			\big\langle\partial^{\alpha}\mathcal{L}_{\mathbf{M}}f, \partial^{\alpha}f\big\rangle\geq&\frac{3\de}{4}
\|\partial^{\alpha}({\bf I}-{\bf P}_{\mathbf{M}})f\|_{\bf D}^2-C\CU(\|\nabla_x[\rho,u,T]\|_{W^{|\alpha|-1,\infty}})\\
			&\times\Big( \frac{1}{\varepsilon}\|({\bf I}-{\bf P}_{\mathbf{M}})f\|_{H^{|\alpha|-1}_{\bf D}}^2+\varepsilon\|f\|^2_{H^{|\alpha|}}\Big).\nonumber
		\end{align}
	\end{itemize}
\end{lemma}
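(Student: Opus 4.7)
The plan is to split the six estimates into two groups: \eqref{coL-Non} and \eqref{L_v-Non} concern the global-Maxwellian operator $\mathcal{L}$ and follow from now-standard non-cutoff Boltzmann machinery; \eqref{coL0-Non}--\eqref{coLh-Non} concern the local-Maxwellian operator $\mathcal{L}_{\mathbf{M}}$ and are deduced from the former via a perturbative argument exploiting the closeness $|\mathbf{M}-\mu|\lesssim\epsilon_1\mu^{5/6}$ granted by \eqref{tt1} and \eqref{em-decay}, in direct analogy with the Landau-case strategy executed in \eqref{coLL}--\eqref{coLLh}.

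For \eqref{coL-Non}, I would invoke the coercivity of the non-cutoff linearized Boltzmann operator proved in \cite{Gressman-Strain-2011,AMUXY-JFA-2012}, stated exactly in the dissipation norm $|\cdot|_{\bf D}$ recalled earlier. To upgrade this to the weighted form \eqref{L_v-Non}, I would apply $\partial_\beta$ to $\mathcal{L}h$, test against $\langle v\rangle^{2\ell}e^{2q\langle v\rangle}\partial_\beta h$, and distribute $\partial_\beta$ through $\mathcal{L}$ by Leibniz to bring all velocity derivatives onto one side. The principal contribution is $|\langle v\rangle^{\ell}e^{q\langle v\rangle}\partial_\beta h|_{\bf D}^2$; commutators $[\partial_\beta,\mathcal{L}]$ produce lower-order velocity derivatives which are absorbed into $\eta\sum_{|\beta'|<|\beta|}|w_\ell\partial_{\beta'}({\bf I}-{\bf P})h|_{\bf D}^2$ by $\eta$-weighted Cauchy--Schwarz, while compactness of the kernel of $\mathcal{L}$ on finite velocity balls yields the residual $L^2$-ball term $-C_\eta|\chi_{\{|v|\leq 2C_\eta\}}h|_{L^2}^2$. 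This is precisely the scheme implemented in \cite{Duan-Liu-Yang-Zhao-2013-KRM}.

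For the local-Maxwellian estimates I would use the decomposition $\mathcal{L}_{\mathbf{M}}=\mathcal{L}+(\mathcal{L}_{\mathbf{M}}-\mathcal{L})$, where the difference can be controlled by $C\epsilon_1|\cdot|_{\bf D}^2$ after rewriting the quadratic form in terms of $\mu^{1/2}$ and using the mean value bound $|\mathbf{M}^{1/2}-\mu^{1/2}|\lesssim\epsilon_1\mu^{5/12}$. Then \eqref{coL0-Non} follows from \eqref{coL-Non} provided $\epsilon_1$ is sufficiently small, since ${\bf P}_{\mathbf{M}}$ and ${\bf P}$ also differ by $O(\epsilon_1)$ in $L^2$. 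For \eqref{wLLM-Non} and \eqref{wLLM0-Non}, the reduction to \eqref{L_v-Non} proceeds identically, but commutators $[\partial^\alpha,\mathcal{L}_{\mathbf{M}}]$ now act on $\mathbf{M}$ itself and generate prefactors involving $\partial^\alpha[\rho,u,T]$; by \eqref{em-decay} these are uniformly $O(\epsilon_1)$ and absorb into the same $(\eta+\epsilon_1)$ coefficient on the right-hand side.

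The main obstacle I expect is \eqref{coLh-Non}. Following the Landau prototype \eqref{coLLh}, I would expand
\[
\langle\partial^\alpha\mathcal{L}_{\mathbf{M}}f,\partial^\alpha f\rangle=\langle\mathcal{L}_{\mathbf{M}}\partial^\alpha({\bf I}-{\bf P}_{\mathbf{M}})f,\partial^\alpha({\bf I}-{\bf P}_{\mathbf{M}})f\rangle+\langle[\partial^\alpha,\mathcal{L}_{\mathbf{M}}]({\bf I}-{\bf P}_{\mathbf{M}})f,\partial^\alpha f\rangle+\langle\mathcal{L}_{\mathbf{M}}\partial^\alpha({\bf I}-{\bf P}_{\mathbf{M}})f,[\partial^\alpha,{\bf P}_{\mathbf{M}}]f\rangle,
\]
apply \eqref{coL0-Non} to the first bracket to extract $\frac{3\delta}{4}\|\partial^\alpha({\bf I}-{\bf P}_{\mathbf{M}})f\|_{\bf D}^2$, and control the remaining two brackets by the forthcoming non-cutoff trilinear estimate for $\Gamma_{\mathbf{M}}$ combined with $\varepsilon$-weighted Cauchy--Schwarz, producing $\frac{1}{\varepsilon}\|({\bf I}-{\bf P}_{\mathbf{M}})f\|_{H^{|\alpha|-1}_{\bf D}}^2+\varepsilon\|f\|_{H^{|\alpha|}}^2$. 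The delicate point is that when $\partial^\alpha$ falls on $\mathbf{M}$ inside the non-cutoff kernel one must verify that the resulting angular integral is still controlled by the dissipation norm; this requires isolating the cancellation $f'-f$ underlying coercivity of $\mathcal{L}$ and checking it survives the substitution $\mu\to\mathbf{M}$, yielding prefactors bounded by $\CU(\|\nabla_x[\rho,u,T]\|_{W^{|\alpha|-1,\infty}})$ as claimed.
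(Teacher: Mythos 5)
Your proposal coincides with the paper's (largely citation-based) argument on most points: \eqref{coL-Non} is quoted from Gressman--Strain and AMUXY, \eqref{L_v-Non} from Lemmas 2.6--2.7 of Duan--Liu--Yang--Zhao, the weighted bounds \eqref{wLLM-Non}--\eqref{wLLM0-Non} are obtained by the same Leibniz/commutator scheme with the derivatives of $\mathbf{M}$ contributing $O(\epsilon_1)$ factors via \eqref{tt1}, \eqref{em-decay} and $|\partial^{\alpha}_{\beta}\mathbf{M}|\lesssim\mu^{5/6}$, and your expansion for \eqref{coLh-Non} is exactly the commutator identity used for the Landau analogue \eqref{coLLh}, which is what the paper intends when it says \eqref{coLh-Non} follows from \eqref{wLLM-Non}, \eqref{tt1} and \eqref{em-decay}.

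The one genuine deviation is \eqref{coL0-Non}. The paper simply reruns the global-Maxwellian coercivity proof with $\mathbf{M}$ in place of $\mu$ (legitimate, since $\mathbf{M}$ is a Maxwellian whose parameters are uniformly close to constants, so the constants are uniform in $(t,x)$), whereas you perturb, $\mathcal{L}_{\mathbf{M}}=\mathcal{L}+(\mathcal{L}_{\mathbf{M}}-\mathcal{L})$. This route can be made to work, but not quite as written. First, the perturbative bound only gives $(\mathcal{L}_{\mathbf{M}}f,f)\geq\delta_0|({\bf I}-{\bf P})f|_{\bf D}^2-C\epsilon_1|f|_{\bf D}^2$, and the error contains the macroscopic contribution $C\epsilon_1|{\bf P}_{\mathbf{M}}f|_{\bf D}^2$, which cannot be absorbed into $\delta|({\bf I}-{\bf P}_{\mathbf{M}})f|_{\bf D}^2$ (for purely macroscopic $f$ the right-hand side of your bound is negative while \eqref{coL0-Non} demands a nonnegative lower bound); you must first use $\mathcal{L}_{\mathbf{M}}{\bf P}_{\mathbf{M}}f=0$ and self-adjointness to reduce to $g=({\bf I}-{\bf P}_{\mathbf{M}})f$, and only then invoke the $O(\epsilon_1)$ comparison of ${\bf P}$ and ${\bf P}_{\mathbf{M}}$, which for such $g$ gives $|({\bf I}-{\bf P})g|_{\bf D}\geq(1-C\epsilon_1)|g|_{\bf D}$. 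Second, the smallness of $(\mathcal{L}_{\mathbf{M}}-\mathcal{L})$ is not a consequence of $|\mathbf{M}^{1/2}-\mu^{1/2}|\lesssim\epsilon_1\mu^{5/12}$ alone, because the two operators carry different conjugations and $\mathbf{M}^{1/2}\mu^{-1/2}\sim e^{c\epsilon_1|v|^2}$ is unbounded; one must first use the Maxwellian collision identity $\mathbf{M}(v_*')\mathbf{M}(v')=\mathbf{M}(v_*)\mathbf{M}(v)$ to rewrite the kernels in terms of $\mathbf{M}^{1/2}(v_*)\mathbf{M}^{1/2}(v_*')$ (and likewise for $\mu$), after which the difference is a bilinear operator with one $O(\epsilon_1)$, rapidly decaying argument and the trilinear estimates apply. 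With these two repairs your perturbative derivation of \eqref{coL0-Non} closes; the paper's choice of redoing the proof directly for $\mathbf{M}$ sidesteps both issues at the cost of no new idea.
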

\begin{proof}
	\eqref{coL-Non} is derived from \cite[Theorem 8.1, p. 829]{Gressman-Strain-2011} and \cite[Theorem 1.1, p. 920]{AMUXY-JFA-2012}. Equation \eqref{L_v-Non} is a direct consequence of Lemmas 2.6 and 2.7 in \cite{Duan-Liu-Yang-Zhao-2013-KRM}. \eqref{coL0-Non} and \eqref{wLLM-Non} can be established following the same method as for \eqref{coL-Non} and \eqref{L_v-Non}, respectively. It's important to note that, for any vector indices $\alpha$ and $\beta$, the inequality
\begin{align*}%\label{alphax}
 |\partial^{\alpha}_{\beta}\mathbf{M}|\lesssim \mu^{5/6}
 \end{align*}
is valid due to \eqref{tt1} and the smallness of $\epsilon_1$.

In contrast, \eqref{wLLM0-Non} and \eqref{coLh-Non} are consequences of \eqref{tt1}, \eqref{em-decay}, and \eqref{wLLM-Non}.
\end{proof}

\subsubsection{Nonlinear collision operators}
Recalling that the operators $\Gamma$ and $\Gamma_{\mathbf{M}}$ are defined as
\begin{align*}
\Gamma(f,g)=\mu^{-\frac{1}{2}} \mathcal{C}( \mu^{\frac{1}{2}} f, \mu^{\frac{1}{2}} g),\quad \Gamma_{\mathbf{M}}(f,g)=\mathbf{M}^{-\frac{1}{2}} \mathcal{C}( \mathbf{M}^{\frac{1}{2}} f, \mathbf{M}^{\frac{1}{2}} g).
\end{align*}
For later use, we will derive trilinear estimates for the two nonlinear collision operators. The first lemma concerns trilinear estimates of $\Gamma$ and $\Gamma_{\mathbf{M}}$ without velocity weight.
\begin{lemma}%\label{GL0-Non}
Assume $\max\big\{-3, -\frac{3}{2}-2s\big\}<\gamma<0$. It holds that
\begin{align}\label{GL-Non}
\big|\big(\partial^{\alpha}_{\beta}\Gamma(f,g),\partial^{\alpha}_{\beta} h\big)\big|
\lesssim& \sum_{\al_1+\al_2= \alpha, \beta_1+\beta_2\leq\beta}\Big(\big|\partial^{\al_1}_{\beta_1}f\big|_{L^2}\big|\partial^{\al_2}_{\beta_2}g\big|_{\bf D}+\big|\partial^{\al_1}_{\beta_1}f\big|_{\bf D}
\big|\partial^{\al_2}_{\beta_2}g\big|_{L^2}\Big)\big|\partial^{\alpha}_{\beta}h\big|_{\bf D}
\end{align}
and
\begin{align}\label{GLM-Non}
&\big|\big(\partial^{\alpha}_{\beta}\Gamma_{\mathbf{M}}(f,g), \partial^{\alpha}_{\beta}h\big)\big|\nonumber\\
\lesssim& \sum_{\al_1+\al_2= \alpha, \beta_1+\beta_2\leq\beta}\Big(\big|\partial^{\al_1}_{\beta_1}f\big|_{L^2}\big|\partial^{\al_2}_{\beta_2}g\big|_{\bf D}+\big|\partial^{\al_1}_{\beta_1}f\big|_{\bf D}
\big|\partial^{\al_2}_{\beta_2}g\big|_{L^2}\Big)\big|\partial^{\alpha}_{\beta}h\big|_{\bf D}\\
&+ \sum_{\al_1+\al_2+\al_3= \alpha, \beta_1+\beta_2\leq\beta}{\bf 1}_{|\alpha_3|\geq1}\CU(\|\nabla_x[\rho,u,T]\|_{W^{|\alpha_3|-1,\infty}})\nonumber\\
&\times\Big(\big|\partial^{\al_1}_{\beta_1}f\big|_{L^2}\big|\partial^{\al_2}_{\beta_2}g\big|_{\bf D}+\big|\partial^{\al_1}_{\beta_1}f\big|_{\bf D}
\big|\partial^{\al_2}_{\beta_2}g\big|_{L^2}\Big)\big|\partial^{\alpha}_{\beta}h\big|_{\bf D}.\nonumber
\end{align}
\end{lemma}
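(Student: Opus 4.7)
The plan is to prove the two estimates separately, with \eqref{GL-Non} serving as the template for \eqref{GLM-Non}.

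For \eqref{GL-Non}, I would follow the non-cutoff framework developed in \cite{AMUXY-JFA-2012, Gressman-Strain-2011, Duan-Liu-Yang-Zhao-2013-KRM}. First, apply the Leibniz rule to distribute $\partial^{\alpha}_{\beta}$ onto the two arguments of the bilinear form, reducing the left-hand side to a finite sum of terms of the form $\bigl(\Gamma(\partial^{\alpha_1}_{\beta_1}f,\partial^{\alpha_2}_{\beta_2}g),\partial^{\alpha}_{\beta}h\bigr)$, using the standard fact that $\Gamma$ commutes with $\partial^{\alpha}$ and that the commutator with $\partial_\beta$ generates only lower-order $\beta_1+\beta_2\leq\beta$ terms (because $\mu$ is $x$-independent and $\partial_{v_i}$ on $\mu^{1/2}$ produces bounded polynomial factors absorbed into the $\mathbf{D}$-norm). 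Then for each reduced trilinear form, use the dyadic decomposition of the kernel $B(v-v_\ast,\sigma)=\sum_k B_k$ with angular cutoff scale $2^{-k}$, paired with Bobylev's identity in Fourier space, to obtain the bound
\begin{equation*}
\bigl|(\Gamma(F,G),H)\bigr|\lesssim |F|_{L^2}|G|_{\mathbf{D}}|H|_{\mathbf{D}}+|F|_{\mathbf{D}}|G|_{L^2}|H|_{\mathbf{D}},
\end{equation*}
which is essentially contained in \cite[Theorem 1.1]{Gressman-Strain-2011} under the constraint $\max\{-3,-3/2-2s\}<\gamma<0$. Summing over the Leibniz decomposition yields \eqref{GL-Non}.

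For \eqref{GLM-Non}, I would first expand
\begin{equation*}
\partial^{\alpha}_{\beta}\Gamma_{\mathbf{M}}(f,g)=\sum_{\alpha_1+\alpha_2+\alpha_3=\alpha\atop\beta_1+\beta_2+\beta_3\leq\beta}C\,\partial_{\beta_3}\bigl(\mathbf{M}^{-1/2}\pa^{\alpha_3}\mathbf{M}^{1/2}\cdot\text{(kernel factors)}\bigr),
\end{equation*}
writing $\Gamma_\mathbf{M}(f,g)=\mathbf{M}^{-1/2}\mathcal{C}(\mathbf{M}^{1/2}f,\mathbf{M}^{1/2}g)$ and distributing all spatial derivatives by Leibniz. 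The key observation is to separate the sum into two pieces: the principal part where no spatial derivative lands on the local Maxwellian $\mathbf{M}$ (i.e.\ $|\alpha_3|=0$), and the remainder where $|\alpha_3|\geq 1$. For the principal part, since \eqref{tt1} gives $\mathbf{M}\sim \mu$ pointwise in $(t,x,v)$ with constants uniform in $(t,x)$, the identical argument from \eqref{GL-Non} applies verbatim with $\mu$ replaced by $\mathbf{M}$, producing the first sum on the right-hand side of \eqref{GLM-Non}.

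For the remainder part where $|\alpha_3|\geq 1$, I would use \eqref{em-decay} to estimate $|\partial^{\alpha_3}\mathbf{M}^{\pm 1/2}|\lesssim \CU(\|\nabla_x[\rho,u,T]\|_{W^{|\alpha_3|-1,\infty}})\,\mathbf{M}^{3/8}$, which converts each $x$-derivative on $\mathbf{M}$ into a smooth, exponentially decaying weight in $v$ multiplied by a polynomial in the fluid derivatives. These weights are harmless when absorbed into the non-cutoff trilinear estimate, and the rest of the argument proceeds as in the principal part, yielding the second sum on the right-hand side of \eqref{GLM-Non}. The main technical obstacle is verifying that the angular-singularity decomposition tolerates the local-Maxwellian modifications uniformly in $(t,x)$; this is precisely where the smallness of $\epsilon_1$ from \eqref{em-decay} and the closeness $\mathbf{M}\sim \mu$ from \eqref{tt1} are essential, since they ensure that the dyadic-kernel estimates and the control of $|\cdot|_{\mathbf{D}}$ remain comparable to the global Maxwellian case.
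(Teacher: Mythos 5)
Your proposal is correct and takes essentially the same approach as the paper: \eqref{GL-Non} is obtained from the known non-cutoff trilinear estimates for the global Maxwellian (the paper cites \cite[Theorem 1.2]{AMUXY-JFA-2012} and \cite[Proposition 4.4]{Duan-Liu-Yang-Zhao-2013-KRM} with slight modifications, which is the bound you derive via the Leibniz/dyadic argument), and \eqref{GLM-Non} is treated exactly by your split into the principal part with no spatial derivative on $\mathbf{M}$ and the $|\alpha_3|\geq 1$ remainder controlled through \eqref{em-decay} and \eqref{tt1}, mirroring the Landau analogue in Lemma \ref{wggmL}. The only cosmetic difference is your attribution of the basic trilinear bound to Gressman--Strain rather than the AMUXY/Duan--Liu--Yang--Zhao statements the paper invokes.
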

\begin{proof}
	\eqref{GL-Non} follows from \cite[Theorem 1.2, p. 921]{AMUXY-JFA-2012} and \cite[Proposition 4.4, p. 202]{Duan-Liu-Yang-Zhao-2013-KRM}. with slight modifications. Since the local Maxwellian doesn't pose significant difficulties, \eqref{GLM-Non} can be proven similarly.
\end{proof}

We now prove exponentially weighted estimates for the nonlinear collision operator $\Gamma$ in the following lemma. These improved estimates address the undesirable terms that arise in previous results \cite[Lemma 2.3, p. 176]{Duan-Liu-Yang-Zhao-2013-KRM} for $0 < s < 1$ and \cite[Lemma 2.4, p. 121]{FLLZ-SCM-2018} for $0 < s < \frac{1}{2}$, such as $\left|\overline{w}g_1\right|_{1+\gamma/2}\big|\mu^{\frac{\lambda_0}{128}}g_2\big|_{L^2}
		\left|\overline{w}g_3\right|_{1+\gamma/2}$. This adjustment allows us to avoid the unmanageable term \eqref{unp-term} when dealing with the following inner product
 $$\big|\big\langle[\Gamma(\mu^{-\frac{1}{2}}F_1,h^{\varepsilon})+\Gamma(
 h^{\varepsilon}, \mu^{-\frac{1}{2}} F_1)], \overline{w}h^{\varepsilon}\big\rangle\big|.$$

  Our proof relies on a precise angular integration estimate. The estimates derived below are of their own interests and can be applied in further study of the non-cutoff Boltzmann-type equations.
\begin{lemma}\label{non-nonlinear}
	Assume $\max\{-3, -\frac{3}{2}-2s\}<\gamma<-2s$ for all
	$0<s<1$, and $\lambda_0>0$.
\begin{itemize}
		\item [(i)] It holds that	
	\begin{eqnarray}\label{Gamma-noncut-1}
			&&\left|\left( \partial_\beta^\alpha\Gamma(f,g), \overline{w}^2\partial_\beta^\alpha h\right)\right|\nonumber\\
			&\lesssim&\sum\left\{\left|\overline{w}\partial^{\alpha_1}_{\beta_1}f\right|_{L^2_
				{\frac\gamma2+s}}
			\left|\partial^{\alpha_2}_{\beta_2}g\right|_{{\bf D}}+\left|\partial^{\alpha_2}_{\beta_2}g\right|_{L^2_{\frac\gamma2+s}}
			\left|\overline{w}\partial^{\alpha_1}_{\beta_1}f\right|_{\bf D}\right\}
			\left|\overline{w}\partial^{\alpha}_{\beta}h\right|_{\bf D}\nonumber\\
			&&+\min\left\{\left|\overline{w}\partial^{\alpha_1}_{\beta_1}f\right|_{L^2}
			\left|\partial^{\alpha_2}_{\beta_2}g\right|_{L^2_{\frac\gamma2+s}},\left|\partial^{\alpha_2}_{\beta_2}g\right|_{L^2_v}
			\left|\overline{w}\partial^{\alpha_1}_{\beta_1}f\right|_{L^2_{\frac\gamma2+s}}\right\}
			\left|\overline{w}\partial^{\alpha}_{\beta}h\right|_{\bf D}\nonumber\\
			&&+\sum\left|e^{\frac{\langle v\rangle}{8RT_c\ln(e+t)}}\partial^{\alpha_2}_{\beta_2}g\right|_{L^2}
			\left|\overline{w}\partial^{\alpha_1}_{\beta_1}f\right|_{L^2_{\frac\gamma2+s}}
			\left|\overline{w}\partial^{\alpha}_{\beta}h\right|_{\bf D},
		\end{eqnarray}
		where the summation $\sum$ is taken over $\alpha_1+\alpha_2\leq \alpha$ and $\beta_1+\beta_2\leq\beta$.
\item [(ii)]For $\ell\geq0$ and $q\in [0,\infty)$, we have
\begin{eqnarray}\label{Gamma-noncut-2}
			&&\left|\left( \partial_\beta^\alpha\Gamma(f,g), \lag v\rag^{2\ell}e^{2q\langle v\rangle}\partial_\beta^\alpha h\right)\right|\nonumber\\
			&\lesssim&\sum\left\{\left|\lag v\rag^{\ell}e^{q\langle v\rangle}\partial^{\alpha_1}_{\beta_1}f\right|_{L^2_
				{\frac\gamma2+s}}
			\left|\partial^{\alpha_2}_{\beta_2}g\right|_{\bf D}+\left|\partial^{\alpha_2}_{\beta_2}g\right|_{L^2_{\frac\gamma2+s}}
			\left|\lag v\rag^{\ell}e^{q\langle v\rangle}\partial^{\alpha_1}_{\beta_1}f\right|_{\bf D}\right\}
			\left|\lag v\rag^{\ell}e^{q\langle v\rangle}\partial^{\alpha}_{\beta}h\right|_{\bf D}\nonumber\\
			&&+\min\left\{\left|\lag v\rag^{\ell}e^{q\langle v\rangle}\partial^{\alpha_1}_{\beta_1}f\right|_{L^2}
			\left|\partial^{\alpha_2}_{\beta_2}g\right|_{L^2_{\frac\gamma2+s}},\left|\partial^{\alpha_2}_{\beta_2}g\right|_{L^2}
			\left|\lag v\rag^{\ell}e^{q\langle v\rangle}\partial^{\alpha_1}_{\beta_1}f\right|_{L^2_{\frac\gamma2+s}}\right\}
			\left|\lag v\rag^{\ell}e^{q\langle v\rangle}\partial^{\alpha}_{\beta}h\right|_{\bf D}\nonumber\\
			&&+\sum\left|e^{{q}\langle v\rangle}\partial^{\alpha_2}_{\beta_2}g\right|_{L^2}
			\left|\lag v\rag^{\ell}e^{q\langle v\rangle}\partial^{\alpha_1}_{\beta_1}f\right|_{L^2_{\frac\gamma2+s}}
			\left|\lag v\rag^{\ell}e^{q\langle v\rangle}\partial^{\alpha}_{\beta}h\right|_{\bf D},
		\end{eqnarray}
		where the summation $\sum$ is taken over $\alpha_1+\alpha_2\leq \alpha$ and $\beta_1+\beta_2\leq\beta$.
%\item [(iii)]	For $0<s<1,\ m\geq 4,\ \ell>0$, it holds that
%		\begin{equation}\label{Gamma-noncut-3}
%			\left|\overline{w}\partial^\alpha\Gamma(f,f)\right|_{L^2_v}\lesssim
%			\sum_{\alpha_1+\alpha_2\leq\alpha }\left|\overline{w}\partial^{\alpha_1} f\right|_{H^3_{\frac\gamma2+s}}
%			\left|\overline{w}\partial^{\alpha_2} f\right|_{H^1_{\frac\gamma2+s}},
%		\end{equation}
%		or
%		\begin{equation}\label{Gamma-noncut-4}
%			\left|\overline{w}\partial^\alpha\Gamma(f,f)\right|_{L^2_v}\lesssim
%			\sum_{\alpha_1+\alpha_2\leq\alpha }\left|\overline{w}\partial^{\alpha_1} f\right|_{L^2_{\frac\gamma2+s}}
%			\left|\overline{w}\partial^{\alpha_2} f\right|_{H^4_{\frac\gamma2+s}}.
%		\end{equation}
	\end{itemize}	
\end{lemma}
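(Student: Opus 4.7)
The plan is to prove part (i) first by combining a Carleman-type decomposition of $\langle\Gamma(f,g),\overline{w}^2 h\rangle$ with a refined angular integration that avoids the suboptimal weight transfer appearing in the arguments of \cite{Duan-Liu-Yang-Zhao-2013-KRM, FLLZ-SCM-2018}, and then to deduce part (ii) from the same scheme with $\overline{w}$ replaced by $\lag v\rag^\ell e^{q\lag v\rag}$. Throughout, the estimate $\lag v\rag \leq \lag v'\rag+\lag v'_*\rag$ (from the microscopic conservation $|v|^2+|v_*|^2=|v'|^2+|v'_*|^2$) and the crucial smallness $|\nabla_v\overline{w}|\leq \frac{1}{8RT_c\ln(e+t)}\overline{w}$ will be used to gain an extra $\sin(\theta/2)$ in the angular kernel, which is what kills the $\mu^{\lambda_0/128}$ factor present in earlier results.

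The first step is to rewrite, using $\mu(v')\mu(v'_*)=\mu(v)\mu(v_*)$ and the pre--post change of variables,
\[
\langle \Gamma(f,g),\overline{w}^2 h\rangle=\iiint B(v-v_*,\sigma)\sqrt{\mu(v_*)}\bigl[f(v'_*)g(v')-f(v_*)g(v)\bigr]\overline{w}^2(v)h(v)\,dv_*\,d\sigma\,dv,
\]
and to split the integrand via $f(v'_*)g(v')-f(v_*)g(v)=f(v'_*)[g(v')-g(v)]+g(v)[f(v'_*)-f(v_*)]$. The first summand paired with $\overline{w}^2(v)h(v)$ becomes a $g$-difference bilinear form; Cauchy--Schwarz together with the definition of $|\cdot|_{\bf D}$ produces factors of $|\overline{w}\partial_\beta^{\alpha_2}g|_{\bf D}$ and $|\overline{w}\partial^\alpha_\beta h|_{\bf D}$ with the remaining $v_*$-integrand absorbed by $\sqrt{\mu(v_*)}$ and giving the $L^2_{\gamma/2+s}$ factor on $f$. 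The second summand, after the change of variable $v_*\mapsto v'_*$ accompanied by the classical Jacobian, generates the symmetric term in which $|\overline{w}\partial_\beta^{\alpha_1}f|_{\bf D}$ pairs with $|\partial_\beta^{\alpha_2}g|_{L^2_{\gamma/2+s}}$.

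The crucial refinement is the treatment of the weight transfer $\overline{w}^2(v)\leftrightarrow \overline{w}^2(v')$ whenever one moves $h$ from the post-collisional to the pre-collisional side. Since $v'-v=\tfrac{|v-v_*|}{2}(\sigma-\omega)$ with $|\sigma-\omega|=2\sin(\theta/2)$, the mean-value theorem gives
\[
|\overline{w}^2(v)-\overline{w}^2(v')|\lesssim \frac{|v-v_*|\sin(\theta/2)}{\ln(e+t)}\,\overline{w}^2(v)\overline{w}^2(v')^{1/2},
\]
and the extra factor $\sin(\theta/2)$ upgrades the borderline angular integral $\int_0^{\pi/2}b(\cos\theta)\sin\theta\,d\theta$ (which diverges in the non-cutoff regime) to $\int_0^{\pi/2}b(\cos\theta)\sin^{1+2k}\theta\,d\theta<\infty$ for any $k\geq s$. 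Plugging this back into the weight-transfer step produces the term with $|e^{q\lag v\rag}\partial_{\beta_2}^{\alpha_2}g|_{L^2}$ on the right-hand side of \eqref{Gamma-noncut-1}, carrying the full exponential weight but with no polynomial, rather than the suboptimal $|\mu^{\lambda_0/128}g|_{L^2}$ appearing in the earlier versions. For part (ii) the same scheme applies: the polynomial prefactor $\lag v\rag^\ell$ transfers through a standard Povzner-type splitting into $\lag v'\rag^\ell +\lag v'_*\rag^\ell$ with the $\lag v'_*\rag^\ell$ piece absorbed by $\sqrt{\mu(v_*)}$ after the change of variables, while the exponential part is handled exactly as in part (i).

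The main obstacle will be organizing the dyadic bookkeeping $\theta\sim 2^{-j}$ uniformly across the full parameter range $\max\{-3,-\tfrac{3}{2}-2s\}<\gamma<-2s$ and $0<s<1$, because the order of Taylor expansion of $g(v')-g(v)$ needed to cancel the angular singularity depends on $s$ (first order suffices for $s<1/2$, but for $1/2\leq s<1$ one must go to second order and use a symmetrization in $\sigma\mapsto -\sigma$ to remove the odd-power integrand). The compatibility of this Taylor expansion with the $\overline{w}$-weight transfer is delicate, since the gradient of $g$ that appears must be paired against the anisotropic fractional norm encoded in $|\cdot|_{\bf D}$; the plan is to perform this pairing via the anisotropic Littlewood--Paley decomposition introduced in \cite{Gressman-Strain-2011} and \cite{AMUXY-JFA-2012}, whose dissipation norm matches ours up to admissible errors controlled by the Gaussian factor $\sqrt{\mu(v_*)}$.
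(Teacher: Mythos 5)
There is a genuine gap in the key step. Your refinement of the weight transfer uses only the mean--value bound $|\overline{w}^2(v)-\overline{w}^2(v')|\lesssim |v-v_*|\sin(\theta/2)\cdot(\dots)$, i.e.\ a gain of a \emph{single} power of $\theta$. Since $b(\cos\theta)\sin\theta\sim\theta^{-1-2s}$, this turns the angular integral into $\int_0^{\pi/2}\theta^{-2s}\,d\theta$, which converges only for $s<\tfrac12$; your claim that one extra factor of $\sin(\theta/2)$ yields $\int_0^{\pi/2} b(\cos\theta)\sin^{1+2k}\theta\,d\theta<\infty$ ``for any $k\ge s$'' does not follow, and the lemma is asserted for the full range $0<s<1$. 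The second--order Taylor expansion you invoke in your final paragraph is applied to $g(v')-g(v)$ (which the standard non-cutoff machinery already handles through the $|\cdot|_{\bf D}$ norm); it is the \emph{weight difference} that needs the quadratic gain. The paper's proof gets exactly this: expanding $\overline{w}(v')-\overline{w}(v)$ to second order, the first--order contribution involves only the component of $v'-v$ parallel to $v-v_*$, of size $|\cos\theta-1|\,|v-v_*|/2\sim\theta^2|v-v_*|$ (the transversal part, of size $\sin\theta$, disappears upon the azimuthal symmetrization), while the remainder is $O(\theta^2|v-v_*|^2)$. Hence
\begin{equation*}
|\overline{w}(v')-\overline{w}(v)|\lesssim \min\bigl\{\,|\cos\theta-1|\,|v-v_*|+\theta^2|v-v_*|^2,\;1\bigr\}\,\overline{w}(v)\,\overline{w}(v_*)\,\langle v_*\rangle^2,
\end{equation*}
which gives a $\theta^2$ gain and makes the angular integral $\int\theta^{1-2s}\,d\theta$ finite for all $s<1$.

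Two further points you would need in any case. First, the $\min\{\cdot,1\}$ above, together with splitting the $\theta$--integration at $\theta\sim|v-v_*|^{-1}$ (quadratic bound for small angles, trivial bound $1$ for large angles), is what produces the relative--velocity factor $|v-v_*|^{2s}$ and hence the $\langle v\rangle^{\gamma/2+s}$ weights matching the $L^2_{\gamma/2+s}$ norms in \eqref{Gamma-noncut-1}; a bound by $\theta|v-v_*|$ alone cannot yield this bookkeeping. Second, your intermediate--point control $\overline{w}^2(v)\overline{w}^2(v')^{1/2}$ is not in a usable form: the growth generated at the intermediate point must be expressed as $\overline{w}(v)\overline{w}(v_*)\langle v_*\rangle^2$ (or similar), so that the $v_*$--growth is absorbed by the Gaussian factor $\sqrt{\mu(v_*)}$ already present in the trilinear form; this is also how the paper removes the $\mu^{\lambda_0/128}$ loss of the earlier results rather than through your Carleman-type reorganization, which the paper does not need (it reuses the decomposition of Duan--Liu--Yang--Zhao verbatim and refines only the weight--commutator term $I_3$).
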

\begin{proof}
	We only prove \eqref{Gamma-noncut-1} since \eqref{Gamma-noncut-2} can be proved in the same way. Note that \cite[Lemma 2.3, p. 176]{Duan-Liu-Yang-Zhao-2013-KRM} is a direct conclusion of \cite[Lemma 2.1, p. 169]{Duan-Liu-Yang-Zhao-2013-KRM}. We mainly focus on refining the estimate on $I_3$ in \cite[Lemma 2.1]{Duan-Liu-Yang-Zhao-2013-KRM} and omit the estimation of the term $I_{2,2}$, which can be treated in the same way. Other parts of the proof there can remain unchanged.
	
	Noting that
\begin{eqnarray*}
	v'-v=\left(\frac{v-v_\ast}{|v-v_\ast|}\cdot\sigma-1\right)\frac{v-v_\ast}2=\left(\cos \theta-1\right)\frac{v-v_\ast}2,
\end{eqnarray*}
we have
\begin{eqnarray*}
	&&\overline{w}(v')-\overline{w}(v)\nonumber\\
	&=&(v-v')\cdot \nabla_v \overline{w}(v)+\frac12(v'-v)\otimes(v'-v):\nabla^2_v \overline{w}|_{v=v(\tau)}\nonumber\\
	&=&\left(\cos \theta-1\right)\frac{v-v_\ast}2\nabla_v \overline{w}(v)+\frac12(v'-v)\otimes(v'-v):\nabla^2_v \overline{w}|_{v=v(\tau)}.
\end{eqnarray*}
This implies
\begin{equation*}
\overline{w}(v')-\overline{w}(v)\lesssim\left(\left|\cos \theta-1\right| |v-v_\ast|+\theta^2 |v-v_\ast|^2\right)\overline{w}(v)\overline{w}(v_\ast)\langle v_\ast\rangle^2.
\end{equation*}
On the other hand, it is straightforward to compute that
\begin{eqnarray*}
&&|\overline{w}(v')-\overline{w}(v)|\lesssim \langle v_\ast\rangle^{\ell}\langle v\rangle^{\ell}\exp{\frac{(\langle v_\ast\rangle+\langle v\rangle)}{8RT_c\ln(e+t)}}\lesssim \overline{w}(v)\overline{w}(v_\ast).
\end{eqnarray*}
Then we can deduce that
\begin{eqnarray*}
	&&\overline{w}(v')-\overline{w}(v)\nonumber\\
	&\lesssim&\min\{\left(\left|\cos \theta-1\right| |v-v_\ast|+\theta^2 |v-v_\ast|^2\right),1\}\overline{w}(v)\overline{w}(v_\ast)\langle v_\ast\rangle^2.
\end{eqnarray*}
Now we re-estimate the term $I_3$ in the proof of Lemma 2.1 in \cite{Duan-Liu-Yang-Zhao-2013-KRM}.
\begin{eqnarray*}
	&&\int_{\mathbb{R}^3\times \mathbb{S}^2}{\bf B}(v-v_\ast,\sigma)\mu(v,v_\ast)f(v)g(v_\ast)h(v)(\overline{w}(v')-\overline{w}(v))dvdv_\ast d\sigma\nonumber\\
	&\lesssim&\int_{\mathbb{R}^3\times \mathbb{S}^2}{\bf B}(v-u,\sigma)\mu(v,v_\ast)f(v)g(v_\ast)h(v)\nonumber\\
	&&\times\min\{\left(\left|\cos \theta-1\right| |v-v_\ast|+\theta^2 |v-v_\ast|^2\right),1\}\overline{w}(v)\overline{w}(v_\ast)\langle u\rangle^2dvdv_\ast d\sigma\nonumber\\
		&\lesssim&\int_{\mathbb{R}^3\times \mathbb{S}^2}{\bf B}(v-v_\ast,\sigma)(\mu^{\tilde{\lambda}}(v)-\mu^{\tilde{\lambda}}(v_\ast))^{k}\mu^{\bar{\lambda}}(v_\ast)f(v)g(v_\ast)h(v)\nonumber\\
	&&\times\min\{\left(\left|\cos \theta-1\right| |v-v_\ast|+\theta^2 |v-v_\ast|^2\right),1\}\overline{w}(v)\overline{w}(v_\ast)\langle v_\ast\rangle^2dvdv_\ast d\sigma\nonumber\\
			&\lesssim&\int_{\mathbb{R}^3\times \mathbb{S}^2}{\bf B}(v-v_\ast,\sigma)(\mu^{\tilde{\lambda}}(v)-\mu^{\tilde{\lambda}}(v_\ast))^{k}\mu^{\frac{\bar{\lambda}}{16}}(v_\ast)f(v)g(v_\ast)h(v)\nonumber\\
	&&\times\min\{\left(\left|\cos \theta-1\right| |v-v_\ast|+\theta^2 |v-v_\ast|^2\right),1\}\overline{w}(v)dvdv_\ast d\sigma\nonumber\\
		&\lesssim&\int_{\mathbb{R}^6}\int^{\frac\pi2}_0\langle v-v_\ast\rangle^{\gamma}{\bf b}(\cos\theta)(\mu^{\tilde{\lambda}}(v)-\mu^{\tilde{\lambda}}(v_\ast))^{k}\mu^{\frac{\bar{\lambda}}{16}}(v_\ast)f(v)g(v_\ast)h(v)\nonumber\\
	&&\times\min\{\left(\left|\cos \theta-1\right| |v-v_\ast|+\theta^2 |v-v_\ast|^2\right),1\}\overline{w}(v)\sin \theta dvdv_\ast d\theta\nonumber\\
			&\lesssim&\int_{\mathbb{R}^6}\langle v-v_\ast\rangle^{\gamma}(\mu^{\tilde{\lambda}}(v)-\mu^{\tilde{\lambda}}(v_\ast))^{k}\mu^{\frac{\bar{\lambda}}{16}}(v_\ast)f(v)g(v_\ast)h(v)\overline{w}(v)\nonumber\\
	&&\times\int^{\min\{\frac\pi2,|v-v_\ast|^{-1}\}}_0{\bf b}(\cos\theta)\min\{\left(\left|\cos \theta-1\right| |v-v_\ast|+\theta^2 |v-v_\ast|^2\right),1\}\sin \theta d\theta dvdv_\ast\nonumber\\
	&&+\int_{\mathbb{R}^6}\langle v-v_\ast\rangle^{\gamma}(\mu^{\tilde{\lambda}}(v)-\mu^{\tilde{\lambda}}(v_\ast))^{k}\mu^{\frac{\bar{\lambda}}{16}}(v_\ast)f(v)g(v_\ast)h(v)\overline{w}(v)\nonumber\\
	&&\times\int_{\min\{\frac\pi2,|v-v_\ast|^{-1}\}}^{\frac\pi2}{\bf b}(\cos\theta)\min\{\left(\left|\cos \theta-1\right| |v-v_\ast|+\theta^2 |v-v_\ast|^2\right),1\}\sin \theta d\theta dvdv_\ast\nonumber\\
	&=&\int_{\mathbb{R}^6}\langle v-v_\ast\rangle^{\gamma}(\mu^{\tilde{\lambda}}(v)-\mu^{\tilde{\lambda}}(v_\ast))^{k}\mu^{\frac{\bar{\lambda}}{16}}(v_\ast)f(v)g(v_\ast)h(v)\overline{w}(v)\nonumber\\
	&&\times\int^{\min\{\frac\pi2,|v-v_\ast|^{-1}\}}_0{\bf b}(\cos\theta)\left(\left|\cos \theta-1\right| |v-v_\ast|+\theta^2 |v-v_\ast|^2\right)\sin \theta d\theta dvdv_\ast\nonumber\\
	&&+\int_{\mathbb{R}^6}\langle v-v_\ast\rangle^{\gamma}(\mu^{\tilde{\lambda}}(v)-\mu^{\tilde{\lambda}}(v_\ast))^{k}\mu^{\frac{\bar{\lambda}}{16}}(v_\ast)f(v)g(v_\ast)h(v)\overline{w}(v)\nonumber\\
	&&\times\int_{\min\{\frac\pi2,|v-v_\ast|^{-1}\}}^{\frac\pi2}{\bf b}(\cos\theta)\sin \theta d\theta dvdv_\ast\nonumber\\
		&\lesssim&\int_{\mathbb{R}^6}\langle v-v_\ast\rangle^{\gamma}(\mu^{\tilde{\lambda}}(v)-\mu^{\tilde{\lambda}}(v_\ast))^{k}\mu^{\frac{\bar{\lambda}}{16}}(v_\ast)f(v)g(v_\ast)h(v)\overline{w}(v)\nonumber\\
	&&\times\int^{\min\{\frac\pi2,|v-v_\ast|^{-1}\}}_0
	\theta^{-1-2s}\left(\theta^2 |v-v_\ast|+\theta^2 |v-v_\ast|^2\right) d\theta dvdv_\ast\nonumber\\
	&&+\int_{\mathbb{R}^6}\langle v-v_\ast\rangle^{\gamma}(\mu^{\tilde{\lambda}}(v)-\mu^{\tilde{\lambda}}(v_\ast))^{k}\mu^{\frac{\bar{\lambda}}{16}}(v_\ast)f(v)g(v_\ast)h(v)\overline{w}(v)\nonumber\\
	&&\times\int_{\min\{\frac\pi2,|v-v_\ast|^{-1}\}}^{\frac\pi2}\theta^{-1-2s} d\theta dvdv_\ast\nonumber\\
	&\lesssim&\left|\mu^{\frac{\bar{\lambda}}{32}}g\right|\left|\langle v\rangle^{\frac{\gamma+2s}2} \overline{w}f\right|\left|\langle v\rangle^{\frac{\gamma+2s}2}\overline{w}h\right|\nonumber\\
	&\lesssim&\left|\mu^{\frac{\bar{\lambda}}{32}}g\right|\left| \overline{w}f\right|_{L^2_{\frac\gamma2+s}}\left|\overline{w}h\right|_{L^2_{\frac\gamma2+s}}\nonumber,
\end{eqnarray*}
where we used the fact that there exist positive constants
$ \tilde{\lambda}, k,\bar{\lambda} $ such that
\[\mu(v,v_\ast)=(\mu^{\tilde{\lambda}}(v)-\mu^{\tilde{\lambda}}(v_\ast))^{k}\mu^{\bar{\lambda}}(v_\ast).\]
\end{proof}

\subsubsection{ The difference operator $\CL_d$}
Our last lemma in this subsection is concerned with some estimates related to the difference operator $\mathcal{L}_d$. The proof is omitted as it closely resembles that of Lemma \ref{WGLLL0} for the VML case.
\begin{lemma}%\label{WGLL0-Non}
Let $\rho(t,x), u(t,x)$ and $T(t,x)$ satisfy  \eqref{em-decay}, and suppose  \eqref{tt1} is valid. For $\overline{w}(t,v)=\exp\big(\frac{\lag v\rag}{8RT_c\ln(\mathrm{e}+t)}\big)$, it holds that
	\begin{align}
		\big|\big\langle\partial^{\alpha}_{\beta}\mathcal{L}_d[h], \overline{w}^2 \partial^{\alpha}_{\beta}h\big\rangle\big|\lesssim& \epsilon_1\big\|\overline{w}h\big\|_{H^{|\alpha|}_xH^{|\beta|}_{\bf D}}^2,\label{wGLd-Non}
	\end{align}
and for $\ell\geq0$, $q\in [0,\infty)$,
\begin{align}
		\big|\big\langle\partial^{\alpha}_{\beta}\mathcal{L}_d[g], {\lag v\rag}^{2\ell} e^{2q\lag v\rag} \partial^{\alpha}_{\beta}h\big\rangle\big|\lesssim& \epsilon_1\big\|{\lag v\rag}^{\ell} e^{q\lag v\rag}h\big\|_{H^{|\alpha|}_xH^{|\beta|}_{\bf D}}\big\|{\lag v\rag}^{\ell} e^{q\lag v\rag}h\big\|_{H^{|\alpha|}_xH^{|\beta|}_{\bf D}}.\label{wGLd-Nong}
	\end{align}
Moreover, if we set $\sqrt{\FM}f=\sqrt{\mu}h$, then it holds that
	\begin{align*}
		\big|\big\langle\partial^{\alpha}_{\beta}\mathcal{L}_d[h], \partial^{\alpha}_{\beta}h\big\rangle\big|\lesssim& \epsilon_1\Big(\big\|({\bf I}-{\bf P})h\|_{H^{|\alpha|}_xH^{|\beta|}_{\bf D}}^2+\big\|f\|_{H^{|\alpha|}_xH^{|\beta|}_{\bf D}}^2\Big)%\label{Ldh-Non}
	\end{align*}
	and
	\begin{align}
		\big(\partial^{\alpha}_{\beta}\mathcal{L}[h], \overline{w}^2 \partial^{\alpha}_{\beta}h\big)\geq& \de\big|\overline{w}\partial^{\alpha}_{\beta}h\big|_{\bf D}^2-\eta\sum_{|\beta'|\leq|\beta|}\big|\overline{w}\partial^{\alpha}_{\beta'}h\big|_{\bf D}^2-C(\eta)\sum_{\al'\leq \alpha}|\partial^{\al'}f|^2_{L^2},\label{wLL-Non}
	\end{align}
where $\eta>0$ is a small constant and $C(\eta)$ is some large constant depending on $\eta$.
\end{lemma}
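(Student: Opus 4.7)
The plan is to reduce each estimate to the trilinear estimates for $\Gamma$ provided in Lemma~\ref{non-nonlinear}, exploiting the crucial smallness $\mathbf{M}-\mu=O(\epsilon_1)$ in a pointwise sense (with rapid velocity decay). First I would rewrite, as in the VML case \eqref{dL-gaL},
\begin{equation*}
-\mathcal{L}_d[h]=\Gamma\!\left(\frac{\mathbf{M}-\mu}{\sqrt{\mu}},\,h\right)+\Gamma\!\left(h,\,\frac{\mathbf{M}-\mu}{\sqrt{\mu}}\right),
\end{equation*}
and then verify the pointwise bound
\begin{equation*}
\Big|\partial^{\alpha}_{\beta}\!\left[\mu^{-\tfrac12}(\mathbf{M}-\mu)\right]\Big|\lesssim \epsilon_1\,\mu^{\tfrac13}
\end{equation*}
by the mean value theorem applied to $[\rho,u,T]\mapsto \mathbf{M}_{[\rho,u,T]}$ together with \eqref{tt1} and \eqref{em-decay}; this exactly parallels \eqref{alphaxL}--\eqref{mmbL} in the VML analysis and produces a Maxwellian-type decay with an $\epsilon_1$ prefactor that absorbs both the polynomial weight $\langle v\rangle^{\ell}$ and the exponential $e^{q\langle v\rangle}$ (and in particular $\overline{w}$).

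Once this reduction is made, for \eqref{wGLd-Non} I would apply the weighted trilinear estimate \eqref{Gamma-noncut-1} with $f$ replaced by $\mu^{-1/2}(\mathbf{M}-\mu)$; each of the three types of norms $|\overline{w}\,\partial^{\alpha_1}_{\beta_1}(\mu^{-1/2}(\mathbf{M}-\mu))|_{L^2_{\gamma/2+s}}$, $|\cdots|_{\mathbf{D}}$, and $|e^{\langle v\rangle/(8RT_c\ln(e+t))}\,\partial^{\alpha_1}_{\beta_1}(\mu^{-1/2}(\mathbf{M}-\mu))|_{L^2}$ is controlled by $C\epsilon_1$ thanks to the $\mu^{1/3}$ decay. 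The remaining factors in \eqref{Gamma-noncut-1} then collapse to $\|\overline{w}h\|_{H^{|\alpha|}_x H^{|\beta|}_{\mathbf{D}}}^2$, yielding \eqref{wGLd-Non}. The Leibniz distribution of $\partial^{\alpha}_{\beta}$ onto the two arguments of $\Gamma$ is routine; derivatives landing on $\mu^{-1/2}(\mathbf{M}-\mu)$ preserve the $\epsilon_1\mu^{1/3}$ bound (with additional polynomial-of-$\|\nabla_x[\rho,u,T]\|_{W^{m,\infty}}$ factors that are absorbed by \eqref{em-decay}). Estimate \eqref{wGLd-Nong} is obtained in exactly the same way using \eqref{Gamma-noncut-2} instead of \eqref{Gamma-noncut-1}.

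The third estimate (without any weight) is proved by combining \eqref{wGLd-Non} with the identity $h=\mu^{-1/2}\mathbf{M}^{1/2}f$. Indeed, that identity gives $\|\mathbf{P}[h]\|_{H^{|\alpha|}_xH^{|\beta|}_{\mathbf{D}}}\lesssim \|f\|_{H^{|\alpha|}_xH^{|\beta|}_{\mathbf{D}}}$ after using the rapid decay of the basis of $\mathcal{N}$ and Cauchy--Schwarz in $v$ (just as in the proof of \eqref{LLdh} for the VML case), so splitting $h=\mathbf{P}[h]+(\mathbf{I}-\mathbf{P})[h]$ gives the claimed bound. Finally, for the coercivity estimate \eqref{wLL-Non}, I would decompose $h=\mathbf{P}[h]+(\mathbf{I}-\mathbf{P})[h]$, write the commutator $[\partial^{\alpha}_{\beta},\mathbf{P}]$ in the usual way, and apply the weighted coercivity \eqref{L_v-Non} on the microscopic part with the exponential weight $\overline{w}$; the error terms in $|\beta'|<|\beta|$ become the first sum on the right, and the compactly supported low-velocity remainder $|\chi_{\{|v|\leq 2C_\eta\}}h|^2_{L^2}$ is converted back to $\sum_{\alpha'\leq\alpha}|\partial^{\alpha'}f|^2_{L^2}$ by again using $h=\mu^{-1/2}\mathbf{M}^{1/2}f$ (which is uniformly bounded on any bounded $v$-ball by \eqref{tt1}).

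The main technical obstacle will be the last step, namely carefully extracting the macroscopic projection piece in the weighted coercivity: the projection $\mathbf{P}$ is with respect to the global Maxwellian $\mu$ whereas the natural macroscopic structure lives with $\mathbf{M}$, and the commutator $[\partial^{\alpha}_{\beta},\mathbf{P}]$ produces no hydrodynamic fields of $h$ that we directly control. The resolution is precisely the bridge $\mathbf{M}^{1/2}f=\mu^{1/2}h$, which lets us trade a bounded-velocity $L^2$ norm of $h$ for $|\partial^{\alpha'}f|_{L^2}^2$, consistent with the form of the right-hand side in \eqref{wLL-Non}. All other manipulations are Leibniz decompositions together with \eqref{em-decay}, Lemma~\ref{non-nonlinear}, and \eqref{L_v-Non}, so no new ideas beyond those already used in the VML proof of Lemma~\ref{WGLLL0} are required.
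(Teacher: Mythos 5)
Your proposal follows essentially the same route as the paper, which in fact omits the proof of this lemma and refers to the VML analogue (Lemma \ref{WGLLL0}): write $-\mathcal{L}_d[h]=\Gamma\big(\tfrac{\mathbf{M}-\mu}{\sqrt{\mu}},h\big)+\Gamma\big(h,\tfrac{\mathbf{M}-\mu}{\sqrt{\mu}}\big)$, use $\big|\partial^{\alpha}_{\beta}\big[\mu^{-1/2}(\mathbf{M}-\mu)\big]\big|\lesssim\epsilon_1\mu^{1/3}$ from \eqref{tt1}, \eqref{em-decay} and the mean value theorem, invoke the weighted trilinear estimates of Lemma \ref{non-nonlinear}, and exploit the bridge $\sqrt{\mathbf{M}}f=\sqrt{\mu}h$ both for the unweighted bound via ${\bf P}[h]$ and for converting the bounded-velocity remainder in the weighted coercivity \eqref{L_v-Non} into $\sum_{\alpha'\le\alpha}|\partial^{\alpha'}f|_{L^2}^2$. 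This matches the paper's intended argument, so no further changes are needed.
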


\subsection{Estimates of $f^{\varepsilon}, E_R^{\varepsilon}, B_R^{\varepsilon}$}
In this subsection, we aim to derive estimates for the remainders $f^{\varepsilon}$, $E_R^{\varepsilon}$, and $B_R^{\varepsilon}$, which satisfy the equations \eqref{VMLf} and \eqref{fM-2} with the initial datum \eqref{fEB-initial} for the VMB case.

As in Section \ref{h-VML} for the VML case, we focus solely on the {\it a priori} energy estimate \eqref{TVML1}, made under the {\it a priori} assumption with the same form as \eqref{aps-vml}.
The following two lemmas deal with the terms related to the Lorentz force, and their proofs closely resemble those in Lemma \ref{dxlm VML} and Lemma \ref{dxvlm VML}. The primary difference is that, unlike the VML case, we can now bound the terms involving $\nabla_v f$ using the energy functional $\mathcal{E}(t)$ of $h^{\varepsilon}$ when the velocity $v$ is large. As such, we provide the estimates and omit the proof details for brevity.

\begin{lemma}\label{dxlm}
	 Assume that $f^{\varepsilon}, E_R^{\varepsilon}, B_R^{\varepsilon}$ and $h^{\varepsilon}$ is the smooth solution to the Cauchy problem \eqref{VMLf}, \eqref{fM-2}, \eqref{h-equation}, \eqref{VML-id-pt}, and \eqref{aps-vml} corresponding to the VMB case holds. Then for $|\alpha|+|\beta|\leq 4$, it holds that
\begin{align*}%\label{growdxi}
	\Big|\Big\langle \partial^\alpha_{\beta}\Big(f^{\varepsilon}\mathbf{M}^{-\frac{1}{2}}\big(\partial_t+v\cdot\nabla_x\big)\mathbf{M}^{\frac{1}{2}}\Big), 4\pi RT\partial^\alpha_{\beta}f^{\varepsilon}\Big\rangle\Big|
	\lesssim&\,\epsilon_1(1+t)^{-p_0}Z_{3,\alpha,\beta}(t),
\end{align*}
\begin{align*}
   &\Big|\Big\langle\partial^\alpha_{\beta}\Big[\Big(E+v \times B \Big)\cdot\frac{v-u }{ T}f^{\varepsilon}\Big], 2\pi T \partial^\alpha_{\beta}f^{\varepsilon}\Big\rangle\Big|
    \lesssim\,\epsilon_1(1+t)^{-p_0}\FZ_{3,\alpha,\beta}(t),
   \end{align*}
  \begin{align*}
  &\varepsilon^k\Big|\Big\langle\partial^\alpha_{\beta}\Big[\big(E_R^{\varepsilon}+v \times B_R^{\varepsilon}\big) \cdot\frac{(v-u)}{ T}f^{\varepsilon}\Big], 2\pi T \partial^\alpha_{\beta}f^{\varepsilon}\Big\rangle\Big|
    \lesssim\,\varepsilon \FZ_{3,\alpha,\beta}(t),
   \end{align*}
  and
   \begin{align*}
  &\sum_{n=1}^{2k-1}\varepsilon^n\Big|\Big\langle \partial^\alpha_{\beta}\Big[\Big(E_n+v \times B_n \Big)\cdot\frac{(v-u)}{ T}f^{\varepsilon}\Big], 2\pi T\partial^\alpha_{\beta}f^{\varepsilon}\Big\rangle\Big|
    \lesssim\,\varepsilon^{1-\kappa}\FZ_{3,\alpha,\beta}(t),
   \end{align*}
 where
   \begin{align*}
   \FZ_{3,\alpha,\beta}(t):=&\|f^{\varepsilon}\|^2_{H^{|\alpha|}_xH^{|\beta|}_v} +\sum_{\alpha'\leq\alpha,\beta'\leq\beta}\frac{1}{\varepsilon}\left\|\partial^{\alpha'}_{\beta'}({\bf I}-{\bf P}_{\mathbf{M}})[f^{\varepsilon}]\right\|_{\bf D}^2\nonumber\\
   &+C_{\epsilon_1}\exp\Big(-\frac{\epsilon_1}{8C_0RT^2_c\sqrt{\varepsilon}}\Big)
   \Big(\sum_{\alpha'\leq\alpha,\beta'\leq\beta}\|\partial^{\alpha'}_{\beta'}h^{\varepsilon}\|^2\Big).
   \end{align*}
\end{lemma}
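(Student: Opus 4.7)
The plan is to adapt the proof of Lemma \ref{dxlm VML} from the VML setting to the non-cutoff VMB one in which mixed spatial--velocity derivatives $\partial^\alpha_\beta$ (with $|\alpha|+|\beta|\leq 4$) must be controlled. The overall strategy is unchanged: we exploit the two representations $\sqrt{\mathbf{M}} f^\varepsilon = \sqrt{\mu} h^\varepsilon$ with $T_c < \inf_{t,x}T(t,x)$ to convert the cubic-in-$v$ growth coming from terms such as $\mathbf{M}^{-1/2}(\partial_t+v\cdot\nabla_x)\mathbf{M}^{1/2}$, $(v-u)/T$, and $\nabla_v F_n$ into an exponentially decaying Gaussian tail whenever we pass from $f^\varepsilon$ to $h^\varepsilon$, while the microscopic dissipation $\varepsilon^{-1}\|\partial^{\alpha'}_{\beta'}({\bf I}-{\bf P}_{\mathbf{M}})[f^\varepsilon]\|_{\bf D}^2$ absorbs the high-$v$ portion that refuses to reduce to a macroscopic norm of $f^\varepsilon$.

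The backbone is a pointwise bound generalising \eqref{dxihfL}. From $f^\varepsilon = \mathbf{M}^{-1/2}\mu^{1/2}h^\varepsilon$ and
\[
\mathbf{M}^{-1/2}\mu^{1/2} \lesssim C_{\epsilon_1}\exp\!\left(-\frac{\epsilon_1|v|^2}{16C_0RT_c^2}\right),
\]
which follows from \eqref{tt1}, applying $\partial^\alpha_\beta$ produces a finite sum of polynomials $p_{\alpha,\alpha',\beta,\beta'}(v)$ (built from $\partial^{\alpha''}[\rho,u,T]$ and from velocity derivatives of $\mathbf{M}^{-1/2}\mu^{1/2}$ itself) times the above Gaussian times $\partial^{\alpha'}_{\beta'}h^\varepsilon$ for $\alpha'\leq\alpha$, $\beta'\leq\beta$. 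Given any of the four target inner products, I then split the $v$-integration at the $\varepsilon$-dependent threshold $\langle v\rangle^{4(|\alpha|+|\beta|+1)}\leq\varepsilon^{-(|\alpha|+|\beta|+1)}$ versus its complement: on the low-velocity region polynomial weights of any fixed degree are absorbed, after paying at most an $\varepsilon^{-(|\alpha|+|\beta|+1)}$ factor, into $\sum_{\alpha'\leq\alpha,\beta'\leq\beta}\|\partial^{\alpha'}_{\beta'}f^\varepsilon\|^2$ and into the microscopic dissipation; on the high-velocity region the Gaussian supplies the small prefactor $C_{\epsilon_1}\exp(-\epsilon_1/(8C_0RT_c^2\sqrt{\varepsilon}))$ in front of the $h^\varepsilon$-norms, exactly reproducing the structure of $\mathcal{Z}_{3,\alpha,\beta}(t)$.

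With this machinery each of the four inequalities reduces to a mechanical calculation. The first is obtained by distributing $\partial^\alpha_\beta$ through $\mathbf{M}^{-1/2}(\partial_t+v\cdot\nabla_x)\mathbf{M}^{1/2}$ and using $\|\nabla_x[\rho,u,T]\|_{W^{m,\infty}}\lesssim\epsilon_1(1+t)^{-p_0}$ from \eqref{em-decay}; the second is analogous, with the $L^\infty$ decay of $[E,B]$ from the same proposition furnishing the $(1+t)^{-p_0}$ factor. The third invokes the a priori bound $\varepsilon^{k-1}\|f^\varepsilon\|_{H^{|\alpha|+|\beta|}}\lesssim \varepsilon^{1/2}$, which follows from \eqref{aps-vml} with $k\geq 4$, together with the Sobolev embedding $H^2_x\hookrightarrow L^\infty_x$ applied to $E_R^\varepsilon+v\times B_R^\varepsilon$, yielding the clean $\varepsilon$ prefactor. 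The fourth uses the polynomial time growth $\|\partial^\alpha[E_n,B_n]\|_{L^\infty}\lesssim(1+t)^n$ established in Section \ref{sec-app}, so that on $t\leq\varepsilon^{-\kappa}$ one has $\varepsilon^n(1+t)^n\lesssim\varepsilon^{n(1-\kappa)}\lesssim\varepsilon^{1-\kappa}$.

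The main obstacle, compared with the purely spatial VML version in Lemma \ref{dxlm VML}, is the bookkeeping of the additional $\langle v\rangle^{|\beta|-|\beta'|}$ factors generated whenever $\partial_\beta$ lands on $\mathbf{M}^{-1/2}\mu^{1/2}$, on $(v-u)/T$, or on $\mathbf{M}^{1/2}$ inside $F_n$; these extra velocity weights cannot be absorbed by the non-cutoff dissipation norm $|\cdot|_{\bf D}$ alone, since it only controls $\langle v\rangle^{\gamma/2+s}$-weighted norms. The resolution is precisely the $(|\alpha|+|\beta|)$-dependent choice of threshold in the $v$-split described above, which allows any finite polynomial in $v$ to be swallowed at the mild cost of an $\varepsilon^{-(|\alpha|+|\beta|+1)}$ loss on the low-$v$ side---tolerated because of the prefactor $\varepsilon^{-1}$ already present in $\mathcal{Z}_{3,\alpha,\beta}(t)$---and of the exponentially small $\exp(-\epsilon_1/(8C_0RT_c^2\sqrt{\varepsilon}))$ on the high-$v$ side. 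Once this refined split is put in place, the four estimates follow term by term along exactly the lines of \eqref{dxfhi0}.
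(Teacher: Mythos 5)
Your proposal is correct and follows essentially the same route the paper intends: the paper omits the proof of this lemma, stating it closely resembles Lemmas \ref{dxlm VML} and \ref{dxvlm VML}, and your argument is exactly that adaptation — the identity $\sqrt{\FM}f^\varepsilon=\sqrt{\mu}h^\varepsilon$ with the Gaussian ratio bound from \eqref{tt1}, the $\varepsilon$-dependent split of velocity space as in \eqref{dxfhi0}, the decay \eqref{em-decay} for the first two estimates, the a priori bound \eqref{vmbf4} for the third, and $\varepsilon(1+t)\lesssim\varepsilon^{1-\kappa}$ on $t\leq\varepsilon^{-\kappa}$ together with \eqref{vmb-EB-es} for the fourth. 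The only cosmetic slip is quoting the a priori bound for $f^\varepsilon$ where the bound on $[E_R^\varepsilon,B_R^\varepsilon]$ is the one actually needed, but both are contained in \eqref{aps-vml}/\eqref{vmbf4}, so the argument stands.
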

%\begin{proof} The Lemma can be proved similarly as Lemma \ref{dxlm VML} and the details of of the proof is omitted for brevity.  %For brevity, we only prove \eqref{EBndxv}. To this end, for $t\in [0, \varepsilon^{\kappa}]$, we get from \eqref{em-fn-es}  that
%\begin{align*}
%&\sum_{n=1}^{2k-1}\varepsilon^n\Big|\Big\langle \partial^\alpha_{\beta}\Big[\Big(E_n+v \times B_n \Big)\cdot\frac{(v-u)}{ T}f^{\varepsilon}\Big], 2\pi T \partial^\alpha_{\beta}f^{\varepsilon}\Big\rangle\Big|\\
%   \lesssim& \sum_{n=1}^{2k-1}\varepsilon^n(1+t)^{n}\sum_{\alpha'\leq\alpha,\beta'\leq\beta}\|{\lag v\rag}\pa^{\alpha'}_{\beta'}f^{\varepsilon}\|^2\lesssim
%    \varepsilon^{1-\kappa}\sum_{\alpha'\leq\alpha,\beta'\leq\beta}\|{\lag v\rag}\pa^{\alpha'}_{\beta'}f^{\varepsilon}\|^2.
%   \end{align*}
%Similar to \eqref{dxfhi0-1}, it holds that
%\begin{align*}
%\|{\lag v\rag}\pa^{\alpha'}_{\beta'}f^{\varepsilon}\|^2\lesssim& \|f^{\varepsilon}\|^2_{H^{|\alpha'|}_xH^{|\beta'|}_v}+\frac{1}{\varepsilon} \left\|\pa^{\alpha'}_{\beta'}({\bf I}-{\bf P}_{\mathbf{M}})[f^{\varepsilon}]\right\|_{\bf D}^2\\
%&+C_{\epsilon_1}
%\exp\Big(-\frac{\epsilon_1\varepsilon^{\frac{-2}{2+|\gamma+2s|}}}{8C_0RT^2_c}\Big)
%\|h^{\varepsilon}\|^2_{H^{|\alpha'|}_xH^{|\beta'|}_v}.
%\end{align*}
%We combine the above two estimates to get  \eqref{EBndxv}. %This ends the proof of Lemma \ref{dxvlm VMB}.
%\end{proof}

\begin{lemma}\label{dxvlm VMB} Under the same assumptions in Lemma \ref{dxlm}, for $|\alpha|+|\beta|\leq 4$ and $t\in [0, \varepsilon^{-\kappa}]$ with $\kappa=\frac{1}{3}$, it holds that
  \begin{align*}
   &\Big|\Big\langle \partial^\alpha_{\beta}\Big[\Big(E+v \times B \Big)\cdot\nabla_vf^{\varepsilon}\Big], 4\pi R T \partial^\alpha_{\beta}f^{\varepsilon}\Big\rangle\Big|
    \lesssim\,\epsilon_1(1+t)^{-p_0}{\bf 1}_{|\alpha|+|\beta|\geq1}Z_{4,\alpha,\beta}(t),
   \end{align*}
   \begin{align*}
   &\varepsilon^k\Big|\Big\langle \partial^\alpha_{\beta}\Big[\big(E_R^{\varepsilon}+v \times B_R^{\varepsilon}\big) \cdot\nabla_vf^{\varepsilon}\Big], 4\pi RT \partial^\alpha_{\beta}f^{\varepsilon}\Big\rangle\Big|
    \lesssim\,\varepsilon{\bf 1}_{|\alpha|+|\beta|\geq1}\FZ_{4,\alpha,\beta}(t),
   \end{align*}
  and
   \begin{align*}
   &\sum_{n=1}^{2k-1}\varepsilon^n\Big|\Big\langle \partial^\alpha_{\beta}\Big[\Big(E_n+v \times B_n \Big)\cdot\nabla_vf^{\varepsilon}+\Big(E_R^{\varepsilon}+v \times B_R^{\varepsilon} \Big)\cdot \frac{\nabla_vF_n}{\sqrt{\mathbf{M}}}\Big], 4\pi RT \partial^\alpha_{\beta}f^{\varepsilon}\Big\rangle\Big|\\
    \lesssim&\,\varepsilon^{2\kappa}\Big[\|f^{\varepsilon}\|^2_{H^{|\alpha|}_xH^{|\beta|}_v}+\|E_R^{\varepsilon}\|^2_{H^{|\alpha|}}+\|B_R^{\varepsilon}\|^2_{H^{|\alpha|}}+{\bf 1}_{|\alpha|+|\beta|\geq1}\FZ_{4,\alpha,\beta}(t)\Big],\nonumber
   \end{align*}
   where
   \begin{align*}
  \FZ_{4,\alpha,\beta}(t)&:=\|f^{\varepsilon}\|^2_{H^{|\alpha|}_xH^{|\beta|}_v} +\sum_{|\al'|\leq|\al|,|\beta'|\leq |\beta|+1,\atop{|\alpha'|+|\beta'|\leq |\alpha|+|\beta|}}\frac{1}{\varepsilon}\left\|\partial^{\alpha'}_{\beta'}({\bf I}-{\bf P}_{\mathbf{M}})[f^{\varepsilon}]\right\|_{\bf D}^2\\
&+C_{\epsilon_1}\exp\Big(-\frac{\epsilon_1}{8C_0RT^2_c\sqrt{\varepsilon}}\Big)
    \Big(\sum_{|\al'|\leq|\al|,|\beta'|\leq |\beta|+1,\atop{|\alpha'|+|\beta'|\leq |\alpha|+|\beta|}}\|\partial^{\alpha'}_{\beta'}h^{\varepsilon}\|^2\Big).
   \end{align*}

   \end{lemma}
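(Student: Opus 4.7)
The plan is to mimic the velocity-splitting argument developed for Lemma~\ref{dxvlm VML} in the VML case, with two modifications tailored to the VMB setting: first, the energy functional \eqref{eg-vmb} now incorporates velocity derivatives, so the error bound must allow an extra velocity-derivative order (this is precisely why $\FZ_{4,\alpha,\beta}$ permits $|\beta'|\leq|\beta|+1$); second, the bilinear structure of the non-cutoff Boltzmann operator yields a dissipation norm that already contains some $v$-derivative information, which helps when $\nabla_v f^\varepsilon$ appears inside the inner product.

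First, I would apply the Leibniz rule to $\partial^{\alpha}_{\beta}[(E+v\times B)\cdot\nabla_v f^\varepsilon]$. Since $E(t,x)$ and $B(t,x)$ are independent of $v$ and $v\times B$ is linear in $v$, only the cases $\beta'=0$ and $|\beta'|=1$ contribute when $\partial_{\beta'}$ acts on $v\times B$, and the latter is a lower-order term. Spatial derivatives fall on $E$ and $B$, controlled through \eqref{em-decay} to furnish the time-decay factor $(1+t)^{-p_0}$. Crucially, $\nabla_v$ acting on $f^\varepsilon$ raises the velocity-derivative count by one, matching the $|\beta'|\leq|\beta|+1$ window in $\FZ_{4,\alpha,\beta}$.

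Next, I would split the $v$-integration into a low-velocity region $\{\langle v\rangle^4\leq\varepsilon^{-1}\}$ and a high-velocity region $\{\langle v\rangle^4\geq\varepsilon^{-1}\}$. On the low-velocity piece, the polynomial weight $\langle v\rangle$ from $v\times B$ is dominated by $\varepsilon^{-1/4}$, and $\nabla_v f^\varepsilon$ is handled by decomposing $f^\varepsilon=\mathbf{P}_{\mathbf{M}}[f^\varepsilon]+(\mathbf{I}-\mathbf{P}_{\mathbf{M}})[f^\varepsilon]$. Since the macroscopic projection involves $v$-dependent basis functions $\chi_j(v)$, differentiating it produces Sobolev norms $\|\partial^{\alpha'}_{\beta'}f^\varepsilon\|$; the microscopic contribution is absorbed by $\varepsilon^{-1}\|(\mathbf{I}-\mathbf{P}_{\mathbf{M}})\partial^{\alpha'}_{\beta'}f^\varepsilon\|_{\mathbf{D}}^2$. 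On the high-velocity piece, I convert $f^\varepsilon$ to $h^\varepsilon$ via $f^\varepsilon=\mathbf{M}^{-1/2}\mu^{1/2}h^\varepsilon$, which under \eqref{tt1} produces a Gaussian factor $\exp(-(T-T_c)|v|^2/(8C_0RTT_c))$. Combined with the constraint $|v|^2\gtrsim\varepsilon^{-1/2}$, this yields the exponentially small prefactor $C_{\epsilon_1}\exp(-\epsilon_1/(8C_0RT_c^2\sqrt{\varepsilon}))$ in front of the $h^\varepsilon$-norms collected in $\FZ_{4,\alpha,\beta}$.

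For the second inequality, the factor $\varepsilon^k$ together with the a priori assumption \eqref{aps-vml} and Sobolev embedding on $[E_R^\varepsilon,B_R^\varepsilon]$ delivers a net gain of $\varepsilon$. For the third inequality, the polynomial time growth $(1+t)^n$ of $[F_n,E_n,B_n]$, combined with the prefactor $\varepsilon^n$ and $t\leq\varepsilon^{-\kappa}$ with $\kappa=\frac{1}{3}$, gives $\varepsilon^n(1+t)^n\lesssim\varepsilon^{1-\kappa}$ in the worst case $n=1$; an additional $\varepsilon^\kappa$ from absorbing the Sobolev norms of $\nabla_v f^\varepsilon$ via Young's inequality yields the target $\varepsilon^{2\kappa}$. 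The coupling term $(E_R^\varepsilon+v\times B_R^\varepsilon)\cdot\mathbf{M}^{-1/2}\nabla_v F_n$ is handled by the Gaussian decay of $\mathbf{M}^{-1/2}\nabla_v F_n$ (using $F_n=\mathbf{M}^{1/2}f_n$) paired with Sobolev embedding for $E_R^\varepsilon, B_R^\varepsilon$. The main obstacle will be the careful bookkeeping of velocity derivatives: since $\nabla_v$ does not commute with $\mathbf{P}_{\mathbf{M}}$ (the basis $\chi_j$ is $v$-dependent and also depends on $(\rho,u,T)$ through \eqref{em-decay}), commutator terms require extra care to fit within the $|\beta|+1$ allowance, and one must verify that the high-velocity exponential gain genuinely dominates all accumulated polynomial weights $\langle v\rangle^{|\beta|+2}$ after expansion, which is ensured by taking $\varepsilon$ sufficiently small relative to $\epsilon_1$.
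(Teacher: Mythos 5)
Your proposal follows essentially the same route the paper intends: the authors omit the proof of this lemma precisely because it is the VMB adaptation of Lemmas \ref{dxlm VML} and \ref{dxvlm VML}, i.e. Leibniz expansion, an $\varepsilon$-dependent split of $\mathbb{R}^3_v$ into low and high velocities, macro--micro decomposition (with the low-velocity part absorbed into $\|f^{\varepsilon}\|^2_{H^{|\alpha|}_xH^{|\beta|}_v}$ and $\varepsilon^{-1}\|\partial^{\alpha'}_{\beta'}({\bf I}-{\bf P}_{\mathbf{M}})[f^{\varepsilon}]\|^2_{\bf D}$), and the conversion $f^{\varepsilon}=\mathbf{M}^{-\frac12}\mu^{\frac12}h^{\varepsilon}$ on the high-velocity part using \eqref{tt1} to produce the factor $C_{\epsilon_1}\exp\big(-\frac{\epsilon_1}{8C_0RT_c^2\sqrt{\varepsilon}}\big)$, with the extra velocity derivative landing in the $|\beta'|\leq|\beta|+1$ window of $\FZ_{4,\alpha,\beta}$ (this is exactly the ``bound $\nabla_vf^{\varepsilon}$ by the $h^{\varepsilon}$ part of $\mathcal{E}(t)$ at large $v$'' modification the paper announces). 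The powers of $\varepsilon$ via \eqref{aps-vml}/\eqref{vmbf4} and the time growth of $[E_n,B_n]$ are also handled as in the paper.

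Two points should be corrected, though neither destroys the argument. First, for the coupling term $\big(E_R^{\varepsilon}+v\times B_R^{\varepsilon}\big)\cdot\mathbf{M}^{-\frac12}\nabla_vF_n$ you invoke ``Gaussian decay of $\mathbf{M}^{-\frac12}\nabla_vF_n$ using $F_n=\mathbf{M}^{\frac12}f_n$''; in the non-cutoff VMB setting this is exactly the route the paper rules out, since $e^{q\langle v\rangle^2}$-weighted estimates for the non-cutoff collision operator (hence for $f_n=\mathbf{M}^{-\frac12}F_n$) are not available. The correct source of velocity decay is Lemma \ref{vmb-Fn-lem}: write $\mathbf{M}^{-\frac12}\nabla_vF_n=\big(\mathbf{M}^{-\frac12}\mu^{\frac12}\big)\big[\nabla_v(\mu^{-\frac12}F_n)+\tfrac{v}{2RT_c}\mu^{-\frac12}F_n\big]$ and use \eqref{vmb-Fn-es}, which gives polynomial-times-$e^{-q\langle v\rangle}$ decay together with the boundedness of $\mathbf{M}^{-\frac12}\mu^{\frac12}$ from \eqref{tt1}; this is amply sufficient for the Cauchy--Schwarz step against $\|E_R^{\varepsilon}\|_{H^{|\alpha|}}$, $\|B_R^{\varepsilon}\|_{H^{|\alpha|}}$ and $\|f^{\varepsilon}\|_{H^{|\alpha|}_xH^{|\beta|}_v}$. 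Second, a small bookkeeping remark on the third inequality: with $\kappa=\tfrac13$ one has $\varepsilon^{n}(1+t)^{n}\lesssim\varepsilon^{n(1-\kappa)}\leq\varepsilon^{1-\kappa}=\varepsilon^{2\kappa}$ already, so no additional factor $\varepsilon^{\kappa}$ from Young's inequality is needed (nor is one legitimately available); the direct count gives the stated $\varepsilon^{2\kappa}$ bound.
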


Now, we are prepared to derive energy estimates for $f^{\varepsilon}$, $E_R^{\varepsilon}$, and $B_R^{\varepsilon}$. Let's begin with the energy estimate that does not involve velocity derivatives.
\begin{proposition}\label{f-eng-vmb}
Under the same assumptions in Lemma \ref{dxlm}, we have
\begin{align}\label{f-eng-vmb-sum}
&\frac{\mathrm{d}}{\mathrm{d} t}\sum_{|\alpha|\leq 4}\Big[\varepsilon^{|\alpha|}\Big(\|\sqrt{4\pi RT}\partial^{\alpha}f^{\varepsilon}\|^2+\|\partial^{\alpha}E_R^{\varepsilon}\|^2+\|\partial^{\alpha}B_R^{\varepsilon}\|^2\Big)\Big]\nonumber\\
&
+\delta\varepsilon^{|\alpha|-1}\sum_{|\alpha|\leq 4}\|\partial^{\alpha}({\bf I}-{\bf P}_{\mathbf{M}})[f^{\varepsilon}]\|^2_{\bf D}   \nonumber\\
\lesssim&\sum\limits_{|\alpha|\leq 4}\vps^{|\alpha|}\Big[\epsilon_1(1+t)^{-p_0}+\varepsilon^{\kappa}
\Big]\Big[\|f^{\varepsilon}\|^2_{H^{|\alpha|}_xL^2_v}+\|E_R^{\varepsilon}\|^2_{H^{|\alpha|}}+\|B_R^{\varepsilon}\|^2_{H^{|\alpha|}} \\
&+\sum_{|\al'|\leq|\al|-1,\atop{|\beta'|= 1,|\alpha|\geq1}}\Big(\frac{1}{\varepsilon}\Big\|\partial^{\alpha'}_{\beta'}({\bf I}-{\bf P}_{\mathbf{M}})[f^{\varepsilon}]\Big\|_{\bf D}^2+C_{\epsilon_1}\exp\Big(-\frac{\epsilon_1}{8C_0RT^2_c\sqrt{\varepsilon}}\Big)\|\partial^{\alpha'}_{\beta'}h^{\varepsilon}\|^2\Big)\Big]\nonumber\\
&+\sum_{|\alpha|\leq 4}{\bf 1}_{|\alpha|\geq1}\epsilon_1(1+t)^{-p_0}\Big( \varepsilon^{|\alpha|-2}\|({\bf I}-{\bf P}_{\mathbf{M}})f\|_{H^{|\alpha|-1}_{\bf D}}^2+\varepsilon^{|\alpha|}\|f\|^2_{H^{|\alpha|}_xL^2_v}\Big)\nonumber\\
     &+\varepsilon^{2k+1}(1+t)^{4k+2}+\sum\limits_{|\alpha|\leq 4}\varepsilon^{k+|\alpha|}(1+t)^{2k}\|\partial^{\alpha}f^{\varepsilon}\|.\nonumber
\end{align}
\end{proposition}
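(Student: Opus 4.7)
The proof will mirror the three-step strategy used for Proposition \ref{f-eng-vml} in the VML case, adapted to the non-cutoff VMB setting using the trilinear estimate \eqref{GLM-Non} and the coercivity estimates \eqref{coL0-Non}, \eqref{coLh-Non} available for the non-cutoff linearized operator. The plan is to derive, for each $0\leq |\alpha|\leq 4$, the weighted estimate of $\partial^{\alpha}f^\vps$, $\partial^\alpha E_R^\vps$, $\partial^\alpha B_R^\vps$, then multiply by $\varepsilon^{|\alpha|}$ and sum over $|\alpha|\leq 4$. Applying $\partial^{\alpha}$ to \eqref{VMLf} and taking the $L^2_{x,v}$-inner product with $4\pi RT\partial^{\alpha}f^{\varepsilon}$, I will use the identity obtained from \eqref{fM-2} that
\[
\tfrac{1}{2}\tfrac{d}{dt}\big(\|\partial^{\alpha}E_R^{\varepsilon}\|^2+\|\partial^{\alpha}B_R^{\varepsilon}\|^2\big)
=4\pi\big\langle v\cdot\partial^{\alpha}E_R^{\varepsilon}\,\mathbf{M}^{1/2},\,\partial^{\alpha}f^{\varepsilon}\big\rangle,
\]
which exactly cancels the electromagnetic coupling term arising from $\partial^\alpha\big[(E_R^\vps+v\times B_R^\vps)\cdot (v-u)\mathbf{M}^{1/2}/(RT)\big]$ against $4\pi RT\partial^\alpha f^\vps$, modulo terms in which at least one $\partial^\alpha$ falls on $\mathbf{M}^{1/2}$ or $RT$ and which carry a factor of $\epsilon_1(1+t)^{-p_0}$ by \eqref{em-decay}.

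For the coercivity of $\varepsilon^{-1}\mathcal{L}_{\mathbf{M}}$, I will use \eqref{coL0-Non} when $|\alpha|=0$ and \eqref{coLh-Non} when $1\leq|\alpha|\leq 4$; the latter produces on the left-hand side a term $\tfrac{3\delta}{4\varepsilon}\|\partial^\alpha({\bf I}-{\bf P}_{\mathbf{M}})[f^\vps]\|_{\bf D}^2$ at the price of the two error quantities $\varepsilon^{-2}\|({\bf I}-{\bf P}_{\mathbf{M}})f^\vps\|^2_{H^{|\alpha|-1}_{\bf D}}$ and $\|f^\vps\|^2_{H^{|\alpha|}_xL^2_v}$, both multiplied by $\epsilon_1(1+t)^{-p_0}$ through $\mathcal{U}(\|\nabla_x[\rho,u,T]\|_{W^{|\alpha|-1,\infty}})$. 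The local-Maxwellian time-derivative term, the Lorentz-force terms involving $(E,B)$, $(E_R^\vps,B_R^\vps)$, and $(E_n,B_n)$, together with the velocity-derivative Lorentz terms, are controlled via Lemmas \ref{dxlm} and \ref{dxvlm VMB}, which after multiplication by $\varepsilon^{|\alpha|}$ yield contributions bounded by $\varepsilon^{|\alpha|}[\epsilon_1(1+t)^{-p_0}+\varepsilon^{2\kappa}]\FZ_{3,\alpha,0}(t)$ and similar expressions that fit into the right-hand side of \eqref{f-eng-vmb-sum}, where the exponentially small $C_{\epsilon_1}\exp(-\epsilon_1/(8C_0RT_c^2\sqrt{\varepsilon}))$ factor arises from the bifurcation of the $v$-integration into a low-velocity region (bounded by $\varepsilon^{-1}\|({\bf I}-{\bf P}_{\mathbf{M}})f^\vps\|_{\bf D}^2+\|f^\vps\|_{H^{|\alpha|}}^2$) and a high-velocity region (bounded via the relation $f^\vps=\mathbf{M}^{-1/2}\mu^{1/2}h^\vps$ from \eqref{dxihfL}).

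For the nonlinear term $\varepsilon^{k-1}\langle\partial^\alpha\Gamma_{\mathbf{M}}(f^\vps,f^\vps),4\pi RT\partial^\alpha f^\vps\rangle$, I will apply \eqref{GLM-Non} combined with Sobolev's inequality $\|\cdot\|_{L^\infty_x}\lesssim\|\cdot\|_{H^2_x}$ (available since $N\geq 4$), together with the a priori bound $\varepsilon^{k-1}\|f^\vps\|_{H^4}\lesssim \varepsilon^{1/2}$ deduced from the a priori assumption $\sup_{t\leq\vps^{-\kappa}}\CE(t)\lesssim \vps^{-1/2}$ and $k\geq 4$; this produces $\frac{o(1)}{\varepsilon}\|\partial^\alpha({\bf I}-{\bf P}_{\mathbf{M}})[f^\vps]\|_{\bf D}^2+\varepsilon\|f^\vps\|_{H^{|\alpha|}_xL^2_v}^2$ plus lower-order terms absorbable into the RHS. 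The coefficient-interaction term $\varepsilon^{n-1}\langle\partial^\alpha\Gamma_{\mathbf{M}}(\mathbf{M}^{-1/2}F_n,f^\vps),\cdot\rangle$ (and its symmetric counterpart) is handled via \eqref{GLM-Non} and the bound $\|\mathbf{M}^{-1/2}F_n\|\lesssim(1+t)^n$ from the coefficient construction, yielding a time-decaying factor $\varepsilon^{n-1}(1+t)^n\lesssim \varepsilon^{n-1}\varepsilon^{-n\kappa}$ which is $\lesssim\varepsilon^{\kappa}$ for $n\geq 1$ since $\kappa=\tfrac13$. The source $\varepsilon^k\partial^\alpha\mathcal{Q}_0$ is estimated by Cauchy-Schwarz and the polynomial-in-$t$ bounds on $F_n,E_n,B_n$, producing the final two terms $\varepsilon^{2k+1}(1+t)^{4k+2}+\varepsilon^{k+|\alpha|}(1+t)^{2k}\|\partial^\alpha f^\vps\|$.

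The main technical obstacle is the careful bookkeeping of negative powers of $\varepsilon$: the commutator term in \eqref{coLh-Non} contributes $\varepsilon^{-2}\|({\bf I}-{\bf P}_{\mathbf{M}})f^\vps\|^2_{H^{|\alpha|-1}_{\bf D}}$, which after multiplication by $\varepsilon^{|\alpha|}$ becomes $\varepsilon^{|\alpha|-2}$ times a lower-order dissipation — this is precisely absorbed in the sum by the dissipation at order $|\alpha|-1$ (which carries $\varepsilon^{|\alpha|-2}$), explaining the choice of weights $\varepsilon^{|\alpha|}$. A parallel delicate point is the treatment of the $(E_R^\vps,B_R^\vps)\cdot\nabla_v F_n$ terms appearing with prefactor $\varepsilon^n$: these must be bounded by $\varepsilon^{2\kappa}(\|E_R^\vps\|_{H^{|\alpha|}}^2+\|B_R^\vps\|_{H^{|\alpha|}}^2+\cdots)$ using polynomial-in-$t$ decay of $\nabla_v F_n$. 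Once all contributions are collected, multiplied by $\varepsilon^{|\alpha|}$, and summed over $0\leq|\alpha|\leq 4$, \eqref{f-eng-vmb-sum} follows.
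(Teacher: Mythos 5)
Your proposal is correct and follows essentially the same route as the paper: the same inner product with $4\pi RT\partial^\alpha f^\varepsilon$ combined with the Maxwell-system identity, coercivity via \eqref{coL0-Non}/\eqref{coLh-Non}, Lemmas \ref{dxlm} and \ref{dxvlm VMB} for the Lorentz-force and velocity-growth terms, \eqref{GLM-Non} with the a priori bound for the nonlinear and $F_n$-interaction terms, the polynomial-in-$t$ coefficient bounds for $\mathcal{Q}_0$, and finally the $\varepsilon^{|\alpha|}$-weighted summation. The only cosmetic imprecision is that the electromagnetic coupling does not cancel exactly even at $|\alpha|=0$ — the leftover is the $u$-weighted term $\langle(E_R^\varepsilon+v\times B_R^\varepsilon)\cdot u\,\mathbf{M}^{1/2},f^\varepsilon\rangle$, which the paper likewise absorbs through the $\epsilon_1(1+t)^{-p_0}$ decay of $u$ from \eqref{em-decay}, exactly as your estimate structure anticipates.
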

\begin{proof}The proof is divided into two steps.
\vskip 0.2cm

\noindent\underline{{\it Step 1. Basic energy estimate of the remainders.}} In this step, we derive the $L^2$ estimate on $[f^{\varepsilon}, E_R^{\varepsilon}, B_R^{\varepsilon}]$. Corresponding to \eqref{L2f1 VML}, in the VMB case, we can apply \eqref{coL0-Non} to have
\begin{align}\label{L2f1 VMB}
&\frac{1}{2}\frac{\mathrm{d}}{\mathrm{d} t}\Big(\|\sqrt{4\pi RT}f^{\varepsilon}\|^2+\|E_R^{\varepsilon}\|^2+\|B_R^{\varepsilon}\|^2\Big)
    +\frac{\delta}{\varepsilon}\|({\bf I}-{\bf P}_{\mathbf{M}})[f^{\varepsilon}]\|^2_{\bf D} \\
    \leq&\;\frac{1}{2}\big|\big\langle \partial_tT f^{\varepsilon}, 4\pi R f^{\varepsilon}\big\rangle\big|+\Big|\Big\langle \big(E_R^{\varepsilon}+v \times B_R^{\varepsilon} \big) \cdot u\mathbf{M}^{\frac{1}{2}},4\pi  f^{\varepsilon}\Big\rangle\Big|\nonumber\\
    &+\Big|\Big\langle \Big(E+v \times B \Big)\cdot(v-u )f^{\varepsilon}, 2\pi  f^{\varepsilon}\Big\rangle\Big|\nonumber\\
    &+    \Big|\Big\langle f^{\varepsilon}\mathbf{M}^{-\frac{1}{2}}\big(\partial_t+v\cdot\nabla_x\big)\mathbf{M}^{\frac{1}{2}}, 4\pi RT f^{\varepsilon}\Big\rangle\Big|
    +\varepsilon^{k-1}\big|\big\langle\Gamma_{\mathbf{M}} ( f^{\varepsilon},
    f^{\varepsilon} ), 4\pi RT f^{\varepsilon}\big\rangle\big|\nonumber\\
    &+\sum_{n=1}^{2k-1}\varepsilon^{n-1}\big|\big\langle[\Gamma_{\mathbf{M}}(\mathbf{M}^{-\frac{1}{2}}F_n,f^{\varepsilon})+\Gamma_{\mathbf{M}}(
 f^{\varepsilon}, \mathbf{M}^{-\frac{1}{2}}F_n)\big], 4\pi RT f^{\varepsilon}\big\rangle\big|\nonumber\\
 &+\varepsilon^k \big|\big\langle \big(E_R^{\varepsilon}+v \times B_R^{\varepsilon}\big) \cdot(v-u)f^{\varepsilon},2\pi  f^{\varepsilon}\big\rangle\big|\nonumber\\
 &+\sum_{n=1}^{2k-1}\varepsilon^n\Big|\Big\langle \Big(E_R^{\varepsilon}+v \times B_R^{\varepsilon} \Big)\cdot\mathbf{M}^{-\frac{1}{2}}\nabla_v F_n,4\pi RT f^{\varepsilon}\Big\rangle\Big|\nonumber\\
 &+\sum_{n=1}^{2k-1}\varepsilon^n\Big|\Big\langle \Big(E_n+v \times B_n \Big)\cdot(v-u)f^{\varepsilon},2\pi f^{\varepsilon}\Big\rangle\Big|
  +\varepsilon^k\big|\big\langle \CQ_0,4\pi RT f^{\varepsilon}\big\rangle\big|.\nonumber
\end{align}
%Here we have used the following identity
%\begin{align*}%\label{L2max}
%\frac{1}{2}\frac{d}{dt}\left(\|E_R^{\varepsilon}(t)\|^2+\|B_R^{\varepsilon}(t)\|^2\right)=4\pi \Big\langle v\cdot E_R^{\varepsilon}\mathbf{M}^{\frac{1}{2}}, f^{\varepsilon}\Big\rangle
%\end{align*}
%from \eqref{fM-2}.

Now we estimate the terms on the R.H.S. of \eqref{L2f1 VMB} individually.\\
In view of \eqref{em-decay}, the upper bound of the first two terms on the R.H.S. of \eqref{L2f1 VMB} is
$ (1+t)^{-p_0}\epsilon_1 \Big(\|f^{\varepsilon}\|^2+\|E_R^{\varepsilon}\|^2
+\|B_R^{\varepsilon}\|^2\Big).$  For the 3rd, 4th, 7th-9th terms on the R.H.S. of \eqref{L2f1 VMB},
we use Lemma \ref{dxlm} and Lemma \ref{dxvlm VMB} with $\beta=0$ to bound them by
$$
 C\Big[\epsilon_1(1+t)^{-p_0}+\varepsilon^{2\kappa}\Big]\Big[\|E_R^{\varepsilon}\|^2+\|B_R^{\varepsilon}\|^2+Z_{4,0,0}(t)\Big].
$$

Noting that corresponding to \eqref{fbd-vml} and \eqref{hf2b VML},
\begin{align}\label{vmbf4}
\varepsilon^{k-1}\Big(\|f^{\varepsilon}\|_{H^4}+\|E_R^{\varepsilon}\|_{H^4}+\|B_R^{\varepsilon}\|_{H^4}\Big)\lesssim \varepsilon^{\frac{1}{2}},\qquad \varepsilon^{k-1}\|\overline{w}h^{\varepsilon}\|_{H^4}\lesssim 1, \qquad k\geq 4,
\end{align}
 by the {\it a priori} assumption \eqref{aps-vml} in the VMB case. Then for the 5th and 6th terms on the R.H.S. of \eqref{L2f1 VMB},
similar to the estimates  \eqref{f0L} and \eqref{fn0L}, we can  use \eqref{GLM-Non}  and \eqref{vmb-Fn-es} to bound them  by
$o(1)\varepsilon^{-1}\|({\bf I}-{\bf P}_{\mathbf{M}})[f^{\varepsilon}]\|_{\bf D}^2+C\varepsilon^{\kappa}\|f^{\varepsilon}\|^2.$

%For the 6th term on the R.H.S. of \eqref{L2f1 VMB}, from \eqref{GLM-Non} or \eqref{GLM}, , it follows that
%\begin{align}\label{nonlinear-f-i-1}
%&\sum_{n=1}^{2k-1}\varepsilon^{n-1}\big|\big\langle[\Gamma_{\mathbf{M}}(\mathbf{M}^{-\frac{1}{2}}F_n,f^{\varepsilon})+\Gamma_{\mathbf{M}}(
% f^{\varepsilon}, \mathbf{M}^{-\frac{1}{2}}F_n)\big], 4\pi RT f^{\varepsilon}\big\rangle\big|\nonumber\\
% \lesssim&\sum_{n=1}^{2k-1}\varepsilon^{n-1}(1+t)^n\Big(\|({\bf I}-{\bf P}_{\mathbf{M}})[f^{\varepsilon}]\|_{\bf D}+\|{{\bf P}_{\mathbf{M}}}[f^{\varepsilon}]\|_{\bf D}\Big)\|({\bf I}-{\bf P}_{\mathbf{M}})[f^{\varepsilon}]\|_{\bf D}\nonumber\\
%    \lesssim&\, \frac{o(1)}{\varepsilon}\|({\bf I}-{\bf P}_{\mathbf{M}})[f^{\varepsilon}]\|_{\bf D}^2+\varepsilon(1+t)^2\|{{\bf P}_{\mathbf{M}}}[f^{\varepsilon}]\|_{\bf D}^2\nonumber\\
%      \lesssim&\, \frac{o(1)}{\varepsilon}\|({\bf I}-{\bf P}_{\mathbf{M}})[f^{\varepsilon}]\|_{\bf D}^2+\varepsilon^{1-2\kappa}\|{{\bf P}_{\mathbf{M}}}[f^{\varepsilon}]\|_{\bf D}^2\nonumber\\
%    \lesssim& \frac{  o(1)}{\varepsilon}\|({\bf I}-{\bf P}_{\mathbf{M}})[f^{\varepsilon}]\|_{\bf D}^2+\varepsilon^{\kappa}\|f^{\varepsilon}\|^2,
%\end{align}
%where we also used the fact $(1+t)\leq \varepsilon^{-\kappa}$ with $\kappa=\frac13$.

For the last term on the R.H.S. of \eqref{L2f1 VMB}, similar to the VML case, we can
use \eqref{vmb-Fn-es}, \eqref{vmb-EB-es}, and  Cauchy's inequality to have
\begin{equation}\label{nonlinear-f-i-0-1}
    \varepsilon^k\big\langle \CQ_0, 4\pi RT f^{\varepsilon}\big\rangle
    \lesssim \frac{o(1)}{\varepsilon}\|({\bf I}-{\bf P}_{\mathbf{M}})[f^{\varepsilon}]\|_{\bf D}^2+\varepsilon^{2k+1}(1+t)^{4k+2}+\varepsilon^{k}(1+t)^{2k}\|f^{\varepsilon}\|.
\end{equation}

Substituting the above estimates into \eqref{L2f1 VMB} leads to
\begin{align}\label{L2f VMB}
&\frac{\mathrm{d}}{\mathrm{d} t}\Big(\|\sqrt{4\pi RT}f^{\varepsilon}\|^2+\|E_R^{\varepsilon}\|^2+\|B_R^{\varepsilon}\|^2\Big)
    +\frac{\delta}{\varepsilon}\|({\bf I}-{\bf P}_{\mathbf{M}})[f^{\varepsilon}]\|^2_{\bf D}\nonumber\\
     \lesssim&\,\Big[(1+t)^{-p_0}+\varepsilon^{\kappa}\Big]\Big(\|f^{\varepsilon}\|^2+\|E_R^{\varepsilon}\|^2+\|B_R^{\varepsilon}\|^2
     +C_{\epsilon_1}\exp\Big(-\frac{\epsilon_1}{8C_0RT^2_c\sqrt{\varepsilon}}\Big)\|h^{\varepsilon}\|^2\Big)\nonumber\\
     &+\varepsilon^{2k+1}(1+t)^{4k+2}+\varepsilon^{k}(1+t)^{2k}\|f^{\varepsilon}\|.
\end{align}
\vskip 0.2cm
\noindent\underline{{\it Step 2. Estimates on the pure spatial derivatives of the remainders.}}
In this step, we proceed to derive estimates for $\partial^{\alpha}f^{\varepsilon}$, $\partial^{\alpha} E_R^{\varepsilon}$, and $\partial^{\alpha} B_R^{\varepsilon}$ for $1\leq |\alpha|\leq N$.

Proceeding similar derivation as that in \eqref{H1f1 VML}, one gets
\begin{align}\label{H1f1 VMB}
&\frac{1}{2}\frac{\mathrm{d}}{\mathrm{d} t}\Big(\|\sqrt{4\pi RT}\partial^{\alpha}f^{\varepsilon}\|^2+\|[\partial^{\alpha}E_R^{\varepsilon},\partial^{\alpha}B_R^{\varepsilon}]\|^2\Big)
    +\frac{1}{\varepsilon}\big\langle \partial^{\alpha}\mathcal{L}_{\mathbf{M}}[f^{\varepsilon}],4\pi RT\partial^{\alpha}f^{\varepsilon}\big\rangle \\
    \leq&\;\big|\big\langle \partial_tT \partial^{\alpha}f^{\varepsilon}+\pa^{\alpha}\big(E_R^{\varepsilon}+v\times B_R^{\varepsilon}\big)\cdot \frac{u}{T},  4\pi R \partial^{\alpha}f^{\varepsilon}\big\rangle\big|\nonumber\\
    &+\sum_{\al'<\al}\Big|\Big\langle \partial^{\alpha'} \big(E_R^{\varepsilon}+v \times B_R^{\varepsilon} \big) \cdot \partial^{\alpha-\alpha'}_x\Big[\frac{v-u}{ T}\mathbf{M}^{\frac{1}{2}}\Big], 2\pi T \partial^{\alpha}f^{\varepsilon}\Big\rangle\Big|\nonumber\\
    &+\sum\limits_{\alpha'<\alpha}\big|\big\langle v\pa^{\al-\al'}\big(\mathbf{M}^{\frac{1}{2}}\big)\pa^{\al'}f^{\varepsilon},  \partial^{\alpha} E_R^{\varepsilon}\big\rangle\big|+\Big|\Big\langle \partial^{\alpha}\left[\big(E+v \times B \big) \cdot \nabla_v f^{\varepsilon}\right], 4\pi R T \partial^{\alpha}f^{\varepsilon}\Big\rangle\Big|\nonumber\\
    &+\Big|\Big\langle \partial^{\alpha}\Big[\Big(E+v \times B \Big)\cdot\frac{v-u }{ 2T}f^{\varepsilon}\Big],4\pi T \partial^{\alpha}f^{\varepsilon}\Big\rangle\Big|\nonumber\\
    &+    \Big|\Big\langle \partial^{\alpha}\Big[f^{\varepsilon}\mathbf{M}^{-\frac{1}{2}}\big(\partial_t+v\cdot\nabla_x\big)\mathbf{M}^{\frac{1}{2}}\Big], 4\pi RT \partial^{\alpha}f^{\varepsilon}\Big\rangle\Big|
    +\varepsilon^{k-1}\big|\big\langle\partial^{\alpha}\Gamma_{\mathbf{M}} ( f^{\varepsilon},
    f^{\varepsilon} ), 4\pi RT \partial^{\alpha}f^{\varepsilon}\big\rangle\big|\nonumber\\
    &+\sum_{n=1}^{2k-1}\varepsilon^{n-1}\big|\big\langle[\partial^{\alpha}\Gamma_{\mathbf{M}}(\mathbf{M}^{-\frac{1}{2}}F_n, f^{\varepsilon})+\partial^{\alpha}\Gamma_{\mathbf{M}}(
 f^{\varepsilon}, \mathbf{M}^{-\frac{1}{2}}F_n)\big], 4\pi RT \partial^{\alpha}f^{\varepsilon}\big\rangle\big|\nonumber\\
 &+\varepsilon^k \big|\big\langle \partial^{\alpha}\Big[\big(E_R^{\varepsilon}+v \times B_R^{\varepsilon}\big) \cdot\nabla_vf^{\varepsilon}\Big], 4\pi RT  \partial^{\alpha}f^{\varepsilon}\big\rangle\big|\nonumber\\
 &+\varepsilon^k \big|\big\langle \partial^{\alpha}\Big[\big(E_R^{\varepsilon}+v \times B_R^{\varepsilon}\big) \cdot\frac{(v-u)}{ 2T}f^{\varepsilon}\Big], 4\pi T \partial^{\alpha}f^{\varepsilon}\big\rangle\big|\nonumber\\
 &+\sum_{n=1}^{2k-1}\varepsilon^n\Big|\Big\langle \partial^{\alpha}\Big[\big(E_n+v \times B_n\big) \cdot\nabla_vf^{\varepsilon}+\Big(E_R^{\varepsilon}+v \times B_R^{\varepsilon} \Big)\cdot \mathbf{M}^{-\frac{1}{2}}\nabla_v F_n\Big], 4\pi RT \partial^{\alpha}f^{\varepsilon}\Big\rangle\Big|\nonumber\\
 &+\sum_{n=1}^{2k-1}\varepsilon^n\Big|\Big\langle \partial^{\alpha}\Big[\Big(E_n+v \times B_n \Big)\cdot\frac{(v-u)}{2 T}f^{\varepsilon}\Big], 4\pi T\partial^{\alpha}f^{\varepsilon}\Big\rangle\Big|
  +\big|\big\langle \partial^{\alpha}\CQ_0, 4\pi RT \partial^{\alpha}f^{\varepsilon}\big\rangle\big|.\nonumber
\end{align}

For the 2nd term on the L.H.S. of \eqref{H1f1 VMB}, applying \eqref{em-decay} and \eqref{coLh-Non}, we have
\begin{align*}
\frac{1}{\varepsilon}\big\langle\partial^{\alpha}\mathcal{L}_{\mathbf{M}}f^{\varepsilon}, 4\pi RT\partial^{\alpha}f^{\varepsilon}\big\rangle\geq&\frac{3\de}{4\varepsilon}\|\partial^{\alpha}({\bf I}-{\bf P}_{\mathbf{M}})f^{\varepsilon}\|_{\bf D}^2-\frac{\epsilon_1}{\varepsilon}(1+t)^{-p_0}\\
&\times\Big( \frac{1}{\varepsilon}\|({\bf I}-{\bf P}_{\mathbf{M}})f^{\varepsilon}\|_{H^{|\alpha|-1}_xL^2_{\bf D}}^2+\varepsilon\|f^{\varepsilon}\|^2_{H^{|\alpha|}_xL^2_v}\Big).\nonumber
\end{align*}
We now turn to estimate terms on the R.H.S. of \eqref{H1f1 VMB} separately.

From \eqref{em-decay}, we have the upper bound of
the first three terms on the R.H.S. of \eqref{H1f1 VMB} as
$C\epsilon_1(1+t)^{-p_0}\big(\|f^{\varepsilon}\|^2_{H^{|\alpha|}_xL^2_v}+\|E_R^{\varepsilon}\|^2_{H^{|\alpha|}}
	+\|B_R^{\varepsilon}\|^2_{H^{|\alpha|}}\big)$.

It follows from Lemma \ref{dxlm} and Lemma \ref{dxvlm VMB} for $\beta=0$ that the 4th-6th, 9th-12th terms on the R.H.S. of \eqref{H1f1 VMB} can be dominated by
\begin{align*}
&C\Big[(1+t)^{-p_0}\epsilon_1+\varepsilon^{2\kappa}\Big]\Big[\|f^{\varepsilon}\|^2_{H^{|\alpha|}_xL^2_v}+\|E_R^{\varepsilon}\|^2_{H^{|\alpha|}}+\|B_R^{\varepsilon}\|^2_{H^{|\alpha|}} \\
&+\sum_{|\al'|\leq|\al|,|\beta'|\leq 1,\atop{|\alpha'|+|\beta'|\leq |\alpha|}}\frac{1}{\varepsilon}\big\|\partial^{\alpha'}_{\beta'}({\bf I}-{\bf P}_{\mathbf{M}})[f^{\varepsilon}]\big\|_{\bf D}^2+C_{\epsilon_1}\exp\Big(-\frac{\epsilon_1}{8C_0RT^2_c\sqrt{\varepsilon}}\Big)\sum_{|\al'|\leq|\al|,|\beta'|\leq 1,\atop{|\alpha'|+|\beta'|\leq |\alpha|}}\|\partial^{\alpha'}_{\beta'}h^{\varepsilon}\|^2\Big].
\end{align*}

For the 7th term on the R.H.S. of \eqref{H1f1 VMB},  we
use \eqref{em-decay}, \eqref{GLM-Non}, \eqref{vmbf4}  and Sobolev's inequalities to have
\begin{align*}
    &\varepsilon^{k-1}\big|\big\langle \partial^{\alpha}\Gamma_{\mathbf{M}} ( f^{\varepsilon},  f^{\varepsilon} ), 4\pi RT \partial^{\alpha}f^{\varepsilon}\big\rangle\big|\\
    \lesssim &\,\varepsilon^{k-1}\|f^{\varepsilon}\|_{H^{4}_xL^2_v}\Big(\sum_{\alpha'\leq\alpha}\|\partial^{\alpha'}f^{\varepsilon}\|_{{\bf D}}\|({\bf I}-{\bf P}_{\mathbf{M}})\partial^{\alpha}f^{\varepsilon}\|_{{\bf D}}+ \epsilon_1(1+t)^{-p_0}\sum_{\alpha'<\alpha}\|\partial^{\alpha'}f^{\varepsilon}\|_{{\bf D}}\|\partial^{\alpha}f^{\varepsilon}\|_{{\bf D}}\Big)\\
    \lesssim &\,\|({\bf I}-{\bf P}_{\mathbf{M}})[f^{\varepsilon}]\|_{H^{|\alpha|}_xL^2_{\bf D}}^2+\Big[\epsilon_1(1+t)^{-p_0}\varepsilon^{\frac{1}{2}}+\varepsilon\Big]\|f^{\varepsilon}\|_{H^{|\alpha|}_xL^2_v}^2.\nonumber
\end{align*}

Similarly, for the 8th term on the R.H.S. of \eqref{H1f1 VMB}, we further use \eqref{vmb-Fn-es} to get
\begin{align*}
&\sum_{n=1}^{2k-1}\varepsilon^{n-1}\big|\big\langle[\partial^{\alpha}\Gamma_{\mathbf{M}}(\mathbf{M}^{-\frac{1}{2}}F_n, f^{\varepsilon})+\partial^{\alpha}\Gamma_{\mathbf{M}}(
 f^{\varepsilon}, \mathbf{M}^{-\frac{1}{2}}F_n)\big], 4\pi RT \partial^{\alpha}f^{\varepsilon}\big\rangle\big|\\
  \lesssim&(1+t)\big(\|{f^{\varepsilon}}\|_{H^{|\alpha|}_xL^2_v}+\|{f^{\varepsilon}}\|_{H^{|\alpha|}_xL^2_{\bf D}}\big)\Big(\|\partial^{\alpha}({\bf I}-{\bf P}_{\mathbf{M}})[f^{\varepsilon}]\|_{\bf D}+\|\Lbrack\partial^{\alpha},{{\bf P}_{\mathbf{M}}}\Rbrack[f^{\varepsilon}]\|\Big)\\
 &+\epsilon_1(1+t)^{1-p_0}\big(\|{f^{\varepsilon}}\|_{H^{|\alpha|-1}_xL^2_v}+\|{f^{\varepsilon}}\|_{H^{|\alpha|-1}_xL^2_{\bf D}}\big)\Big(\|\partial^{\alpha}({\bf I}-{\bf P}_{\mathbf{M}})[f^{\varepsilon}]\|_{\bf D}+\|f^{\varepsilon}\|_{H^{|\alpha|}L^2_v}\Big)\\
    \lesssim&\, \frac{o(1)}{\varepsilon}\|\partial^{\alpha}({\bf I}-{\bf P}_{\mathbf{M}})[f^{\varepsilon}]\|_{\bf D}^2 +\frac{\epsilon_1}{\varepsilon}(1+t)^{-p_0}\|f^{\varepsilon}\|_{H^{|\alpha|-1}_xL^2_v}^2+\varepsilon^{1-2\kappa}\|f^{\varepsilon}\|^2_{H^{|\alpha|}_xL^2_v}\nonumber.
\end{align*}

Similar to \eqref{nonlinear-f-i-0-1},
we can deduce from \eqref{vmb-Fn-es} and \eqref{vmb-EB-es} that the last term on the R.H.S. of \eqref{H1f1 VMB} can be dominated by
\begin{align*}
   & \frac{o(1)}{\varepsilon}\|\partial^{\alpha}({\bf I}-{\bf P}_{\mathbf{M}})[f^{\varepsilon}]\|_{\bf D}^2+\frac{C\epsilon_1}{\varepsilon}\|({\bf I}-{\bf P}_{\mathbf{M}})[f^{\varepsilon}]\|_{H^{|\alpha|-1}_xL^2_{\bf D}}^2\\
    &+C\Big[\frac{\epsilon_1}{\varepsilon}(1+t)^{-p_0}\|f^{\varepsilon}\|_{H^{|\alpha|-1}_xL^2_v} +\varepsilon^{2k+1}(1+t)^{4k+2}+\varepsilon^{k}(1+t)^{2k}\|\partial^{\alpha}f^{\varepsilon}\|\Big].\nonumber
\end{align*}

Plugging the above estimates into \eqref{H1f1 VMB}, and multiplying it by $\varepsilon^{|\alpha|}$ , we arrive at
\begin{align}\label{H1f VMB}
&\frac{\mathrm{d}}{\mathrm{d} t}\sum_{1\leq|\alpha|\leq 4}\Big[\varepsilon^{|\alpha|}\Big(\|\sqrt{4\pi RT}\partial^{\alpha}f^{\varepsilon}\|^2+\|\partial^{\alpha}E_R^{\varepsilon}\|^2+\|\partial^{\alpha}B_R^{\varepsilon}\|^2\Big)\Big]\nonumber\\
&
    +\delta\varepsilon^{|\alpha|-1}\sum_{1\leq|\alpha|\leq 4}\|\partial^{\alpha}({\bf I}-{\bf P}_{\mathbf{M}})[f^{\varepsilon}]\|^2_{\bf D}   \nonumber\\
     \lesssim&\sum\limits_{1\leq|\alpha|\leq 4}\vps^{|\alpha|}\Big[\epsilon_1(1+t)^{-p_0}+\varepsilon^{\kappa}\Big]
     \Big[\|f^{\varepsilon}\|^2_{H^{|\alpha|}_xL^2_v}+\|E_R^{\varepsilon}\|^2_{H^{|\alpha|}}+\|B_R^{\varepsilon}\|^2_{H^{|\alpha|}} \\
     &+\sum_{|\al'|\leq|\al|,|\beta'|\leq 1,\atop{|\alpha'|+|\beta'|\leq |\alpha|}}\Big(\frac{1}{\varepsilon}\left\|\partial^{\alpha'}_{\beta'}({\bf I}-{\bf P}_{\mathbf{M}})[f^{\varepsilon}]\right\|_{\bf D}^2+C_{\epsilon_1}\exp\Big(-\frac{\epsilon_1}{8C_0RT^2_c\sqrt{\varepsilon}}\Big)\|\partial^{\alpha'}_{\beta'}h^{\varepsilon}\|^2\Big)\Big]\nonumber\\
     &+\sum_{1\leq|\alpha|\leq 4}\epsilon_1(1+t)^{-p_0}\Big( \varepsilon^{|\alpha|-2}\|({\bf I}-{\bf P}_{\mathbf{M}})f\|_{H^{|\alpha|-1}_xL^2_{\bf D}}^2+\varepsilon^{|\alpha|}\|f\|^2_{H^{|\alpha|}_xL^2_v}\Big),\nonumber\\
    & +\varepsilon^{2k+2}(1+t)^{4k+2}+\sum\limits_{1\leq|\alpha|\leq 4}\varepsilon^{k+|\alpha|}(1+t)^{2k}\|\partial^{\alpha}f^{\varepsilon}\|\nonumber.
\end{align}
%for the non-cutoff collision kernel case. \eqref{H1f VMB} also holds for the cutoff collision kernel case with $\gamma+2s$ replaced by $\gamma$.
\vskip 0.2cm

Finally, \eqref{f-eng-vmb-sum} follows from \eqref{L2f VMB}, \eqref{H1f VMB}, this ends the proof of Proposition \ref{f-eng-vmb}.
\end{proof}

Next we will estimate the mixed space-velocity derivatives of $f^{\varepsilon}$. To do this, we first
 apply the micro-projection operator into \eqref{VMLf} to have
	\begin{align}\label{VMLf-2-micro}
		&\Big[\partial_t+v\cdot\nabla_x-\Big(E+v \times B \Big)\cdot\nabla_v\Big]({\bf I}-{\bf P}_{\mathbf{M}})[f^{\varepsilon}]\nonumber\\
		&+\Big(E+v \times B \Big)\cdot\frac{v-u }{ 2RT}({\bf I}-{\bf P}_{\mathbf{M}})f^{\varepsilon}+\frac{\mathcal{L}_{\mathbf{M}}[f^{\varepsilon}]}{\varepsilon}\nonumber\\
		=&-({\bf I}-{\bf P}_{\mathbf{M}})[f^{\varepsilon}]\mathbf{M}^{-\frac{1}{2}}\Big[\partial_t+v\cdot\nabla_x-\Big(E+v \times B \Big)\cdot\nabla_v\Big]\mathbf{M}^{\frac{1}{2}}\nonumber\\
				&+\varepsilon^{k-1}\Gamma_{\mathbf{M}}(f^{\varepsilon},f^{\varepsilon})+\sum_{i=1}^{2k-1}
		\varepsilon^{i-1}\Big[\Gamma_{\mathbf{M}}(\mathbf{M}^{-\frac{1}{2}}F_n, f^{\varepsilon})+\Gamma_{\mathbf{M}}(f^{\varepsilon}, \mathbf{M}^{-\frac{1}{2}}F_n)\Big]\\
		&+\varepsilon^k\left(E_R^{\varepsilon}+v \times B_R^{\varepsilon}\right)\cdot\nabla_v({\bf I}-{\bf P}_{\mathbf{M}})[f^{\varepsilon}]- \varepsilon^k\left(E_R^{\varepsilon}+v \times B_R^{\varepsilon}\right) \cdot\frac{v-u }{ 2RT}({\bf I}-{\bf P}_{\mathbf{M}})[f^{\varepsilon}]\nonumber\\
		&+\sum_{n=1}^{2k-1}\varepsilon^n\Big[\left(E_n+v \times B_n \right)\cdot\nabla_v({\bf I}-{\bf P}_{\mathbf{M}})[f^{\varepsilon}]+({\bf I}-{\bf P}_{\mathbf{M}})\Big[\left(E_R^{\varepsilon}+v \times B_R^{\varepsilon} \right)\cdot\frac{\nabla_v F_n}{\sqrt{\mathbf{M}}}\Big]\Big]\nonumber\\
		&-\sum_{n=1}^{2k-1}\varepsilon^n\Big[\left(E_n+v \times B_n \right)\cdot\frac{v-u}{ 2RT}({\bf I}-{\bf P}_{\mathbf{M}})[f^{\varepsilon}]\Big]+\varepsilon^{k}({\bf I}-{\bf P}_{\mathbf{M}})\CQ_0+\Lbrack {\bf P},\tau_{EB}\Rbrack f^{\varepsilon},
		\nonumber
	\end{align}
where $\Lbrack{\bf P}_\FM,\tau_{EB}\Rbrack$ is a commutative operator with the following form:
\begin{align*}
\Lbrack{\bf P}_\FM,\tau_{EB}\Rbrack=&\partial_t+v\cdot\nabla_x-\Big(E+v \times B \Big)\cdot\nabla_v+\Big(E+v \times B \Big)\cdot\frac{v-u }{ 2RT}\\
&+\mathbf{M}^{-\frac{1}{2}}\Big[\partial_t+v\cdot\nabla_x-\Big(E+v \times B \Big)\cdot\nabla_v\Big]\mathbf{M}^{\frac{1}{2}}\\
&-\varepsilon^k\left[ \left(E_R^{\varepsilon}+v \times B_R^{\varepsilon}\right)\cdot\nabla_v- \left(E_R^{\varepsilon}+v \times B_R^{\varepsilon}\right) \cdot\frac{v-u }{ 2RT}\right]\\
&-\sum_{n=1}^{2k-1}\varepsilon^n\left[\left(E_n+v \times B_n \right)\cdot\nabla_v-\left(E_n^{\varepsilon}+v \times B_n^{\varepsilon} \right)\cdot \frac{v-u }{ 2RT}\right].
\end{align*}
\begin{proposition}\label{f-eng-vml-1}
	Under the same assumptions in Lemma \ref{dxlm}, for $t\leq \varepsilon^{-\kappa}$ and $|\alpha|+|\beta|\leq4$ with $|\beta|\geq1$, we have
	\begin{align}\label{f-eng-vmlsum}
		&\frac{\mathrm{d}}{\mathrm{d} t}\sum\limits_{|\alpha|+|\beta|\leq 4,|\beta|\geq 1}\vps^{|\alpha|+|\beta|}\|\pa^\alpha _\beta ({\bf I}-{\bf P}_{\mathbf{M}})[f^{\varepsilon}]\|^2+\delta\sum\limits_{|\alpha|+|\beta|\leq 4,|\beta|\geq 1}\vps^{|\alpha|+|\beta|-1}\|\pa^\alpha _\beta({\bf I}-{\bf P}_{\mathbf{M}})[f^{\varepsilon}]\|^2_{\bf D}  \nonumber \\
		\lesssim&\sum\limits_{|\alpha|\leq 4}\vps^{|\alpha|}\Big[(1+t)^{-p_0}\epsilon_1+\varepsilon^{\kappa}\Big]\Big(\frac{1}{\vps}\|\partial^{\alpha}({\bf I}-{\bf P}_{\mathbf{M}})[f^{\varepsilon}]\|^2_{\bf D}+\|\partial^{\alpha}f^\vps\|^2+\|\partial^{\alpha}E_R^{\varepsilon}\|^2+\|\partial^{\alpha}B_R^{\varepsilon}\|^2\Big)\nonumber\\
		&+\sum\limits_{|\alpha|+|\beta|\leq4,|\beta|\geq 1}\vps^{|\alpha|+|\beta|+1}C_{\epsilon_1}\exp\Big(-\frac{\epsilon_1}{8C_0RT^2_c\sqrt{\varepsilon}}\Big)
		\Big(\sum_{|\al'|\leq|\al|,|\beta'|\leq |\beta|+1,\atop{|\alpha'|+|\beta'|\leq |\alpha|+|\beta|}}\|\partial^{\alpha'}_{\beta'}h^{\varepsilon}\|^2\Big)\\
		&+\sum\limits_{|\alpha|+|\beta|\leq 4,|\beta|\geq 1}\varepsilon^{2k+1+|\alpha|+|\beta|}(1+t)^{4k+2}.\nonumber
	\end{align}
%for the non-cutoff collision kernel case. \eqref{f-eng-vmlsum} also holds for the cutoff collision kernel case with $\gamma+2s$ replaced by $\gamma$.
\end{proposition}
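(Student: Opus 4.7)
\medskip

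\noindent\textbf{Proof proposal.} The plan is to apply $\pa^\al_\beta$ with $|\beta|\geq 1$ directly to the microscopic equation \eqref{VMLf-2-micro}, take the $L^2_{x,v}$ inner product with $\pa^\al_\beta ({\bf I}-{\bf P}_{\mathbf{M}})[f^\vps]$, and extract the dissipative term $\tfrac{\delta}{\vps}|\pa^\al_\beta ({\bf I}-{\bf P}_{\mathbf{M}})[f^\vps]|_{\bf D}^2$ via the coercivity estimate \eqref{wLLM-Non} with $\ell=0$ and $q=0$. The transport term $v\cdot\na_x \pa^\al_\beta({\bf I}-{\bf P}_{\mathbf{M}})[f^\vps]$ drops out after integration in $x$, but the commutator $[\pa_\beta, v\cdot\na_x]$ generates terms of shape $\pa^{\al+e_i}_{\beta-e_i}({\bf I}-{\bf P}_{\mathbf{M}})[f^\vps]$ which have the same total order $|\al|+|\beta|$ but strictly lower $|\beta|$. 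After the multiplication by $\vps^{|\al|+|\beta|}$, these commutator terms will be absorbed either by the dissipation of the current $|\beta|$-level (through Cauchy--Schwarz) or reduced to the $|\beta|=0$ case handled by Proposition \ref{f-eng-vmb}.

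The nonlinear collision contributions $\vps^{k-1}\pa^\al_\beta\Ga_\FM(f^\vps,f^\vps)$ and $\sum_n \vps^{n-1}\pa^\al_\beta\Ga_\FM(\FM^{-1/2}F_n,f^\vps)$ are bounded through \eqref{GLM-Non}, combined with the a priori bound \eqref{vmbf4}, Sobolev embedding $H^2_x \hookrightarrow L^\infty_x$, and the coefficient estimates \eqref{vmb-Fn-es}; the result is controlled by a small multiple of $\vps^{-1}|\pa^\al_\beta({\bf I}-{\bf P}_{\mathbf{M}})[f^\vps]|_{\bf D}^2$ plus $\vps^\kappa$ times macroscopic norms already appearing on the right-hand side of \eqref{f-eng-vmlsum}. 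The Lorentz-force terms $(E+v\times B)\cdot\na_v$, $(E+v\times B)\cdot\tfrac{v-u}{2RT}$, and their $\vps^k$ and $\sum_n\vps^n$ variants are treated by the VMB analogues of Lemmas \ref{dxlm} and \ref{dxvlm VMB}: on the low-velocity region $\langle v\rangle^{4(i+1-j)}\leq \vps^{-(i+1-j)}$ they are dominated by $\vps^{-1}\|({\bf I}-{\bf P}_\FM)[f^\vps]\|_{\bf D}^2$ plus macroscopic norms, while on the complementary high-velocity region the relation $F_R^\vps=\sqrt{\FM}f^\vps=\sqrt{\mu}h^\vps$ converts the bound into the $h^\vps$-norms multiplied by $C_{\epsilon_1}\exp(-\epsilon_1/(8C_0RT_c^2\sqrt\vps))$. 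The source term $\vps^k\pa^\al_\beta\CQ_0$ and the commutator $\Lbrack{\bf P}_\FM,\tau_{EB}\Rbrack f^\vps$ yield $\vps^{2k+1+|\al|+|\beta|}(1+t)^{4k+2}$ together with lower-order pieces absorbed by the dissipation.

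Finally, summing the resulting differential inequalities over all pairs $(\al,\beta)$ with $|\beta|\geq 1$ and $|\al|+|\beta|\leq 4$ against the weight $\vps^{|\al|+|\beta|}$, the small-constant losses $(\eta+\epsilon_1)$ from \eqref{wLLM-Non} at the same-$|\beta|$ level are absorbed by the left-hand dissipation, and the fixed-constant losses at strictly lower $|\beta|$ are absorbed either by the corresponding term at that lower level (having a larger $\vps$-power weight of $\vps^{|\al|+|\beta|-1}$, hence strictly dominating after multiplication) or, when $|\beta|-1=0$, by invoking Proposition \ref{f-eng-vmb}. The term involving $\pa^{\al'}_{\beta'}h^\vps$ for $|\beta'|\leq|\beta|+1$ in the definition of $\FZ_{4,\al,\beta}(t)$ reflects the fact that $\na_v$ acting on $f^\vps$ may produce one extra velocity derivative, which on the high-velocity tail is transferred to $h^\vps$ via the relation $\sqrt{\FM}f^\vps=\sqrt{\mu}h^\vps$ and the uniform bound $|\pa_{v_i}(\FM^{-1/2}\mu^{1/2})|\lesssim C_{\epsilon_1}\langle v\rangle \exp(-\epsilon_1|v|^2/(16C_0RT_c^2))$.

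The main obstacle I anticipate is the commutator $[\pa_\beta,v\cdot\na_x]$, because it preserves the total order $|\al|+|\beta|$ and so the natural weight $\vps^{|\al|+|\beta|}$ provides no direct smallness for absorption. The resolution is a downward induction on $|\beta|$: the top-level $|\beta|=4$ estimate is closed by the dissipation of that level alone (no commutator contribution of lower $|\beta|$ can exceed it beyond factors of $\eta+\epsilon_1$), and descending to smaller $|\beta|$ the commutator terms fall into strictly larger $\vps$-power weights, where they are controlled by the already-established higher-$|\beta|$ dissipation, ending at $|\beta|=1$ whose commutator sends us to $|\beta|=0$, already handled by Proposition \ref{f-eng-vmb}.
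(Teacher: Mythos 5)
Your overall skeleton (apply $\pa^\al_\beta$ to the microscopic equation \eqref{VMLf-2-micro}, use the coercivity of $\CL_\FM$, treat the Lorentz-force and nonlinear terms through Lemmas \ref{dxlm}, \ref{dxvlm VMB} and \eqref{GLM-Non}, then sum with the weights $\vps^{|\al|+|\beta|}$) is the same as the paper's, but there is a genuine gap in your treatment of the transport term, and it is precisely the point where the paper has to work. After integration by parts the surviving contribution is the commutator piece $\langle \pa^{\al+e_i}_{\beta-e_i}({\bf I}-{\bf P}_\FM)[f^\vps],\,\pa^\al_\beta({\bf I}-{\bf P}_\FM)[f^\vps]\rangle$, which is a plain $L^2_{x,v}$ pairing. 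You propose to absorb it "by Cauchy--Schwarz into the dissipation", but for the soft potentials considered here ($\gamma+2s<0$ is allowed) the ${\bf D}$-norm only controls $L^2_{\gamma/2+s}$-type weighted norms and degenerates at large $|v|$, so neither factor of the Cauchy--Schwarz bound is controlled by $\|\cdot\|_{\bf D}$; nor can you dump these terms into $\mathcal{E}(t)$ and use Gronwall, since they carry no smallness or time decay and the lifespan is $\vps^{-\kappa}$. The paper resolves this by writing the pairing as $\|({\bf I}-{\bf P}_\FM)[f^\vps]\|_{H^{|\al|+1}_xH^{|\beta|-1}_{\bf D}}\,\|\lag v\rag^{|\gamma+2s|/2}\pa^\al_\beta({\bf I}-{\bf P}_\FM)[f^\vps]\|$ and then splitting the velocity domain at an $\vps$-dependent threshold: on low velocities the weighted $L^2$ factor is bounded by $\frac{o(1)}{\vps}\|\pa^\al_\beta({\bf I}-{\bf P}_\FM)[f^\vps]\|^2_{\bf D}$, while on high velocities the relation $\sqrt\FM f^\vps=\sqrt\mu h^\vps$ converts it into $C_{\epsilon_1}\exp\big(-\tfrac{\epsilon_1}{8C_0RT_c^2\sqrt\vps}\big)\sum_{\al'\le\al,\beta'\le\beta}\|\pa^{\al'}_{\beta'}h^\vps\|^2$. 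You invoke exactly this low/high-velocity mechanism for the Lorentz-force terms, but not for the transport commutator; yet it is this term that generates the $h^\vps$-contributions with $\beta'\le\beta$ on the right-hand side of \eqref{f-eng-vmlsum}, so without it your argument cannot produce (or close) the stated inequality.

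Relatedly, your diagnosis of "the main obstacle" and the proposed downward induction on $|\beta|$ are off. The weight hierarchy does provide smallness across levels: the commutator couples the energy at weight $\vps^{|\al|+|\beta|}$ to the dissipation of the neighboring level $(|\al|+1,|\beta|-1)$, whose coefficient is $\vps^{|\al|+|\beta|-1}$, a net gain of one factor of $\vps$; this is why the paper simply sums all levels at once and absorbs. The true obstruction is the velocity degeneracy described above. Moreover, a downward induction starting at $|\beta|=4$ cannot work as you describe: the commutator \emph{lowers} $|\beta|$, so at the top level its contributions live at $(\al,\beta)=(1,3)$, i.e.\ at levels not yet established in a downward sweep, and the claim that the $|\beta|=4$ estimate "is closed by the dissipation of that level alone" is therefore not justified. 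Either run the induction upward from Proposition \ref{f-eng-vmb}, or, as in the paper, close all $(\al,\beta)$ with $|\beta|\ge1$ simultaneously after the velocity-splitting estimate of the transport term.
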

\begin{proof}
By applying $\pa^\al_\beta$ to \eqref{VMLf-2-micro} and multiplying the result by $\pa^\alpha_\beta ({\bf I}-{\bf P}_{\mathbf{M}})[f^{\varepsilon}]$, we can make use of \eqref{wLLM-Non} without the weight function to obtain	
	\begin{align}\label{mix-h-es}
	&\frac{1}{2}\frac{\mathrm{d}}{\mathrm{d} t}\|\pa^\alpha _\beta ({\bf I}-{\bf P}_{\mathbf{M}})[f^{\varepsilon}]\|^2+\frac{\delta}{\varepsilon}\|\pa^\alpha_\beta ({\bf I}-{\bf P}_{\mathbf{M}})[f^{\varepsilon}]\|_{\bf D}^2 \\
	\lesssim& \frac{1}{\varepsilon}\sum_{\al'\leq \alpha}\sum_{|\beta'|<|\beta|}\big|\partial^{\al'}_{\beta'} ({\bf I}-{\bf P}_{\mathbf{M}})[f^{\varepsilon}]\big|_{\bf D}^2+\big|\big\langle \pa^\alpha_{\beta}\big(v\cdot\nabla_x({\bf I}-{\bf P}_{\mathbf{M}})[f^{\varepsilon}]\big) ,\pa^\alpha_\beta ({\bf I}-{\bf P}_{\mathbf{M}})[f^{\varepsilon}]\big\rangle\big|\nonumber\\
&+\Big|\Big\langle \pa^\alpha_{\beta}\Big[\Big(E+v \times B \Big)\cdot\nabla_v({\bf I}-{\bf P}_{\mathbf{M}})[f^{\varepsilon}]\Big] ,\pa^\alpha_\beta ({\bf I}-{\bf P}_{\mathbf{M}})[f^{\varepsilon}]\Big\rangle\Big|\nonumber\\
	&+\Big|\Big\langle \pa^\alpha_{\beta}\Big[\Big(E+v \times B \Big)\cdot\frac{v-u }{ 2RT}({\bf I}-{\bf P}_{\mathbf{M}})[f^{\varepsilon}]\Big] ,\pa^\alpha_\beta ({\bf I}-{\bf P}_{\mathbf{M}})[f^{\varepsilon}]\Big\rangle\Big|\nonumber\\
&+\Big|\Big\langle \pa^\alpha_{\beta}\Big[\mathbf{M}^{-\frac{1}{2}}({\bf I}-{\bf P}_{\mathbf{M}})f^{\varepsilon}\Big[\partial_t+v\cdot\nabla_x-\Big(E+v \times B \Big)\cdot\nabla_v\Big]\mathbf{M}^{\frac{1}{2}}\Big] ,\pa^\alpha_\beta ({\bf I}-{\bf P}_{\mathbf{M}})[f^{\varepsilon}]\Big\rangle\Big|\nonumber\\
&+\varepsilon^{k-1}\Big|\Big\langle \pa^\alpha_{\beta}\Gamma_{\mathbf{M}}(f^{\varepsilon},f^{\varepsilon}) ,\pa^\alpha_\beta ({\bf I}-{\bf P}_{\mathbf{M}})[f^{\varepsilon}]\Big\rangle\Big|\nonumber\\
&+\sum_{n=1}^{2k-1}
		\varepsilon^{n-1}\Big|\Big\langle \pa^\alpha_{\beta}\Big[\Gamma_{\mathbf{M}}(\mathbf{M}^{-\frac{1}{2}}F_n, f^{\varepsilon})+\Gamma_{\mathbf{M}}(f^{\varepsilon}, \mathbf{M}^{-\frac{1}{2}}F_n)\Big] ,\pa^\alpha_\beta ({\bf I}-{\bf P}_{\mathbf{M}})[f^{\varepsilon}]\Big\rangle\Big|\nonumber\\
&+\varepsilon^k\Big|\Big\langle \pa^\alpha_{\beta}\Big[\left(E_R^{\varepsilon}+v \times B_R^{\varepsilon}\right)\cdot\nabla_v({\bf I}-{\bf P}_{\mathbf{M}})[f^{\varepsilon}]\Big] ,\pa^\alpha_\beta ({\bf I}-{\bf P}_{\mathbf{M}})[f^{\varepsilon}]\Big\rangle\Big|\nonumber\\
&+\varepsilon^k\Big|\Big\langle \pa^\alpha_{\beta}\Big[\left(E_R^{\varepsilon}+v \times B_R^{\varepsilon}\right)\cdot\frac{v-u }{ 2RT}({\bf I}-{\bf P}_{\mathbf{M}})[f^{\varepsilon}]\Big] ,\pa^\alpha_\beta ({\bf I}-{\bf P}_{\mathbf{M}})[f^{\varepsilon}]\Big\rangle\Big|\nonumber\\
&+\sum_{n=1}^{2k-1}\varepsilon^n\Big|\Big\langle \pa^\alpha_{\beta}\Big[\left(E_n+v \times B_n \right)\cdot\nabla_v({\bf I}-{\bf P}_{\mathbf{M}})[f^{\varepsilon}]\Big] ,\pa^\alpha_\beta ({\bf I}-{\bf P}_{\mathbf{M}})[f^{\varepsilon}]\Big\rangle\Big|\nonumber\\
&+\sum_{n=1}^{2k-1}\varepsilon^n\Big|\Big\langle \pa^\alpha_{\beta}\Big[({\bf I}-{\bf P}_{\mathbf{M}})\Big[\left(E_R^{\varepsilon}+v \times B_R^{\varepsilon} \right)\cdot\mathbf{M}^{-\frac{1}{2}}\nabla_v F_n\Big]\Big] ,\pa^\alpha_\beta ({\bf I}-{\bf P}_{\mathbf{M}})[f^{\varepsilon}]\Big\rangle\Big|\nonumber\\
&+\sum_{n=1}^{2k-1}\varepsilon^n\Big|\Big\langle \pa^\alpha_{\beta}\Big[\left(E_n+v \times B_n \right)\cdot\frac{v-u}{ 2RT}({\bf I}-{\bf P}_{\mathbf{M}})[f^{\varepsilon}]\Big] ,\pa^\alpha_\beta ({\bf I}-{\bf P}_{\mathbf{M}})[f^{\varepsilon}]\Big\rangle\Big|\nonumber\\
&+\Big|\Big\langle \pa^\alpha_{\beta}\Big[\varepsilon^{k}({\bf I}-{\bf P}_{\mathbf{M}})\CQ_0+\Lbrack {\bf P},\tau_{EB} \Rbrack f^{\varepsilon}\Big] ,\pa^\alpha_\beta ({\bf I}-{\bf P}_{\mathbf{M}})[f^{\varepsilon}]\Big\rangle\Big|.\nonumber
\end{align}

Now we estimate terms on the R.H.S. of \eqref{mix-h-es}.

The 2nd term on the R.H.S. of \eqref{mix-h-es} can be bounded by
\begin{align*}
&C\|({\bf I}-{\bf P}_{\mathbf{M}})[f^{\varepsilon}]\|_{H^{|\alpha|+1}_xH^{|\beta|-1}_{\bf D}}\big\|\lag v\rag^{\frac{|\gamma+2s|}{2}}\pa^\alpha_{\beta}({\bf I}-{\bf P}_{\mathbf{M}})[f^{\varepsilon}]\big\|\\
\lesssim&\|({\bf I}-{\bf P}_{\mathbf{M}})[f^{\varepsilon}]\|_{H^{|\alpha|+1}_xH^{|\beta|-1}_{\bf D}}^2\nonumber\\
&+o(1)\int_{{\mathbb R}^3}\Big(\int_{{\lag v\rag}^{2|\gamma+2s|}\leq\varepsilon^{-|\gamma+2s|}}+\int_{{\lag v\rag}^{2|\gamma+2s|}\geq\varepsilon^{-|\gamma+2s|}}\Big)\left|\pa^\alpha_\beta({\bf I}-{\bf P}_{\mathbf{M}})[f^{\varepsilon}]\right|^2\, dv dx\nonumber\\
\lesssim&\|({\bf I}-{\bf P}_{\mathbf{M}})[f^{\varepsilon}]\|_{H^{|\alpha|+1}_xH^{|\beta|-1}_{\bf D}}^2+\frac{o(1)}{\varepsilon}\|\pa^\alpha_\beta ({\bf I}-{\bf P}_{\mathbf{M}})[f^{\varepsilon}]\|_{\bf D}^2\nonumber\\
&+C_{\epsilon_1}\exp\Big(-\frac{\epsilon_1}{8C_0RT^2_c\sqrt{\varepsilon}}\Big)\sum_{\alpha'\leq\alpha,\beta'\leq \beta}\big\|\pa^{\alpha'}_{\beta'} h^{\varepsilon}\big\|^2 ,\nonumber
\end{align*}
where we used similar estimates to those in  \eqref{dxihfL} in the last inequality.

It follows from Lemma \ref{dxlm} and Lemma \ref{dxvlm VMB} that the 3rd-5th, 8th-12th terms on the R.H.S. of \eqref{mix-h-es} can be dominated by
\begin{align*}
&C\Big[\epsilon_1(1+t)^{-p_0}+\varepsilon^{2\kappa}\Big]
	\Big[\|E_R^{\varepsilon}\|^2_{H^{|\alpha|}}+\|B_R^{\varepsilon}\|^2_{H^{|\alpha|}} \nonumber\\
	&+\sum_{|\al'|\leq|\al|,|\beta'|\leq |\beta|+1,\atop{|\alpha'|+|\beta'|\leq |\alpha|+|\beta|}}\Big(\frac{1}{\varepsilon}\left\|\partial^{\alpha'}_{\beta'}({\bf I}-{\bf P}_{\mathbf{M}})[f^{\varepsilon}]\right\|_{\bf D}^2+C_{\epsilon_1}\exp\Big(-\frac{\epsilon_1}{8C_0RT^2_c\sqrt{\varepsilon}}\Big)
	\|\partial^{\alpha'}_{\beta'}h^{\varepsilon}\|^2\Big)\Big].
\end{align*}		

For the 6th term on the R.H.S. of \eqref{mix-h-es}, we can use \eqref{aps-vml} for the VMB case, \eqref{GLM-Non}, and Sobolev's inequalities to directly bound it by
\begin{align*}
   % &\varepsilon^{k-1}\big|\big\langle \pa^\alpha_{\beta}\Gamma_{\mathbf{M}} ( f^{\varepsilon},  f^{\varepsilon} ),  \pa^\alpha_{\beta}({\bf I}-{\bf P}_{\mathbf{M}})[f^{\varepsilon}]\big\rangle\big|\\
%   \lesssim &\, \frac{o(1)}{\varepsilon}\|\pa^\alpha_\beta ({\bf I}-{\bf P}_{\mathbf{M}})[f^{\varepsilon}]\|_{\bf D}^2+\varepsilon\Big(\|({\bf I}-{\bf P}_{\mathbf{M}})[f^{\varepsilon}]\|_{H^{|\alpha|}_xH^{|\beta|}_{\bf D}}^2+\|{\bf P}_{\mathbf{M}}f^{\varepsilon}\|_{H^{|\alpha|}_xH^{|\beta|}_{\bf D}}^2\Big)\nonumber\\
%    \lesssim &\,
\frac{o(1)}{\varepsilon}\|\pa^\alpha_\beta ({\bf I}-{\bf P}_{\mathbf{M}})[f^{\varepsilon}]\|_{\bf D}^2+C\varepsilon\Big(\|({\bf I}-{\bf P}_{\mathbf{M}})[f^{\varepsilon}]\|_{H^{|\alpha|}_xH^{|\beta|}_{\bf D}}^2+\|f^{\varepsilon}\|_{H^{|\alpha|}_xL^2_v}^2\Big).\nonumber
\end{align*}

Via \eqref{vmb-Fn-es}, the 7th term on the R.H.S. of \eqref{mix-h-es} can be controlled similarly by
\begin{align*}
%&\sum_{n=1}^{2k-1}\varepsilon^{n-1}\big|\big\langle[\pa^\alpha_{\beta}\Gamma_{\mathbf{M}}(\mathbf{M}^{-\frac{1}{2}}F_n, f^{\varepsilon})+\pa^\alpha_{\beta}\Gamma_{\mathbf{M}}(
% f^{\varepsilon}, \mathbf{M}^{-\frac{1}{2}}F_n)\big], \pa^\alpha_{\beta}({\bf I}-{\bf P}_{\mathbf{M}})[f^{\varepsilon}]\big\rangle\big|\\
%  \lesssim& \frac{o(1)}{\varepsilon}\|\pa^\alpha_{\beta}({\bf I}-{\bf P}_{\mathbf{M}})[f^{\varepsilon}]\|_{\bf D}^2 + \varepsilon(1+t)^2\big(\|{f^{\varepsilon}}\|^2_{H^{|\alpha|}_xH^{|\beta|}_v}+\|{f^{\varepsilon}}\|^2_{H^{|\alpha|}_xH^{|\beta|}_{\bf D}}\big)\nonumber\\
%  \lesssim&
  \frac{o(1)}{\varepsilon}\|\pa^\alpha_{\beta}({\bf I}-{\bf P}_{\mathbf{M}})[f^{\varepsilon}]\|_{\bf D}^2 + C\varepsilon^{\kappa}\Big(\|{f^{\varepsilon}}\|^2_{H^{|\alpha|}_xL^{2}_v}+\|({\bf I}-{\bf P}_{\mathbf{M}})[f^{\varepsilon}]\|^2_{H^{|\alpha|}_xH^{|\beta|}_{\bf D}}\Big) .\nonumber
\end{align*}

As in \eqref{nonlinear-f-i-0-1},
the upper bound of the last term on the R.H.S. of \eqref{mix-h-es} is
\begin{align*}
   & \frac{o(1)}{\varepsilon}\|\pa^\alpha_{\beta}({\bf I}-{\bf P}_{\mathbf{M}})[f^{\varepsilon}]\|_{\bf D}^2 +\varepsilon^{2k+1}(1+t)^{4k+2}+\varepsilon\Big(\|f^{\varepsilon}\|^2_{H^{|\alpha|+1}_xL^2_v}+\|f^{\varepsilon}\|^2_{H^{|\alpha|}_xH^{|\beta|}_v}\Big) .\nonumber
\end{align*}
Finally, plugging the above estimates into \eqref{H1f1 VMB}, we multiply the result by $\varepsilon^{|\alpha|+|\beta|}$, sum over $|\alpha|+|\beta|\leq 4, |\beta|\geq 1$, and arrive at \eqref{f-eng-vmlsum} by the smallness of $\epsilon_1$.
\end{proof}	

Combining Proposition \ref{f-eng-vml} and Proposition \ref{f-eng-vml-1}, one can deduce the following total energy estimate of $f^{\varepsilon}$ by leveraging the smallness of $\epsilon_1$ and $\varepsilon$.
\begin{proposition}\label{f-end}
	Assume that the assumptions in Lemma \ref{dxlm} hold. Then for $t\leq \varepsilon^{-\kappa}$ with $\kappa=\frac{1}{3}$, it holds that
	\begin{align}\label{f-eng-vmb-sum}
		&\frac{\mathrm{d}}{\mathrm{d} t}\Big(\sum\limits_{|\alpha|\leq 4}\vps^{|\alpha|}\|\sqrt{4\pi RT}\pa^\alpha[f^{\varepsilon}]\|^2+\sum\limits_{|\alpha|+|\beta|\leq 4,|\beta|\geq1}\vps^{|\alpha|+|\beta|}\|\pa^\alpha _\beta ({\bf I}-{\bf P}_{\mathbf{M}})[f^{\varepsilon}]\|^2\Big)\\
		&+\delta\sum\limits_{|\alpha|+|\beta|\leq 4}\vps^{|\alpha|+|\beta|-1}\|\pa^\alpha _\beta({\bf I}-{\bf P}_{\mathbf{M}})[f^{\varepsilon}]\|^2_{\bf D}\nonumber\\
				\lesssim&\Big[\epsilon_1(1+t)^{-p_0}+\varepsilon^{\kappa}\Big]\mathcal{E}(t)+\varepsilon^{2k+1}(1+t)^{4k+2}+\varepsilon^{k}(1+t)^{2k}\sqrt{\mathcal{E}(t)},\nonumber
	\end{align}
where $\mathcal{E}(t)$ is defined in Lemma \ref{dxlmh VML}.
\end{proposition}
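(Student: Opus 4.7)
The plan is to derive the desired inequality by forming a suitable linear combination of \eqref{f-eng-vmb-sum} (Proposition \ref{f-eng-vmb}) and \eqref{f-eng-vmlsum} (Proposition \ref{f-eng-vml-1}). Specifically, I would add the estimate of Proposition \ref{f-eng-vmb} to a sufficiently small positive multiple $\eta$ (independent of $\varepsilon$ and $\epsilon_1$) of the estimate of Proposition \ref{f-eng-vml-1}. After this combination, the left-hand side reproduces the time derivative of the $f^{\varepsilon}$-part of $\mathcal{E}(t)$ together with the full microscopic dissipation $\delta\sum_{|\alpha|+|\beta|\leq 4}\vps^{|\alpha|+|\beta|-1}\|\pa^\alpha_\beta({\bf I}-{\bf P}_\mathbf{M})[f^\varepsilon]\|^2_{\bf D}$, since the pure spatial piece ($|\beta|=0$) comes from Proposition \ref{f-eng-vmb} while the mixed piece ($|\beta|\geq 1$) comes from Proposition \ref{f-eng-vml-1}.

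The core step is to absorb the overlapping dissipation between the two estimates. The right-hand side of \eqref{f-eng-vmlsum} contains $\sum_{|\alpha|\leq 4}\vps^{|\alpha|-1}[(1+t)^{-p_0}\epsilon_1+\varepsilon^\kappa]\|\partial^\alpha({\bf I}-{\bf P}_\mathbf{M})[f^\varepsilon]\|^2_{\bf D}$, which matches precisely in $\vps$-power the pure spatial dissipation on the left-hand side of \eqref{f-eng-vmb-sum}; choosing both $\eta$ and $\varepsilon$ (as well as $\epsilon_1$) sufficiently small, this is absorbed. Symmetrically, the right-hand side of \eqref{f-eng-vmb-sum} contains mixed-derivative dissipation contributions of the form $\vps^{|\alpha|-1}[(1+t)^{-p_0}\epsilon_1+\varepsilon^\kappa]\|\partial^{\alpha'}_{\beta'}({\bf I}-{\bf P}_\mathbf{M})[f^\varepsilon]\|^2_{\bf D}$ (indexed by $|\alpha'|\leq|\alpha|-1$, $|\beta'|=1$), which are absorbed into the $|\beta|\geq 1$ part of the left-hand side dissipation arising from $\eta$ times \eqref{f-eng-vmlsum}.

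Next, I would treat the exponentially small $h^\varepsilon$-terms of the form $C_{\epsilon_1}\exp\bigl(-\epsilon_1/(8C_0RT_c^2\sqrt{\varepsilon})\bigr)\|\partial^{\alpha'}_{\beta'}h^\varepsilon\|^2$ that appear on both right-hand sides. For $\varepsilon$ sufficiently small, this exponentially small factor is dominated by $\vps^{|\alpha'|+|\beta'|+1+\kappa}$ (or any fixed power of $\varepsilon$), so these contributions are controlled by the $h$-contribution $\sum_{|\alpha|+|\beta|\leq 4}\vps^{|\alpha|+|\beta|+1+\kappa}\|\overline{w}\partial^\alpha_\beta h^\varepsilon\|^2$ of $\mathcal{E}(t)$, using $\overline{w}\geq 1$. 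All remaining clean energy contributions of the form $\vps^{|\alpha|}\bigl(\|\partial^\alpha f^\varepsilon\|^2+\|\partial^\alpha E_R^\varepsilon\|^2+\|\partial^\alpha B_R^\varepsilon\|^2\bigr)$ multiplied by $[(1+t)^{-p_0}\epsilon_1+\varepsilon^\kappa]$ are directly dominated by $[(1+t)^{-p_0}\epsilon_1+\varepsilon^\kappa]\mathcal{E}(t)$. Finally, the source term $\sum_{|\alpha|\leq 4}\vps^{k+|\alpha|}(1+t)^{2k}\|\partial^\alpha f^\varepsilon\|$ is rewritten as $\vps^k(1+t)^{2k}\sqrt{\mathcal{E}(t)}$ via Cauchy--Schwarz applied to the pairing $(\vps^{|\alpha|/2}\|\partial^\alpha f^\varepsilon\|)\cdot \vps^{|\alpha|/2}$.

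The main obstacle lies in the careful $\varepsilon$-power bookkeeping: one must check that every right-hand side term is paired with a left-hand side dissipation or energy quantity of matching (or lower) $\vps$-power, so that the small prefactor $[(1+t)^{-p_0}\epsilon_1+\varepsilon^\kappa]$, the $\eta$-weighting, or the exponentially small factor really suffices for absorption. In particular, the $1/\vps$ factors hidden in the right-hand side of Proposition \ref{f-eng-vml-1} must precisely match the $\vps^{|\alpha|-1}$-scaled pure spatial dissipation on the left-hand side of Proposition \ref{f-eng-vmb}, and the cross absorption must be arranged in a single step to avoid circular dependence. Once this matching is verified, the desired estimate follows by standard rearrangement.
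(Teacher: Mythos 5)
Your proposal is correct and follows essentially the same route as the paper, which obtains Proposition \ref{f-end} simply by adding the estimates of Propositions \ref{f-eng-vmb} and \ref{f-eng-vml-1}, absorbing the overlapping dissipation terms by the smallness of $\epsilon_1$ and $\varepsilon$, dominating the exponentially small $h^{\varepsilon}$-terms and the remaining energy terms by $\mathcal{E}(t)$, and bounding the source terms as you describe. One minor remark: the auxiliary small weight $\eta$ is unnecessary (and with $\eta<1$ your left-hand side would literally control only the $\eta$-weighted time derivative of the mixed-derivative norms rather than the one stated); since every cross dissipation term on the right-hand sides already carries the small prefactor $\epsilon_1(1+t)^{-p_0}+\varepsilon^{\kappa}$, the weight-one sum absorbs them directly once $\epsilon_1$ and $\varepsilon$ are small, yielding exactly the stated inequality.
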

%%%%%%%%%%%%%%%%%%%%%%%%%%%%%%%%%%%%%%%%%%%%%%%%%%%%%%%%%%%%%%%%%%%%%%%%%%%%%%%%%%
%\subsection{Weighted Energy Estimates of $h^{\varepsilon}$} \label{Sec:Energy-h}
%%%%%%%%%%%%%%%%%%%%%%%%%%%%%%%%%%%%%%%%%%%%%%%%%%%%%%%%%%%%%%%%%%%%%%%%%%%%%%%%%%

%\setcounter{equation}{0}

%In this section, we derive the $L^2$ energy estimates for the remainders $h^{\varepsilon}$.
\subsection{Estimates of $h^{\varepsilon}$}
In this subsection, we focus on the weighted energy estimates of $h^\varepsilon$, which are divided into the spatial derivatives estimates part and the space-velocity derivatives estimates part.

\subsubsection{The spatial derivatives Estimates of $h^{\varepsilon}$}

Compared with the time-dependent exponential weight function $\exp\left(\frac{\langle v \rangle^2}{8RT_c\ln(\mathrm{e}+t)}\right)$ in the VML case, the weight function available for the non-cutoff VMB case is $\overline{w}=\exp\left(\frac{\lag v\rag}{8RT_c\ln(\mathrm{e}+t)}\right)$, which results in weaker additional dissipation terms $Y(t)\|{\lag v\rag}^{\frac{1}{2}}\overline{w} \pa^{\alpha}_{\beta}h^{\varepsilon}\|^2, |\alpha|+|\beta|\leq 4$.

Our first lemma concerns terms involving the velocity growth $|v|$ in the presence of the electric field. Since the velocity growth in our additional dissipation terms $Y(t)\|\langle v \rangle^{\frac{1}{2}}\overline{w} \partial^{\alpha}_{\beta}h^{\varepsilon}\|^2$ ($|\alpha|+|\beta|\leq 4$) is also of order 1, we need to bound these terms by the additional dissipation terms only, instead of controlling them through interpolation between the additional dissipation norms and the weighted norms of $h^{\varepsilon}$ as in \eqref{EBLndxih}.

\begin{lemma}\label{dxlmh VMB}
Assuming the conditions in Lemma \ref{dxlm} are satisfied, then for $|\alpha|+|\beta|\leq4$ and $t\leq \varepsilon^{-\kappa}$, it holds that
  \begin{align*}%\label{Evh}
   &\Big|\Big\langle \pa^\alpha_{\beta}\Big[\frac{E\cdot v }{ 2RT_c}h^{\varepsilon}\Big], \overline{w}^2 \pa^\alpha_{\beta}h^{\varepsilon}\Big\rangle\Big|
   \lesssim\epsilon_1Y(t)\sum_{\alpha'\leq\alpha,\beta'\leq\beta}\|{\lag v\rag}^{\frac{1}{2}}\overline{w} \pa^{\alpha'}_{\beta'}h^{\varepsilon}\|^2,
\end{align*}
   \begin{align*}%\label{EBRdxih}
   \varepsilon^k\Big|\Big\langle \pa^\alpha_{\beta}\Big[\frac{E_R^{\varepsilon}\cdot v }{ 2RT_c}h^{\varepsilon}\Big],\overline{w}^2 \pa^\alpha_{\beta}h^{\varepsilon}\Big\rangle\Big|
    \lesssim\,\varepsilon Y(t)\sum_{\alpha'\leq\alpha,\beta'\leq\beta}\|{\lag v\rag}^{\frac{1}{2}}\overline{w} \pa^{\alpha'}_{\beta'}h^{\varepsilon}\|^2,
   \end{align*}
  and
   \begin{align}\label{EBndxih}
   &\sum_{n=1}^{2k-1}\varepsilon^n\Big|\Big\langle \pa^\alpha_{\beta}\Big[\frac{E_n\cdot v }{ 2RT_c}h^{\varepsilon}\Big], \overline{w}^2 \pa^\alpha_{\beta}h^{\varepsilon}\Big\rangle\Big|
    \lesssim\,\varepsilon^{\kappa^-}Y(t)\sum_{\alpha'\leq\alpha,\beta'\leq\beta}\|{\lag v\rag}^{\frac{1}{2}}\overline{w} \pa^{\alpha'}_{\beta'}h^{\varepsilon}\|^2.
   \end{align}
where $\kappa^-$ denote any postive number strictly smaller than $\kappa=\frac{1}{3}$.
\end{lemma}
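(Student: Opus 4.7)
The plan is to treat all three inequalities by a common scheme and then specialize. Because each of $E$, $E_R^\vps$, $E_n$ is $v$-independent and $\pa_{\beta_1}v$ vanishes identically for $|\beta_1|\geq 2$, the Leibniz rule reduces $\pa^\alpha_\beta\big(\frac{\mathfrak{E}\cdot v}{2RT_c}h^\vps\big)$ (with $\mathfrak{E}\in\{E,E_R^\vps,E_n\}$) to a finite sum
\begin{align*}
\frac{1}{2RT_c}\sum_{\alpha_1\leq\alpha}\binom{\alpha}{\alpha_1}\pa^{\alpha_1}\mathfrak{E}\cdot v\,\pa^{\alpha-\alpha_1}_\beta h^\vps+\frac{1}{2RT_c}\sum_{\alpha_1\leq\alpha,\,|\beta_1|=1}\binom{\alpha}{\alpha_1}\binom{\beta}{\beta_1}\pa^{\alpha_1}\mathfrak{E}\cdot(\pa_{\beta_1}v)\pa^{\alpha-\alpha_1}_{\beta-\beta_1}h^\vps.
\end{align*}
For the typical (first) term I pair against $\overline{w}^2\pa^\alpha_\beta h^\vps$, apply Cauchy--Schwarz in $v$ after splitting $|v|\leq\lag v\rag=\lag v\rag^{1/2}\lag v\rag^{1/2}$, and obtain
\begin{align*}
\big|\big\langle\pa^{\alpha_1}\mathfrak{E}\cdot v\,\pa^{\alpha-\alpha_1}_\beta h^\vps,\overline{w}^2\pa^\alpha_\beta h^\vps\big\rangle\big|\lesssim\|\pa^{\alpha_1}\mathfrak{E}\|_{L^\infty_x}\big\|\lag v\rag^{1/2}\overline{w}\pa^{\alpha-\alpha_1}_\beta h^\vps\big\|\big\|\lag v\rag^{1/2}\overline{w}\pa^\alpha_\beta h^\vps\big\|,
\end{align*}
whose quadratic factors are exactly the dissipative quantities on the right-hand side of the lemma; the $|\beta_1|=1$ terms are strictly easier because $\pa_{\beta_1}v$ is a constant and no weight splitting is required.

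For the first inequality, \eqref{em-decay} gives $\|\pa^{\alpha_1}E\|_{L^\infty_x}\lesssim\epsilon_1(1+t)^{-p_0}$ for $|\alpha_1|\leq 4$, which is permitted since $N_1\gg 4$. Combined with the elementary bound $(1+t)^{-p_0}\lesssim Y(t)$ for $p_0>1$ (direct from $Y(t)=[8RT_c(\mathrm{e}+t)(\ln(\mathrm{e}+t))^2]^{-1}$), Young's inequality closes the estimate. For the third inequality, the coefficient bound $\|\pa^{\alpha_1}E_n\|_{L^\infty_x}\lesssim(1+t)^n$ from \eqref{vmb-EB-es} yields $\vps^n(1+t)^n\lesssim\vps^{(1-\kappa)n}$ on $t\leq\vps^{-\kappa}$, whose dominant value at $n=1$ is $\vps^{2/3}$. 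For any $\kappa^-\in(0,\kappa)$ one has $Y(t)\gtrsim\vps^\kappa(\ln(1/\vps))^{-2}$ on this time interval, so $\vps^{1/3-\kappa^-}\lesssim(\ln(1/\vps))^{-2}$ for $\vps$ small, giving $\vps^{2/3}\lesssim\vps^{\kappa^-}Y(t)$, and Young closes the third estimate.

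The second inequality requires more care and is the main technical point. When $|\alpha_1|\leq 2$, Sobolev $H^2_x\hookrightarrow L^\infty_x$ and the \textit{a priori} assumption $\mathcal{E}(t)\lesssim\vps^{-1/2}$ yield $\|\pa^{\alpha_1}E_R^\vps\|_{L^\infty_x}\lesssim\vps^{-|\alpha_1|/2-5/4}$, hence $\vps^k\|\pa^{\alpha_1}E_R^\vps\|_{L^\infty_x}\lesssim\vps^{k-9/4}\leq\vps^{7/4}\lesssim\vps Y(t)$ on $t\leq\vps^{-\kappa}$ once $k\geq 4$ and $\vps$ is small. When $|\alpha_1|\in\{3,4\}$, the embedding into $L^\infty_x$ is no longer within the energy budget, so I redistribute: place $\pa^{\alpha_1}E_R^\vps$ in $L^2_x$ and lift $\lag v\rag^{1/2}\overline{w}\pa^{\alpha-\alpha_1}_\beta h^\vps$ from $L^2_xL^2_v$ into $L^\infty_xL^2_v$ via Sobolev, at the cost of two extra $x$-derivatives on $h^\vps$. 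Since $|\alpha-\alpha_1|+|\beta|\leq 1$ in this regime, the total order of $x$-derivatives on $h^\vps$ is at most $|\alpha-\alpha_1|+|\beta|+2\leq 3$, still within the order-$4$ space controlled by $\mathcal{E}(t)$ and $\mathcal{D}(t)$ in \eqref{eg-vmb}--\eqref{dn-vmb}, and the $\vps$-scaling closes identically. The hard part is precisely this bookkeeping: one must verify, for each admissible splitting $(\alpha_1,\alpha-\alpha_1,\beta)$, that after the Sobolev-induced derivative transfer the residual norm of $h^\vps$ is indeed one of the summands of $\mathcal{D}(t)$, and that the polynomial weight $\lag v\rag^{1/2}$ is accommodated by the extra dissipation rather than by $\overline{w}$ alone -- both facts that rely on the specific design of \eqref{eg-vmb}--\eqref{dn-vmb}.
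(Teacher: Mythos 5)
Your overall scheme --- Leibniz expansion, an $L^\infty_x$ bound on the field factor, Cauchy--Schwarz after splitting $\langle v\rangle=\langle v\rangle^{1/2}\langle v\rangle^{1/2}$, and conversion of the time factor into $Y(t)$ on $t\leq\varepsilon^{-\kappa}$ --- is exactly the paper's. For the third inequality the paper's displayed chain is $\varepsilon^n(1+t)^n\lesssim\varepsilon(1+t)\lesssim\varepsilon(1+t)^2\ln(1+t)\,Y(t)\lesssim\varepsilon^{\kappa^-}Y(t)$, which is the same computation as your endpoint argument; for the first inequality, your use of \eqref{em-decay} together with $(1+t)^{-p_0}\lesssim Y(t)$ (valid since $p_0>1$) is precisely the ``similar'' argument the paper intends (note that only $N_1\geq 3$, hence $i\leq N_1+1$ covering $i\leq 4$, is needed, not $N_1\gg 4$).

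The one place where your argument does not deliver the statement literally is the $E_R^{\varepsilon}$ estimate when three or four $x$-derivatives fall on $E_R^{\varepsilon}$. Placing $\partial^{\alpha_1}E_R^{\varepsilon}$ in $L^2_x$ and lifting the $h^{\varepsilon}$-factor to $L^\infty_xL^2_v$ costs two extra $x$-derivatives of arbitrary direction, so the resulting norms $\|\langle v\rangle^{1/2}\overline{w}\partial^{\alpha-\alpha_1+\alpha_2}_{\beta}h^{\varepsilon}\|$ with $|\alpha_2|\leq 2$ need not be of the componentwise form $\alpha'\leq\alpha$, $\beta'\leq\beta$ appearing on the stated right-hand side; they are only dominated by the sum over total order $|\alpha'|+|\beta'|\leq|\alpha|+|\beta|$. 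This is harmless for the way the lemma is used (in Propositions \ref{h-vmb-eng-prop} and \ref{h-vml-eng-prop-y} the contributions are re-indexed by total order anyway), but you should state the modified right-hand side rather than assert that the bookkeeping ``closes identically.'' Moreover, the scaling you leave unverified is a one-line check: by \eqref{aps-vml} and \eqref{eg-vmb}, $\varepsilon^k\|\partial^{\alpha_1}E_R^{\varepsilon}\|\lesssim\varepsilon^{k-|\alpha_1|/2-1/4}\leq\varepsilon^{7/4}\lesssim\varepsilon Y(t)$ on $t\leq\varepsilon^{-1/3}$ (equivalently, invoke \eqref{vmbf4}, which is how the paper handles the analogous terms in the proof of \eqref{EBRvdxih-1}), after which the redistributed terms are absorbed by the corresponding summands of \eqref{dn-vmb}. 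With that right-hand side adjustment and the scaling check made explicit, your proof is complete and consistent with the paper's.
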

\begin{proof}For brevity, let's verify \eqref{EBndxih} since the other two inequalities can be treated similarly. To do this, we use \eqref{em-decay} and \eqref{vmb-EB-es} to have
\begin{align*}
&\sum_{n=1}^{2k-1}\varepsilon^n\Big|\Big\langle \pa^\alpha_{\beta}\Big[\frac{E_n\cdot v }{ 2RT_c}h^{\varepsilon}\Big], \overline{w}^2 \pa^\alpha_{\beta}h^{\varepsilon}\Big\rangle\Big|
\lesssim\sum_{n=1}^{2k-1}\varepsilon^n(1+t)^{n}\sum_{\alpha'\leq\alpha,\beta'\leq\beta}\big\|{\lag v\rag}^{\frac{1}{2}}\overline{w} \pa^{\alpha'}_{\beta'} h^{\varepsilon}\big\|^2\\
\lesssim&\,\varepsilon(1+t)\sum_{\alpha'\leq\alpha,\beta'\leq\beta}\big\|{\lag v\rag}^{\frac{1}{2}}\overline{w} \pa^{\alpha'}_{\beta'} h^{\varepsilon}\big\|^2
\lesssim\,\varepsilon(1+t)^2\ln(1+t)Y(t)\sum_{\alpha'\leq\alpha,\beta'\leq\beta}\big\|{\lag v\rag}^{\frac{1}{2}}\overline{w} \pa^{\alpha'}_{\beta'} h^{\varepsilon}\big\|^2\\
\lesssim&\varepsilon^{\kappa^-
}Y(t)\sum_{\alpha'\leq\alpha,\beta'\leq\beta}\big\|{\lag v\rag}^{\frac{1}{2}}\overline{w} \pa^{\alpha'}_{\beta'} h^{\varepsilon}\big\|^2
\end{align*}
for $t\leq \varepsilon^{-\kappa}$ with $\kappa=\frac{1}{3}$. Here, $\kappa^-$ is defined earlier in the text.
%Here we used the fact that $\varepsilon^{1-2\kappa}\leq\vps^{\kappa}$ for $\kappa\in(0,\frac13]$.
\end{proof}

The next lemma focuses on controlling the velocity derivative terms caused by the Lorentz force. The most complicated case involves estimating terms such as $\big(v \times B \big)\cdot\nabla_vh^{\varepsilon}$, where both velocity growth and velocity derivatives are present simultaneously. Similar to Lemma \ref{dxlmh VMB}, we are able to bound these terms using the additional dissipation terms $Y(t)\|{\lag v\rag}^{\frac{1}{2}}\overline{w} \pa^{\alpha}_{\beta}h^{\varepsilon}\|^2 (|\alpha|+|\beta|\leq 4)$. These additional dissipation terms are induced by our weight function $\overline{w}$.
\begin{lemma}\label{key-VMB-2} Assume that the assumptions in Lemma \ref{dxlm} hold. Then for $|\alpha|+|\beta|\leq4$, $t\leq \varepsilon^{-\kappa}$, one has
 \begin{align}\label{EBvdxih-1}
		&\Big|\Big\langle \pa^\alpha_\beta\Big[\Big(E+v \times B \Big)\cdot\nabla_vh^{\varepsilon}\Big],  \overline{w}^2 \pa^\alpha_\beta h^{\varepsilon}\Big\rangle\Big|		\lesssim\epsilon_1 {\bf Z}_{5,\alpha,\beta}(t),
	\end{align}
	\begin{align}\label{EBRvdxih-1}
		&\varepsilon^k\Big|\Big\langle \pa^\alpha_\beta\Big[\big(E_R^{\varepsilon}+v \times B_R^{\varepsilon}\big) \cdot\nabla_vh^{\varepsilon}\Big], \overline{w}^2 \pa^\alpha_\beta h^{\varepsilon}\Big\rangle\Big|
		\lesssim\,\varepsilon {\bf Z}_{5,\alpha,\beta}(t),
	\end{align}
	and
	\begin{align}\label{EBnvdxih-1}
		&\sum_{n=1}^{2k-1}\varepsilon^n\Big|\Big\langle \pa^\alpha_\beta\big[\big(E_n+v \times B_n \big)\cdot\nabla_vh^{\varepsilon}+\big(E_R^{\varepsilon}+v \times B_R^{\varepsilon} \big)\cdot\mu^{-\frac{1}{2}}\nabla_v F_n\big],  \overline{w}^2 \pa^\alpha_\beta h^{\varepsilon}\Big\rangle\Big|\\
		\lesssim&\,\frac{o(1)}{\varepsilon}\|\overline{w}\pa^\alpha_\beta h^{\varepsilon}\|^2_{\bf D}+\varepsilon^{3-2\kappa}\Big(\|E_R^{\varepsilon}\|_{H^{|\alpha|}}^2+\|B_R^{\varepsilon}\|_{H^{|\alpha|}}^2\Big)+\varepsilon^{\kappa^-}{\bf Z}_{5,\alpha,\beta}(t),\nonumber
	\end{align}
where
$${\bf Z}_{5,\alpha,\beta}(t):=Y(t) \Big(\|{\lag v\rag}^{\frac{1}{2}}\overline{w} \pa^\alpha_\beta h^{\varepsilon}\|^2+{\bf I}_{|\alpha|+|\beta|\geq1}\sum_{|\alpha'|+|\beta'|<|\alpha|+|\beta|}\|\lag v\rag^{\frac{1}{2}}\overline{w} \pa^{\alpha'}_{\beta'}\nabla_v h^{\varepsilon}\|^2\Big).$$
  \end{lemma}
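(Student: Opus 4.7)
\textbf{Proof plan for Lemma \ref{key-VMB-2}.}
The plan is to first expand each inner product via the Leibniz rule into a \emph{principal term} (where all the $\pa^\alpha_\beta$ derivatives act on $h^\varepsilon$) plus \emph{commutator terms} (with at least one derivative falling on the Lorentz force factor). For each of \eqref{EBvdxih-1}--\eqref{EBnvdxih-1} the principal term is the source of the difficulty because it couples both the velocity growth $|v|$ coming from $v\times B$ and the velocity derivative $\nabla_v h^\varepsilon$, and neither factor can be absorbed by the degenerate dissipation norm $|\cdot|_{\bf D}$ alone. The commutator terms are routine once the EM decay \eqref{em-decay}, the coefficient bounds \eqref{vmb-Fn-es}-\eqref{vmb-EB-es} and the a priori bound \eqref{vmbf4} are invoked, as in the VML case.

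For the principal term in \eqref{EBvdxih-1}, write $g=\pa^\alpha_\beta h^\varepsilon$ and integrate by parts in $v$:
\begin{align*}
\bigl\langle (E+v\times B)\cdot\nabla_v g,\ \overline{w}^2 g\bigr\rangle
=-\tfrac12\bigl\langle\nabla_v\!\cdot\!\bigl[(E+v\times B)\overline{w}^2\bigr],\,g^2\bigr\rangle.
\end{align*}
The point is that $\nabla_v\!\cdot\!(E+v\times B)=0$, and because $\overline{w}$ depends on $v$ only through $\lag v\rag$, one has $\nabla_v\overline{w}^2\parallel v$, so $(v\times B)\cdot\nabla_v\overline{w}^2=0$. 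Only the electric part survives, giving an extra factor $\frac{1}{\ln(\mathrm{e}+t)}$ from $\nabla_v\overline{w}^2$ and $\|E\|_\infty\lesssim\epsilon_1(1+t)^{-p_0}$ from \eqref{em-decay}. Combining with $(1+t)^{-p_0}\ln^{-1}(\mathrm{e}+t)\lesssim Y(t)$ for $p_0>1$, the principal term is controlled by $\epsilon_1 Y(t)\|\lag v\rag^{1/2}\overline{w}\pa^\alpha_\beta h^\varepsilon\|^2$, which is part of ${\bf Z}_{5,\alpha,\beta}$.

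For the commutator terms I split according to $\beta_1$: if $|\beta_1|\geq 2$ the contribution vanishes; if $|\beta_1|=1$ then $\pa_{\beta_1}(v\times B)=e_i\times\pa^{\alpha_1}B$ loses the velocity growth entirely; if $|\beta_1|=0$ and $|\alpha_1|\geq 1$ the factor $v\times\pa^{\alpha_1}B$ retains an $O(|v|)$ growth but the corresponding velocity derivative $\pa^{\alpha-\alpha_1}_{\beta}\nabla_v h^\varepsilon$ is of strictly lower total order. In every commutator case Cauchy--Schwarz combined with $\|\pa^{\alpha_1}[E,B]\|_{L^\infty}\lesssim\epsilon_1(1+t)^{-p_0}$ (or Sobolev embedding in $x$ when $|\alpha_1|$ is large) gives a bound of the form
\[
\epsilon_1(1+t)^{-p_0}\Bigl(\|\lag v\rag^{1/2}\overline{w}\pa^{\alpha'}_{\beta'}\nabla_v h^\varepsilon\|^2+\|\lag v\rag^{1/2}\overline{w}\pa^\alpha_\beta h^\varepsilon\|^2\Bigr),\qquad |\alpha'|+|\beta'|<|\alpha|+|\beta|,
\]
which, after using $(1+t)^{-p_0}\lesssim Y(t)$, falls into $\epsilon_1{\bf Z}_{5,\alpha,\beta}$. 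This completes \eqref{EBvdxih-1}.

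For \eqref{EBRvdxih-1} the same splitting applies after replacing $[E,B]$ by $[E_R^\varepsilon,B_R^\varepsilon]$. In the commutator analysis, instead of \eqref{em-decay} we use the a priori bound \eqref{vmbf4}, which gives $\varepsilon^k\bigl(\|E_R^\varepsilon\|_{H^4}+\|B_R^\varepsilon\|_{H^4}\bigr)\lesssim\varepsilon$; the principal-term integration by parts still eliminates the magnetic contribution, while the surviving $E_R^\varepsilon$ contribution is estimated by $\varepsilon^k\|E_R^\varepsilon\|_{H^2}\cdot\|\lag v\rag^{1/2}\overline{w}\pa^\alpha_\beta h^\varepsilon\|^2\lesssim\varepsilon\,{\bf Z}_{5,\alpha,\beta}$. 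For \eqref{EBnvdxih-1} the same strategy works with $[E_n,B_n]$, the extra dissipation $\tfrac{o(1)}{\varepsilon}\|\overline{w}\pa^\alpha_\beta h^\varepsilon\|^2_{\bf D}$ appearing from the term $\bigl(E_R^\varepsilon+v\times B_R^\varepsilon\bigr)\cdot\mu^{-1/2}\nabla_vF_n$ whose $\mu^{-1/2}\nabla_vF_n$ is rapidly decaying in $v$ by Lemma~\ref{vmb-Fn-lem}; the time weight $\varepsilon^n(1+t)^n$ combined with $t\leq\varepsilon^{-\kappa}$ and $(1+t)^{-p_0}\lesssim Y(t)$ yields the prefactor $\varepsilon^{\kappa^-}$ in front of ${\bf Z}_{5,\alpha,\beta}$. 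The main obstacle throughout is precisely the algebraic cancellation $(v\times B)\cdot\nabla_v\overline{w}^2=0$ in Step~2; without it, the $|v|$-growth of the magnetic term would be uncontrollable since the weight $\overline{w}$, unlike the VML weight $\exp\!\bigl(\lag v\rag^2/(8RT_c\ln(\mathrm{e}+t))\bigr)$, only produces a degree-one, not a degree-two, velocity dissipation.
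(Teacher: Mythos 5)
Your proposal is correct and follows essentially the same route as the paper's proof: Leibniz expansion into a principal term plus commutators, integration by parts in $v$ for the principal term using $\nabla_v\cdot(v\times B)=0$ together with the radial structure of $\overline{w}$ (so that $(v\times B)\cdot\nabla_v\overline{w}^2=0$ and only the electric part survives), and then the bounds \eqref{em-decay}, \eqref{vmbf4}, \eqref{vmb-Fn-es} combined with $t\le\varepsilon^{-\kappa}$ to absorb everything into the $Y(t)$-weighted quantity ${\bf Z}_{5,\alpha,\beta}(t)$; the paper only writes this out for \eqref{EBRvdxih-1}, and your sketch of the two omitted estimates matches its mechanism. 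One small wording slip: for the $E_n$-terms in \eqref{EBnvdxih-1} there is no $(1+t)^{-p_0}$ decay to invoke (by \eqref{vmb-EB-es} these fields grow like $(1+t)^n$); the correct absorption is $\varepsilon^n(1+t)^n\lesssim \varepsilon(1+t)$ together with $\varepsilon(1+t)^{2}\ln(\mathrm{e}+t)\lesssim\varepsilon^{\kappa^-}$ for $t\le\varepsilon^{-\kappa}$, exactly as in Lemma \ref{dxlmh VMB}, but since you do invoke $t\le\varepsilon^{-\kappa}$ this is a presentational issue rather than a gap.
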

\begin{proof}
	For brevity, we will prove  \eqref{EBRvdxih-1} and omit the proof of \eqref{EBvdxih-1} and \eqref{EBnvdxih-1}.	%To prove \eqref{EBvdxih-1}, from \eqref{em-decay}, it follows that
%	\begin{align*}
%		&\big|\big\langle \pa^\alpha_\beta\big[\big(E+v \times B \big)\cdot\nabla_vh^{\varepsilon}\big],\overline{w}^2 \pa^\alpha_\beta h^{\varepsilon}\big\rangle\big| \\
%		\lesssim&\big|\big\langle E \cdot\nabla_v\big(\overline{w}^2\big) \pa^\alpha_\beta h^{\varepsilon},  \pa^\alpha_\beta h^{\varepsilon} \big\rangle\big|\\
%&+\sum_{\alpha'\leq\alpha, \beta'\leq \beta}{\bf I}_{|\alpha|+|\beta|\geq1}\big|\big\langle\partial^{\alpha'}_{\beta'}\big(E+ v \times B \big)\cdot\nabla_v\partial^{\alpha-\alpha'}_{\beta-\beta'} h^{\varepsilon}, \overline{w}^2\pa^\alpha_\beta h^{\varepsilon}\big\rangle\big|\\
%	\lesssim & \Big(\|E\|_{W^{|\alpha|,\infty}}+\|B\|_{W^{|\alpha|,\infty}}\Big)\Big(\|{\lag v\rag}^{\frac{1}{2}}\overline{w} \pa^\alpha_\beta h^{\varepsilon}\|^2+{\bf I}_{|\alpha|+|\beta|\geq1}\sum_{|\alpha'|+|\beta'|<|\alpha|+|\beta|}\|\lag v\rag^{\frac{1}{2}}\overline{w} \pa^{\alpha'}_{\beta'}\nabla_v h^{\varepsilon}\|^2\Big)\\
%		\lesssim& \epsilon_1Y(t) \Big(\|{\lag v\rag}^{\frac{1}{2}}\overline{w} \pa^\alpha_\beta h^{\varepsilon}\|^2+{\bf I}_{|\alpha|+|\beta|\geq1}\sum_{|\alpha'|+|\beta'|<|\alpha|+|\beta|}\|\lag v\rag^{\frac{1}{2}}\overline{w} \pa^{\alpha'}_{\beta'}\nabla_v h^{\varepsilon}\|^2\Big),
%%		\lesssim&\sum_{|\alpha'|+|\beta'|\leq |\alpha|+|\beta|,\atop |\beta'|\leq |\beta|+1}\big\|{\lag v\rag}w_{\ell-|\beta'|} \pa^{\al'}_{\beta'} h^{\varepsilon}\big\|^2.
%	\end{align*}
%where we used $(1+t)^{-p_0}\lesssim Y(t)$.

	From \eqref{vmbf4} and Sobolev's inequalities, we can obtain that for $t\leq \varepsilon^{-\kappa}$,
	\begin{align*}
		&\varepsilon^k \Big|\Big\langle \pa^\alpha_\beta\Big[\Big(E_R^{\varepsilon}+v \times B_R^{\varepsilon} \Big)\cdot\nabla_vh^{\varepsilon}\Big], \overline{w}^2 \pa^\alpha_\beta h^{\varepsilon}\Big\rangle\Big|\\
\leq&\,\varepsilon^k\sum_{\alpha'\leq\alpha, \beta'\leq \beta }\big|\big\langle\partial^{\alpha'}_{\beta'}\big(E_R^{\varepsilon}+ v \times B_R^{\varepsilon} \big)\cdot\nabla_v\partial^{\alpha-\alpha'}_{\beta-\beta'} h^{\varepsilon}, \overline{w}^2\pa^\alpha_\beta h^{\varepsilon}\big\rangle\big|\\
		\lesssim&\,\varepsilon^k\big|\big\langle E_R^{\varepsilon} \cdot\nabla_v\big(\overline{w}^2\big)  \pa^\alpha_\beta h^{\varepsilon},  \pa^\alpha_\beta h^{\varepsilon}\big\rangle\big|\\
		&+\varepsilon^k\sum_{\alpha'\leq\alpha, \beta'\leq \beta \atop{|\alpha'|+|\beta'|>0}}
{\bf 1}_{|\alpha|+|\beta|\geq1}\big|\big\langle\partial^{\alpha'}_{\beta'}\big(E_R^{\varepsilon}+ v \times B_R^{\varepsilon} \big)\cdot\nabla_v\partial^{\alpha-\alpha'}_{\beta-\beta'} h^{\varepsilon}, \overline{w}^2\pa^\alpha_\beta h^{\varepsilon}\big\rangle\big|\\
				\lesssim &\varepsilon^k\Big(\|E_R^\varepsilon\|_{H^4}+\|B_R^\varepsilon\|_{H^4}\Big){\bf Z}_{5,\alpha,\beta}(t)
						\lesssim\varepsilon {\bf Z}_{5,\alpha,\beta}(t).
	\end{align*}
Here in the third line of the above estimate, we used the integration by parts w.r.t. $v$ when $|\alpha'|+|\beta'|=0$ and the fact $\nabla_v\cdot\big(v \times B_R^{\varepsilon} \big)=0$.
\end{proof}

With the above preparations in hand,
now we turn to the weighted energy estimates on $h^\varepsilon$. The following proposition is about the estimate of the pure spatial derivatives of $h^\varepsilon$.
\begin{proposition}\label{h-vmb-eng-prop}
Assume that the assumptions in Lemma \ref{dxlm} hold. For $t\leq \varepsilon^{-\kappa}$, it holds that
\begin{align}\label{h-vmb-eng}
	&\sum_{|\alpha|\leq 4}\varepsilon^{|\alpha|+1+\kappa}\Big(\frac{\mathrm{d}}{\mathrm{d} t}\|\overline{w} \partial^{\alpha} h^{\varepsilon}\|^2+Y(t)\big\|{\lag v\rag}^{\frac{1}{2}}\overline{w} \partial^{\alpha} h^{\varepsilon}\big\|^2
	+\frac{\delta}{\varepsilon}\|\overline{w} \partial^{\alpha} h^{\varepsilon}\|^2_{\bf D}\Big) \\
	\lesssim&\;\varepsilon^{\kappa}\mathcal{E}(t)+\left(\epsilon_1+\varepsilon^{\kappa^-}\right)\mathcal{D}(t)+\varepsilon^{2k+1}(1+t)^{4k+2},\nonumber
\end{align}
where $\mathcal{E}(t)$ is defined in \eqref{eg-vmb}, and $\mathcal{D}(t)$ is given in \eqref{dn-vmb}.
\end{proposition}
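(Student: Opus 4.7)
The plan is to mimic the three-step structure used for the VML case (Proposition~\ref{h-vml-eng-prop}) but with the spatial weight $\overline{w}(t,v)=\exp\bigl(\tfrac{\langle v\rangle}{8RT_c\ln(\mathrm{e}+t)}\bigr)$ replacing $w_i$. Concretely, for each multi-index $\alpha$ with $|\alpha|\leq 4$ I would apply $\partial^{\alpha}$ to \eqref{h-equation}, then take the inner product with $\overline{w}^2\partial^{\alpha}h^{\varepsilon}$. The coercivity on the L.H.S. comes from \eqref{wLL-Non} applied to $\varepsilon^{-1}\partial^{\alpha}\mathcal{L}[h^{\varepsilon}]$, which yields the dissipation $\tfrac{\delta}{\varepsilon}\|\overline{w}\partial^{\alpha}h^{\varepsilon}\|_{\bf D}^2$ at the price of an absorbable lower-order term of the type $\sum_{|\alpha'|\leq|\alpha|}\|\partial^{\alpha'}f^{\varepsilon}\|^2$ via the relation $\sqrt{\mathbf{M}}f^{\varepsilon}=\sqrt{\mu}h^{\varepsilon}$. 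Differentiating the weight in time produces the additional dissipation $Y(t)\|\langle v\rangle^{\tfrac12}\overline{w}\partial^{\alpha}h^{\varepsilon}\|^2$, which is exactly what Lemmas~\ref{dxlmh VMB} and~\ref{key-VMB-2} are designed to absorb.

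Next I would control the forcing terms on the R.H.S. term by term, following the template of Step~1-Step~3 in the proof of Proposition~\ref{h-vml-eng-prop}. The $\mathcal{L}_d$ contribution $\varepsilon^{-1}\langle\partial^{\alpha}\mathcal{L}_d[h^{\varepsilon}],\overline{w}^2\partial^{\alpha}h^{\varepsilon}\rangle$ is handled by \eqref{wGLd-Non}, giving $\epsilon_1\varepsilon^{-1}\|\overline{w}h^{\varepsilon}\|_{H^{|\alpha|}_{\bf D}}^2$, which is absorbed by the dissipation once $\epsilon_1$ is small. The Lorentz-force terms involving $E,B$, their remainders, and the coefficients $E_n,B_n$ are precisely the contents of Lemmas~\ref{dxlmh VMB} and~\ref{key-VMB-2}: they are controlled by $\epsilon_1{\bf Z}_{5,\alpha,0}(t)$, $\varepsilon{\bf Z}_{5,\alpha,0}(t)$, and $\varepsilon^{\kappa^-}{\bf Z}_{5,\alpha,0}(t)$ respectively, all of which fit into $\mathcal{D}(t)$ with the small prefactor $\epsilon_1+\varepsilon^{\kappa^-}$. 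The inhomogeneous source $\varepsilon^k\partial^{\alpha}\mathcal{Q}_1$ is treated via Cauchy–Schwarz together with \eqref{vmb-Fn-es}, \eqref{vmb-EB-es}, producing the tail $\varepsilon^{2k+1}(1+t)^{4k+2}$.

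The main obstacle is the pair of trilinear terms $\varepsilon^{k-1}\partial^{\alpha}\Gamma(h^{\varepsilon},h^{\varepsilon})$ and $\sum_{n=1}^{2k-1}\varepsilon^{n-1}\partial^{\alpha}[\Gamma(\mu^{-\tfrac12}F_n,h^{\varepsilon})+\Gamma(h^{\varepsilon},\mu^{-\tfrac12}F_n)]$. The difficulty is that the standard non-cutoff trilinear estimate of \cite{Duan-Liu-Yang-Zhao-2013-KRM} would produce an unmanageable term of the form $[\ln(\mathrm{e}+t)]^{-2}\|\mu^{\lambda_0/128}F_1\|_{L^\infty_xL^2_v}\|\overline{w}h^{\varepsilon}\|_{1+\gamma/2}^2$, which cannot be closed because of the linear-in-$t$ growth of $F_1$. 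This is the precise motivation for the refined estimate \eqref{Gamma-noncut-1} in Lemma~\ref{non-nonlinear}: plugging it in replaces that problematic factor with $\|e^{\langle v\rangle/(8RT_c\ln(\mathrm{e}+t))}\partial^{\alpha_2}_{\beta_2}(\mu^{-1/2}F_n)\|_{L^2}$, which by Lemma~\ref{vmb-Fn-lem} is bounded by $C(1+t)^n$, and with $t\leq\varepsilon^{-\kappa}$ this produces a coefficient of order $\varepsilon^{n-1}(1+t)^n\leq\varepsilon^{n-1-n\kappa}$ in front of quantities of the form $\|\overline{w}h^{\varepsilon}\|_{L^2_{\gamma/2+s}}\|\overline{w}\partial^{\alpha}h^{\varepsilon}\|_{\bf D}$; Cauchy–Schwarz then absorbs the dissipation factor into $\delta\varepsilon^{-1}\|\overline{w}\partial^{\alpha}h^{\varepsilon}\|_{\bf D}^2$ while the remaining factor is $\varepsilon^{\kappa}\mathcal{E}(t)$ after using $k\geq 4$ and the a~priori bound \eqref{vmbf4}. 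The nonlinear term $\varepsilon^{k-1}\Gamma(h^{\varepsilon},h^{\varepsilon})$ is handled similarly via \eqref{Gamma-noncut-1}, Sobolev embedding $\|\cdot\|_{L^\infty_x}\lesssim\|\cdot\|_{H^2_x}$, and the smallness $\varepsilon^{k-1}\|\overline{w}h^{\varepsilon}\|_{H^4}\lesssim 1$ from \eqref{vmbf4}.

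Finally, I would multiply the resulting differential inequality for each $|\alpha|$ by $\varepsilon^{|\alpha|+1+\kappa}$ and sum over $|\alpha|\leq 4$. The weighting by $\varepsilon^{|\alpha|+1+\kappa}$ is calibrated so that (a) the $\varepsilon^{-1}$ singularity from $\mathcal{L}_d$ and from $\|f^{\varepsilon}\|_{H^{|\alpha|}}^2/\varepsilon$ (generated by $\sqrt{\mathbf{M}}f^{\varepsilon}=\sqrt{\mu}h^{\varepsilon}$) becomes $\varepsilon^{|\alpha|+\kappa}$, which is exactly what appears in $\mathcal{D}(t)$ from the $f^{\varepsilon}$-side, and (b) the prefactors of all lower-order terms generated by commutators and polynomial-to-exponential weight conversions become small powers of $\varepsilon$ multiplied by $\mathcal{E}(t)$ or $\mathcal{D}(t)$. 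The output is precisely \eqref{h-vmb-eng}.
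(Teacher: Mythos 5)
Your proposal is correct and follows essentially the same route as the paper: test $\partial^{\alpha}$ of \eqref{h-equation} against $\overline{w}^2\partial^{\alpha}h^{\varepsilon}$, use \eqref{wLL-Non} for coercivity (paying $\varepsilon^{-1}\|f^{\varepsilon}\|^2_{H^{|\alpha|}_xL^2_v}$ through $\sqrt{\mathbf{M}}f^{\varepsilon}=\sqrt{\mu}h^{\varepsilon}$), absorb the Lorentz-force and $\mathcal{L}_d$ terms via Lemmas \ref{dxlmh VMB}, \ref{key-VMB-2} and \eqref{wGLd-Non}, treat the trilinear terms with the refined estimate \eqref{Gamma-noncut-1} together with \eqref{vmbf4} and \eqref{vmb-Fn-es}, and finally weight by $\varepsilon^{|\alpha|+1+\kappa}$ and sum over $|\alpha|\leq 4$. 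The only cosmetic discrepancies are that the paper organizes this in two steps ($L^2$ plus higher spatial derivatives) rather than three, and the term $\varepsilon^{|\alpha|+\kappa}\|f^{\varepsilon}\|^2_{H^{|\alpha|}_xL^2_v}$ is absorbed into $\varepsilon^{\kappa}\mathcal{E}(t)$ rather than into $\mathcal{D}(t)$, neither of which affects the argument.
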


\begin{proof}
The proof is divided into two steps.
\vskip 0.2cm
\noindent{\underline{\it Step 1: Basic Weighted Energy Estimate of $h^{\varepsilon}$.}} In this step, we aim to establish the following weighted $L^2$ estimate for $h^{\varepsilon}$:
\begin{align}\label{L2h VMB}
	&\frac{\mathrm{d}}{\mathrm{d} t}\|\overline{w}h^{\varepsilon}\|^2+Y(t)\|{\lag v\rag}^{\frac{1}{2}}\overline{w}h^{\varepsilon}\|^2+\frac{\delta}{\varepsilon}\|\overline{w}h^{\varepsilon}\|^2_{\bf D}
	\nonumber\\
	\lesssim&\frac{1}{\varepsilon}\|f^{\varepsilon}\|^2+\varepsilon^{\kappa}\Big(\|\overline{w}h^{\varepsilon}\|^2+\|E_R^{\varepsilon}\|^2+\|B_R^{\varepsilon}\|^2\Big)+\varepsilon^{2k+1}(1+t)^{4k+2}.
\end{align}
To prove this, we first take the $L^2$ inner product of $\overline{w}^2 h^{\varepsilon}$ with \eqref{h-equation} and use \eqref{wLL-Non} with $|\beta|=0$ to obtain
\begin{align}\label{L2h1 VMB}
	&\frac{1}{2}\frac{\mathrm{d}}{\mathrm{d} t}\|\overline{w}h^{\varepsilon}\|^2+Y(t)\|{\lag v\rag}^{\frac{1}{2}}\overline{w}h^{\varepsilon}\|^2
	+\frac{\delta}{\varepsilon}\|\overline{w}h^{\varepsilon}\|^2_{\bf D} \\
	\leq&\;\frac{C}{\varepsilon}\|f^{\varepsilon}\|^2+\frac{1}{\varepsilon}\big|\big\langle \mathcal{L}_d[ h^{\varepsilon}], \overline{w}^2h^{\varepsilon}\big\rangle\big|+\Big|\Big\langle \big(E_R^{\varepsilon}+v \times B_R^{\varepsilon} \big) \cdot \frac{v-u}{ RT}\mu^{-\frac{1}{2}}\mathbf{M},\overline{w}^2 h^{\varepsilon}\Big\rangle\Big| \nonumber\\
	& +\Big|\Big\langle \big(E+v \times B \big) \cdot \nabla_v h^{\varepsilon}, \overline{w}^2 h^{\varepsilon}\Big\rangle\Big|  +\Big|\Big\langle \frac{E\cdot v }{ 2RT_c}h^{\varepsilon}, \overline{w}^2 h^{\varepsilon}\Big\rangle\Big|\nonumber\\
	&+\varepsilon^{k-1}\big|\big\langle\Gamma ( h^{\varepsilon},
	h^{\varepsilon} ), \overline{w}^2 h^{\varepsilon}\big\rangle\big|+\sum_{n=1}^{2k-1}\varepsilon^{n-1}\big|\big\langle[\Gamma(\mu^{-\frac{1}{2}}F_n, h^{\varepsilon})    +\Gamma( h^{\varepsilon}, \mu^{-\frac{1}{2}} F_n)], \overline{w}^2 h^{\varepsilon}\big\rangle\big|\nonumber\\
	&+\varepsilon^k \big|\big\langle \Big(E_R^{\varepsilon}+v \times B_R^{\varepsilon} \Big)\cdot\nabla_vh^{\varepsilon},\overline{w}^2 h^{\varepsilon}\big\rangle\big|+\varepsilon^k \big|\big\langle \frac{E_R^{\varepsilon} \cdot v}{ 2RT_c}h^{\varepsilon},\overline{w}^2 h^{\varepsilon}\big\rangle\big|\nonumber\\
	&+\sum_{n=1}^{2k-1}\varepsilon^n\Big|\Big\langle \big(E_n+v \times B_n \Big)\cdot\nabla_vh^{\varepsilon}+\big(E_R^{\varepsilon}+v \times B_R^{\varepsilon} \big)\cdot\frac{\nabla_v F_n}{\sqrt{\mu}},\overline{w}^2 h^{\varepsilon}\Big\rangle\Big|\nonumber\\
	&+\sum_{n=1}^{2k-1}\varepsilon^n\Big|\Big\langle \frac{E_n\cdot v}{ 2RT_c}h^{\varepsilon},\overline{w}^2 h^{\varepsilon}\Big\rangle\Big|
	+\varepsilon^k\big|\big\langle \CQ_1,\overline{w}^2 h^{\varepsilon}\big\rangle\big|.\nonumber
\end{align}

We can now proceed to estimate each term on the R.H.S. of \eqref{L2h1 VMB}.
For the 2nd term on the R.H.S. of \eqref{L2h1 VMB}, we use \eqref{wGLd-Non} to bound it by
$ C\epsilon_1\varepsilon^{-1}\|\overline{w}h^{\varepsilon}\|^2_{\bf D}$.
The upper bound of the 3rd term on the R.H.S. of \eqref{L2h1 VMB} is
$ o(1)\varepsilon^{-1}\|\overline{w}h^{\varepsilon}\|^2_{\bf D} +C\varepsilon\big(\|E_R^{\varepsilon}\|^2+\|B_R^{\varepsilon}\|^2\big)$.
From Lemma \ref{dxlmh VMB} and Lemma \ref{key-VMB-2}, we can see that the 4th, 5th, 8th-11th terms on the R.H.S. of \eqref{L2h1 VMB} can be bounded by
\begin{align*}
	&C\left(\epsilon_1+\varepsilon^{\kappa^-}\right) Y(t)\|{\lag v\rag}^{\frac{1}{2}}\overline{w} h^{\varepsilon}\|^2+\frac{o(1)}{\vps}\|\overline{w}h^{\varepsilon}\|^2_{\bf D}+C\varepsilon^{3-2\kappa}\Big(\|E_R^{\varepsilon}\|^2
	+\|B_R^{\varepsilon}\|^2\Big).\nonumber
\end{align*}

For the 6th term on the R.H.S. of \eqref{L2h1 VMB}, we can use \eqref{Gamma-noncut-1}, \eqref{vmbf4}, and Sobolev's inequalities to bound it by
\begin{align*}%\label{gah-1-Non}
	C\varepsilon^{k-1}\|{\lag v\rag}^{\frac{1}{2}}\overline{w}h^{\varepsilon}\|_{L^{\infty}_{x}L^2_v}
	\|\overline{w}h^{\varepsilon}\|^2_{{\bf D}}
	\lesssim\varepsilon^{k-1}\|h^{\varepsilon}\|_{H^2_{x}L^2_v}
	\|\overline{w}h^{\varepsilon}\|^2_{{\bf D}}
	\lesssim
	\|\overline{w}h^{\varepsilon}\|^2_{{\bf D}}.
\end{align*}

%While for the cutoff collision kernel case,
%we can similarly get from \eqref{wGG} that
%\begin{align}\label{gah-1}
%    &\varepsilon^{k-1}\big|\big\langle\Gamma ( h^{\varepsilon},
%    h^{\varepsilon} ), \overline{w}^2  h^{\varepsilon}\big\rangle\big|\nonumber\\
%    \lesssim&\varepsilon^{k-1}\int_{{\mathbb R}^3}   \sum\limits_{m\leq2}\left\{
%    \left|\nabla^m_{v}\left\{\mu^\delta h^{\varepsilon}\right\}\right|+\left|\overline{w}h^{\varepsilon}\right|\right\}
%    \left|\overline{w}h^{\varepsilon}\right|_{{\bf D}}
%    \left|\overline{w}h^{\varepsilon}\right|_{{\bf D}}dx\nonumber\\
%    \lesssim&\varepsilon^{k-1}\big(\|h^{\varepsilon}\|_{H^4_{x,v}}+\|\overline{w}h^{\varepsilon}\|_{H^2_{x,v}}\big)
%    \left\|\overline{w}h^{\varepsilon}\right\|^2_{\bf D}    \lesssim \varepsilon^{k-4}\left\|\overline{w}h^{\varepsilon}\right\|^2_{\bf D}.
%\end{align}

For the 7th term on the R.H.S. of \eqref{L2h1 VMB}, we can use \eqref{vmb-Fn-es} to bound it similarly by
\begin{align*}
    \frac{o(1)}{\varepsilon}\|\overline{w}h^{\varepsilon}\|_{\bf D}^2+C(1+t)^2\varepsilon\|\overline{w}h^{\varepsilon}\|^2
\lesssim\frac{o(1)}{\varepsilon}\|\overline{w}h^{\varepsilon}\|_{\bf D}^2+\varepsilon^{\kappa}\|\overline{w}h^{\varepsilon}\|^2.
\end{align*}

As in \eqref{nonlinear-f-i-0-1}, the last term on the R.H.S. of \eqref{L2h1 VMB} can be bounded by
\begin{align*}
\varepsilon^k\big|\big\langle \CQ_1, \overline{w}^2 h^{\varepsilon}\big\rangle\big|\lesssim \frac{\epsilon_1}{\varepsilon} \|\overline{w}h^{\varepsilon}]\|^2_{\bf D} +\varepsilon^{2k+1}(1+t)^{4k+2}.
\end{align*}

Collecting the above estimates in \eqref{L2h1 VMB} gives \eqref{L2h VMB}.

\vskip 0.2cm
\noindent\underline{{\it Step 2. High-order derivative estimates of $h^{\varepsilon}$ with weight.} }
In this step, we continue to deduce the estimate of $\|\overline{w}\pa^\alpha h^{\varepsilon}\|$ with  $1\leq |\alpha|\leq N$.

 %Applying $\partial^{\alpha} $ to \eqref{VMLh-2} yields
%\begin{align}\label{mh1x VML}
% \partial_t\partial^{\alpha}h^{\varepsilon}&+v\cdot\nabla_x\partial^{\alpha}h^{\varepsilon}+\partial^{\alpha}\Big[\frac{\big(E_R^{\varepsilon}+v \times B_R^{\varepsilon} \big) }{ RT}\cdot \big(v-u\big)\mu^{-\frac{1}{2}}\mathbf{M}\Big]\nonumber\\
%&+\partial^{\alpha}\Big[\frac{E\cdot v }{ 2RT_c}h^{\varepsilon}\Big]-\partial^{\alpha}\big[\big(E+v \times B \big)\cdot\nabla_vh^{\varepsilon}\big]+\frac{\partial^{\alpha}\mathcal{L}[h^{\varepsilon}]}{\varepsilon}\nonumber\\
% =&-\frac{\partial^{\alpha}\mathcal{L}_d[h^{\varepsilon}]}{\varepsilon}+\varepsilon^{k-1}\partial^{\alpha}\Gamma(h^{\varepsilon},h^{\varepsilon})
% +\sum_{n=1}^{2k-1}\varepsilon^{n-1}[\partial^{\alpha}\Gamma(\mu^{-\frac{1}{2}}F_n, h^{\varepsilon})+\partial^{\alpha}\Gamma(h^{\varepsilon}, \mu^{-\frac{1}{2}}F_n)]\nonumber\\
% &+\varepsilon^k \partial^{\alpha}\Big[\Big(E_R^{\varepsilon}+v \times B_R^{\varepsilon}\Big)\cdot\nabla_vh^{\varepsilon}\Big]-\varepsilon^k  \partial^{\alpha}\Big[\frac{E_R^{\varepsilon}\cdot v}{ 2RT_c}h^{\varepsilon}\Big]\\
% &+\sum_{n=1}^{2k-1}\varepsilon^n\partial^{\alpha}\Big[\Big(E_n+v \times B_n \Big)\cdot\nabla_vh^{\varepsilon}+\Big(E_R^{\varepsilon}+v \times B_R^{\varepsilon} \Big)\cdot\mu^{-\frac{1}{2}}\nabla_v F_n \Big]\nonumber\\
% &-\sum_{n=1}^{2k-1}\varepsilon^n\partial^{\alpha}\Big(\frac{E_n \cdot v}{ 2RT_c}h^{\varepsilon}\Big)+\varepsilon^{k}\partial^{\alpha}\CQ_1.
%\nonumber
%\end{align}
Next, taking the $L^2$ inner product of $\overline{w}^2 \partial^{\alpha} h^{\varepsilon}$ with \eqref{mh1x VML} in the non-cutoff VMB case and applying \eqref{wLL-Non} with $|\beta|=0$, one obtains
\begin{align}\label{H1h1 VMB}
&\frac{1}{2}\frac{\mathrm{d}}{\mathrm{d} t}\|\overline{w} \partial^{\alpha} h^{\varepsilon}\|^2+Y(t)\big\|{\lag v\rag}^{\frac{1}{2}}\overline{w} \partial^{\alpha} h^{\varepsilon}\big\|^2
    +\frac{\delta}{\varepsilon}\|\overline{w} \partial^{\alpha} h^{\varepsilon}]\|^2_{\bf D} \\
    \leq&\;\frac{C}{\varepsilon}\|f^{\varepsilon}\|^2_{H^{|\alpha|}_xL^2_v}+\frac{1}{\varepsilon}\big|\big\langle \partial^{\alpha}\mathcal{L}_d[ h^{\varepsilon}], \overline{w}^2 \partial^{\alpha}h^{\varepsilon}\big\rangle\big|\nonumber\\
    &+\Big|\Big\langle \partial^{\alpha}\Big[\big(E_R^{\varepsilon}+v \times B_R^{\varepsilon} \big) \cdot \frac{v-u}{ RT}\mu^{-\frac{1}{2}}\mathbf{M}\Big], \overline{w}^2 \partial^{\alpha} h^{\varepsilon}\Big\rangle\Big|
    +\Big|\Big\langle \partial^{\alpha}\Big[\frac{E\cdot v }{ 2RT_c}h^{\varepsilon}\Big], \overline{w}^2 \partial^{\alpha}h^{\varepsilon}\Big\rangle\Big|\nonumber\\
    &  +\big|\big\langle \partial^{\alpha}\big[\big(E+v \times B \big)\cdot\nabla_vh^{\varepsilon}\big], \overline{w}^2 \partial^{\alpha} h^{\varepsilon}\big\rangle\big|  +\varepsilon^{k-1}\big|\big\langle \partial^{\alpha}\Gamma ( h^{\varepsilon},
    h^{\varepsilon} ), \overline{w}^2 \partial^{\alpha} h^{\varepsilon}\big\rangle\big|\nonumber\\
    &+\sum_{n=1}^{2k-1}\varepsilon^{n-1}\big|\big\langle[\partial^{\alpha}\Gamma(\mu^{-\frac{1}{2}}F_n,h^{\varepsilon})
    +\partial^{\alpha}\Gamma(
 h^{\varepsilon}, \mu^{-\frac{1}{2}} F_n)], \overline{w}^2 \partial^{\alpha} h^{\varepsilon}\big\rangle\big|\nonumber\\
 &+\varepsilon^k \big|\big\langle \partial^{\alpha}\Big[\Big(E_R^{\varepsilon}+v \times B_R^{\varepsilon} \Big)\cdot\nabla_vh^{\varepsilon}\Big], \overline{w}^2 \partial^{\alpha} h^{\varepsilon}\big\rangle\big|+\varepsilon^k \Big|\Big\langle \partial^{\alpha}\Big[\frac{E_R^{\varepsilon} \cdot v}{ 2RT_c}h^{\varepsilon}\Big], \overline{w}^2 \partial^{\alpha} h^{\varepsilon}\Big\rangle\Big|\nonumber\\
 &+\sum_{n=1}^{2k-1}\varepsilon^i\Big|\Big\langle \partial^{\alpha}\Big[\big(E_n+v \times B_n \big)\cdot\nabla_vh^{\varepsilon}+\big(E_R^{\varepsilon}+v \times B_R^{\varepsilon} \big)\cdot\mu^{-\frac{1}{2}}\nabla_v F_n\Big], \overline{w}^2 \partial^{\alpha} h^{\varepsilon}\Big\rangle\Big|\nonumber\\
 &+\sum_{n=1}^{2k-1}\varepsilon^n\Big|\Big\langle \partial^{\alpha}\Big(\frac{E_n\cdot v}{ 2RT_c}h^{\varepsilon}\Big), \overline{w}^2 \partial^{\alpha} h^{\varepsilon}\Big\rangle\Big|
  +\varepsilon^k\big|\big\langle \partial^{\alpha}\CQ_1, \overline{w}^2 \partial^{\alpha} h^{\varepsilon}\big\rangle\big|.\nonumber
\end{align}
We can now estimate the R.H.S. of \eqref{H1h1 VMB} individually.

For the 2nd term on the R.H.S. of \eqref{H1h1 VMB}, from \eqref{wGLd-Non}, its upper bound is given by
$C\epsilon_1 \varepsilon^{-1}\big(\|\overline{w} \partial^{\alpha}h^{\varepsilon}\|^2_{\bf D}+\sum_{\alpha'<\alpha}\|\overline{w} \partial^{\alpha'}h^{\varepsilon}\|^2_{\bf D}\big).$
The 3rd term on the R.H.S. of \eqref{H1h1 VMB} can be dominated by
\begin{align*}
 \frac{o(1)}{\varepsilon}\|\overline{w} \partial^{\alpha}h^{\varepsilon}\|^2_{\bf D}+C\varepsilon\Big(\|E_R^{\varepsilon}\|_{H^{|\alpha|}}^2+\|B_R^{\varepsilon}\|_{H^{|\alpha|}}^2\Big).
\end{align*}

By applying Lemma \ref{dxlmh VMB} and Lemma \ref{key-VMB-2}, we can bound the 4th, 5th, 8th-11th terms on the R.H.S. of \eqref{H1h1 VMB} by
\begin{align*}
&C\left(\epsilon_1+\varepsilon^{\kappa^-}\right)Y(t)\sum_{|\alpha'|+|\beta'|\leq |\alpha|,\atop |\beta'|\leq1}\|\lag v\rag^{\frac{1}{2}} \overline{w} \partial^{\alpha'}_{\beta'}h^{\varepsilon}\|^2
+\frac{o(1)}{\vps}\|\overline{w} \partial^{\alpha}h^{\varepsilon}\|^2_{\bf D}\nonumber\\
&+C\varepsilon^{3-2\kappa}\Big(\|E_R^{\varepsilon}\|_{H^{|\alpha|}}^2
+\|B_R^{\varepsilon}\|_{H^{|\alpha|}}^2\Big).
\end{align*}

For the 6th term on the R.H.S. of \eqref{H1h1 VMB}, we use \eqref{Gamma-noncut-1}, \eqref{vmbf4}, and Sobolev's inequalities to obtain
\begin{align}\label{gamh-1-Non}
    &\varepsilon^{k-1}\big|\big\langle\partial^{\alpha}\Gamma ( h^{\varepsilon},
    h^{\varepsilon} ), \overline{w}^2  \partial^{\alpha}h^{\varepsilon}\big\rangle\big|\nonumber\\
    \lesssim&\varepsilon^{k-1}\sum_{\alpha_1+\alpha_2=\alpha}{\bf I}_{|\alpha_1|\leq 2}\left\|\overline{w}\pa^{\alpha_1}h^{\varepsilon}\right\|_{L^{\infty}_{x}L^2_v}\left\|\overline{w}\pa^{\alpha_2}h^{\varepsilon}\right\|_{{\bf D}}\left\|\overline{w}\partial^{\alpha}h^{\varepsilon}\right\|_{{\bf D}}\nonumber\\
    &+\varepsilon^{k-1}\sum_{\alpha_1+\alpha_2=\alpha}{\bf I}_{|\alpha_1|> 2}\left\|\overline{w}\pa^{\alpha_1}h^{\varepsilon}\right\|
    \left\|\overline{w}\pa^{\alpha_2}h^{\varepsilon}\right\|_{L^{\infty}_{x}L^2_{\bf D}}\left\|\overline{w}\partial^{\alpha}h^{\varepsilon}\right\|_{{\bf D}}\\
    \lesssim&\varepsilon^{k-1}\left\|\overline{w}h^{\varepsilon}\right\|_{H^4}\sum_{|\alpha'|\leq|\alpha|}
    \left\|\overline{w}\partial^{\alpha'}h^{\varepsilon}\right\|^2_{{\bf D}}    \lesssim \sum_{|\alpha'|\leq|\alpha|}
    \big\|\overline{w}\partial^{\alpha'}h^{\varepsilon}\big\|^2_{{\bf D}}.\nonumber
\end{align}

For the 7th term on the R.H.S. of \eqref{H1h1 VMB},
we use \eqref{em-Fn-es} to similarly obtain
	\begin{align*}
	&\sum_{n=1}^{2k-1}\varepsilon^{n-1}\big|\big\langle[\partial^{\alpha}\Gamma(\mu^{-\frac{1}{2}}F_n,h^{\varepsilon})+\partial^{\alpha}\Gamma(
	h^{\varepsilon}, \mu^{-\frac{1}{2}} F_n)], \overline{w}^2\partial^{\alpha} h^{\varepsilon}\big\rangle\big|\\
	\lesssim&\,\sum_{n=1}^{2k-1}[\varepsilon(1+t)]^{n-1}(1+t)\sum_{\alpha'\leq\alpha}\Big(
\big\|\overline{w}\partial^{\alpha'}h^{\varepsilon}\big\|_{{\bf D}}+
\big\|\overline{w}\partial^{\alpha'}h^{\varepsilon}\big\|\Big)\big\|\overline{w}\pa^{\alpha}h^{\varepsilon}\big\|_{{\bf D}}\\
	\lesssim&\,\frac{o(1)}{\varepsilon} \|\overline{w}\pa^{\alpha}h^{\varepsilon}\|^2_{{\bf D}} + \varepsilon^{\kappa}\sum_{\alpha'\leq\alpha}\Big(
\big\|\overline{w}\partial^{\alpha'}h^{\varepsilon}\big\|_{{\bf D}}^2+
\big\|\overline{w}\partial^{\alpha'}h^{\varepsilon}\big\|^2\Big),
\end{align*}
for $t\leq \varepsilon^{-\kappa}$ with $\kappa=\frac{1}{3}$. By a similar way as that in \eqref{nonlinear-f-i-0-1}, one gets
\begin{align*}
\big|\big\langle \pa^{\alpha}\CQ_1, \overline{w}^2\pa^{\alpha} h^{\varepsilon}\big\rangle\big|\lesssim \frac{\epsilon_1}{\varepsilon} \|\overline{w}\pa^{\alpha}h^{\varepsilon}\|^2_{{\bf D}} +\varepsilon^{2k+1}(1+t)^{4k+2}.
\end{align*}

Putting the above estimates into \eqref{H1h1 VMB} with $\vps$ sufficiently small, we arrive at
\begin{align}\label{H1h VMB-al-w}
	&\frac{\mathrm{d}}{\mathrm{d} t}\|\overline{w} \partial^{\alpha} h^{\varepsilon}\|^2+Y(t)\big\|{\lag v\rag}^{\frac{1}{2}}\overline{w} \partial^{\alpha} h^{\varepsilon}\big\|^2
	+\frac{\delta}{\varepsilon}\|\overline{w} \partial^{\alpha} h^{\varepsilon}]\|^2_{{\bf D}} \\
	\lesssim&\;\frac{1}{\varepsilon}\|f^{\varepsilon}\|^2_{H^{|\alpha|}_xL^2_v}+\varepsilon^{\kappa}\Big(\sum_{\alpha'\leq\alpha}\big\|\overline{w}\partial^{\alpha'}h^{\varepsilon}\big\|^2+\|E_R^{\varepsilon}\|_{H^{|\alpha|}}^2+\|B_R^{\varepsilon}\|_{H^{|\alpha|}}^2\Big)\nonumber\\
	&+\frac{\epsilon_1 }{\varepsilon}\sum_{\alpha'<\alpha}\|\overline{w} \partial^{\alpha'}h^{\varepsilon}\|^2_{\bf D}+\epsilon_1Y(t)\sum_{|\alpha'|+|\beta'|\leq |\alpha|,\atop |\beta'|\leq1}\|\lag v\rag^{\frac{1}{2}} \overline{w} \partial^{\alpha'}_{\beta'}h^{\varepsilon}\|^2+\varepsilon^{2k+1}(1+t)^{4k+2}.\nonumber
\end{align}

Multiplying  \eqref{H1h VMB-al-w} by $\varepsilon^{|\alpha|+1+\kappa}$, taking summation over $1\leq |\alpha|\leq 4$, and combing with $\varepsilon^{1+\kappa}\times \eqref{L2h1 VMB}$ give \eqref{h-vmb-eng}.
%\begin{align}\label{H1h VML-al-w-end}
%	
%\end{align}
 This ends the proof of Proposition \ref{h-vmb-eng-prop}.
\end{proof}
%\begin{remark}
%	We need to note that
%	\[\sum_{|\alpha'|<N}\|\lag v\rag w_{\ell-1} \partial^{\alpha'}\nabla_vh^{\varepsilon}\|^2+\sum_{|\alpha'|\leq2}\|\lag v\rag w_{\ell-1} \partial^{\alpha'}\nabla_vh^{\varepsilon}\|^2\]
%	on the R.H.S. of \eqref{h-vml-eng} can not be controlled by the
%	corresponding dissipation terms on the L.H.S. of \eqref{h-vml-eng}.
%	The next subsection is
%	\end{remark}
\subsubsection{The space-velocity derivatives Estimates of $h^{\varepsilon}$}
In this part, we are devoted to the space-velocity derivatives estimates of $h^{\varepsilon}$, with weight. As a preparation, we first estimate the delicate transport term. Thanks to the additional dissipation induced by our weight functions $\overline{w}$ and the coercivity of the linearized Boltzmann collision operator, we can control this term  via interpolation inequalities.
\begin{lemma}\label{transp}
	For $|\beta|\geq1$, $t\leq \eps^{-\kappa}$ with $\kappa=\frac13$,  it holds that
	\begin{eqnarray}\label{transport-h}
		&&|\lag\pa^\alpha_\beta (v\cdot \nabla_x h^\eps),\overline{w}^2\pa^\alpha_\beta h^\varepsilon\rag|\nonumber\\
		&\lesssim&\varepsilon^{\frac{1}{5}} \Big(\frac{1}{\varepsilon}\big\|\overline{w}\pa^\alpha_\beta h^\varepsilon\big\|_{\bf D}^2+Y(t)\big\|\lag v\rag^{\frac{1}{2}}\overline{w}\pa^\alpha_\beta h^\varepsilon\big\|^2\\
&&+\frac{1}{\varepsilon}\big\|\overline{w} \nabla_x^{|\alpha|+1}\nabla^{|\beta|-1}_vh^\varepsilon\big\|^2_{\bf D}+Y(t)\big\|\lag v\rag^{\frac{1}{2}}\overline{w} \nabla_x^{|\alpha|+1}\nabla^{|\beta|-1}_vh^\varepsilon\big\|^2\Big).\nonumber
	\end{eqnarray}
\end{lemma}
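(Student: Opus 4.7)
The plan is to first apply Leibniz's rule to decompose $\partial^{\alpha}_{\beta}(v\cdot\nabla_x h^{\varepsilon})$. Because $\partial_{\beta'}v_j$ vanishes unless $|\beta'|\leq 1$, one obtains
\begin{equation*}
\partial^{\alpha}_{\beta}(v\cdot\nabla_x h^{\varepsilon})
= v\cdot\nabla_x\partial^{\alpha}_{\beta} h^{\varepsilon}
+\sum_{j:\,\beta_j\geq 1}\beta_j\,\partial_{x_j}\partial^{\alpha}_{\beta-e_j}h^{\varepsilon},
\end{equation*}
where $e_j$ denotes the $j$-th unit multi-index in the velocity variables. When tested against $\overline{w}^2\partial^{\alpha}_{\beta} h^{\varepsilon}$, the principal term contributes zero: since $\overline{w}=\overline{w}(t,v)$ is independent of $x$, integration by parts in $x_j$ gives
\begin{equation*}
\big\langle v\cdot\nabla_x\partial^{\alpha}_{\beta} h^{\varepsilon},\,\overline{w}^2\partial^{\alpha}_{\beta} h^{\varepsilon}\big\rangle
=\tfrac{1}{2}\int v\cdot\nabla_x\bigl(\overline{w}^2|\partial^{\alpha}_{\beta} h^{\varepsilon}|^2\bigr)\,dv\,dx=0.
\end{equation*}
The problem thus reduces to estimating the commutator terms, which is where the hypothesis $|\beta|\geq 1$ becomes essential.

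Next, Cauchy--Schwarz combined with AM--GM yields
\begin{equation*}
\Big|\big\langle\partial_{x_j}\partial^{\alpha}_{\beta-e_j}h^{\varepsilon},\,\overline{w}^2\partial^{\alpha}_{\beta} h^{\varepsilon}\big\rangle\Big|
\lesssim \|\overline{w}\,\nabla_x^{|\alpha|+1}\nabla_v^{|\beta|-1}h^{\varepsilon}\|^2
+\|\overline{w}\,\partial^{\alpha}_{\beta} h^{\varepsilon}\|^2,
\end{equation*}
so the task reduces to controlling each $\|\overline{w}\,g\|^2$ by
\begin{equation*}
\varepsilon^{-4/5}\|\overline{w}\,g\|^2_{{\bf D}} + \varepsilon^{1/5}Y(t)\,\|\lag v\rag^{1/2}\overline{w}\,g\|^2.
\end{equation*}
I would establish this by splitting the velocity integration at $|v|=R:=\varepsilon^{-a}$ with a parameter $a>0$ to be tuned. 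Setting $c_0:=\max\{0,-\gamma-2s\}$, the coercivity $|g|^2_{{\bf D}}\gtrsim |\lag v\rag^{\gamma/2+s}g|_{L^2}^2$ (standard for the Gressman--Strain-type norm introduced in the excerpt, cf.\ Lemma \ref{non-nonlinear}) yields
\begin{equation*}
\|\overline{w}\,g\,\chi_{\{|v|\leq R\}}\|^2\lesssim R^{c_0}\|\overline{w}\,g\|^2_{{\bf D}},
\qquad
\|\overline{w}\,g\,\chi_{\{|v|>R\}}\|^2 \leq R^{-1}\|\lag v\rag^{1/2}\overline{w}\,g\|^2.
\end{equation*}

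Finally, I would calibrate $a$: the $\bf D$-coefficient condition $R^{c_0}\leq \varepsilon^{-4/5}$ requires $a\,c_0\leq 4/5$, while the weighted-norm condition $R^{-1}\leq \varepsilon^{1/5}Y(t)$ combined with the lower bound $Y(t)\gtrsim \varepsilon^{1/3}/(\ln\varepsilon^{-1})^2$ on $[0,\varepsilon^{-1/3}]$ forces $a\geq 8/15$ (modulo a small logarithmic correction absorbed by taking $a=\tfrac{8}{15}+\delta$ for some $\delta>0$). The admissible window $a\in(8/15,\,4/(5c_0))$ is nonempty precisely when $c_0<3/2$, which is ensured by the theorem's hypothesis $\gamma>-\tfrac{3}{2}-2s$ (giving $\gamma+2s>-3/2$, whence $c_0<3/2$); the degenerate case $\gamma+2s\geq 0$ makes $c_0=0$ and the low-velocity bound becomes immediate. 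The main obstacle is precisely this delicate calibration of the splitting parameter $a$ against the two $\varepsilon$-powers and the minimum of $Y(t)$ over the Hilbert-expansion lifespan $[0,\varepsilon^{-1/3}]$; the fact that the feasibility range for $a$ is controlled by the very same threshold $-\tfrac{3}{2}-2s$ appearing in the main non-cutoff VMB theorem is not coincidental and is what makes the argument close.
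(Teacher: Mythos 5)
Your proposal is correct and follows essentially the same route as the paper: after observing that only the commutator terms $\partial_{x_j}\partial^{\alpha}_{\beta-e_j}h^{\varepsilon}$ survive (the paper passes directly to the bound $\|\overline{w}\nabla_x^{|\alpha|+1}\nabla_v^{|\beta|-1}h^{\varepsilon}\|\,\|\overline{w}\partial^{\alpha}_{\beta}h^{\varepsilon}\|$, which tacitly uses the same integration by parts in $x$), the $L^2$ norms are traded against the $\mathbf{D}$-coercivity weight $\langle v\rangle^{(\gamma+2s)/2}$ and the extra $\langle v\rangle^{1/2}$ dissipation, using $|\gamma+2s|<3/2$ and $Y(t)\gtrsim\varepsilon^{1/3}$ (up to logarithms) on $t\leq\varepsilon^{-1/3}$. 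The only difference is implementational: the paper uses the weight-interpolation inequality $\|\overline{w}g\|\lesssim\|\overline{w}g\|_{\mathbf{D}}^{\frac{1}{1+|\gamma+2s|}}\|\langle v\rangle^{1/2}\overline{w}g\|^{\frac{|\gamma+2s|}{1+|\gamma+2s|}}$ plus Young's inequality, producing the prefactor $\varepsilon^{\frac{1}{1+|\gamma+2s|}}Y(t)^{-\frac{|\gamma+2s|}{1+|\gamma+2s|}}\lesssim\varepsilon^{1/5}$, whereas your velocity split at $|v|=\varepsilon^{-a}$ with $a\in(8/15,\,4/(5c_0))$ proves the same inequality by hand, with identical numerology and the same role for the threshold $\gamma>-\tfrac{3}{2}-2s$.
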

\begin{proof} It is straightforward to see that
\begin{align*}%\label{trns}
|\lag\pa^\alpha_\beta (v\cdot \nabla_x h^\eps),\overline{w}^2\pa^\alpha_\beta h^\varepsilon\rag|\lesssim \big\|\overline{w} \nabla_x^{|\alpha|+1}\nabla^{|\beta|-1}_vh^\varepsilon\big\| \left\|\overline{w}\pa^\alpha_\beta h^\varepsilon\right\|.
\end{align*}
We further bound this term  via interpolation inequalities as follows:
	\begin{eqnarray*}
		&& |\lag\pa^\alpha_\beta (v\cdot \nabla_x h^\eps),\overline{w}^2\pa^\alpha_\beta h^\varepsilon\rag|\nonumber\\
		&\lesssim& \Big(\big\|\overline{w} \nabla_x^{|\alpha|+1}\nabla^{|\beta|-1}_vh^\varepsilon\big\|_{\bf D} \Big)^{\frac{1}{1+|\gamma+2s|}}
\Big(\big\|\lag v\rag^{\frac12}\overline{w} \nabla_x^{|\alpha|+1}\nabla^{|\beta|-1}_vh^\varepsilon\big\|\Big)^{\frac{|\gamma+2s|}{1+|\gamma+2s|}}\nonumber\\
		&&\times\Big(\big\|\overline{w}\pa^\alpha_\beta h^\varepsilon\big\|_{\bf D}\Big)^{\frac{1}{1+|\gamma+2s|}}
\Big(\big\|\lag v\rag^{\frac12}\overline{w}\pa^\alpha_\beta h^\varepsilon\big\|\Big)^{\frac{|\gamma+2s|}{1+|\gamma+2s|}}\nonumber\\
		&\lesssim& \varepsilon^{\frac{1}{1+|\gamma+2s|}}Y(t)^{\frac{-|\gamma+2s|}{1+|\gamma+2s|}} \Big(\frac{1}{\varepsilon}\big\|\overline{w}\pa^\alpha_\beta h^\varepsilon\big\|_{\bf D}^2+Y(t)\big\|\lag v\rag^{\frac12}\overline{w}\pa^\alpha_\beta h^\varepsilon\big\|^2\nonumber\\
&&+\frac{1}{\varepsilon}\big\|\overline{w} \nabla_x^{|\alpha|+1}\nabla^{|\beta|-1}_vh^\varepsilon\big\|^2_{\bf D}+Y(t)\big\|\lag v\rag^{\frac12}\overline{w} \nabla_x^{|\alpha|+1}\nabla^{|\beta|-1}_vh^\varepsilon\big\|^2\Big).
	\end{eqnarray*}
Noting that for $t\leq \eps^{-\kappa}$ with $\kappa=\frac{1}{3}$ and $|\gamma+2s|<\frac{3}{2}$,
\begin{align*}
\varepsilon^{\frac{1}{1+|\gamma+2s|}}Y(t)^{\frac{-|\gamma+2s|}{1+|\gamma+2s|}} \lesssim \varepsilon^{\frac{1}{1+|\gamma+2s|}}\varepsilon^{\frac{-|\gamma+2s|\kappa^-}{1+|\gamma+2s|}}\lesssim \varepsilon^{\frac{1}{1+3/2}-\frac{3/2}{3(1+3/2)}}\lesssim \varepsilon^{\frac{1}{5}},
\end{align*}
we further obtain \eqref{transport-h}.

\end{proof}

Now we are ready to give the energy estimates on $\overline{w}^2\pa^\alpha_\beta h^\varepsilon$ with $|\beta|\geq 1$ in the following:

\begin{proposition}\label{h-vml-eng-prop-y}
	Assume that the assumptions in Lemma \ref{dxlm} hold. For $t\leq \varepsilon^{-\kappa}$, it holds that
	\begin{align}\label{h-vml-eng-y}
	\sum_{|\alpha|+|\beta|\leq 4, |\beta|\geq 1}\varepsilon^{|\alpha|+|\beta|+1+\kappa}&\Big(\frac{\mathrm{d}}{\mathrm{d} t}\|\overline{w} \partial^\alpha_\beta h^{\varepsilon}\|^2+Y(t)\big\|{\lag v\rag}^{\frac{1}{2}}\overline{w} \partial^\alpha_\beta h^{\varepsilon}\big\|^2+\frac{\delta}{\varepsilon}\|\overline{w} \partial^\alpha_\beta   h^{\varepsilon}]\|^2_{\bf D}\Big) \nonumber\\
		\lesssim&\varepsilon^{\kappa}\mathcal{E}(t)+\epsilon_1\mathcal{D}(t)+\varepsilon^{2k+1}(1+t)^{4k+2}.
	\end{align}
\end{proposition}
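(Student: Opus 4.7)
The plan is to proceed in parallel with the proof of Proposition \ref{h-vmb-eng-prop}, but now applying the mixed derivative $\partial^{\alpha}_{\beta}$ with $|\beta|\geq 1$ to equation \eqref{h-equation} and taking the $L^2$ inner product against $\overline{w}^2\partial^{\alpha}_{\beta}h^{\varepsilon}$. Using the coercivity bound \eqref{wLL-Non}, the left-hand side produces, up to the standard $\eta$-absorbable tail $\eta\sum_{|\beta'|\leq|\beta|}|\overline{w}\partial^{\alpha}_{\beta'}h^{\varepsilon}|^2_{\bf D}$ and a low-velocity remainder controlled by $\|f^{\varepsilon}\|^2_{H^{|\alpha|}}$, the three target quantities $\tfrac{d}{dt}\|\overline{w}\partial^{\alpha}_{\beta}h^{\varepsilon}\|^2$, $Y(t)\|\langle v\rangle^{1/2}\overline{w}\partial^{\alpha}_{\beta}h^{\varepsilon}\|^2$, and $\tfrac{\delta}{\varepsilon}\|\overline{w}\partial^{\alpha}_{\beta}h^{\varepsilon}\|^2_{\bf D}$. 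The $\eta$ term, as well as the low-derivative $|\beta'|<|\beta|$ contributions produced throughout the argument, will be closed by performing a downward induction on $|\beta|$ and then multiplying by $\varepsilon^{|\alpha|+|\beta|+1+\kappa}$ and summing.

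Next I would estimate the right-hand side term by term. The transport piece $\partial^{\alpha}_{\beta}(v\cdot\nabla_xh^{\varepsilon})$ is precisely the object handled by Lemma \ref{transp}: it produces an $\varepsilon^{1/5}$-small multiple of the dissipation at order $(|\alpha|+1,|\beta|-1)$, which after the prefactor $\varepsilon^{|\alpha|+|\beta|+1+\kappa}$ lands exactly inside the dissipation norm $\mathcal{D}(t)$ with coefficient $\varepsilon^{1/5}\ll 1$ and is therefore absorbable. The Lorentz-force contributions $(E+v\times B)\cdot\nabla_vh^{\varepsilon}$, $\varepsilon^{k}(E_R^{\varepsilon}+v\times B_R^{\varepsilon})\cdot\nabla_vh^{\varepsilon}$, and the $F_n$-coupled analogues are bounded by Lemma \ref{key-VMB-2}, whose right-hand sides consist of an $o(1)\varepsilon^{-1}$ portion of $\|\overline{w}\partial^{\alpha}_{\beta}h^{\varepsilon}\|^2_{\bf D}$, a small multiple $\epsilon_1+\varepsilon^{\kappa^-}$ of $Y(t)$-dissipation (including one velocity derivative at lower order), and a $\varepsilon^{3-2\kappa}$ multiple of $\|E_R^{\varepsilon}\|^2_{H^{|\alpha|}}+\|B_R^{\varepsilon}\|^2_{H^{|\alpha|}}$; after the $\varepsilon^{|\alpha|+|\beta|+1+\kappa}$ weighting, all fit inside $\epsilon_1\mathcal{D}(t)+\varepsilon^{\kappa}\mathcal{E}(t)$. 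The electric drift $\tfrac{E\cdot v}{2RT_c}h^{\varepsilon}$ together with its $E_R^{\varepsilon}$ and $E_n$ counterparts is controlled by Lemma \ref{dxlmh VMB}, landing directly inside the $Y(t)$-dissipation.

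For the collisional terms, the difference operator $\mathcal{L}_d$ is absorbed via \eqref{wGLd-Non} into $\epsilon_1\mathcal{D}(t)$. The pure nonlinear term $\varepsilon^{k-1}\partial^{\alpha}_{\beta}\Gamma(h^{\varepsilon},h^{\varepsilon})$ and the $F_n$-couplings are handled by the improved weighted trilinear estimate \eqref{Gamma-noncut-2} (applied with $\ell=0$, $q=\tfrac{1}{8RT_c\ln(\mathrm{e}+t)}$ so that $\langle v\rangle^{\ell}e^{q\langle v\rangle}=\overline{w}$), combined with the bootstrap bounds \eqref{vmbf4}, Sobolev embedding, and \eqref{vmb-Fn-es}; the polynomial time growth of $F_n$ is cancelled against the prefactor $\varepsilon^{n-1}(1+t)^{n-1}\leq\varepsilon^{\kappa(1-n)+(n-1)}\leq\varepsilon^{\kappa^-}$ on $[0,\varepsilon^{-\kappa}]$. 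Crucially, the replacement of the old bound by \eqref{Gamma-noncut-2} is what removes the would-be unmanageable term of type $\|\mu^{\lambda_0/128}F_n\|_{L^\infty_xL^2_v}\|\overline{w}h^{\varepsilon}\|^2_{1+\gamma/2}$ flagged in the introduction. Finally, the source $\varepsilon^{k}\partial^{\alpha}_{\beta}\mathcal{Q}_1$ gives, after Cauchy--Schwarz and \eqref{vmb-Fn-es}--\eqref{vmb-EB-es}, the last two types $\varepsilon^{2k+1}(1+t)^{4k+2}$ and an $o(1)\varepsilon^{-1}$ multiple of the dissipation.

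The main obstacle I expect is the simultaneous bookkeeping of three interacting hierarchies: the induction on $|\beta|$ needed to absorb the $\eta\sum_{|\beta'|\leq|\beta|}$ remainder in \eqref{wLL-Non} and the $|\beta'|<|\beta|$ tails from \eqref{GLM-Non} and Lemma \ref{key-VMB-2}; the one-step jump in spatial order $|\alpha|\mapsto|\alpha|+1$ caused by Lemma \ref{transp}, which forces the weighting $\varepsilon^{|\alpha|+|\beta|+1+\kappa}$ to dominate $\varepsilon^{(|\alpha|+1)+(|\beta|-1)+1+\kappa}$ (which fortunately is an equality, so there is no loss); and the balance between the $Y(t)$-gain coming from $\overline{w}$ and the polynomial time growth of $F_n$, which must remain $\varepsilon^{\kappa^-}$-small on the Hilbert-expansion lifespan. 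Once these three are arranged consistently and $\eta$, $\epsilon_1$, $\varepsilon$ are chosen small enough in this order, summation of the resulting differential inequalities over $|\alpha|+|\beta|\leq 4$, $|\beta|\geq 1$ yields \eqref{h-vml-eng-y}.
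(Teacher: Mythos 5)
Your proposal follows essentially the same route as the paper's proof: apply $\partial^{\alpha}_{\beta}$ to \eqref{h-equation}, test against $\overline{w}^2\partial^{\alpha}_{\beta}h^{\varepsilon}$ with the coercivity estimate \eqref{wLL-Non}, control the transport term by Lemma \ref{transp}, the Lorentz-force and electric-drift terms by Lemmas \ref{key-VMB-2} and \ref{dxlmh VMB}, $\mathcal{L}_d$ by \eqref{wGLd-Non}, the nonlinear and $F_n$-coupled collision terms by the weighted trilinear estimate (the $\overline{w}$-version \eqref{Gamma-noncut-1}, equivalent to your choice of \eqref{Gamma-noncut-2} with $\ell=0$) together with \eqref{vmbf4} and \eqref{vmb-Fn-es}, and $\mathcal{Q}_1$ by Cauchy--Schwarz, before weighting by $\varepsilon^{|\alpha|+|\beta|+1+\kappa}$ and summing. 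The only cosmetic difference is that you organize the absorption of the $|\beta'|\leq|\beta|$ tails by downward induction on $|\beta|$, whereas the paper simply sums and absorbs them using the monotone $\varepsilon$-weights; both are fine.
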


\begin{proof} For $|\alpha|+|\beta|\leq 4,|\beta|\geq 1$, applying $\partial^{\alpha}_\beta $ to \eqref{h-equation}, one has
	\begin{align}\label{mh1x VML-y}
		\partial_t\partial^{\alpha}_\beta h^{\varepsilon}&+\partial^{\alpha}_\beta [v\cdot\nabla_xh^{\varepsilon}]+\partial^{\alpha}_\beta\Big[\frac{\big(E_R^{\varepsilon}+v \times B_R^{\varepsilon} \big) }{ RT}\cdot \big(v-u\big)\mu^{-\frac{1}{2}}\mathbf{M}\Big]\nonumber\\
		&+\partial^{\alpha}_\beta\Big[\frac{E\cdot v }{ 2RT_c}h^{\varepsilon}\Big]-\partial^{\alpha}_\beta\big[\big(E+v \times B \big)\cdot\nabla_vh^{\varepsilon}\big]+\frac{\partial^{\alpha}_\beta\mathcal{L}[h^{\varepsilon}]}{\varepsilon}\nonumber\\
		=&-\frac{\partial^{\alpha}_\beta\mathcal{L}_d[h^{\varepsilon}]}{\varepsilon}+\varepsilon^{k-1}\partial^{\alpha}_\beta\Gamma(h^{\varepsilon},h^{\varepsilon})
		+\sum_{n=1}^{2k-1}\varepsilon^{n-1}[\partial^{\alpha}_\beta\Gamma(\mu^{-\frac{1}{2}}F_n, h^{\varepsilon})+\partial^{\alpha}_\beta\Gamma(h^{\varepsilon}, \mu^{-\frac{1}{2}}F_n)]\nonumber\\
		&+\varepsilon^k \partial^{\alpha}_\beta\Big[\Big(E_R^{\varepsilon}+v \times B_R^{\varepsilon}\Big)\cdot\nabla_vh^{\varepsilon}\Big]-\varepsilon^k  \partial^{\alpha}_\beta\Big[\frac{E_R^{\varepsilon}\cdot v}{ 2RT_c}h^{\varepsilon}\Big]\\
		&+\sum_{n=1}^{2k-1}\varepsilon^n\partial^{\alpha}_\beta\Big[\Big(E_n+v \times B_n \Big)\cdot\nabla_vh^{\varepsilon}+\Big(E_R^{\varepsilon}+v \times B_R^{\varepsilon} \Big)\cdot \mu^{-\frac{1}{2}}\nabla_v F_n\Big]\nonumber\\
		&-\sum_{n=1}^{2k-1}\varepsilon^n\partial^{\alpha}_\beta\Big(\frac{E_n\cdot v}{ 2RT_c}h^{\varepsilon}\Big)+\varepsilon^{k}\partial^{\alpha}_\beta\CQ_1.
		\nonumber
	\end{align}
	Next, taking the $L^2$ inner product of $\overline{w}^2\partial^\alpha_\beta h^{\varepsilon}$ with \eqref{mh1x VML-y}  and applying \eqref{wLL-Non}, one has
	\begin{align}\label{H1h1 VML-y}
	&\frac{1}{2}\frac{\mathrm{d}}{\mathrm{d} t}\|\overline{w} \partial^\alpha_\beta h^{\varepsilon}\|^2+Y(t)\big\|{\lag v\rag}^{\frac{1}{2}}\overline{w} \partial^\alpha_\beta h^{\varepsilon}\big\|^2+\frac{\delta}{\varepsilon}\|\overline{w} \partial^\alpha_\beta   h^{\varepsilon}]\|^2_{\bf D} \\ \lesssim&\;\frac{1}{\varepsilon}\Big(\|f^{\varepsilon}\|^2_{H^{|\alpha|}_xH^{|\beta|}_v}+o(1)\sum_{|\beta'|\leq|\beta|}\big\|\overline{w}\partial^{\alpha}_{\beta'}h^{\varepsilon}\big\|_{\bf D}^2\Big)\nonumber\\
	&+|\lag\pa^\alpha_\beta (v\cdot \nabla_x h^\eps),\overline{w}^2\pa^\alpha_\beta h^\varepsilon\rag|+\frac{1}{\varepsilon}\big|\big\langle \partial^\alpha_\beta\mathcal{L}_d[ h^{\varepsilon}], \overline{w}^2  \partial^\alpha_\beta h^{\varepsilon}\big\rangle\big|\nonumber\\
	&+\Big|\Big\langle \pa^{\alpha}_\beta\Big[\big(E_R^{\varepsilon}+v \times B_R^{\varepsilon} \big) \cdot \frac{v-u}{ RT}\mu^{-\frac{1}{2}}\mathbf{M}\Big], \overline{w}^2 \partial^\alpha_\beta h^{\varepsilon}\Big\rangle\Big|
	+\Big|\Big\langle \partial^\alpha_\beta\Big[\frac{E\cdot v }{ 2RT_c}h^{\varepsilon}\Big], \overline{w}^2 \partial^\alpha_\beta h^{\varepsilon}\Big\rangle\Big|\nonumber\\
	&  +\big|\big\langle \partial^\alpha_\beta\big[\big(E+v \times B \big)\cdot\nabla_vh^{\varepsilon}\big],\overline{w}^2 \partial^\alpha_\beta h^{\varepsilon}\big\rangle\big|  +\varepsilon^{k-1}\big|\big\langle \partial^\alpha_\beta\Gamma ( h^{\varepsilon},
	h^{\varepsilon} ), \overline{w}^2 \partial^\alpha_\beta h^{\varepsilon}\big\rangle\big|\nonumber\\
	&+\sum_{n=1}^{2k-1}\varepsilon^{n-1}\big|\big\langle[\pa^{\alpha}_\beta\Gamma(\mu^{-\frac{1}{2}}F_n,h^{\varepsilon})
	+\pa^{\alpha}_\beta\Gamma(
	h^{\varepsilon}, \mu^{-\frac{1}{2}} F_n)], \overline{w}^2 \partial^\alpha_\beta h^{\varepsilon}\big\rangle\big|\nonumber\\
	&+\varepsilon^k \big|\big\langle \pa^{\alpha}_\beta\Big[\Big(E_R^{\varepsilon}+v \times B_R^{\varepsilon} \Big)\cdot\nabla_vh^{\varepsilon}\Big], \overline{w}^2 \pa^{\alpha}_\beta h^{\varepsilon}\big\rangle\big|+\varepsilon^k \Big|\Big\langle \pa^{\alpha}_\beta\Big[\frac{E_R^{\varepsilon} \cdot v}{ 2RT_c}h^{\varepsilon}\Big], \overline{w}^2 \partial^\alpha_\beta h^{\varepsilon}\Big\rangle\Big|\nonumber\\
	&+\sum_{n=1}^{2k-1}\varepsilon^n\Big|\Big\langle \pa^{\alpha}_\beta\Big[\big(E_n+v \times B_n \big)\cdot\nabla_vh^{\varepsilon}+\big(E_R^{\varepsilon}+v \times B_R^{\varepsilon} \big)\cdot\mu^{-\frac{1}{2}}\nabla_v F_n\Big], \overline{w}^2 \partial^\alpha_\beta h^{\varepsilon}\Big\rangle\Big|\nonumber\\
	&+\sum_{n=1}^{2k-1}\varepsilon^n\Big|\Big\langle \pa^{\alpha}_\beta\Big(\frac{E_n\cdot v}{ 2RT_c}h^{\varepsilon}\Big), \overline{w}^2 \partial^\alpha_\beta h^{\varepsilon}\Big\rangle\Big|
	+\big|\big\langle \pa^{\alpha}_\beta\CQ_1, \overline{w}^2 \partial^\alpha_\beta h^{\varepsilon}\big\rangle\big|.\nonumber
\end{align}

	We now estimate the R.H.S. of \eqref{H1h1 VML-y} individually.

 In view of Lemma \ref{transp}, the 2nd term on the R.H.S. of \eqref{H1h1 VML-y} can be bounded by
	\begin{align*}
&C\varepsilon^{\frac{1}{5}} \Big(\frac{1}{\varepsilon}\big\|\overline{w}\pa^\alpha_\beta h^\varepsilon\big\|_{\bf D}^2+Y(t)\big\|\lag v\rag^{\frac12}\overline{w}\pa^\alpha_\beta h^\varepsilon\big\|^2\\
&+\frac{1}{\varepsilon}\big\|\overline{w} \nabla_x^{|\alpha|+1}\nabla^{|\beta|-1}_vh^\varepsilon\big\|^2_{\bf D}+Y(t)\big\|\lag v\rag^{\frac12}\overline{w} \nabla_x^{|\alpha|+1}\nabla^{|\beta|-1}_vh^\varepsilon\big\|^2\Big).
\end{align*}

		The upper bound of the 3rd term on the R.H.S. of \eqref{H1h1 VML-y} is
	$C\epsilon_1 \sum_{|\al'|\leq|\al|,\beta'\leq \beta}\|\overline{w} \pa^{\alpha'}_{\beta'}h^{\varepsilon}\|^2_{\bf D}$  by \eqref{wGLd-Non} and
 the 4th term on the R.H.S. of \eqref{H1h1 VML-y} is dominated by
	\begin{align*}
		\frac{o(1)}{\varepsilon}\|\overline{w} \pa^\alpha_\beta h^{\varepsilon}\|^2_{\bf D}+C\varepsilon\Big(\|E_R^{\varepsilon}\|_{H^{|\alpha|}}^2+\|B_R^{\varepsilon}\|_{H^{|\alpha|}}^2\Big).
	\end{align*}

	Applying Lemma \ref{dxlmh VMB} and  Lemma \ref{key-VMB-2}, we can bound the 5th, 6th, 9th-12th terms on the R.H.S. of \eqref{H1h1 VML-y} by
		\begin{eqnarray*}
&&\,C\left(\epsilon_1+\varepsilon^{\kappa^-}\right)Y(t)\sum_{|\alpha'|+|\beta'|\leq |\alpha|+|\beta|,\atop |\beta'|\leq |\beta|+1}\big\|{\lag v\rag}^{\frac{1}{2}}\overline{w} \pa^{\al'}_{\beta'} h^{\varepsilon}\big\|^2\nonumber\\
&&+C\varepsilon^{3-2\kappa}\Big(\|E_R^{\varepsilon}\|_{H^{|\alpha|}}^2+\|B_R^{\varepsilon}\|_{H^{|\alpha|}}^2\Big)+\frac{o(1)}{\varepsilon}\|\overline{w}^2 \pa^\alpha_\beta h^{\varepsilon}\|^2_{\bf D}.
	\end{eqnarray*}

	For the 7th term on the R.H.S. of \eqref{H1h1 VML-y}, similar to \eqref{gamh-1-Non}, we have
		\begin{align*}
    \varepsilon^{k-1}\big|\big\langle\pa^\alpha_\beta\Gamma ( h^{\varepsilon},
    h^{\varepsilon} ), \overline{w}^2  \pa^\alpha_\beta h^{\varepsilon}\big\rangle\big|
        \lesssim \sum_{|\alpha'|+|\beta'|\leq|\alpha|+|\beta|}
   \big\|\overline{w}\pa^{\alpha'}_{\beta'}h^{\varepsilon}\big\|^2_{{\bf D}}.\nonumber
\end{align*}

		For the 8th term on the R.H.S. of \eqref{H1h1 VML-y},
	 we get from \eqref{vmb-Fn-es} that
	\begin{align*} &\sum_{n=1}^{2k-1}\varepsilon^{n-1}\big|\big\langle[\pa^\alpha_\beta\Gamma(\mu^{-\frac{1}{2}}F_n,h^{\varepsilon})+\pa^\alpha_\beta\Gamma(
		h^{\varepsilon}, \mu^{-\frac{1}{2}}F_n)], \overline{w}^2\pa^\alpha_\beta h^{\varepsilon}\big\rangle\big|\\
		\lesssim&\,\sum_{n=1}^{2k-1}[\varepsilon(1+t)]^{n-1}(1+t)\big\|\sum_{\alpha'\leq\alpha,\beta'\leq\beta}\Big(
\big\|\overline{w}\partial^{\alpha'}h^{\varepsilon}\big\|_{{\bf D}}+
\big\|\overline{w}\partial^{\alpha'}h^{\varepsilon}\big\|\Big)\big\|\overline{w}\pa^{\alpha}h^{\varepsilon}\big\|_{{\bf D}}\\
	\lesssim&\,\frac{o(1)}{\varepsilon} \|\overline{w}\pa^{\alpha}_{\beta}h^{\varepsilon}\|^2_{{\bf D}} + \varepsilon^{\kappa}\sum_{\alpha'\leq\alpha,\beta'\leq\beta}\Big(
\big\|\overline{w}\pa^{\alpha'}_{\beta'}h^{\varepsilon}\big\|_{{\bf D}}^2+
\big\|\overline{w}\pa^{\alpha'}_{\beta'}h^{\varepsilon}\big\|^2\Big).
	\end{align*}

	As in \eqref{nonlinear-f-i-0-1}, for the last term on the R.H.S. of \eqref{H1h1 VML-y}, one has
	\begin{align*}
		\big|\big\langle \pa^\alpha_\beta\CQ_1, \overline{w}^2\pa^\alpha_\beta  h^{\varepsilon}\big\rangle\big|\lesssim \frac{o(1)}{\varepsilon} \|\overline{w}\pa^\alpha_\beta h^{\varepsilon}\|^2_{\bf D} +\varepsilon^{2k+1}(1+t)^{4k+2}.
	\end{align*}
	
	Putting the above estimates into \eqref{H1h1 VML-y}, we arrive at
	\begin{align}\label{H1h VMB}
		&\frac{1}{2}\frac{\mathrm{d}}{\mathrm{d} t}\|\overline{w} \partial^\alpha_\beta h^{\varepsilon}\|^2+Y(t)\big\|{\lag v\rag}^{\frac{1}{2}}\overline{w} \partial^\alpha_\beta h^{\varepsilon}\big\|^2+\frac{\delta}{\varepsilon}\|\overline{w} \partial^\alpha_\beta   h^{\varepsilon}]\|^2_{\bf D} \\
		\lesssim&\frac{1}{\varepsilon}\Big(\|f^{\varepsilon}\|^2_{H^{|\alpha|}_xH^{|\beta|}_v}+o(1)\sum_{|\beta'|\leq|\beta|}\big\|\overline{w}\partial^{\alpha}_{\beta'}h^{\varepsilon}\big\|_{\bf D}^2\Big)+\frac{\epsilon_1 }{\varepsilon}\sum_{|\al'|+|\beta'|\leq|\al|+ |\beta|}\|\overline{w} \pa^{\alpha'}_{\beta'}h^{\varepsilon}\|^2_{\bf D}\nonumber\\
		&+\sum_{|\al'|+|\beta'|\leq|\al|+ |\beta|}\varepsilon^{\kappa}\big\|\overline{w}\pa^{\alpha'}_{\beta'}h^{\varepsilon}\big\|^2+\varepsilon\Big(\|E_R^{\varepsilon}\|_{H^{|\alpha|}}^2+\|B_R^{\varepsilon}\|_{H^{|\alpha|}}^2\Big) \nonumber\\
		&+\left(\epsilon_1+\varepsilon^{\kappa^-}\right)Y(t)\sum_{|\alpha'|+|\beta'|\leq |\alpha|+|\beta|}\big\|{\lag v\rag}^{\frac{1}{2}}\overline{w} \pa^{\al'}_{\beta'} h^{\varepsilon}\big\|^2+\varepsilon^{2k+1}(1+t)^{4k+2}.\nonumber
	\end{align}
	
	Now multiplying \eqref{H1h VMB} by $\varepsilon^{|\alpha|+|\beta|+1+\kappa}$ and taking summation over $|\alpha|+|\beta|\leq 4, |\beta|\geq 1$, one deduces \eqref{h-vml-eng-prop-y}.
\end{proof}

Finally, we
collect \eqref{h-vmb-eng} and \eqref{h-vml-eng-y} together to obtain  the following total energy  estimate for $ h^{\varepsilon}$.
\begin{proposition} \label{h-end}
\label{mix-h}Assume that the assumptions in Lemma \ref{dxlm} hold. For $t\leq \varepsilon^{-\kappa}$, it holds that
	\begin{align}\label{mix-h}
		&\sum_{|\alpha|+|\beta|\leq 4}\varepsilon^{|\alpha|+|\beta|+1+\kappa}\Big(\frac{\mathrm{d}}{\mathrm{d} t}\|\overline{w} \partial^\alpha_\beta h^{\varepsilon}\|^2+Y(t)\big\|{\lag v\rag}^{\frac{1}{2}}\overline{w} \partial^\alpha_\beta h^{\varepsilon}\big\|^2+\frac{\delta}{\varepsilon}\|\overline{w} \partial^\alpha_\beta   h^{\varepsilon}]\|^2_{\bf D}\Big) \nonumber\\
		\lesssim&\varepsilon^{\kappa}\mathcal{E}(t)+\varepsilon^{2k+1}(1+t)^{4k+2}.\nonumber
	\end{align}
\end{proposition}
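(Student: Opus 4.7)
The plan is to establish Proposition \ref{mix-h} by simply combining the two previously proved weighted energy estimates for $h^{\varepsilon}$. First, I would add the inequality \eqref{h-vmb-eng} of Proposition \ref{h-vmb-eng-prop}, which controls the pure spatial derivatives $\partial^{\alpha}$ with $|\alpha|\leq 4$ (corresponding to $|\beta|=0$), to the inequality \eqref{h-vml-eng-y} of Proposition \ref{h-vml-eng-prop-y}, which controls the mixed space--velocity derivatives $\partial^{\alpha}_{\beta}$ with $|\alpha|+|\beta|\leq 4$ and $|\beta|\geq 1$. Since both inequalities carry the common weighting $\varepsilon^{|\alpha|+|\beta|+1+\kappa}$ inside each summand, adding them yields a single inequality whose left-hand side is exactly the full sum over $|\alpha|+|\beta|\leq 4$ that appears in the statement of Proposition \ref{mix-h}, while the right-hand side takes the form
$$
\varepsilon^{\kappa}\mathcal{E}(t)+(\epsilon_1+\varepsilon^{\kappa^{-}})\mathcal{D}(t)+\varepsilon^{2k+1}(1+t)^{4k+2}.
$$

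The crucial step is then to absorb the $(\epsilon_1+\varepsilon^{\kappa^{-}})\mathcal{D}(t)$ term. Inspecting the definition \eqref{dn-vmb} of $\mathcal{D}(t)$, its $h^{\varepsilon}$-parts, namely $\varepsilon^{|\alpha|+|\beta|+\kappa}\|\overline{w}\pa^{\alpha}_{\beta}h^{\varepsilon}\|^2_{\bf D}$ and the $Y(t)$-induced dissipation $Y(t)\varepsilon^{|\alpha|+|\beta|+\kappa}\|\lag v\rag^{1/2}\overline{w}\pa^{\alpha}_{\beta}h^{\varepsilon}\|^2$, coincide up to the fixed positive constant $\delta$ with the positive dissipation terms built into the left-hand side by the coercivity $\delta\varepsilon^{|\alpha|+|\beta|+\kappa}\|\overline{w}\pa^{\alpha}_{\beta}h^{\varepsilon}\|^{2}_{\bf D}$ and the time-velocity weight contribution $\varepsilon^{|\alpha|+|\beta|+1+\kappa}Y(t)\|\lag v\rag^{1/2}\overline{w}\pa^{\alpha}_{\beta}h^{\varepsilon}\|^{2}$. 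Consequently, under the smallness conditions $\epsilon_1+\varepsilon^{\kappa^{-}}\ll \delta$, these $h^{\varepsilon}$-dissipation contributions on the right can be subtracted and merged into the dissipation already present on the left. The remaining $f^{\varepsilon}$-component of $\mathcal{D}(t)$, $\varepsilon^{|\alpha|+|\beta|-1}\|\pa^{\alpha}_{\beta}({\bf I}-{\bf P}_{\mathbf{M}})[f^{\varepsilon}]\|^{2}_{\bf D}$, does not actually arise through the derivations leading to Propositions \ref{h-vmb-eng-prop} and \ref{h-vml-eng-prop-y}: in those proofs $f^{\varepsilon}$ appears only through the source term $\varepsilon^{-1}\|f^{\varepsilon}\|_{H^{|\alpha|}_{x}H^{|\beta|}_{v}}^{2}$, which after multiplication by $\varepsilon^{|\alpha|+|\beta|+1+\kappa}$ is already bounded by $\varepsilon^{\kappa}\mathcal{E}(t)$.

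After this absorption the right-hand side reduces cleanly to $\varepsilon^{\kappa}\mathcal{E}(t)+\varepsilon^{2k+1}(1+t)^{4k+2}$, yielding the stated estimate. I expect the main obstacle here to be purely notational rather than analytical: one must carefully verify the matching of powers of $\varepsilon$ between each term of $(\epsilon_1+\varepsilon^{\kappa^{-}})\mathcal{D}(t)$ on the right and the corresponding positive dissipation on the left, remembering that the outer weighting $\varepsilon^{|\alpha|+|\beta|+1+\kappa}$ was chosen precisely so that, when combined with the intrinsic $\varepsilon^{-1}$ in front of $\|\overline{w}\pa^{\alpha}_{\beta}h^{\varepsilon}\|_{\bf D}^{2}$, this matching succeeds. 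All the real analytic work---the weighted trilinear estimates from Lemma \ref{non-nonlinear}, the control of the Lorentz-force terms through Lemmas \ref{dxlmh VMB}--\ref{key-VMB-2}, and the estimate of the delicate transport term through the interpolation of Lemma \ref{transp}---has already been performed in the preceding two propositions, so that Proposition \ref{mix-h} is obtained essentially by bookkeeping.
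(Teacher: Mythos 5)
Your overall strategy is exactly the paper's: Proposition \ref{h-end} is obtained by adding \eqref{h-vmb-eng} and \eqref{h-vml-eng-y} and absorbing the dissipation on the right into the dissipation on the left, and your treatment of the $f^{\varepsilon}$-source term and of the $({\bf I}-{\bf P}_{\mathbf{M}})[f^{\varepsilon}]$-part of $\mathcal{D}(t)$ (namely, that it never actually arises in the proofs of Propositions \ref{h-vmb-eng-prop} and \ref{h-vml-eng-prop-y}, which is essential since it could not be absorbed at this stage and would otherwise have to be carried until the combination with Proposition \ref{f-end}) is correct and is precisely what the paper leaves implicit.

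There is, however, one claim in your absorption step that is not right as written. You assert that the $Y(t)$-induced part of $\mathcal{D}(t)$, which according to the definition \eqref{dn-vmb} carries the weight $\varepsilon^{|\alpha|+|\beta|+\kappa}$, ``coincides up to the fixed positive constant $\delta$'' with the left-hand term $\varepsilon^{|\alpha|+|\beta|+1+\kappa}Y(t)\big\|\lag v\rag^{\frac12}\overline{w}\partial^{\alpha}_{\beta}h^{\varepsilon}\big\|^{2}$. These differ by a full factor of $\varepsilon$, and since the regime of the paper is $\varepsilon\ll\epsilon_1$, the smallness $\epsilon_1+\varepsilon^{\kappa^-}\ll\delta$ alone does not allow you to hide $(\epsilon_1+\varepsilon^{\kappa^-})$ times the former inside the latter. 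The fix is the same device you already invoke for the $f^{\varepsilon}$-dissipation: look at what the proofs of Propositions \ref{h-vmb-eng-prop} and \ref{h-vml-eng-prop-y} actually produce on the right-hand side. There the $Y(t)$-terms occur as $(\epsilon_1+\varepsilon^{\kappa^-})$ times $\varepsilon^{|\alpha'|+|\beta'|+1+\kappa}Y(t)\big\|\lag v\rag^{\frac12}\overline{w}\partial^{\alpha'}_{\beta'}h^{\varepsilon}\big\|^{2}$, i.e.\ with the same weight as on the left-hand side (equivalently, the $Y(t)$-part of \eqref{dn-vmb} should be read with the weight $\varepsilon^{|\alpha|+|\beta|+1+\kappa}$, in analogy with the VML definition \eqref{dn-vml}); with that reading the matching is exact up to constants and the absorption under $\epsilon_1+\varepsilon^{\kappa^-}\ll 1$ goes through, so the rest of your bookkeeping yields the stated estimate.
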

%\begin{proof}
%	A linear combination of
%		\begin{align}
%			&\frac{1}{2}\frac{\mathrm{d}}{\mathrm{d} t}\sum_{|\alpha|+|\beta|\leq N}\|w_{\ell-|\beta|} \partial^\alpha_\beta  h^{\varepsilon}\|^2\nonumber\\
%			&+Y(t)\sum_{|\alpha|+|\beta|\leq N}\big\|{\lag v\rag}w^2_i\partial^\alpha_\beta h^{\varepsilon}\big\|^2+\frac{\delta}{\varepsilon}\sum_{|\alpha|+|\beta|\leq N}\|w_{\ell-|\beta|} \partial^\alpha_\beta   h^{\varepsilon}]\|^2_{\bf D} \\
%			\leq&\frac{C}{\varepsilon}\|f^{\varepsilon}\|^2_{H^{N}_{x,v}}+\varepsilon^{2k+1}(1+t)^{4k+2}+\varepsilon^{k}(1+t)^{2k}+C\varepsilon\Big(\|E_R^{\varepsilon}\|_{H^{N}_x}^2+\|B_R^{\varepsilon}\|_{H^{N}_x}^2\Big),\nonumber
%		\end{align}
%	\eqref{mix-h} follows by multiplying the above inequality with $\varepsilon^{N+1+\kappa}$, this complete the proof of this lemma.
%\end{proof}
%%%%%%%%%%%%%%%%%%%%%%%%%%%%%%%%%%%%%%%%%%%%%%%%%%%%%%%%%%%%%%%%%%%%%%%%%%%%%%%%%%
\subsection{Proof of the Theorem \ref{resultVMB}} %\label{Sec:thmVMB}
%%%%%%%%%%%%%%%%%%%%%%%%%%%%%%%%%%%%%%%%%%%%%%%%%%%%%%%%%%%%%%%%%%%%%%%%%%%%%%%%%%

We are now ready to complete the proof of Theorem~\ref{resultVMB}. For brevity, we will derive the energy estimates \eqref{TVML1} and omit the details of the non-negativity proof of $F^{\varepsilon}$.

For this purpose, combining Propositions \ref{h-end} and \ref{f-end}, we have
\begin{align*}
	&\frac{\mathrm{d}}{\mathrm{d} t}\Bigg(\sum\limits_{|\alpha|\leq 4}\varepsilon^{|\alpha|}\big(\|\sqrt{4\pi RT}\partial^{\alpha} f^{\varepsilon}\|^2+\|\partial^{\alpha} E_R^{\varepsilon}\|^2+\|\partial^{\alpha} B^{\varepsilon}_R\|^2\big)\\
	&+\sum\limits_{|\alpha|+|\beta|\leq 4,|\beta|\geq1}\varepsilon^{|\alpha|+|\beta|}\|\pa^\alpha _\beta ({\bf I}-{\bf P}_{\mathbf{M}})[f^{\varepsilon}]\|^2+\sum_{|\alpha|+|\beta|\leq 4}\varepsilon^{|\alpha|+|\beta|+1+\kappa}\|\overline{w} \partial^\alpha_\beta  h^{\varepsilon}\|^2\Bigg)\\
	&+\delta\sum\limits_{|\alpha|+|\beta|\leq 4}\varepsilon^{|\alpha|+|\beta|-1}\|\pa^\alpha _\beta({\bf I}-{\bf P}_{\mathbf{M}})[f^{\varepsilon}]\|^2_{\bf D}\\
	&+\sum_{|\alpha|+|\beta|\leq 4}\varepsilon^{|\alpha|+|\beta|+1+\kappa}\Big(Y(t)\big\|{\lag v\rag}^{\frac{1}{2}}\overline{w} \partial^\alpha_\beta h^{\varepsilon}\big\|^2+\frac{\delta}{\varepsilon}\|\overline{w} \partial^\alpha_\beta   h^{\varepsilon}]\|^2_{\bf D}\Big) \\
	&\lesssim \Big[\epsilon_1(1+t)^{-p_0}+\varepsilon^{\kappa}\Big]\mathcal{E}(t)+\varepsilon^{2k+1}(1+t)^{4k+2}+\varepsilon^{k}(1+t)^{2k}\sqrt{\mathcal{E}(t)},
\end{align*}

Here, $\mathcal{E}(t)$ is defined in \eqref{eg-vmb}. By choosing $\varepsilon_1\geq \varepsilon>0$ sufficiently small such that

$$C_{\epsilon_1}\exp\left(\frac{-\epsilon_1}{8C_0RT^2_c\sqrt{\varepsilon}}\right)\leq \varepsilon_1^{2+\kappa},\qquad \kappa=\frac13,$$

and recalling the definition of $\mathcal{D}(t)$ in \eqref{dn-vmb}, we get

\[\frac{d}{dt}\mathcal{E}(t)+\mathcal{D}(t)\lesssim\Big[\varepsilon^{\kappa}+(1+t)^{-p_0}+\varepsilon^{k}(1+t)^{2k}\Big] \big[\mathcal{E}(t)+1\big]+\varepsilon^{2k+1}(1+t)^{4k+2}.\]

This yields

\begin{align*}
	\mathcal{E}(t)+\int_0^t\mathcal{D}(s)\, d s\lesssim \mathcal{E}(0)+1
\end{align*}

from Gronwall's inequality over $t\in [0,\varepsilon^{-\kappa}]$ with $\kappa=\frac13$. Then, \eqref{TVML1} follows.

\section{Appendix}\label{sec-app}
In this Appendix, we will illustrate the process of determining the coefficients $F_n, E_n, B_n$ for all $1\leq n\leq2k-1$ in the expansion \eqref{expan}.
The construction of coefficients for the non-cutoff VMB case is presented in detail, while the construction of coefficients for the VML system is omitted for brevity.

%\setcounter{equation}{0}
%%%%%%%%%%%%%%%%%%%%%%%%%%%%%%%%%%%%%%%%%%%%%%%%%%%%%%%%%%%%%%%%%%%%%%%%%%%%%%%%%%
\subsection{The coefficients of the VML system}
This subsection is concerned with the construction of coefficients $[F_n, E_n, B_n]$ for all $1\leq n\leq2k-1$ in the expansion \eqref{expan}.
It can be stated as in the following lemma, whose proof is omitted since it is similar to that in Lemma \ref{vmb-Fn-lem}.
\begin{lemma}%\label{em-Fn-lem}
Under the condition \eqref{Fn-id-vml}, coefficients $[F_n, E_n, B_n]$ for all $1\leq n\leq2k-1$ are well-posed. Moreover, for $N\geq2$, $q\in(0,1)$, and $\ell\geq0$, there exists $C_1>0$ such that
\begin{align}
\sum\limits_{\al_0+|\al|+|\beta|\leq N+2k-2n+2}\left|\pa_t^{\al_0}\pa_\bet^\al \left(\frac{F_n}{\sqrt{\FM}}\right)\right|\leq C_1\lag v\rag^{-\ell+2n}\FM^{\frac{q}{2}}(1+t)^{n}\label{em-Fn-es}
\end{align}
and
\begin{align}
\sum\limits_{\al_0+|\al|\leq N+2k-2n+2}|\pa_t^{\al_0}\pa^\al [E_n,B_n]|\leq C_1(1+t)^{n},\label{em-EB-es}
\end{align}
where $C_1>0$ and depends on $N$, $q$, and initial data of $F_n, E_n, B_n$.
\end{lemma}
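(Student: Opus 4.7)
The plan is to proceed by strong induction on $n$, constructing $[F_n,E_n,B_n]$ simultaneously from the hierarchy \eqref{Fn} and propagating the weighted bounds \eqref{em-Fn-es}-\eqref{em-EB-es} through each step. The base case $n=0$ is supplied by Proposition~\ref{em-ex-lem}, which yields the local Maxwellian $F_0=\mathbf{M}_{[\rho,u,T]}$ together with the Euler--Maxwell fields $(E,B)$ satisfying \eqref{em-decay}. For the inductive step, I would assume $[F_j,E_j,B_j]$ are already constructed for $0\le j\le n-1$ with all the stated weighted estimates, and then build $[F_n,E_n,B_n]$ in two stages: first fix the microscopic part of $F_n$, then solve a linear hyperbolic--Maxwell system for the macroscopic part coupled with $(E_n,B_n)$.

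Writing $F_n=\sqrt{\mathbf{M}}\, f_n$ and splitting $f_n={\bf P}_{\mathbf{M}}[f_n]+({\bf I}-{\bf P}_{\mathbf{M}})[f_n]$, the microscopic component is determined explicitly by inverting $\mathcal{L}_{\mathbf{M}}$ on the right-hand side of \eqref{Fn} at order $n-1$, every ingredient of which is known by the inductive hypothesis. The macroscopic component, encoded by $[\rho_n,u_n,T_n]$, is then obtained by taking the velocity moments $1,v,|v|^2$ of \eqref{Fn} and coupling to the Maxwell equations, producing a linear symmetrizable hyperbolic system analogous to \eqref{examp1} whose coefficients depend smoothly on the Euler--Maxwell solution and whose source terms are built from the lower-order coefficients together with the just-determined microscopic part. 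Well-posedness in Sobolev space then follows from standard linear symmetric hyperbolic theory \cite{majda-book}, with initial data supplied by \eqref{Fn-id-vml}; the time derivatives $\partial_t^{\alpha_0}$ appearing there are to be interpreted through the evolution equations themselves, so that only the macroscopic spatial traces at $t=0$ need to be prescribed.

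The weighted bounds \eqref{em-Fn-es}-\eqref{em-EB-es} will then propagate inductively. The time growth $(1+t)^n$ arises because the source terms contain bilinear products $\mathcal{C}(F_i,F_j)$ with $i+j=n$ and Lorentz contributions $(E_i+v\times B_i)\cdot\nabla_v F_j$, which by the inductive estimates grow at most like $(1+t)^{i+j}$; integrating along the linear hyperbolic system contributes one additional temporal factor, giving $(1+t)^n$. The velocity weight $\langle v\rangle^{-\ell+2n}$ reflects a loss of two powers of $\langle v\rangle$ per iteration, attributable to the streaming term $v\cdot\nabla_x F_{n-1}$ together with the Lorentz piece $v\times B_{n-1}$, while the Maxwellian factor $\mathbf{M}^{q/2}$ for any $q\in(0,1)$ is preserved because $\mathcal{L}_{\mathbf{M}}^{-1}$ maps sources with $\mathbf{M}^{q'/2}$ decay (any $q'>q$) back into functions with $\mathbf{M}^{q/2}$ decay at the cost of only a polynomial weight loss, a Guo-type estimate adapted to the local Maxwellian using \eqref{tt1}.

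I expect the main obstacle to be the interplay between the $(t,x)$-dependence of $\mathcal{L}_{\mathbf{M}}$ and the regularity budget. On the one hand, inverting $\mathcal{L}_{\mathbf{M}}$ requires controlling commutators $[\partial^{\alpha}_\beta,\mathcal{L}_{\mathbf{M}}^{-1}]$, which I would handle by writing $\mathcal{L}_{\mathbf{M}}=\mathcal{L}+(\mathcal{L}_{\mathbf{M}}-\mathcal{L})$ and absorbing the perturbation through the smallness of $\epsilon_1$ from \eqref{tt1}, exactly as in the proof of \eqref{coLLh}. On the other hand, solving the macroscopic hyperbolic system at step $n$ loses two orders of $x$-regularity relative to step $n-1$, which dictates the decreasing budget $N+4k-2n+2$ in \eqref{Fn-id-vml} and ultimately the requirement $k\ge 3$ in Theorem~\ref{resultVML}. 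Trading each $\partial_t^{\alpha_0}$ against $\partial_x^{\alpha_0}$ via the evolution equations is consistent with this budget, closing the induction.
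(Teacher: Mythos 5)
Your proposal matches the paper's proof (which is carried out for the analogous VMB lemma and to which this one is deferred): the microscopic part of $F_n$ is fixed explicitly by inverting $\mathcal{L}_{\mathbf{M}}$ on lower-order data, the macroscopic part together with $[E_n,B_n]$ solves a linear symmetrizable hyperbolic system obtained from the moments $1,v,|v|^2$, and the bounds \eqref{em-Fn-es}--\eqref{em-EB-es} are propagated inductively by weighted estimates for the collision terms (of $\mathbf{M}^{-q/2}$ type, cf. \eqref{wLLLM} and \eqref{GGGLm}) combined with energy estimates for the hyperbolic system, with a two-derivative loss per step and the $\partial_t$-derivatives of the data interpreted through the evolution equations. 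One small bookkeeping caveat: the $(1+t)^n$ growth should be traced to time-integrating forcing of size $(1+t)^{n-1}$ (the microscopic part of $f_n$ and lower-order coefficients paired with the decaying Euler--Maxwell background), rather than to products with $i+j=n$ plus an extra integration as you wrote, which would overshoot by one power of $t$.
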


%\setcounter{equation}{0}
%%%%%%%%%%%%%%%%%%%%%%%%%%%%%%%%%%%%%%%%%%%%%%%%%%%%%%%%%%%%%%%%%%%%%%%%%%%%%%%%%%
\subsection{The coefficients of the non-cutoff VMB system}
Our results for the non-cutoff VMB system are the following
\begin{lemma}\label{vmb-Fn-lem}
Under the condition \eqref{Fn-id},  coefficients $[F_n, E_n, B_n]$ for all $1\leq n\leq2k-1$ are well-posed. Moreover, for $N\geq4$, $q\in(0,1)$, and $\ell\geq0$, there exists $C_2>0$ such that
	\begin{align}
		\sum\limits_{\al_0+|\al|+|\beta|\leq N+4k-2n+2}\left|\pa_t^{\al_0}\pa_\bet^\al \left(\mu^{-\frac{1}{2}}F_n\right)\right|\leq C_2\lag v\rag^{-\ell+n(1+|\gamma+2s|)}e^{-q\langle v\rangle}(1+t)^{n}\label{vmb-Fn-es}
	\end{align}
	and
	\begin{align}
		\sum\limits_{\al_0+|\al|\leq N+4k-2n+2}|\pa_t^{\al_0}\pa^\al [E_n,B_n]|\leq C_2(1+t)^{n},\label{vmb-EB-es}
	\end{align}
	where $C_2>0$ and depends on $N$, $q$, and initial data of $F_n, E_n, B_n$.
\end{lemma}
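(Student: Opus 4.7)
The plan is to proceed by strong induction on $n\in\{1,2,\dots,2k-1\}$, in parallel with a similar construction for the VML case. The base ingredient is Proposition \ref{em-ex-lem}, which already furnishes $[\rho,u,T,E,B]$ (and hence $F_0=\mathbf{M}$) together with the decay bounds \eqref{em-decay}. Motivated by the heuristic that $\mathbf{M}\mu^{-1}\sim e^{c_1\langle v\rangle^2}$ prevents the transfer of Gaussian weighted estimates from $\mathbf{M}^{-1/2}F_n$ to $\mu^{-1/2}F_n$, I will write each coefficient in the global-Maxwellian form $F_n=\mu^{1/2}h_n$ and derive all weighted bounds directly on $h_n$ with the exponential weight $e^{q\langle v\rangle}$, $q\in(0,1)$. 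The equation for $h_n$ is obtained by dividing \eqref{Fn} by $\mu^{1/2}$; the dominant collision term becomes $\CL h_n$ (global linearized Boltzmann operator), with all other contributions being given by previously constructed $[F_m,E_m,B_m]$ with $m<n$, and with $\Gamma_{\mathbf M}$ replaced by $\Gamma$ modulo a remainder of the form $\mathcal{L}_d$-type corrections controlled by $\epsilon_1$ through \eqref{mmbL}.

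The second step is the standard macro-micro decomposition $h_n=\mathbf{P}h_n+(\mathbf{I}-\mathbf{P})h_n$. The microscopic part is determined algebraically by inverting $\CL$: writing Schematically
\begin{equation*}
(\mathbf{I}-\mathbf{P})h_n = \CL^{-1}\Bigl\{-(\mathbf{I}-\mathbf{P})\bigl[\partial_t+v\cdot\nabla_x-(E+v\times B)\cdot\nabla_v\bigr]h_{n-1}+\text{l.o.t.}\Bigr\},
\end{equation*}
where the lower-order terms involve $\Gamma(\mu^{-1/2}F_i,\mu^{-1/2}F_j)$ with $i+j=n+1$ and Lorentz couplings with $m<n$. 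Using the coercivity estimate \eqref{coL-Non} and the weighted exponential estimates \eqref{L_v-Non} and \eqref{Gamma-noncut-2}, I can bound $(\mathbf{I}-\mathbf{P})h_n$ in $L^\infty_{t,x,v}$ with the weight $\langle v\rangle^{\ell}e^{q\langle v\rangle}$, paying a polynomial loss in $v$ of order $\langle v\rangle^{1+|\gamma+2s|}$ at each inversion of $\CL$ (this is the origin of the factor $n(1+|\gamma+2s|)$ in \eqref{vmb-Fn-es}). The macroscopic part $\mathbf{P}h_n$, parametrized by $[\rho_n,u_n,T_n]$, is coupled with $[E_n,B_n]$ through the fluid moments of \eqref{Fn} and the Maxwell block; as exhibited already for $n=1$ in \eqref{examp1}, this yields a linear symmetrizable hyperbolic-Maxwell system for $[\rho_n,u_n,T_n,E_n,B_n]$ with smooth coefficients (depending on $[\rho,u,T,E,B]$) and with a right-hand side built from $(\mathbf{I}-\mathbf{P})h_n$ and from the previously constructed $[F_m,E_m,B_m]$ with $m<n$.

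The third step solves this linear system by the classical Friedrichs-Kato theory (cf.~\cite{majda-book}), using \eqref{Fn-id} to provide the initial regularity (including the time-derivatives through the evolution equation, as noted in the remark after Theorem \ref{resultVML}). A standard energy estimate gives
\begin{equation*}
\sum_{\alpha_0+|\alpha|\leq N+4k-2n+2}\bigl\|\partial_t^{\alpha_0}\partial^\alpha[\rho_n,u_n,T_n,E_n,B_n](t)\bigr\|\lesssim (1+t)\Bigl(\text{data}+\text{source from }m<n\Bigr),
\end{equation*}
which combined with the inductive hypothesis (where each $[F_m,E_m,B_m]$ grows like $(1+t)^m$) produces the $(1+t)^n$ growth in \eqref{vmb-Fn-es}-\eqref{vmb-EB-es}. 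Combining this with Sobolev embedding in $x$ and the pointwise bound on $(\mathbf{I}-\mathbf{P})h_n$ obtained above yields the full pointwise estimate \eqref{vmb-Fn-es}; the estimate \eqref{vmb-EB-es} is a direct byproduct of the linear symmetric hyperbolic-Maxwell system.

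The main obstacle I foresee is the control of the velocity-weighted estimate for $(\mathbf{I}-\mathbf{P})h_n$ in the non-cutoff framework: the usual $e^{q\langle v\rangle^2}$ weight is incompatible with the relation $\mathbf{M}\mu^{-1}\sim e^{c_1\langle v\rangle^2}$, so one must work with the weaker exponential $e^{q\langle v\rangle}$. This forces the use of the refined angular-integration estimate of Lemma \ref{non-nonlinear} (in particular \eqref{Gamma-noncut-2}) to avoid the unmanageable term of type \eqref{unp-term} that would otherwise appear when inverting $\CL$ against source terms built from $\Gamma(\mu^{-1/2}F_i,\mu^{-1/2}F_j)$. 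Once this refined trilinear estimate is in hand, the induction closes and the explicit polynomial factor $\langle v\rangle^{-\ell+n(1+|\gamma+2s|)}$ in \eqref{vmb-Fn-es} records the cumulative loss from $n$ successive $\CL^{-1}$ inversions.
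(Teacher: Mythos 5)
Your overall architecture (strong induction on $n$, exponential weight $e^{q\langle v\rangle}$ on $\mu^{-1/2}F_n$ rather than a Gaussian weight, the refined trilinear estimate \eqref{Gamma-noncut-2} to avoid terms of type \eqref{unp-term}, and a linear symmetrizable hyperbolic--Maxwell system for the fluid block) matches the paper's Step 2 in spirit. However, there is a genuine gap in the way you propose to \emph{construct} the coefficients entirely in the global-Maxwellian frame. The equation that determines the non-fluid part of $h_n$ is not $\mathcal{L}[h_n]=\mathrm{RHS}$ but $\mathcal{L}[h_n]+\mathcal{L}_d[h_n]=\mathrm{RHS}$, and by \eqref{dL} the kernel of $\mathcal{L}+\mathcal{L}_d$ is $\mu^{-1/2}\,\mathrm{span}\{\mathbf{M},v\mathbf{M},|v|^2\mathbf{M}\}$, not $\mathcal{N}$. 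Hence your schematic formula $(\mathbf{I}-\mathbf{P})h_n=\mathcal{L}^{-1}\{\cdots\}$ does not determine $(\mathbf{I}-\mathbf{P})h_n$ from level-$(n-1)$ data alone: $\mathcal{L}_d[h_n]$ cannot be shoved into ``l.o.t.'' because it acts on the unknown $h_n$ itself, including the piece carrying the yet-undetermined $[\rho_n,u_n,T_n]$. This circularity then propagates into the fluid energy estimate: the source of the hyperbolic--Maxwell system needs one spatial derivative of the microscopic part, and in your $\mu$-frame formulation that microscopic part re-introduces the same-level fluid unknowns with an extra derivative --- exactly the ``endless loop'' the paper isolates in Remark \ref{fn-hn}.

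The paper escapes this by a two-frame argument you are missing: solvability is performed around the \emph{local} Maxwellian, writing $F_n=\mathbf{M}^{1/2}f_n$, where $(\mathbf{I}-\mathbf{P}_{\mathbf{M}})[f_n]$ is given explicitly by \eqref{micro-fn} through $\mathcal{L}_{\mathbf{M}}^{-1}$ applied to level-$(n-1)$ quantities only, and the macroscopic system \eqref{em-abc-eq} is then closed inductively via the energy estimate \eqref{em-ma-es}, whose source term deliberately uses $(\mathbf{I}-\mathbf{P}_{\mathbf{M}})[f_n]$ (not $(\mathbf{I}-\mathbf{P})[h_n]$) to avoid the derivative loss. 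Only \emph{after} the coefficients are constructed does one pass to $h_n=\mu^{-1/2}F_n$ to prove the $e^{q\langle v\rangle}$-weighted decay \eqref{vmb-Fn-es}, and there $\mathcal{L}_d$ is never inverted: it is treated as an $O(\epsilon_1)$ perturbation via \eqref{wGLd-Nong}, with the macroscopic norms of $h_n$ traded for those of $[\rho_n,u_n,T_n]$ already controlled from the $\mathbf{M}$-frame step. To repair your proposal you should split it accordingly: construct in the $\mathbf{M}$-frame, estimate in the $\mu$-frame; your bookkeeping of the polynomial loss $\langle v\rangle^{n(1+|\gamma+2s|)}$ from successive inversions is then the right heuristic for the exponent in \eqref{vmb-Fn-es}.
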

\begin{proof} Our proof is divided into two steps.
\vskip 0.2cm
\noindent{\underline{\it Step 1. Solvability of the coefficients.}} In this step, we will solve the coefficients $F_n, E_n$ and $B_n$ around the local Maxwellian $\FM$.
	The main argument applied here is originally used in
	\cite{Guo-CPAM-2006} for the case of cutoff Boltzmann equation and Landau equation around global Maxwellian.
	Setting $F_n=\FM^{\frac{1}{2}}f_n$ and recalling the definition \eqref{fn-mac-def}, we write
	\begin{align*}
	{\bf P}_{\mathbf{M}}[f_n]=&\frac{\rho_n}{\sqrt{\rho}} \chi_0+\sum\limits_{i=1}^3\frac{1}{\sqrt{R\rho T}}u_n^i\chi_i+\frac{T_n}{\sqrt{6\rho}}\chi_4,\\
F_n=&\Big[\frac{\rho_n}{\rho} \FM+\sum\limits_{i=1}^3\frac{v^i-u^i}{R\rho T}u_n^i\FM+\frac{T_n}{6\rho}\Big(\frac{|v-u|^2}{RT}-3\Big)\FM\Big]+\FM^{\frac{1}{2}}({\bf I-P}_{\mathbf{M}})[f_n].
	\end{align*}
	From the coefficient equation $\varepsilon^{n-1}$ with $1\leq n\leq 2k-1$ in \eqref{expan2}, it is straightforward to obtain the expression for microscopic part $f_{n}=\FM^{-\frac{1}{2}}F_{n}$:
\begin{align}\label{micro-fn}
({\bf I-P}_{\mathbf{M}})[f_{n}]=&\mathcal{L}_{\FM}^{-1}\Big[-\FM^{-\frac{1}{2}}\Big(\partial_tF_{n-1}+v\cdot \nabla_xF_{n-1}\\
&-\sum_{\substack{i+j=n-1\\i,j\geq0}}\Big(E_n+v \times B_n \Big)\cdot\nabla_vF_j-\sum_{\substack{i+j=n\\i,j\geq1}}\mathcal{C}(F_i,F_j)\Big)\Big], \nonumber
\end{align}
and equations of $E_n, B_n$:
\begin{align*}%\label{EnBn}
&\pa_t E_n-\na_x\times B_n=4\pi\big(u_n+\rho_n u\big),\notag\\
	&		\pa_t B_n+\na_x\times E_n=0,\\
			&\na_x\cdot E_n=-\rho_n,\ \na_x\cdot B_n=0.\notag
\end{align*}
To obtain equations of $[\rho_n, u_n, T_n]$ which corresponds to the macroscopic part of $f_{n}$, we need to project the equation of $f_n$ in \eqref{expan2} onto $1, v, |v|^2$.
Before doing this, we first note that
\begin{align*}
&\int_{\mathbb R^3} F_n dv=\rho_n, \qquad \int_{\mathbb R^3}  v F_n dv=\int_{\mathbb R^3}  u F_n dv+\int_{\mathbb R^3}  (v-u) F_n dv=\rho_n u+u_n, \\
&\int_{\mathbb R^3}  v^iv^j F_n dv=\int_{\mathbb R^3}\Big[(v^i-u^i)(v^j-u^j)-\delta_{ij}\frac{|v-u|^2}{3}\Big] F_n dv\\
&\hspace{2.5cm}+u_n^iu^j+\rho_nu^iu^j+u^iu_n^j+\frac{\delta_{ij}(3RT\rho_n+RTT_n)}{3},
\end{align*}
\begin{align*}
&\int_{\mathbb R^3}  |v|^2 F_n dv=\rho_n|u|^2+2u\cdot u_n+3RTT_n+RTT_n,\\
&\int_{\mathbb R^3} v^i|v|^2F_n dv=\int_{\mathbb R^3}(v^i-u^i)|v|^2 F_n dv+\int_{\mathbb R^3}u^i|v|^2 F_n dv\\
%&\hspace{2.5cm}=\int_{\mathbb R^3}(v^i-u^i)\big[|u|^2+2u\cdot(v-u)+|v-u|^2\big]  F_n dv\\
%&\hspace{2.7cm}+u^i\big(\rho_n|u|^2+2u\cdot u_n+3RT\rho_n+RTT_n\big)\\
&\hspace{2.3cm}=\int_{\mathbb R^3}(v^i-u^i)\big[|v-u|^2-5RT\big] F_n dv\\
&\hspace{2.5cm}+2u^j\int_{\mathbb R^3}\Big[(v^i-u^i)(v^j-u^j)-\delta_{ij}\frac{|v-u|^2}{3}\Big] F_n dv\\
&\hspace{2.5cm}+(5RT+|u|^2)u_n^i+u^i\Big[(5RT+|u|^2)\rho_n+2u\cdot u_n+\frac{5}{3}RTT_n\Big],
\end{align*}
and
\begin{align*}
&-\int_{\mathbb R^3}v^i(E+v\times B)\cdot\na_v F_n dv\\
&\hspace{1cm}=\int_{\mathbb R^3}\big[E^i+(v\times B)^i\big] F_n dv=\rho_n\big[E^i+\big(u\times B\big)^i\big]+\big(u_n\times B\big)^i,\\
&-\int_{\mathbb R^3}|v|^2(E+v\times B)\cdot\na_v F_n dv=2\int_{\mathbb R^3}v\cdot E F_n dv=2E\cdot(\rho_n u+u_n).
\end{align*}
Now we project the equation of $f_n$ in \eqref{expan2} onto $1, v, |v|^2$ to get
\begin{align*}
&\partial_t\rho_n+\na_x\cdot(\rho_n u+u_n)=0,\\
&\partial_t(\rho_n u^i+u_n^i)+\partial_{x_j}\Big[u_n^iu^j+\rho_nu^iu^j+u^iu_n^j+\frac{\delta_{ij}(3RT\rho_n+RTT_n)}{3}\Big]\\
&+\rho\big[E^i_n+\big(u\times B_n\big)^i\big]+\rho_n\big[E^i+\big(u\times B\big)^i\big]+\big(u_n\times B\big)^i\\
=&-\partial_{x_j}\int_{\mathbb R^3}\Big[(v^i-u^i)(v^j-u^j)-\delta_{ij}\frac{|v-u|^2}{3}\Big] F_n dv\\
&-\sum_{\substack{i_0+j_0=n\\i_0,j_0\geq1}}\Big[\rho_{j_0}\big[E^i_{i_0}+\big(u\times B_{i_0}\big)^i\big]+\big(u_{j_0}\times B_{i_0}\big)^i\Big],\\
&\partial_t\big[\rho_n|u|^2+2u\cdot u_n+3RT\rho_n+RTT_n\big]\\
&+\nabla_x\cdot\Big[(5RT+|u|^2)(u_n+\rho_nu)+2u\cdot u_n+\frac{5}{3}RTT_n u\big)\Big]\\
&+2\rho u\cdot E_n+2E\cdot(\rho_n u+u_n)+2\sum_{\substack{i_0+j_0=n\\i_0,j_0\geq1}}E_{i_0}\cdot(\rho_{j_0} u+u_{j_0})\\
=&-\nabla_x\cdot\int_{\mathbb R^3}(v-u)\big[|v-u|^2-5RT\big] F_n dv\\
&-2\partial_{x_j}\Big[u^i\int_{\mathbb R^3}\Big[(v^i-u^i)(v^j-u^j)-\delta_{ij}\frac{|v-u|^2}{3}\Big] F_n dv\Big].
\end{align*}
By utilizing the compressible Euler-Maxwell system \eqref{EM}, we can express the aforementioned equations in a more concise form
\begin{align}\label{em-abc-eq}
		&\partial_t\rho_n+\na_x\cdot(\rho_n u+u_n)=0,\notag\\
&\partial_tu_n+(u\cdot\nabla_x)u_n+(u_n\cdot\nabla_x)u+(\nabla_x\cdot u)u_n+\frac{1}{3}\nabla_x(3RT\rho_n+RTT_n)-\frac{R\rho_n}{\rho}\nabla_x\big(\rho T\big)\notag\\
&+\rho\big[E_n+\big(u\times B_n\big)\big]+u_n\times B+\sum_{\substack{i+j=n\\i,j\geq1}}\Big[\rho_{j}\big[E_{i}+\big(u\times B_{i}\big)\big]+\big(u_{j}\times B_{i}\big)\Big]\notag\\
=&-\nabla_x\cdot\int_{\mathbb R^3}\Big[(v-u)\otimes(v-u)-\frac{|v-u|^2}{3}\FI_{3\times3}\Big] F_n dv,\notag\\
&RT\big[\partial_tT_n+u\cdot \nabla_x T_n+ (\nabla_x\cdot u) T_n\big]+2RT\nabla_x\cdot u_n+3R\nabla_xT\cdot u_n\notag\\
&-\frac{2RT}{\rho}\nabla_x\rho\cdot u_n-2(u\times B)\cdot u_n\\
=&2u\cdot\sum_{\substack{i+j=n\\i,j\geq1}}\big[\rho_jE_n +u_j\times B_i\big]-\nabla_x\cdot\int_{\mathbb R^3}(v-u)\big[|v-u|^2-5RT\big] F_n dv\notag\\
&+2u^i\partial_{x_j}\Big[\int_{\mathbb R^3}\Big[(v^i-u^i)(v^j-u^j)-\delta_{ij}\frac{|v-u|^2}{3}\Big] F_n dv\Big],\notag\\
&-2\partial_{x_j}\Big[u^i\int_{\mathbb R^3}\Big[(v^i-u^i)(v^j-u^j)-\delta_{ij}\frac{|v-u|^2}{3}\Big] F_n dv\Big],\notag\\
&\pa_t E_n-\na_x\times B_n=u_n+\rho_n u,\notag\\
			&\pa_t B_n+\na_x\times E_n=0,\notag\\
			&\na_x\cdot E_n=-\rho_n,\ \na_x\cdot B_n=0,\notag
			\end{align}
	where $1\leq n\leq2k-1$ and $\FI_{3\times3}$ stands for the unit $3\times 3$ matrix.

Following the approach in \cite{Guo-Jang-CMP-2010} and \cite{Guo-Xiao-CMP-2021}, we can express \eqref{em-abc-eq} as a symmetric hyperbolic system. Consequently, we can construct classical solution $[\rho_n, u_n, T_n, E_n, B_n]$ in the Sobolev space $H^{4k+4-2n}$ with $1\leq n\leq 2k-1$ inductively.
\vskip 0.2cm
\noindent{\underline{\it Step 2. Estimates of the coefficients.}} We aim to establish \eqref{vmb-Fn-es} and \eqref{vmb-EB-es}, indicating the velocity decay estimates of $F_n$ and the temporal growth of $F_n, E_n$, and $B_n$ with $1\leq n\leq 2k-1$, around the global Maxwellian $\mu$.

Letting $F_n=\mu^{\frac{1}{2}}h_n$ for $1\leq n\leq 2k-1$, we decompose $h_n$ as the macroscopic part and microscopic part:
\begin{align*}
h_n={\bf P}[h_n]+({\bf I-P})[h_n].
\end{align*}
%where the operator ${\bf P}$ is the orthogonal projection from $L^2_v$ onto $\mathcal {N}$.
	Next, from \eqref{expan2}, one sees that $h_n$ satisfies the following iterative equations
	\begin{align}\notag
		\mathcal{L} [h_1]+\mathcal{L}_d [h_1]=-\mu^{-\frac{1}{2}}\{\pa_t\FM+v\cdot\na_x\FM\}+\mu^{-\frac{1}{2}}(E+v\times B)\cdot\na_v \FM,%\label{em-mi-f1}\
	\end{align}
	\begin{align}
		&\pa_th_1+v\cdot\na_xh_1-(E+v\times B)\cdot\na_v h_1+\frac{v}{2RT_c}  \cdot E  h_1\notag\\
&-h_1\mu^{-\frac{1}{2}}(E_1+v\times B_1)\cdot\na_v\FM
		+\CL  [h_2]+\CL_d [h_2]=\Ga (h_1, h_1),\notag
	\end{align}
	\begin{align}
		\cdots\cdots\cdots\cdots\cdots,\notag
	\end{align}
	\begin{align}%\label{em-fn-eq}
		&\pa_th_n+v\cdot\na_xh_n-(E+v\times B)\cdot\na_v h_n+\frac{v}{2RT_c}  \cdot E  h_n\notag\\
  &	-\mu^{-\frac{1}{2}}(E_n+v\times B_n)\cdot\na_v\FM-{\bf 1}_{n>1}\sum\limits_{i+j=n\atop{i,j\geq1}}(E_i+v\times B_i)\cdot\na_v h_j
		\notag\\&
		-{\bf 1}_{n>1}\sum\limits_{i+j=n\atop{i,j\geq1}}\frac{v}{2RT_c}  \cdot E_i  h_j
		+\CL  [h_{n+1}]+\CL_d [h_{n+1}]\notag=\sum\limits_{i+j=n+1}\Ga (h_i,h_j),\notag
	\end{align}
	\begin{align}
		\cdots\cdots\cdots\cdots\cdots,\notag
	\end{align}
	\begin{align*}
		&\pa_th_{2k-2}+v\cdot\na_xh_{2k-2}
		-(E+v\times B)\cdot\na_v h_{2k-2}+\frac{v}{2RT_c}  \cdot E  h_{2k-2}\notag\\
&-\mu^{-\frac{1}{2}}(E_{2k-2}+v\times B_{2k-2})\cdot\na_v\FM-\sum\limits_{i+j={2k-2}\atop{i,j\geq1}}(E_i+v\times B_i)\cdot\na_v h_j\notag\\&
		-\sum\limits_{i+j={2k-2}\atop{i,j\geq1}}\frac{v}{2RT_c}  \cdot E_i  h_j
		+\CL [h_{2k-1}]+\CL_d [h_{2k-1}]
		=\sum\limits_{i+j=2k-1\atop{i,j\geq1}}\Ga (h_i, h_j),
	\end{align*}
	\begin{align}
		\pa_th_{2k-1}&+v\cdot\na_xh_{2k-1}-(E+v\times B)\cdot\na_v h_{2k-1}+\frac{v}{2RT_c}  \cdot E  h_{2k-1}\notag\\&
		-\mu^{-\frac{1}{2}}(E_{2k-1}+v\times B_{2k-1})\cdot\na_v\FM-\sum\limits_{i+j={2k-1}\atop{i,j\geq1}}(E_i+v\times B_i)\cdot\na_v h_j
		\notag\\&
		-\sum\limits_{i+j={2k-1}\atop{i,j\geq1}}\frac{v}{2RT_c}  \cdot E_i  h_j
		=\sum\limits_{i+j=2k}\Ga (h_i,h_j).\notag
	\end{align}

To prove \eqref{vmb-Fn-es} and \eqref{vmb-EB-es}, our current objective is to demonstrate that
	\begin{align}
		\sum\limits_{\al_0+|\al|+|\beta|\leq N+4k-2n+2}&\left|\lag v\rag^{\ell-n(1+|\gamma+2s|}e^{q\lag v\rag}\pa_\bet^\al h_n\right|
		\nonumber\\
		&+\sum\limits_{\al_0+|\al|\leq N+2k-2n+2}\left|\pa_t^{\al_0}\pa^\al [E_n,B_n]\right|\leq C(C_0)(1+t)^{n}\label{em-fn-es}
	\end{align}
	holds for $N\geq4$.
	
In fact, \eqref{em-fn-es} can be deduced from the following estimates:
	\begin{align}
		\sum\limits_{\al_0+|\al|+|\beta|\leq N+4k-2n+2}&\left\|\lag v\rag^{\ell-n-(2n-1)|\gamma+2s|/2}e^{q\lag v\rag}\pa_\bet^\al h_n\right\|_{\bf D}\nonumber\\
		&+\sum\limits_{\al_0+|\al|\leq N+2k-2n+2}\left|\pa_t^{\al_0}\pa^\al [E_n,B_n]\right|
		\leq C(C_0)(1+t)^n.\label{em-w-fn-es}
	\end{align}
Then, using standard energy estimates, we can derive from \eqref{em-abc-eq} and \eqref{em-decay} that
	\begin{align}\label{em-ma-es}
		\frac{d}{dt}\sum\limits_{\al_0+|\al|\leq m}&\|\pa_t^{\al_0}\pa^\al[\rho_n,u_n,T_n,E_n,B_n]\|^2\notag\\
		\leq& C\epsilon_1(1+t)^{-p_0}\sum\limits_{\al_0+|\al|\leq m}\|\pa_t^{\al_0}\pa^\al[\rho_n,u_n,T_n,E_n,B_n]\|^2
		\notag\\&+C\sum\limits_{\al_0+|\al|\leq m+1}\|\pa_t^{\al_0}\pa^\al({\bf I-P}_{\mathbf{M}})[f_{n}]\|_{\bf D}\sum\limits_{\al_0+|\al|\leq m}\|\pa_t^{\al_0}\pa^\al[\rho_n,u_n,T_n,E_n,B_n]\|,
		%+C\sup\limits_{0\leq t\leq t_e}\sum\limits_{|\al|\leq m+1}\|\pa^\al\FP^\FM_1f_n\|^2_{\BF D},
	\end{align}
	where $m>0$ is finite.
\begin{remark}\label{fn-hn}
It's important to note that we use $({\bf I-P}_{\mathbf{M}})[f_{n}]$ instead of $({\bf I-P}_{\mathbf{M}})[h_{n}]$ to establish the above bound. This choice arises from an additional spatial derivative loss when dealing with $({\bf I-P}_{\mathbf{M}})[h_{n}]$. As per \eqref{micro-fn}, this additional spatial derivative loss can be controlled by higher-order norms of the previous $n-1$ items, namely $[\rho_i, u_i, T_i, E_i, B_i]$ for $0\leq i\leq n-1$, through an inductive argument. However, this approach is not applicable for $h_{n}$ because the equation for $h_n$ involves an additional term $\mathcal{L}_{d}[({\bf I-P})[h_n]]$, which contains $[\rho_n, u_n, T_n]$. Consequently, it leads to another loss of one more spatial derivative for $[\rho_n, u_n, T_n]$, resulting in a situation where the estimates enter an endless loop.
\end{remark}

	To close our estimate, we now turn to estimate the microscopic part $({\bf I-P})[h_n]$.
	For $n=1$, in view of the first equation for $\varepsilon^0$ in \eqref{expan2}, one has
	\begin{align}\label{hn-co}
		&\left\lag \pa_t^{\al_0}\pa_\beta^\al \mathcal{L} [({\bf I-P})[h_1]],\lag v\rag^{2(\ell-1)-|\gamma+2s|}e^{2q\lag v\rag}\pa_t^{\al_0}\pa_\beta^\al({\bf I-P})[h_1]\right\rag\notag\\
		=&-\left\lag \pa_t^{\al_0}\pa_\beta^\al \mathcal{L}_{d}[({\bf I-P})[h_1]],\lag v\rag^{2(\ell-1)-|\gamma+2s|}e^{2q\lag v\rag}\pa_t^{\al_0}\pa_\beta^\al({\bf I-P})[h_1]\right\rag\notag\\
& -\left\lag \pa_t^{\al_0}\pa_\beta^\al
		\left[\mu^{-\frac{1}{2}}\{\pa_t\FM+v\cdot\na_x\FM\}\right],\lag v\rag^{2(\ell-1)-|\gamma+2s|}e^{2q\lag v\rag}\pa_t^{\al_0}\pa_\beta^\al({\bf I-P})[h_1]\right\rag\\
& +\left\lag\pa_t^{\al_0}\pa_\beta^\al
		\left[\mu^{-\frac{1}{2}}(E+v\times B)\cdot\na_v \FM\right],\lag v\rag^{2(\ell-1)-|\gamma+2s|}e^{2q\lag v\rag}\pa_t^{\al_0}\pa_\beta^\al({\bf I-P})[h_1]\right\rag.\notag
	\end{align}
For the 1st term in the right hand side of \eqref{hn-co}, by utilizing \eqref{wGLd-Nong}, and using the equivalence of norms for ${\bf P}[h_1]$ and
${\bf P}[f_1]$, along with the following fact:
$$\|[\rho_1,u_1,T_1]\|^2_{H^{\alpha_0}_tH^{|\alpha|}_x}\lesssim\|{\bf P}[f_1]\|^2_{H^{\alpha_0}_tH^{|\alpha|}_xH^{|\beta|}_v}\lesssim \|[\rho_1,u_1,T_1]\|^2_{H^{\alpha_0}_tH^{|\alpha|}_x},$$
one has
\begin{align*}
\Big|&\left\lag \pa_t^{\al_0}\pa_\beta^\al \mathcal{L}_{d}[({\bf I-P})[h_1]],\lag v\rag^{2(\ell-1)-|\gamma+2s|}e^{2q\lag v\rag}\pa_t^{\al_0}\pa_\beta^\al({\bf I-P})[h_1]\right\rag\Big|\\
\leq& C\eps_1 \Big(\sum\limits_{\bar{\al}_0\leq\al_0}\sum_{\bar{\alpha}\leq \alpha,|\bar{\beta}|\leq|\beta|}\left\|\lag v\rag^{\ell-1-|\gamma+2s|/2}e^{q\lag v\rag}\pa_t^{\bar{\al}_0}\partial^{\bar{\alpha}}_{\bar{\beta}}({\bf I-P})[h_1]\right\|_{\bf D}^2+\|[\rho_1,u_1,T_1]\|^2_{H^{\alpha_0}_tH^{|\alpha|}_x}\Big).
\end{align*}
Additionally, considering \eqref{em-decay}, we can bound the 2nd and 3rd terms on the R.H.S. of \eqref{hn-co} as follows:
 \begin{align*}
 &\Big| \left\lag \pa_t^{\al_0}\pa_\beta^\al
		\left[\mu^{-\frac{1}{2}}\{\pa_t\FM+v\cdot\na_x\FM\}\right],\lag v\rag^{2(\ell-1)-|\gamma+2s|}e^{2q\lag v\rag}\pa_t^{\al_0}\pa_\beta^\al({\bf I-P})[h_1]\right\rag\Big|\\
&+\Big| \left\lag\pa_t^{\al_0}\pa_\beta^\al
		\left[\mu^{-\frac{1}{2}}(E+v\times B)\cdot\na_v \FM\right],\lag v\rag^{2(\ell-1)}e^{2q\lag v\rag}\pa_t^{\al_0}\pa_\beta^\al({\bf I-P})[h_1]\right\rag\Big|\\
\leq &C\eps_1\left\|\lag v\rag^{\ell-1-|\gamma+2s|/2}e^{q\lag v\rag}\pa_t^{\al_0}\partial^{\alpha}_{\beta}({\bf I-P})[h_1]\right\|_{\bf D}.
 \end{align*}
Consequently, we use \eqref{wLLM-Non} to get for $\beta>0$ that
	\begin{align}%\label{em-cof1-1}
		&\left\|\lag v\rag^{\ell-1-|\gamma+2s|/2}e^{q\lag v\rag}\pa_t^{\al_0}\partial^{\alpha}_{\beta}({\bf I-P})[h_1]\right\|_{\bf D}^2
		\notag\\
		&-\left(\eta+C\epsilon_1\right)\sum_{\bar{\al}_0+|\bar{\alpha}|\leq \al_0+|\alpha|}\sum_{|\bar{\beta}|=|\beta|}\left\|\lag v\rag^{\ell-1-|\gamma+2s|/2}e^{q\lag v\rag}\pa_t^{\bar{\al}_0}\partial^{\bar{\alpha}}_{\bar{\beta}}({\bf I-P})[h_1]\right\|_{\bf D}^2\notag\\
		&-C(\eta)\sum\limits_{\bar{\al}_0\leq\al_0}\sum_{\bar{\alpha}\leq \alpha}\sum_{|\bar{\beta}|<|\beta|}
		\left\|\lag v\rag^{\ell-1-|\gamma+2s|/2}e^{q\lag v\rag}\pa_t^{\bar{\al}_0}\partial^{\bar{\alpha}}_{\bar{\beta}}({\bf I-P})[h_1]\right\|^2_{\bf D}\notag\\
		\leq& C\Big(\eps_1^2+\|[\rho_1,u_1,T_1]\|^2_{H^{\alpha_0}_tH^{|\alpha|}_x}\Big),\notag
	\end{align}
	and for $\beta=0$
	\begin{align}%\label{em-cof1-2}
		&\left\|\lag v\rag^{\ell-1-|\gamma+2s|/2}e^{q\lag v\rag}\pa_t^{\al_0}\pa^\al({\bf I-P})[h_1]\right\|_{\bf D}^2\notag\\&
		-C\epsilon_1{\bf 1}_{\al_0+|\al|>0}\sum\limits_{\bar{\al}_0\leq\al_0,\bar{\alpha}\leq \alpha,|\bar{\al}_0|+|\bar{\alpha}|<|\al_0|+|\alpha|}\left\|\lag v\rag^{\ell-1-|\gamma+2s|/2}e^{q\lag v\rag}\pa_t^{\bar{\al}_0}\partial^{\bar{\alpha}}({\bf I-P})[h_1]\right\|_{\bf D}^2
		\notag\\
		&-C\left\|\lag v\rag^{\ell-1-|\gamma+2s|/2}e^{q\lag v\rag}\pa_t^{\al_0}\pa^{\al}({\bf I-P})[h_1]\right\|_{L^2(\R^3\times B_C(\eta))}\notag\\
		\leq& C\Big(\eps_1^2+\|[\rho_1,u_1,T_1]\|^2_{H^{\alpha_0}_tH^{|\alpha|}_x}\Big),\notag
	\end{align}
	and moreover
	\begin{align*}%\label{em-cof1-3}
		\left\|\pa_t^{\al_0}\pa^\al({\bf I-P})[h_1]\right\|_{\bf D}^2
		\leq C\Big(\eps_1^2+\|[\rho_1,u_1,T_1]\|^2_{H^{\alpha_0}_tH^{|\alpha|}_x}\Big).
		%,\\
		%{\bf 1}_{\al_0+|\al|>0}\sum\limits_{\bar{\al}_0\leq\al_0,\bar{\alpha}\leq \alpha,|\bar{\al}_0|+|\bar{\alpha}|<|\al_0|+|\alpha|}\left\|\lag v\rag^{\ell-1}\mathbf{M}^{-q'/2}\pa_t^{\bar{\al}_0}\partial^{\bar{\alpha}}({\bf I-P}_{\mathbf{M}})[f_1]\right\|_{\BF D}^2
		%\leq C\eps_1^2.
	\end{align*}
	To summarize the above estimates, we can conclude that
	\begin{align}\label{em-cof1-s}
		\sum\limits_{\al_0+|\al|+|\beta|\leq m+4k-4}\left\|\lag v\rag^{\ell-1-|\gamma+2s|/2}e^{q\lag v\rag}\pa_t^{\al_0}\pa_\beta^\al({\bf I-P})[h_1]\right\|_{\bf D}^2
		\leq C\eps_1^2.
	\end{align}
	%for any $m\geq0.$
	Thus \eqref{em-cof1-s} and \eqref{em-ma-es} with $n=1$ gives
	\begin{align}\label{em-f1-es}
\sum\limits_{\al_0+|\al|+|\beta|\leq m+4k-4}&\left\|\lag v\rag^{\ell-1-|\gamma+2s|/2}e^{q\lag v\rag}\pa_t^{\al_0}\pa_\beta^\al h_1(t)\right\|_{\bf D}+\sum\limits_{\al_0+|\al|\leq m+4k-4}\left|\pa_t^{\al_0}\pa^\al [E_1,B_1]\right|\notag\\
		\leq& C\sum\limits_{\al_0+|\al|\leq m+4k-4}\left\|\pa_t^{\al_0}\pa^\al[\rho_1,u_1,T_1,E_1,B_1](0,x)\right\|+C\eps_1(1+t).
	\end{align}
	
	Now we assume that
	\begin{align*}%\label{em-fn-es-ass}
		&\sum\limits_{\al_0+|\al|+|\beta|\leq m+4k-2n-2}\left\|\lag v\rag^{\ell-n-(2n-1)|\gamma+2s|/2}e^{q\lag v\rag}\pa_t^{\al_0}\pa_\beta^\al h_n(t)\right\|_{\bf D}\\
&+\sum\limits_{\al_0+|\al|\leq m+4k-2n-2}\left|\pa_t^{\al_0}\pa^\al [E_n,B_n]\right|\notag\\
		\leq& C\sum\limits_{\al_0+|\al|\leq m+4k-2n-2}\left\|\pa_t^{\al_0}\pa^\al[\rho_n,u_n,T_n,E_n,B_n](0,x)\right\|+C(C_0)(1+t)^n\notag
	\end{align*}
	holds for $1\leq n\leq2k-2 $, then by repeating the argument used to deduce \eqref{em-cof1-s}, one has
	\begin{align}\label{em-cohn-s}
		\sum\limits_{\al_0+|\al|+|\beta|\leq m+4k-2n-3}\left\|\lag v\rag^{\ell-n-|\gamma+2s|/2}e^{q\lag v\rag}\pa_t^{\al_0}\pa_\beta^\al({\bf I-P})[h_{n+1}](t)\right\|_{\bf D}
		\leq C(C_0)(1+t)^n.
	\end{align}
On the other hand, as discussed in Remark \ref{fn-hn}, by a similar argument as for deriving \eqref{em-cohn-s}, it can be demonstrated that $({\bf I-P})[f_{n}]$ is bounded in the way as $({\bf I-P})[h_{n}]$.

Thus \eqref{em-cohn-s} together with \eqref{em-ma-es} yields
	\begin{align}\label{em-f1-es}
		\sum\limits_{\al_0+|\al|+|\beta|\leq m+4k-2n-4}&\left\|\lag v\rag^{\ell-n-(2n-1)|\gamma+2s|/2}e^{q\lag v\rag}\pa_t^{\al_0}\pa_\beta^\al h_{n+1}(t)\right\|_{\bf D}\nonumber\\
		&+\sum\limits_{\al_0+|\al|\leq m+4k-2n-4}\left|\pa_t^{\al_0}\pa^\al [E_{n+1},B_{n+1}]\right|\\
		\leq& C(C_0)(1+t)^{n+1}.\nonumber
	\end{align}
	
	Finally, by setting $m=N+4$, the estimate \eqref{em-f1-es} for $0\leq n\leq 2k-2$ implies \eqref{em-w-fn-es}.

\end{proof}

\vspace{2cm}

\end{document}